\newtheorem{theorem}{Theorem}[section]
\numberwithin{equation}{section}
\newtheorem{proposition}[theorem]{Proposition}
\newtheorem{definition}[theorem]{Definition}
\newtheorem{corollary}[theorem]{Corollary}
\newtheorem{remark}[theorem]{Remark}
\newtheorem{lemma}[theorem]{Lemma}
\newtheorem{algorithm}[theorem]{Scheme}
\newtheorem{assumption}[theorem]{Assumption}
\providecommand{\Vo}{{\mathaccent23 V}}
\providecommand{\Qo}{{\mathaccent23 Q}}
\providecommand{\D}{{\scriptscriptstyle \mathbf{D}_x}}
\providecommand{\plus}{{\scriptscriptstyle +}}
\providecommand{\bXp}{{\mathbfcal{U}^{q,p}_{\D}}}
\providecommand{\bXplus}{{\mathbfcal{U}^{q,p}_{\plus}}}
\providecommand{\bXminus}{{\mathbfcal{U}^{q,p}_{\minus}}}
\providecommand{\bXpn}{{\mathbfcal{U}^{q_n,p_n}_{\D}}}
\providecommand{\bVp}{{\mathbfcal{V}^{q,p}_{\D}}}
\providecommand{\minus}{{\scriptscriptstyle -}}
\DeclareMathAlphabet\mathbfcal{OMS}{cmsy}{b}{n}
\titleformat{\section}{\normalfont\scshape\centering}{\thesection.}{0.5em}{}
\titleformat*{\subsection}{\itshape}
\titleformat*{\subsubsection}{\itshape}
\providecommand{\keywords}[1]
{
	{\small\textit{Keywords:~~} #1}
}
\providecommand{\MSC}[1]
{
	{\small\textit{AMS MSC (2020): ~~} #1}
}
	\def\MR#1{}
\begin{document}
	\setlength{\abovedisplayskip}{5.5pt}
	\setlength{\belowdisplayskip}{5.5pt}
	\setlength{\abovedisplayshortskip}{5.5pt}
	\setlength{\belowdisplayshortskip}{5.5pt}

	\title{Convergence analysis for a fully-discrete finite element approximation of the unsteady $p(\cdot,\cdot)$-Navier--Stokes equations}
	\author[1]{Luigi C. Berselli\thanks{Email: \texttt{luigi.carlo.berselli@unipi.it}}\thanks{funded by  INdAM GNAMPA;  MIUR 
			within PRIN20204NT8W;
			 MIUR Excellence, Department of Mathematics, University of Pisa, CUP I57G22000700001.}}
	\author[2]{Alex Kaltenbach\thanks{Email: \texttt{kaltenbach@math.tu-berlin.de}}\thanks{funded by the Deutsche Forschungsgemeinschaft (DFG, German Research
		Foundation) - 525389262.}}
\date{\today}
\affil[1]{\small{Department of Applied Mathematics, University of Pisa, Largo Bruno Pontecorvo 5, 56127 Pisa}}
\affil[2]{\small{Institute of Mathematics, Technical University of Berlin, Straße des 17. Juni 136, 10623 Berlin}}
\maketitle

	\pagestyle{fancy}
	\fancyhf{}
	\fancyheadoffset{0cm}
	\addtolength{\headheight}{-0.25cm}
	\renewcommand{\headrulewidth}{0pt} 
	\renewcommand{\footrulewidth}{0pt}
	\fancyhead[CO]{\textsc{Convergence of a FE approximation of the $p(\cdot,\cdot)$-Navier--Stokes equations}}
	\fancyhead[CE]{\textsc{L. C. Berselli and A. Kaltenbach}}
	\fancyhead[R]{\thepage}
	\fancyfoot[R]{}
	\thispagestyle{empty}
	
	\begin{abstract}
	       In the present paper, we establish the well-posedness, stability, and (weak) convergence of a fully-discrete approximation of the unsteady $p(\cdot,\cdot)$-Navier--Stokes equations employing an implicit Euler step in time and a discretely inf-sup-stable finite element approximation~in~space.~Moreover, numerical experiments are carried out that supplement the theoretical findings.
	\end{abstract}

	\keywords{Variable exponents; convergence analysis; velocity; pressure; Rothe method; Galerkin method; finite elements.}
	
	\MSC{35Q35; 65M12; 65N30; 76A05.}
	
	\section{Introduction}\label{sec:intro}

    \hspace{5mm}In this paper, we examine a fully-discrete Rothe--Galerkin approximation for unsteady systems of  \textit{$p(\cdot,\cdot)$-Navier--Stokes type}, \textit{i.e.},\enlargethispage{1mm}
\begin{equation}
	\label{eq:ptxNavierStokes}
	\begin{aligned}
		\partial_t\boldsymbol{v}-\mathrm{div}_x(\mathbf{S}(\cdot,\cdot,\mathbf{D}_x\boldsymbol{v}))+\mathrm{div}_x(\boldsymbol{v}\otimes\boldsymbol{v})+\nabla_{\! x}    \pi&=\boldsymbol{f} +\mathrm{div}_x\boldsymbol{F}&&\quad\text{ in }Q_T\,,\\
		\mathrm{div}_x\boldsymbol{v}&=0 &&\quad\text{ in }Q_T\,,
		\\
		\boldsymbol{v} &= \mathbf{0} &&\quad\text{ on } \Gamma_T\,,\\
  \boldsymbol{v}(0) &= \mathbf{v}_0 &&\quad\text{ in } \Omega\,,
	\end{aligned}
\end{equation}
using an implicit Euler step in time and a discretely inf-sup-stable finite element approximation~in~space. 
More precisely,
for a given vector field $\boldsymbol{f}\colon Q_T\to \mathbb{R}^d$ and a given tensor field $\boldsymbol{F}\colon Q_T\to \mathbb{R}^{d\times d}_{\mathrm{sym}}$, jointly describing external
body forces, an incompressibility constraint \eqref{eq:ptxNavierStokes}$_2$, a no-slip
boundary condition~\eqref{eq:ptxNavierStokes}$_3$, and an initial velocity vector field $\mathbf{v}_0\colon \Omega\to \mathbb{R}^d$ at initial time $t=0$,  the system \eqref{eq:ptxNavierStokes} seeks~for~a~\textit{velocity vector field} $\boldsymbol{v}\coloneqq (v_1,\ldots,v_d)^\top\colon \overline{Q_T}\to
	\mathbb{R}^d$ and a scalar \textit{kinematic~\mbox{pressure}}~${\pi\colon Q_T\to \mathbb{R}}$~\mbox{solving}~\eqref{eq:ptxNavierStokes}.
Here, $\Omega\subseteq \mathbb{R}^d$, $d\ge 2$, is a bounded simplicial Lipschitz domain, $I\coloneqq (0,T)$, $0<T<\infty$, a finite time interval, $Q_T\coloneqq I\times\Omega$ the corresponding time-space~cylinder~and~$\Gamma_T\coloneqq I\times\partial\Omega$~the~parabolic~boundary.~The~\textit{extra-stress tensor} $\mathbf{S}(\cdot,\cdot,\mathbf{D}_x\boldsymbol{v})\colon  Q_T\to \mathbb{R}^{d\times d}_{\textup{sym}}$\footnote{Here, $\mathbb{R}^{d\times d}_{\textup{sym}}\coloneqq \{\mathbf{A}\in \mathbb{R}^{d\times d}\mid \mathbf{A}=\mathbf{A}^\top\}$ and $\mathbf{A}^{\textup{sym}}\coloneqq[\mathbf{A}]^{\textup{sym}}\coloneqq \frac{1}{2}(\mathbf{A}+\mathbf{A}^\top)\in \mathbb{R}^{d\times d}_{\textup{sym}}$ for all $\mathbf{A}\in\mathbb{R}^{d\times d}$.}  depends on the \textit{strain-rate} \mbox{tensor} $\smash{\mathbf{D}_x\boldsymbol{v} \coloneqq  [\nabla_{\! x}  \boldsymbol{v}]^{\textup{sym}}
	\colon  Q_T\to  \mathbb{R}^{d\times d}_{\textup{sym}}}$, \textit{i.e.}, the symmetric part of the velocity gradient  $\nabla_{\! x}  \boldsymbol{v}\hspace*{-0.1em}=\hspace*{-0.1em}(\partial_{x_j} v_i)_{i,j=1,\ldots,d}
\colon \hspace*{-0.1em}Q_T\hspace*{-0.1em}\to\hspace*{-0.1em} \mathbb{R}^{d\times d}$. The \textit{convective~term} ${\mathrm{div}_x(\boldsymbol{v}\otimes\hspace*{-0.1em}\boldsymbol{v})\colon Q_T\hspace*{-0.1em}\to\hspace*{-0.1em} \mathbb{R}^d}$ is defined by $(\mathrm{div}_x(\boldsymbol{v}\otimes\boldsymbol{v}))_i\coloneqq \sum_{j=1}^d{\partial_{x_j}(v_i v_j)}$ for all $i=1,\ldots,d$.\pagebreak

Throughout the paper, we  assume that the extra-stress tensor $\mathbf{S}$ has a \textit{weak\footnote{The prefix \textit{weak} refers to the fact that the extra-stress tensor does not need to possess any higher~regularity~properties.} $(p(\cdot,\cdot),\delta)$-structure}, \textit{i.e.},
for some $\delta\ge 0$ and  some \textit{power-law index} $p\colon Q_T\to [0,+\infty)$, which is at least (Lebesgue)~measurable~with\vspace*{-0.5mm}
\begin{align}\label{eq:pm}
	1<p^-\coloneqq \underset{(t,x)^\top\in Q_T}{\textrm{ess\,inf}}{p(t,x)}\leq p^+\coloneqq \underset{(t,x)^\top\in Q_T}{\textrm{ess\,sup}}{p(t,x)}<+\infty\,,\\[-7mm]\notag
\end{align}
the following properties are satisfied:
	\begin{itemize}[noitemsep,topsep=2pt,leftmargin=!,labelwidth=\widthof{\itshape (S.3)},font=\itshape]
		\item[(S.1)]\hypertarget{S.1}{} $\textbf{S}\colon Q_T\times \mathbb{R}^{d\times d}_{\textup{sym}}\to 
		\mathbb{R}^{d\times d}_{\textup{sym}}$ is a Carath\'eodory mapping\footnote{$\textbf{S}(\cdot,\cdot,\mathbf{A})\colon Q_T\to \mathbb{R}^{d\times d}_{\textup{sym}}$ is (Lebesgue) measurable for all $\mathbf{A}\in \mathbb{R}^{d\times d}_{\textup{sym}}$ and $\textbf{S}(t,x,\cdot)\colon \mathbb{R}^{d\times d}_{\textup{sym}}\to \mathbb{R}^{d\times d}_{\textup{sym}}$ is continuous for a.e.\ $(t,x)^\top\in Q_T$.};
		\item[(S.2)]\hypertarget{S.2}{} $\vert\mathbf{S}(t,x,\mathbf{A})\vert \leq \alpha\,(\delta+\vert \mathbf{A}\vert)^{p(t,x)-2}\vert\mathbf{A}\vert+\beta(t,x)$ 
		for all $\mathbf{A}\in \mathbb{R}^{d\times d}_{\textup{sym}}$ and a.e.\  $(t,x)^\top\in Q_T$,
		\newline where $\alpha\ge 1$ and $\beta\in L^{p'(\cdot,\cdot)}(Q_T;\mathbb{R}_{\ge 0})$;
		\item[(S.3)]\hypertarget{S.3}{} $\mathbf{S}(t,x, \mathbf{A}):\mathbf{A}\ge 
		c_0\,(\delta+\vert \mathbf{A}\vert)^{p(t,x)-2}\vert\mathbf{A}\vert^2-c_1(t,x)$ 
		for all ${\mathbf{A}\in \mathbb{R}^{d\times d}_{\textup{sym}}}$~and~a.e.~${(t,x)^\top\in Q_T}$,\newline
		where $c_0>0$ and $c_1\in L^1(Q_T;\mathbb{R}_{\ge 0})	$;
		\item[(S.4)]\hypertarget{S.4}{} $(\mathbf{S}(t,x,\mathbf{A})-\mathbf{S}(t,x,\mathbf{B})):
		(\mathbf{A}-\mathbf{B})\ge 0$ for all 
		$\mathbf{A},\mathbf{B}\in \mathbb{R}^{d\times d}_{\textup{sym}}$ and a.e.\  $(t,x)^\top\in Q_T$.
	\end{itemize}
 \hspace*{5mm}A prototypical example for $\textbf{S}\colon Q_T\times \mathbb{R}^{d\times d}_{\textup{sym}}\to 
 \mathbb{R}^{d\times d}_{\textup{sym}}$  satisfying (\hyperlink{S.1}{S.1})--(\hyperlink{S.1}{S.1}),  for  a.e.\   $(t,x)^\top\in Q_T$   and every  $\mathbf{A}\in \smash{\mathbb{R}^{d\times d}_{\textup{sym}}}$, is defined by\vspace*{-1mm}
\begin{align}\label{eq:example-stress}
\smash{\mathbf{S}(t,x,\mathbf{A})\coloneqq \mu_0\, (\delta+\vert \mathbf{A}\vert)^{p(t,x)-2}\mathbf{A}\,,}
\end{align}
where $\mu_0\hspace*{-0.15em}>\hspace*{-0.15em}0$, $\delta\hspace*{-0.15em}\ge\hspace*{-0.15em} 0$, and the power-law index $p\colon \hspace*{-0.15em}Q_T\hspace*{-0.15em}\to\hspace*{-0.15em} [0,+\infty)$ is at least (Lebesgue)~\mbox{measurable}~with~\eqref{eq:pm}. 

The unsteady $p(\cdot,\cdot)$-Navier--Stokes equations \eqref{eq:ptxNavierStokes} is a prototypical example of a non-linear system with variable growth conditions. It  appears naturally in physical models for \textit{smart fluids}, \textit{e.g.}, electro-rheological fluids (\textit{cf}.\ \cite{RR1,rubo}), micro-polar electro-rheological fluids (\textit{cf}.\ \cite{win-r,eringen-book}), magneto-rheological fluids (\textit{cf}.\ \cite{magneto}), chemically reacting fluids (\textit{cf.}~\cite{LKM78,HMPR10,KPS18}), and thermo-rheological~fluids~(\textit{cf.}~\cite{Z97,AR06}).  \linebreak In all these models, the power-law index $p(\cdot,\cdot)$ is a function depending on certain physical~\mbox{quantities}, \textit{e.g.}, an electric field, a magnetic field, a concentration field of a chemical material or a temperature~field, and, consequently, implicitly depends on $(t,x)^\top\in Q_T$.  
Smart fluids have the potential for an application in various areas, such as, \textit{e.g.},
in electronic,  
automobile, heavy machinery, military, and biomedical~industry (\textit{cf}.~\cite[Chap.\ 6]{smart_fluids}~for~an~overview).

\subsection{Related contributions}\vspace*{-0.5mm}

\hspace{5mm}We \hspace*{-0.1mm}recall \hspace*{-0.1mm}known \hspace*{-0.1mm}analytical \hspace*{-0.1mm}and \hspace*{-0.1mm}numerical \hspace*{-0.1mm}results \hspace*{-0.1mm}for \hspace*{-0.1mm}the \hspace*{-0.1mm}unsteady \hspace*{-0.1mm}$p(\cdot,\cdot)$-Navier--Stokes \hspace*{-0.1mm}equations~\hspace*{-0.1mm}\eqref{eq:ptxNavierStokes}.\vspace*{-1mm}\enlargethispage{15mm}

\subsubsection{Existence analyses}\vspace*{-0.5mm}

\hspace{5mm}The existence analysis of the unsteady $p(\cdot,\cdot)$-Navier--Stokes equations \eqref{eq:ptxNavierStokes} is by now well-understood and developed essentially in the following main steps:\vspace*{-0.5mm}

\begin{itemize}[noitemsep,topsep=2pt,leftmargin=!,labelwidth=\widthof{\itshape(iii)},font =\normalfont\itshape] 
	
     \item[(i)] In \cite{rubo}, deploying the $A$-approximation technique,  M.\ R\r{u}\v{z}i\v{c}ka proved the weak solvability~of~\eqref{eq:ptxNavierStokes}~for\vspace*{-0.75mm}
     \begin{align*}
     	p\in C^1(\overline{Q_T})\quad \text{ with }\quad 2\leq p^-\leq  p^+< \min\Big\{\frac{8}{3},\frac{3+p^-}{2(3-p^-)}\Big\}\,;
     \end{align*}
     
     \item[(ii)] In \cite{Z08,Pas11}, by means of hydro-mechanical Parabolic Compensated Compactness~Principles~\mbox{(PCCPs)}, V.\ V.\ Zhikov and S.\ E.\ Pastukhova proved the weak solvability of \eqref{eq:ptxNavierStokes} for\vspace*{-0.75mm}
     \begin{align*}
     	\hspace*{-0.75cm}p \in L^\infty(Q_T)\text{ with }p(t,\cdot)\in \mathcal{P}^{\log}(\Omega)\text{ for a.e.\ }t\in I\quad\text{ and }\quad\frac{3d}{d+2}<p^-\leq p^+<\min\Big\{p^-+2,p^-\frac{d+2}{d}\Big\}\,;
     \end{align*}
     
     \item[(iii)] In \cite{alex-book}, combining a generalized parabolic pseudo-monotonicity theory with the PCCPs~in~\cite{Z08,Pas11}, the weak solvability of \eqref{eq:ptxNavierStokes} was proved for\vspace*{-1.75mm}
     \begin{align*}
     	p\in \mathcal{P}^{\log}(Q_T)\quad \text{ with }\quad p^->\frac{3d}{d+2}\,;
     \end{align*}

     \item[(iv)] In \cite{BGSW22}, M.\ Buli\v{c}ek \textit{et al}.\ proved the weak solvability of \eqref{eq:ptxNavierStokes} for\vspace*{-0.75mm}
      \begin{align*}
     	\hspace*{-0.75cm}p \in L^\infty(Q_T)\text{ with }p(t,\cdot)\in \mathcal{P}^{\log}(\Omega)\text{ for a.e.\ }t\in I\,,\quad\textrm{ess\,sup}_{t\in I}{c_{\log}(p(t,\cdot))}<\infty\,,\quad\text{ and }\quad p^-\ge \frac{3d+2}{d+2}\,.
     \end{align*}
\end{itemize}

Apart from that, in \cite{BG19}, in the case of space periodic boundary conditions, 
D.~Breit~and~F.~Gemeinder  proved the weak solvability of \eqref{eq:ptxNavierStokes} for $ p\in W^{1,\infty}(Q_T)$ with $p^->\frac{3d}{d+2}$.\vspace*{-0.5mm}\newpage

\subsubsection{Numerical analyses}\vspace*{-1mm}

\hspace*{5mm}
On the contrary, the numerical analysis of unsteady problems 
with variable exponents non-linearity  is less developed and we are  only aware of the following contributions treating the unsteady case:\enlargethispage{3mm}

\begin{itemize}[noitemsep,topsep=2pt,leftmargin=!,labelwidth=\widthof{\itshape(iii)},font =\normalfont\itshape] 

     \item[(i)] The \hspace*{-0.1mm}first \hspace*{-0.1mm}semi-discrete \hspace*{-0.1mm}(\textit{i.e.}, \hspace*{-0.1mm}discrete-in-time) \hspace*{-0.1mm}numerical \hspace*{-0.1mm}analysis \hspace*{-0.1mm}of\hspace*{-0.1mm} the \hspace*{-0.1mm}unsteady~\hspace*{-0.1mm}\mbox{$p(\cdot,\cdot)$-Navier--Stokes} equations \eqref{eq:ptxNavierStokes} goes back to L.\ Diening (\textit{cf}.\ \cite{die-diss}).

    \item[(ii)] The first fully-discrete (\textit{i.e.}, discrete-in-time and -space) numerical analysis of the unsteady~$p(\cdot,\cdot)$-Navier--Stokes equations \eqref{eq:ptxNavierStokes} 
    goes back to E.\ Carelli \textit{et al}.\ (\textit{cf}.\ \cite{CHP10}), but considers solely a time-independent power-law index which is less realistic for the models for smart fluids mentioned~above. More precisely, in \cite{CHP10},
    the (weak) convergence  of a fully-discrete Rothe--Galerkin approximation~to the unsteady~$p(\cdot)$-Navier--Stokes equations \eqref{eq:ptxNavierStokes} was established employing a regularized~\mbox{convective} term and continuous approximations $(p_h)_{h>0}\subseteq C^0(\overline{\Omega})$ satisfying $p_h\to p$ in $C^0(\overline{\Omega})$~$(h\to 0)$ and $p_h\ge p$ in $\overline{\Omega}$~for~all~$h>0$, which is restrictive in applications.

    \item[(iii)] In \cite{BR20},  D.\ Breit and P.\ R.\ Mensa
    derived a priori error estimates for a fully-discrete~\mbox{Rothe--Galerkin} approximation of parabolic systems with variable growth conditions without divergence constraint like \eqref{eq:ptxNavierStokes}$_2$ or lower-order terms like the convective term.\vspace*{-2mm}
\end{itemize}

	\subsection{New contributions of the paper}\vspace*{-1mm}\enlargethispage{6mm}

\hspace*{5mm}The purpose of this paper is to improve the literature with respect to several aspects and, in~this~way, to establish a firm foundation for further numerical analyses of related complex~models~involving the unsteady $p(\cdot,\cdot)$-Navier--Stokes equations \eqref{eq:ptxNavierStokes} such as, \textit{e.g.},~\mbox{electro-rheological}~\mbox{fluids}~(\textit{cf}.~\mbox{\cite{RR1,rubo}}),~magneto-rheological fluids (\textit{cf}.\ \mbox{\cite{magneto}}), micro-polar electro-rheological~\mbox{fluids} (\textit{cf}.\ \cite{win-r,eringen-book}), chemically reacting fluids (\textit{cf}.\ \cite{LKM78,HMPR10}), and thermo-rheological~fluids (\textit{cf}.\ \mbox{\cite{Z97,AR06}}):

\begin{itemize}[noitemsep,topsep=2pt,leftmargin=!,labelwidth=\widthof{3.}]
	\item[1.] \textit{Time-dependency of the power-law index:} In this paper, we treat a power-law index $p(\cdot,\cdot)$~that~is~both time- and space-dependent, which makes the extraction of compactness properties more difficult.

	\item[2.] \textit{Discretization of the power-law index:} In this paper, we allow arbitrary uniform approximations~of~the power-law index $p(\cdot,\cdot)$, \textit{i.e.}, we only require that 
	%
	$(p_h)_{h>0}\subseteq L^\infty(Q_T)$ with $p_h\to p$ in $L^\infty(Q_T)$~$(h\to 0)$.

	\item[3.] \textit{Unregularized scheme:} 
	In this paper,  we consider the unregularized convective term. This leads to the restriction $p^-> \smash{\frac{3d+2}{d+2}}$. Deploying a regularized convective~term, allows us to treat~the~case~$p^-\ge 2$. However,  then one needs to perform~a~\mbox{two-level} passage to the limit: first, one passes~to~the~limit~with the discretization parameters of the fully-discrete Rothe--Galerkin approximation,~in~this~way, approximating the regularized~continuous~problem; second, one passes to~the~limit~with~the~\mbox{regularization}. 
	As there is no parabolic Lipschitz truncation technique available (\textit{cf}.\ \cite[Sec.\ 6.3]{alex-book}),~the~\mbox{second}~passage to the limit 
	is still an open problem for the unsteady $p(\cdot,\cdot)$-Navier--Stokes equations \eqref{eq:ptxNavierStokes}. Therefore, 
	we  approximate the unsteady $p(\cdot,\cdot)$-Navier--Stokes equations \eqref{eq:ptxNavierStokes} with a single passage~to~the~limit.
		\item[4.] \textit{Numerical experiments:} Since the weak convergence of a fully-discrete Rothe--Galerkin~\mbox{approximation} is difficult to measure in numerical experiments, we carry out numerical experiments for low~regularity data.
		More precisely, we construct solutions with fractional regularity, gradually reduce the fractional regularity parameters, and measure~convergence~rates that reduce with decreasing (but are stable 
		for fixed) value of the fractional regularity parameters  to get an indication that weak convergence~is~likely.

\end{itemize}

\textit{This paper is organized as follows:} In Section \ref{sec:preliminaries}, we  introduce the used notation, recall~the~\mbox{definitions} of variable Lebesgue spaces and variable Sobolev spaces, and~collect~auxiliary~results.~In~\mbox{Section}~\ref{sec:variable_bochner_lebesgue}, we introduce variable Bochner--Lebesgue spaces and variable Bochner--Sobolev spaces,~and~collect~auxiliary results. In \mbox{Section}~\ref{sec:convergence_of_exponents}, we examine the behavior of the weak topologies of variable Lebesgue~spaces~and~variable Bochner--Lebesgue spaces with respect to converging sequences of variable exponents.~In~Section~\ref{sec:discrete_ptxNavierStokes}, we introduce the discrete spaces and projectors used in the fully-discrete Rothe-Galerkin approximation. In Section \ref{sec:conditionM}, we introduce the concept of non-conforming Bochner condition~(M).~In~\mbox{Section}~\ref{sec:rothe_galerkin},~we~specify a general fully-discrete Rothe--Galerkin approximation and establish its well-posedness,~stability,~and (weak) convergence. 
In Section \ref{sec:application}, we apply the general framework of Section~\ref{sec:rothe_galerkin} to prove~the~two~main results of the paper, \textit{i.e.}, the well-posedness, stability, and (weak) convergence of a  fully-discrete Rothe--Galerkin approximation of
the unsteady $p(\cdot,\cdot)$-Stokes equations (\textit{i.e.}, \eqref{eq:ptxNavierStokes} without convective~term) (\textit{cf}.\ Theorem \ref{thm:S}) and of the unsteady $p(\cdot,\cdot)$-Navier--Stokes equations  \eqref{eq:ptxNavierStokes} (\textit{cf}.\ Theorem \ref{prop:p-NS-scheme}). In~Section~\ref{sec:experiments}, we complement the theoretical findings with some numerical experiments.
\newpage

    \section{Preliminaries}\label{sec:preliminaries}

    \hspace*{5mm}Throughout the entire paper, let $\Omega\hspace*{-0.05em}\subseteq\hspace*{-0.05em} \mathbb{R}^d$,~$d\hspace*{-0.05em}\ge\hspace*{-0.05em}2$,  be a bounded simplicial Lipschitz~domain,~${I\hspace*{-0.05em}\coloneqq \hspace*{-0.05em} (0,T)}$, $0<T<\infty$, a finite time interval, and $Q_T\coloneqq I\times \Omega$ the corresponding time cylinder. The integral mean of a (Lebesgue) integrable function, vector or tensor field $\mathbf{u}\colon M\to\mathbb{R}^{\ell}$, $\ell\in \mathbb{N}$, over a (Lebesgue) measurable set $M\subseteq \mathbb{R}^n$, $n\in \mathbb{N}$, with $\vert M\vert>0$ is denoted by $\langle \mathbf{u}\rangle_M\coloneqq  \smash{\frac 1 {|M|}\int_M \mathbf{u} \,\textup{d}x}$. For (Lebesgue) measurable functions, vector or tensor fields $\mathbf{u},\mathbf{w}\colon M\to \mathbb{R}^{\ell}$, $\ell\in \mathbb{N}$, and a (Lebesgue) measurable set $M\subseteq \mathbb{R}^n$,~${n\in \mathbb{N}}$, 
    we  write $(\mathbf{u},\mathbf{w})_M\coloneqq \int_M \mathbf{u} \odot \mathbf{w}\,\textup{d}x$,  whenever the right-hand side is well-defined,~where
    ${\odot\colon \mathbb{R}^{\ell}\times \mathbb{R}^{\ell}\to \mathbb{R}}$ either denotes scalar multiplication, the Euclidean inner product~or~the~Frobenius~inner~product.

    \subsection{Variable Lebesgue spaces and variable Sobolev spaces}
    
    \hspace{5mm}In this section, we summarize all essential information
    concerning variable Lebesgue spaces and variable Sobolev spaces, which
    will find use in the hereinafter analysis. For an extensive presentation, we refer the reader to the textbooks \cite{dhhr,CUF13}.
    
   Let $M\subseteq \mathbb{R}^n$, $n\in \mathbb{N}$, be a (Lebesgue) measurable set and  denote 
  the set of (Lebesgue) measurable functions, vector or tensor fields defined in $M$  by  $L^0(M;\mathbb{R}^{\ell})$, $\ell\in\mathbb{N}$. A function
   $p\in L^0(M;\mathbb{R}^1)$ is called   \textit{variable
   	exponent} if $p\ge 1$ a.e.\ in $M$ and  the  \textit{set of variable exponents} is denoted by $\mathcal{P}(M)$.~For~$p\in \mathcal{P}(M)$, we denote by
   ${p^+\coloneqq \textup{ess\,sup}_{x\in
   		M}{p(x)}}$~and~${p^-\coloneqq \textup{ess\,inf}_{x\in
   		M}{p(x)}}$ its constant~\textit{limit~exponents}. Then, the \textit{set of bounded variable exponents} is defined by
   $\mathcal{P}^{\infty}(M)\coloneqq \{p\in\mathcal{P}(M)\mid
   p^+<\infty\}$. For ${p\in\mathcal{P}^\infty(M)}$ and  $\mathbf{u}\in L^0(M;\mathbb{R}^{\ell})$, $\ell\in \mathbb{N}$, we define the \textit{modular (with respect to $p$)} 
   by 
   \begin{align*}
   	\rho_{p(\cdot),M}(\mathbf{u})\coloneqq \int_{M}{\vert \mathbf{u}(x)\vert^{p(x)}\,\mathrm{d}x}\,.
   \end{align*}
   
   Then, for $p\in \mathcal{P}^\infty(M)$ and $\ell\in \mathbb{N}$,
   the \textit{variable Lebesgue space} is defined by
   \begin{align*}
   	\smash{L^{p(\cdot)}(M;\mathbb{R}^{\ell})\coloneqq \big\{ \mathbf{u}\in L^0(M;\mathbb{R}^{\ell})\mid \rho_{p(\cdot),M}(\mathbf{u})<\infty\big\}\,.}
   \end{align*}
  In the special case $\ell =1$, we abbreviate $L^{p(\cdot)}(M)\coloneqq L^{p(\cdot)}(M;\mathbb{R}^1)$. The \textit{Luxembourg norm}, \textit{i.e.},
  $$
  		\| \mathbf{u}\|_{p(\cdot),M}\coloneqq \inf\Big\{\lambda> 0\;\Big|\; \rho_{p(\cdot),M}\Big(\frac{\mathbf{u}}{\lambda}\Big)\leq 1\Big\}\,,
  $$ 
  turns $L^{p(\cdot)}(M;\mathbb{R}^{\ell})$~into~a~\mbox{Banach}~space (\textit{cf}.\ \cite[Thm.\  3.2.7]{dhhr}).

    For $p\in \mathcal{P}^\infty(M)$ with $p^->1$,  $p'\coloneqq \frac{p}{p-1} \in
    \mathcal{P}^\infty(M)$  is the conjugate exponent. Then, for $p\in \mathcal{P}^\infty(M)$ with $p^->1$ and $\ell\in \mathbb{N}$, 
    we
    make frequent use of the \textit{variable (exponent) H\"older inequality}, \textit{i.e.},
    \begin{align}
    	\|\mathbf{u}\odot \mathbf{w}\|_{1,M} \leq 2\, \|\mathbf{u}\|_{p'(\cdot),M}\|\mathbf{w}\|_{p(\cdot),M}\,, \label{eq:gen_hoelder}
    \end{align}
     valid for all $\mathbf{u}\in\smash{
    L^{p'(\cdot)}(M;\mathbb{R}^{\ell})}$ and $\mathbf{w}\in \smash{L^{p(\cdot)}(M;\mathbb{R}^{\ell})}$ (\textit{cf}.\ \cite[Lem.\  3.2.20]{dhhr}), where
    $\odot\colon \mathbb{R}^{\ell}\times \mathbb{R}^{\ell}\to \mathbb{R}$ either denotes scalar multiplication, the Euclidean inner product or the Frobenius inner product.
    
    For a bounded (Lebesgue) measurable set $M\subseteq \mathbb{R}^n$, $n\in \mathbb{N}$,  $q,p\in \mathcal{P}^\infty(M)$ with $q\leq p$ a.e.\  in $M$, and $\ell\in \mathbb{N}$, every   $\mathbf{u}\in L^{p(\cdot)}(M;\mathbb{R}^{\ell})$ satisfies $\mathbf{u}\in L^{q(\cdot)}(M;\mathbb{R}^{\ell})$ (\textit{cf}.\ \cite[Cor.\ 3.3.4]{dhhr}) with 
    \begin{align}
    	\|\mathbf{u}\|_{q(\cdot),M}\leq 2\,(1+\vert M\vert)\,\|\mathbf{u}\|_{p(\cdot),M}\,.\label{eq:hoelder_embeddding}
    \end{align}
    
For   $q,p\in\mathcal{P}^\infty(\Omega)$, the \textit{symmetric variable Sobolev space}~is~defined~by\enlargethispage{5mm}
\begin{align*}
		E^{q(\cdot),p(\cdot)}\coloneqq \big\{ \mathbf{u}\in L^{q(\cdot)}(\Omega;\mathbb{R}^d)\mid \mathbf{D}_x \mathbf{u}\in L^{p(\cdot)}(\Omega;\mathbb{R}^{d\times d}_{\textup{sym}})\big\}\,.
\end{align*} 
The \textit{symmetric variable Sobolev norm}, \textit{i.e.},
$$
\| \cdot\|_{q(\cdot),p(\cdot),\Omega}\coloneqq \| \cdot\|_{q(\cdot),\Omega}+\| \mathbf{D}_x(\cdot)\|_{p(\cdot),\Omega}\,,
$$ 
turns $E^{q(\cdot),p(\cdot)}$ into a Banach space  (\textit{cf}.\  \cite[Prop.\  2.9]{alex-book}). 
Then, the closure of $C^\infty_c(\Omega;\mathbb{R}^d)$ in $E^{q(\cdot),p(\cdot)}$ is denoted by $\smash{E^{q(\cdot),p(\cdot)}_0}$, the closure of 
\begin{align*}
	\mathcal{V}\coloneqq \big\{\boldsymbol{\varphi}\in C^\infty_c(\Omega;\mathbb{R}^d)\mid \textup{div}_x\boldsymbol{\varphi}=0\textup{ in }\Omega\big\}
\end{align*}
in \hspace*{-0.15mm}$E^{q(\cdot),p(\cdot)}$ \hspace*{-0.15mm}is \hspace*{-0.15mm}denoted \hspace*{-0.15mm}by \hspace*{-0.25mm}$E^{q(\cdot),p(\cdot)}_{0,\textup{div}_x}$, \hspace*{-0.15mm}and \hspace*{-0.15mm}the \hspace*{-0.15mm}closure \hspace*{-0.15mm}of \hspace*{-0.15mm}$	\mathcal{V}$ \hspace*{-0.15mm}in \hspace*{-0.15mm}$Y\hspace*{-0.15em}\coloneqq\hspace*{-0.15em} L^2(\Omega;\mathbb{R}^d)$~\hspace*{-0.15mm}is~\hspace*{-0.15mm}\mbox{denoted}~\hspace*{-0.15mm}by~\hspace*{-0.25mm}$H$.

    \section{Variable Bochner--Lebesgue spaces and variable Bochner--Sobolev spaces}\label{sec:variable_bochner_lebesgue}\vspace*{-0.25mm}

    \hspace{5mm}In \hspace*{-0.05mm}this \hspace*{-0.05mm}section, \hspace*{-0.05mm}we \hspace*{-0.05mm}recall \hspace*{-0.05mm}the \hspace*{-0.05mm}definitions \hspace*{-0.05mm}and \hspace*{-0.05mm}discuss \hspace*{-0.05mm}properties \hspace*{-0.05mm}of \hspace*{-0.05mm}variable \hspace*{-0.05mm}Bochner--Lebesgue~\hspace*{-0.05mm}spaces~\hspace*{-0.05mm}and variable Bochner--Sobolev spaces, the appropriate substitutes of classical Bochner--Lebesgue spaces and classical Bochner--Sobolev spaces for the treatment of the unsteady $p(\cdot,\cdot)$-Navier--Stokes~equations~\eqref{eq:ptxNavierStokes}; we refer the reader to the textbook \cite{alex-book} or to the thesis \cite{alex-diss}, for a extensive presentation.  We emphasize that variable Bochner--Lebesgue~spaces and variable Bochner--Sobolev spaces --as it is done, \textit{e.g.}, in \cite{alex-diss}-- should more accurately  be called variable \textit{exponent} Bochner--Lebesgue spaces and variable \textit{exponent} Bochner--Sobolev spaces, respectively. However, we suppress the word \textit{`exponent'} in favor of readability.\vspace*{-0.25mm}\enlargethispage{7mm}

    \subsection{Variable Bochner--Lebesgue spaces}\vspace*{-0.25mm}
	
	\hspace{5mm}Throughout the entire subsection, unless otherwise specified, let $q,p\in \mathcal{P}^\infty(Q_T)$ with $q^-,p^->1$.\vspace*{-0.25mm}

	\begin{definition}\label{3.1} We define for a.e.\   $t\in I$, the 
		\textup{time slice spaces}\vspace*{-0.25mm}
		\begin{align*}
			\smash{U^{q,p}_{\D}(t)\coloneqq  E^{q(t,\cdot),p(t,\cdot)}_0 \,,\qquad
			V^{q,p}_{\D}(t)\coloneqq  E^{q(t,\cdot),p(t,\cdot)}_{0,\textup{div}_x} \,.}
		\end{align*}
		Moreover, we set  $\smash{U^{q,p}_{\plus}\coloneqq E^{q^{\plus},p^{\plus}}_0}$, $\smash{U^{q,p}_{\minus}\coloneqq  E^{q^{\minus},p^{\minus}}_0 }$,
		$	\smash{V^{q,p}_{\plus}\coloneqq E^{q^{\plus},p^{\plus}}_{0,\textup{div}_x}}$, and $\smash{V^{q,p}_{\minus}\coloneqq  	E^{q^{\minus},p^{\minus}}_{0,\textup{div}_x} }$. 	
	\end{definition}

	By means of the time slice spaces $\smash{\{U^{q,p}_{\D}(t)\}_{t\in I}}$ and $\smash{\{V^{q,p}_{\D}(t)\}_{t\in I}}$, we next introduce\vspace*{-0.25mm}
	
	\begin{definition}[Variable Bochner--Lebesgue spaces]\label{3.3} We define the \textup{variable Bochner--Lebesgue spaces}\vspace*{-0.25mm}
		\begin{align*}
			\begin{aligned}
		\bXp
		&\coloneqq \big\{\boldsymbol{u}\in L^{q(\cdot,\cdot)}(Q_T;\mathbb{R}^d)
		\mid \mathbf{D}_x\boldsymbol{u}\in L^{p(\cdot,\cdot)}(Q_T;\mathbb{R}_{\mathrm{sym}}^{d\times d})\,,\;\boldsymbol{u}(t)\in U^{q,p}_{\D}(t)\text{ for a.e.\  }t\in I\big\}\,,\\
  \bVp
		&\coloneqq \big\{\boldsymbol{v}\in L^{q(\cdot,\cdot)}(Q_T;\mathbb{R}^d)
		\mid \mathbf{D}_x\boldsymbol{v}\in L^{p(\cdot,\cdot)}(Q_T;\mathbb{R}_{\mathrm{sym}}^{d\times d})\,,\;\;\boldsymbol{v}(t)\in V^{q,p}_{\D}(t)\text{ for a.e.\  }t\in I\big\}\,.
	\end{aligned}
		\end{align*}
		Moreover, we set  $	\smash{\bXplus\hspace*{-0.175em}\coloneqq\hspace*{-0.175em} L^{\max\{q^{\plus},p^{\plus}\}}(I,U^{q,p}_{\plus})}$, $\smash{\bXminus\hspace*{-0.175em}\coloneqq\hspace*{-0.175em} L^{\min\{q^{\minus},p^{\minus}\}}(I,U^{q,p}_{\minus})}$, $\smash{
		\mathbfcal{V}_{\plus}^{q,p}\hspace*{-0.175em}\coloneqq \hspace*{-0.175em} L^{\max\{q^{\plus},p^{\plus}\}}(I,V^{q,p}_{\plus})}$, and $\smash{\mathbfcal{V}_{\minus}^{q,p}\coloneqq L^{\min\{q^{\minus},p^{\minus}\}}(I,V^{q,p}_{\minus})}$. 
	\end{definition}

	\begin{proposition}\label{3.4}
		The space  $\smash{\bXp}$ is a separable, reflexive Banach space when equipped with~the~norm\vspace*{-0.25mm}
        \begin{align*}
		\|\cdot\|_{\smash{\bXp}}
		\coloneqq \|\cdot\|_{q(\cdot,\cdot),Q_T}
		+\|\mathbf{D}_x(\cdot)\|_{p(\cdot,\cdot),Q_T}\,.
		\end{align*}
        In addition, the space $\smash{\bVp}$ is a closed subspace of $\smash{\bXp}$.
	\end{proposition}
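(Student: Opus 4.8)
The plan is to realize $\bXp$ as a closed subspace of a product of two (classical) variable Lebesgue spaces and then import separability and reflexivity from there. That $\|\cdot\|_{\bXp}$ is a norm on $\bXp$ is routine: homogeneity and the triangle inequality follow from the corresponding properties of the Luxembourg norms on $L^{q(\cdot,\cdot)}(Q_T;\mathbb{R}^d)$ and $L^{p(\cdot,\cdot)}(Q_T;\mathbb{R}^{d\times d}_{\textup{sym}})$, and definiteness follows since $\|\boldsymbol{u}\|_{\bXp}=0$ already forces $\boldsymbol{u}=\mathbf{0}$ in $L^{q(\cdot,\cdot)}(Q_T;\mathbb{R}^d)$. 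I would then consider the linear map
\begin{align*}
	J\colon\bXp\to L^{q(\cdot,\cdot)}(Q_T;\mathbb{R}^d)\times L^{p(\cdot,\cdot)}(Q_T;\mathbb{R}^{d\times d}_{\textup{sym}})\,,\qquad J\boldsymbol{u}\coloneqq(\boldsymbol{u},\mathbf{D}_x\boldsymbol{u})\,,
\end{align*}
which is an isometry onto its image when the target carries the norm $(\mathbf{f},\mathbf{G})\mapsto\|\mathbf{f}\|_{q(\cdot,\cdot),Q_T}+\|\mathbf{G}\|_{p(\cdot,\cdot),Q_T}$. Since $q^-,p^->1$ and $q^+,p^+<\infty$, the spaces $L^{q(\cdot,\cdot)}(Q_T;\mathbb{R}^d)$ and $L^{p(\cdot,\cdot)}(Q_T;\mathbb{R}^{d\times d}_{\textup{sym}})$ are separable, reflexive Banach spaces (\textit{cf}.\ \cite[Sec.\ 3.4]{dhhr}), hence so is their product; as closed subspaces of separable, reflexive Banach spaces are again separable, reflexive Banach spaces, it suffices to show that $J(\bXp)$ is closed.

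To this end, let $(\boldsymbol{u}_n)_{n\in\mathbb{N}}\subseteq\bXp$ with $\boldsymbol{u}_n\to\boldsymbol{u}$ in $L^{q(\cdot,\cdot)}(Q_T;\mathbb{R}^d)$ and $\mathbf{D}_x\boldsymbol{u}_n\to\mathbf{G}$ in $L^{p(\cdot,\cdot)}(Q_T;\mathbb{R}^{d\times d}_{\textup{sym}})$ $(n\to\infty)$. \emph{First}, inequality \eqref{eq:hoelder_embeddding} (applied with the constant exponent $1$, using that $Q_T$ is bounded) yields $\boldsymbol{u}_n\to\boldsymbol{u}$ and $\mathbf{D}_x\boldsymbol{u}_n\to\mathbf{G}$ in $L^1(Q_T)$, so that passing to the limit in $(\mathbf{D}_x\boldsymbol{u}_n,\boldsymbol{\Phi})_{Q_T}=-(\boldsymbol{u}_n,\textup{div}_x\boldsymbol{\Phi})_{Q_T}$ for all $\boldsymbol{\Phi}\in C^\infty_c(Q_T;\mathbb{R}^{d\times d}_{\textup{sym}})$ identifies $\mathbf{G}=\mathbf{D}_x\boldsymbol{u}$ in the sense of distributions, whence $\mathbf{D}_x\boldsymbol{u}\in L^{p(\cdot,\cdot)}(Q_T;\mathbb{R}^{d\times d}_{\textup{sym}})$. \emph{Second}, since $q^+,p^+<\infty$, norm convergence in $L^{q(\cdot,\cdot)}(Q_T;\mathbb{R}^d)$ and $L^{p(\cdot,\cdot)}(Q_T;\mathbb{R}^{d\times d}_{\textup{sym}})$ is equivalent to modular convergence (\textit{cf}.\ \cite[Sec.\ 3.2]{dhhr}), and Tonelli's theorem gives $\rho_{q(\cdot,\cdot),Q_T}(\boldsymbol{w})=\int_I\rho_{q(t,\cdot),\Omega}(\boldsymbol{w}(t))\,\mathrm{d}t$ and likewise for $p(\cdot,\cdot)$; hence
\begin{align*}
	\int_I\big(\rho_{q(t,\cdot),\Omega}(\boldsymbol{u}_n(t)-\boldsymbol{u}(t))+\rho_{p(t,\cdot),\Omega}(\mathbf{D}_x\boldsymbol{u}_n(t)-\mathbf{D}_x\boldsymbol{u}(t))\big)\,\mathrm{d}t\to 0\qquad(n\to\infty)\,.
\end{align*}
Passing to a subsequence (not relabeled), the non-negative integrand converges to $0$ for a.e.\ $t\in I$, so, using once more the modular--norm equivalence on the bounded set $\Omega$ together with the fact that the time slice of the distributional symmetric gradient $\mathbf{D}_x\boldsymbol{u}$ coincides with the symmetric gradient of the slice $\boldsymbol{u}(t)$ for a.e.\ $t\in I$, we get $\boldsymbol{u}_n(t)\to\boldsymbol{u}(t)$ in $E^{q(t,\cdot),p(t,\cdot)}$ for a.e.\ $t\in I$. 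Since $\boldsymbol{u}_n(t)\in U^{q,p}_{\D}(t)=E^{q(t,\cdot),p(t,\cdot)}_0$, which is closed in $E^{q(t,\cdot),p(t,\cdot)}$, it follows that $\boldsymbol{u}(t)\in U^{q,p}_{\D}(t)$ for a.e.\ $t\in I$, i.e.\ $\boldsymbol{u}\in\bXp$ and $(\boldsymbol{u},\mathbf{G})=J\boldsymbol{u}\in J(\bXp)$. Thus $J(\bXp)$ is closed, and $\bXp$ is a separable, reflexive Banach space.

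\emph{Finally}, $\bVp$ is a linear subspace of $\bXp$ carrying the same norm, so it remains to prove that it is closed. If $(\boldsymbol{v}_n)_{n\in\mathbb{N}}\subseteq\bVp$ converges to $\boldsymbol{v}$ in $\bXp$, then, up to a subsequence, the argument of the preceding paragraph gives $\boldsymbol{v}_n(t)\to\boldsymbol{v}(t)$ in $E^{q(t,\cdot),p(t,\cdot)}$ for a.e.\ $t\in I$; since $\boldsymbol{v}_n(t)\in V^{q,p}_{\D}(t)=E^{q(t,\cdot),p(t,\cdot)}_{0,\textup{div}_x}$, which is closed in $E^{q(t,\cdot),p(t,\cdot)}$, we conclude $\boldsymbol{v}(t)\in V^{q,p}_{\D}(t)$ for a.e.\ $t\in I$, i.e.\ $\boldsymbol{v}\in\bVp$. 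The step I expect to be the main obstacle is the \emph{second} one above, namely transferring the global modular (equivalently, norm) convergence to a.e.-in-time convergence of the time slices in the slice spaces $E^{q(t,\cdot),p(t,\cdot)}$: this rests on the Fubini-type identity for the variable modulars, on the modular--norm equivalence available because $q^+,p^+<\infty$, and on the compatibility between the slice of $\mathbf{D}_x\boldsymbol{u}$ and the symmetric gradient of the slice $\boldsymbol{u}(t)$ (which itself needs a short Fubini argument); everything else is either elementary or a direct appeal to the properties of variable Lebesgue spaces recalled in Section~\ref{sec:preliminaries}.
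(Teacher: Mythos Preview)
Your proof is correct and follows the standard route that the paper defers to in \cite[Prop.\ 3.2, Prop.\ 3.3 \& Prop.\ 4.1]{alex-book}: realize $\bXp$ as a closed subspace (via the isometry $J\boldsymbol{u}=(\boldsymbol{u},\mathbf{D}_x\boldsymbol{u})$) of the separable, reflexive product $L^{q(\cdot,\cdot)}(Q_T;\mathbb{R}^d)\times L^{p(\cdot,\cdot)}(Q_T;\mathbb{R}^{d\times d}_{\textup{sym}})$, and then verify the a.e.-in-time slice condition via the Tonelli/modular argument you outline. Your identification of the only delicate point---that the time slice of the distributional $\mathbf{D}_x\boldsymbol{u}$ agrees for a.e.\ $t$ with $\mathbf{D}_x(\boldsymbol{u}(t))$, which is settled by testing against tensor products $\varphi(t)\boldsymbol{\psi}(x)$ and Fubini---is exactly right, and once this is in hand the closedness of both $J(\bXp)$ and $\bVp$ follows as you wrote.
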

	
	\begin{proof}
		See~\cite[Prop.\ 3.2, Prop.\ 3.3 \& Prop.\ 4.1]{alex-book}.
	\end{proof}
 
	We have the following characterization for the dual space of $\smash{\bXp}$.\vspace*{-0.25mm}
	
	\begin{proposition}\label{3.7}The linear mapping
		$\smash{\mathbfcal{J}_{\D}\colon 
		L^{q'(\cdot,\cdot)}(Q_T;\mathbb{R}^d)\times 
		L^{p'(\cdot,\cdot)}(Q_T;\mathbb{R}_{\mathrm{sym}}^{d\times d})\to (\bXp)^*}$,   for every 
		$\boldsymbol{f}\in L^{q'(\cdot,\cdot)}(Q_T;\mathbb{R}^d)$, 
		$\boldsymbol{F}\in L^{p'(\cdot,\cdot)}(Q_T;\mathbb{R}_{\mathrm{sym}}^{d\times d})$, 
		and $\boldsymbol{u}\in\bXp$ defined 
		by\vspace*{-0.25mm}
		\begin{align*}
		\langle	\mathbfcal{J}_{\D}(\boldsymbol{f},
		\boldsymbol{F}),\boldsymbol{u}\rangle
		_{\bXp}\coloneqq ( \boldsymbol{f},\boldsymbol{u})_{Q_T}
		+(\boldsymbol{F},
	\mathbf{D}_x\boldsymbol{u})_{Q_T}\,,
		\end{align*}
		is well-defined, linear, and Lipschitz continuous with constant bounded by $2$. For~\mbox{every}~$\smash{\boldsymbol{f}^*\in (\bXp)^*}$, there exist $\boldsymbol{f}\in L^{q'(\cdot,\cdot)}(Q_T;\mathbb{R}^d)$ and
		$\boldsymbol{F}\in L^{p'(\cdot,\cdot)}(Q_T;\mathbb{R}_{\mathrm{sym}}^{d\times d})$ such that $\boldsymbol{f}^*= \mathbfcal{J}_{\D}(\boldsymbol{f},
		\boldsymbol{F})$~in~$(\bXp)^*$~and\vspace*{-1mm}
        \begin{align*}
            \frac{1}{2}\,\|\boldsymbol{f}^*\|_{(\bXp)^*}\leq \|\boldsymbol{f}\|_{q'(\cdot,\cdot),Q_T}+\|\boldsymbol{F}\|_{p'(\cdot,\cdot),Q_T}\leq 2\,\|\boldsymbol{f}^*\|_{(\bXp)^*}\,.
        \end{align*}
	\end{proposition}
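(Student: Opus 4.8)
The plan is to treat the two assertions separately. The continuity of $\mathbfcal{J}_{\D}$ is immediate from the variable H\"older inequality \eqref{eq:gen_hoelder}: for $\boldsymbol{f}\in L^{q'(\cdot,\cdot)}(Q_T;\mathbb{R}^d)$, $\boldsymbol{F}\in L^{p'(\cdot,\cdot)}(Q_T;\mathbb{R}_{\mathrm{sym}}^{d\times d})$ and $\boldsymbol{u}\in\bXp$, applying \eqref{eq:gen_hoelder} to each of the two terms in the definition of $\mathbfcal{J}_{\D}$ and using $\|\boldsymbol{u}\|_{q(\cdot,\cdot),Q_T}+\|\mathbf{D}_x\boldsymbol{u}\|_{p(\cdot,\cdot),Q_T}=\|\boldsymbol{u}\|_{\bXp}$ gives
\begin{align*}
\vert\langle\mathbfcal{J}_{\D}(\boldsymbol{f},\boldsymbol{F}),\boldsymbol{u}\rangle_{\bXp}\vert\le 2\,\big(\|\boldsymbol{f}\|_{q'(\cdot,\cdot),Q_T}+\|\boldsymbol{F}\|_{p'(\cdot,\cdot),Q_T}\big)\,\|\boldsymbol{u}\|_{\bXp}\,,
\end{align*}
so that $\mathbfcal{J}_{\D}$ is well-defined, linear, and Lipschitz continuous with constant at most $2$; in particular $\|\mathbfcal{J}_{\D}(\boldsymbol{f},\boldsymbol{F})\|_{(\bXp)^*}\le 2(\|\boldsymbol{f}\|_{q'(\cdot,\cdot),Q_T}+\|\boldsymbol{F}\|_{p'(\cdot,\cdot),Q_T})$, which already yields the left-hand inequality of the claimed norm equivalence.

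For the representation result, the idea is to realize $\bXp$ isometrically as a subspace of a product of vector-/tensor-valued variable Lebesgue spaces and then apply Hahn--Banach together with the known duality for those spaces. Concretely, set $Z\coloneqq L^{q(\cdot,\cdot)}(Q_T;\mathbb{R}^d)\times L^{p(\cdot,\cdot)}(Q_T;\mathbb{R}_{\mathrm{sym}}^{d\times d})$ equipped with $\|(\boldsymbol{g},\boldsymbol{G})\|_Z\coloneqq\|\boldsymbol{g}\|_{q(\cdot,\cdot),Q_T}+\|\boldsymbol{G}\|_{p(\cdot,\cdot),Q_T}$; then $\iota\colon\bXp\to Z$, $\boldsymbol{u}\mapsto(\boldsymbol{u},\mathbf{D}_x\boldsymbol{u})$, is a linear isometry, and by Proposition~\ref{3.4} its range $X\coloneqq\iota(\bXp)$ is a closed subspace of $Z$. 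Given $\boldsymbol{f}^*\in(\bXp)^*$, the functional $g^*\coloneqq\boldsymbol{f}^*\circ\iota^{-1}\in X^*$ satisfies $\|g^*\|_{X^*}=\|\boldsymbol{f}^*\|_{(\bXp)^*}$, and the Hahn--Banach theorem provides $G^*\in Z^*$ extending $g^*$ with $\|G^*\|_{Z^*}=\|\boldsymbol{f}^*\|_{(\bXp)^*}$. Since $q^-,p^->1$ and $q^+,p^+<\infty$, the factors of $Z$ are reflexive Banach spaces whose duals are $L^{q'(\cdot,\cdot)}(Q_T;\mathbb{R}^d)$ and $L^{p'(\cdot,\cdot)}(Q_T;\mathbb{R}_{\mathrm{sym}}^{d\times d})$ under the pairings $(\cdot,\cdot)_{Q_T}$ (\textit{cf}.\ \cite{dhhr}), so that $G^*$ is represented by a pair $(\boldsymbol{f},\boldsymbol{F})\in L^{q'(\cdot,\cdot)}(Q_T;\mathbb{R}^d)\times L^{p'(\cdot,\cdot)}(Q_T;\mathbb{R}_{\mathrm{sym}}^{d\times d})$. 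Restricting $G^*$ back to $X$ and unwinding definitions gives $\langle\boldsymbol{f}^*,\boldsymbol{u}\rangle_{\bXp}=(\boldsymbol{f},\boldsymbol{u})_{Q_T}+(\boldsymbol{F},\mathbf{D}_x\boldsymbol{u})_{Q_T}$ for all $\boldsymbol{u}\in\bXp$, i.e.\ $\boldsymbol{f}^*=\mathbfcal{J}_{\D}(\boldsymbol{f},\boldsymbol{F})$ in $(\bXp)^*$. To finish the norm equivalence, one tests $G^*$ against pairs of the form $(\boldsymbol{g},\mathbf{0})$ and $(\mathbf{0},\boldsymbol{G})$: this shows that the functionals $\boldsymbol{g}\mapsto(\boldsymbol{f},\boldsymbol{g})_{Q_T}$ on $L^{q(\cdot,\cdot)}(Q_T;\mathbb{R}^d)$ and $\boldsymbol{G}\mapsto(\boldsymbol{F},\boldsymbol{G})_{Q_T}$ on $L^{p(\cdot,\cdot)}(Q_T;\mathbb{R}_{\mathrm{sym}}^{d\times d})$ have norm at most $\|G^*\|_{Z^*}$, and the norm conjugate formula (whose lower bound holds with constant $1$) then gives $\|\boldsymbol{f}\|_{q'(\cdot,\cdot),Q_T}\le\|G^*\|_{Z^*}$ and $\|\boldsymbol{F}\|_{p'(\cdot,\cdot),Q_T}\le\|G^*\|_{Z^*}$, hence $\|\boldsymbol{f}\|_{q'(\cdot,\cdot),Q_T}+\|\boldsymbol{F}\|_{p'(\cdot,\cdot),Q_T}\le 2\,\|\boldsymbol{f}^*\|_{(\bXp)^*}$; combined with the estimate from the first part this is exactly the asserted two-sided bound.

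\textbf{Expected main obstacle.} There is no genuine difficulty here: the only non-elementary inputs are the duality $(L^{r(\cdot,\cdot)}(Q_T;\mathbb{R}^\ell))^*\cong L^{r'(\cdot,\cdot)}(Q_T;\mathbb{R}^\ell)$ for $r\in\mathcal{P}^\infty(Q_T)$ with $r^->1$ and the norm conjugate formula, both of which are standard for variable Lebesgue spaces and reduce componentwise to the scalar case. The work is purely bookkeeping of constants and of the vector-/tensor-valued formulation; in particular one must track that the constant $2$ in \eqref{eq:gen_hoelder} is precisely what forces the factor $2$ (rather than $1$) in the final estimate, while all remaining arguments are the routine Hahn--Banach extension of a bounded functional defined on a closed subspace.
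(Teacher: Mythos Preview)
Your argument is correct and follows the standard route for this type of result: the variable H\"older inequality \eqref{eq:gen_hoelder} for the Lipschitz bound, and the isometric embedding into the product space together with Hahn--Banach and the known duality $(L^{r(\cdot,\cdot)})^*\cong L^{r'(\cdot,\cdot)}$ for the surjectivity and norm equivalence. The paper does not give its own proof here but cites \cite[Prop.~3.4]{alex-book}, where precisely this argument is carried out, so your approach matches the intended one. One small remark: in the last step you invoke the ``norm conjugate formula (whose lower bound holds with constant $1$)''---this is the inequality $\|\boldsymbol{f}\|_{r'(\cdot,\cdot)}\le\sup_{\|\boldsymbol{g}\|_{r(\cdot,\cdot)}\le 1}\vert(\boldsymbol{f},\boldsymbol{g})_{Q_T}\vert$ between the Luxembourg norm and the associate (Orlicz) norm, which indeed holds with constant $1$ in one direction (see, e.g., \cite[Cor.~3.2.14]{dhhr}); it would be worth making that reference explicit.
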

	
	\begin{proof}
		See~\cite[Prop.\ 3.4]{alex-book}.
	\end{proof}
	
	\begin{remark}[The notation $q(\cdot,\cdot)$,  $p(\cdot,\cdot)$]
		We employ the notation $q(\cdot,\cdot)$, $p(\cdot,\cdot)$ instead of $q(\cdot)$, $p(\cdot)$, respectively, to emphasize that the variable exponents $q,p\in \mathcal{P}^\infty(Q_T)$ are both time- and space-dependent.
	\end{remark}

   The variable exponents $q,p\in \mathcal{P}^{\infty}(Q_T)$ are  \textit{$\log$-Hölder continuous} (written $q,p\in \mathcal{P}^{\log}(Q_T)$) if there exists a constant $c>0$ such that for every $z, \tilde{z}\in Q_T$ with $0<\vert z-\tilde{z}\vert\leq \frac{1}{2}$, it holds that\vspace*{-0.25mm}
    \begin{align*}
    	\vert q(z)-q(\tilde{z})\vert +\vert p(z)-p(\tilde{z})\vert \leq \frac{c}{-\ln\vert z-\tilde{z}\vert}\,.
    \end{align*}

    \begin{proposition}\label{prop:density_in_Vqp}
    	Let $q,p\in \mathcal{P}^{\log}(Q_T)$ with $q\ge p$ in $Q_T$. Then,  the~space $  \mathbfcal{D}_T\coloneqq \{\boldsymbol{\varphi}\in C^\infty_c(Q_T;\mathbb{R}^d)\mid \mathrm{div}_x\boldsymbol{\varphi}=0\text{ in }Q_T\}$ lies densely in $\bVp$.
    \end{proposition}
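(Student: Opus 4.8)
The plan is to approximate an arbitrary $\boldsymbol{v}\in\bVp$ in three successive steps --- a slicewise spatial solenoidal regularization, a temporal cut-off, and a temporal mollification --- and then to conclude with a diagonal argument over the three regularization parameters. A point to keep in mind is that the \emph{order} of the steps is forced: the temporal mollification has to come \emph{last}. Indeed, if one mollified a generic $\boldsymbol{v}\in\bVp$ in time first, then the time slice $\int\psi_\varepsilon(t-s)\boldsymbol{v}(s)\,\mathrm{d}s$ --- an average of functions $\boldsymbol{v}(s)\in V^{q,p}_{\D}(s)=E^{q(s,\cdot),p(s,\cdot)}_{0,\mathrm{div}_x}$ --- would, by the $\log$-H\"older continuity of $q,p$ in time, in general only be controlled in $E^{q(t,\cdot)-\delta_\varepsilon,\,p(t,\cdot)-\delta_\varepsilon}$ with a small exponent loss $\delta_\varepsilon>0$, so the mollified field need not lie in $\bVp$ any more. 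After the spatial regularization, however, the time slices will lie in $\mathcal{V}$, hence in $V^{q,p}_{\D}(t)$ for \emph{every} $t\in I$, and this obstruction disappears.

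For the first step I would use the standard regularization operator $S^\sigma$ on $\Omega$ that realizes the density of $C^\infty_c(\Omega;\mathbb{R}^d)$ in $E^{r(\cdot),r(\cdot)}_0$ --- a finite partition of unity $\{\theta_j\}$ subordinate to a cover of $\overline{\Omega}$ by subdomains star-shaped with respect to balls, composed with a small inward translation $T_j$ of each localized piece and a mollification $M_\sigma$ --- and set $\Phi^\sigma\boldsymbol{u}\coloneqq S^\sigma\boldsymbol{u}-\mathcal{B}_\Omega(\mathrm{div}_x\,S^\sigma\boldsymbol{u})$, where $\mathcal{B}_\Omega$ denotes the Bogovski\u{\i} operator on $\Omega$. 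Since $T_j$ and $M_\sigma$ commute with $\mathrm{div}_x$ and $\mathrm{div}_x\boldsymbol{u}=\mathbf{0}$ for $\boldsymbol{u}\in V^{q,p}_{\D}(t)$, one has $\mathrm{div}_x\,S^\sigma\boldsymbol{u}=\sum_j T_jM_\sigma(\nabla\theta_j\cdot\boldsymbol{u})$, whose source $\nabla\theta_j\cdot\boldsymbol{u}$ lies in $L^{q(t,\cdot)}(\Omega)$; hence $S^\sigma\boldsymbol{u}\in C^\infty_c(\Omega;\mathbb{R}^d)$, $\mathrm{div}_x\,S^\sigma\boldsymbol{u}\in C^\infty_c(\Omega)$ has vanishing mean, and $\mathrm{div}_x\,S^\sigma\boldsymbol{u}\to(\sum_j\nabla\theta_j)\cdot\boldsymbol{u}=\mathbf{0}$ in $L^{q(t,\cdot)}(\Omega)$ as $\sigma\to0$, because $\sum_j\theta_j\equiv1$ on $\overline{\Omega}$. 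Consequently $\mathcal{B}_\Omega(\mathrm{div}_x\,S^\sigma\boldsymbol{u})\in C^\infty_c(\Omega;\mathbb{R}^d)$ (the Bogovski\u{\i} operator maps smooth compactly supported mean-zero data to smooth compactly supported fields), so $\Phi^\sigma\boldsymbol{u}\in\mathcal{V}$, and $\mathcal{B}_\Omega(\mathrm{div}_x\,S^\sigma\boldsymbol{u})\to\mathbf{0}$ in $W^{1,q(t,\cdot)}_0(\Omega)$, hence in $E^{q(t,\cdot),p(t,\cdot)}$ by $q\ge p$ and \eqref{eq:hoelder_embeddding}; together with $S^\sigma\boldsymbol{u}\to\boldsymbol{u}$ in $E^{q(t,\cdot),p(t,\cdot)}$ this gives $\Phi^\sigma\boldsymbol{u}\to\boldsymbol{u}$ in $E^{q(t,\cdot),p(t,\cdot)}$. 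I would then apply $\Phi^\sigma$ slicewise, $\boldsymbol{v}^\sigma(t)\coloneqq\Phi^\sigma(\boldsymbol{v}(t))$: this is measurable in $t$ because $\Phi^\sigma$ is a composition of integral operators with $t$-independent kernels, and --- the crucial point --- the norm and modular estimates for mollification, small translations and the Bogovski\u{\i} operator on $L^{r(\cdot)}(\Omega)$ and $W^{1,r(\cdot)}_0(\Omega)$ depend only on $\Omega$, $r^-$, $r^+$ and the $\log$-H\"older constant of $r$, which are bounded uniformly over the slices $r\in\{q(t,\cdot),p(t,\cdot)\}$, $t\in I$, since $q,p\in\mathcal{P}^{\log}(Q_T)$. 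Integrating the pointwise-in-$t$ modular bound over $I$ and using $\boldsymbol{v}\in\bVp$ and $|I|<\infty$ shows $\boldsymbol{v}^\sigma\in\bVp$; the dominated convergence theorem in variable Lebesgue spaces then yields $\boldsymbol{v}^\sigma\to\boldsymbol{v}$ in $\bVp$ as $\sigma\to0$, while $\boldsymbol{v}^\sigma(t)\in\mathcal{V}$ for a.e.\ $t\in I$, with support in a fixed compact subset of $\Omega$.

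The remaining two steps are routine. I would pick $\eta_k\in C^\infty_c(I)$ with $0\le\eta_k\le1$ and $\eta_k\uparrow1$ on $I$; as $\eta_k$ does not depend on $x$, $\eta_k\boldsymbol{v}^\sigma$ is divergence-free with $(\eta_k\boldsymbol{v}^\sigma)(t)\in\mathcal{V}$ for a.e.\ $t$, and $|\eta_k\boldsymbol{v}^\sigma|\le|\boldsymbol{v}^\sigma|$, $|\mathbf{D}_x(\eta_k\boldsymbol{v}^\sigma)|\le|\mathbf{D}_x\boldsymbol{v}^\sigma|$ together with pointwise convergence give $\eta_k\boldsymbol{v}^\sigma\to\boldsymbol{v}^\sigma$ in $\bVp$ by dominated convergence. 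We may thus assume in addition that $\boldsymbol{v}^\sigma$ has compact support in time; extending it by zero to $t\in\mathbb{R}$ and convolving in time with a one-dimensional mollifier $\psi_\varepsilon$, the field $\boldsymbol{v}^\sigma\ast_t\psi_\varepsilon$ has, for small $\varepsilon$, compact support in $Q_T$, is smooth in $t$ and in $x$ (its slices are temporal averages of $C^\infty_c$-fields supported in a fixed compact set), and is divergence-free because $\mathrm{div}_x(\boldsymbol{v}^\sigma\ast_t\psi_\varepsilon)=(\mathrm{div}_x\boldsymbol{v}^\sigma)\ast_t\psi_\varepsilon=\mathbf{0}$; hence $\boldsymbol{v}^\sigma\ast_t\psi_\varepsilon\in\mathbfcal{D}_T$. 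Since $\mathbf{D}_x(\boldsymbol{v}^\sigma\ast_t\psi_\varepsilon)=(\mathbf{D}_x\boldsymbol{v}^\sigma)\ast_t\psi_\varepsilon$ and one-dimensional-in-time mollification converges in $L^{r(\cdot,\cdot)}(Q_T)$ for $r\in\{q,p\}$ --- which follows from the density of $C(\overline{Q_T})$ and the $\varepsilon$-uniform boundedness of this mollification on $L^{r(\cdot,\cdot)}(Q_T)$, a consequence of $r\in\mathcal{P}^{\log}(Q_T)$ with $r^->1$ --- one obtains $\boldsymbol{v}^\sigma\ast_t\psi_\varepsilon\to\boldsymbol{v}^\sigma$ in $\bVp$ as $\varepsilon\to0$. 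A diagonal argument over $\sigma$, $k$ and $\varepsilon$ finally produces a sequence in $\mathbfcal{D}_T$ converging to $\boldsymbol{v}$ in $\bVp$.

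I expect Step~1 to be the main obstacle: the difficulty is to construct the slicewise regularization $\Phi^\sigma$ with norm and modular bounds that are uniform in the time variable. This rests on the boundedness of the Bogovski\u{\i} operator --- and of mollification and small translations --- on variable-exponent Lebesgue and Sobolev spaces over $\Omega$ with constants governed solely by the $\log$-H\"older data of the exponent (hence the hypothesis $q,p\in\mathcal{P}^{\log}(Q_T)$), together with the fact that the divergence source produced by the partition of unity naturally lives in $L^{q(t,\cdot)}(\Omega)$, so that the divergence correction is controlled in $E^{q(t,\cdot),p(t,\cdot)}$ precisely because $q\ge p$.
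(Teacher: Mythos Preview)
The paper does not give its own proof here; it simply cites \cite[Prop.~4.16]{alex-book}. Your three-step scheme --- slicewise solenoidal spatial regularization via a Bogovski\u{\i} correction, temporal cut-off, temporal mollification --- is exactly the standard route to this density statement and is, in all likelihood, the argument carried out in the cited reference. Your identification of the two structural points is on the mark: (i) the constants for mollification, small translations, and the Bogovski\u{\i} operator on $L^{r(\cdot)}(\Omega)$ and $W^{1,r(\cdot)}_0(\Omega)$ depend only on the $\log$-H\"older data of $r$, which is uniform over $r\in\{q(t,\cdot),p(t,\cdot):t\in I\}$ because $q,p\in\mathcal{P}^{\log}(Q_T)$; and (ii) the condition $q\ge p$ is what lets the divergence correction --- whose source $\sum_j T_jM_\sigma(\nabla\theta_j\cdot\boldsymbol{u})$ sits in $L^{q(t,\cdot)}$ --- be absorbed into $E^{q(t,\cdot),p(t,\cdot)}$ via \eqref{eq:hoelder_embeddding}. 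Your remark that temporal mollification must come last is also correct and is the reason the argument cannot be run in a more naive order.

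One point you pass over rather quickly is the \emph{joint} smoothness of $\boldsymbol{v}^\sigma\ast_t\psi_\varepsilon$ in $(t,x)$. Knowing $\boldsymbol{v}^\sigma(s)\in\mathcal{V}$ for a.e.\ $s$ is not by itself enough; you need that, for fixed $\sigma$, every spatial derivative $\partial_x^\alpha\boldsymbol{v}^\sigma(\cdot,\cdot)$ lies in $L^1_{\mathrm{loc}}(I;L^\infty(\Omega))$ so that you may differentiate under the time integral. This is true, but it requires an argument: for the $S^\sigma$-part, each $\partial_x^\alpha S^\sigma\boldsymbol{v}(t)$ is an integral of $\boldsymbol{v}(t)$ against $\partial_x^\alpha$ of a fixed smooth kernel, hence bounded by $C_{\alpha,\sigma}\|\boldsymbol{v}(t)\|_{1,\Omega}\in L^1(I)$; for the Bogovski\u{\i} part, one uses that $\mathcal{B}_\Omega$ is bounded $W^{k,r}_0\to W^{k+1,r}_0$ for every $k\in\mathbb{N}_0$ and $r\in(1,\infty)$, so the same $L^1(I)$-in-time control propagates. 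With this addendum, your sketch is complete.
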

    
    \begin{proof}
    	See   \cite[Prop.\ 4.16]{alex-book}.
    \end{proof}

    \subsection{Variable Bochner--Sobolev spaces}

    \hspace{5mm}Throughout the entire subsection, unless otherwise specified, let $q,p\in \mathcal{P}^{\log}(Q_T)$ with $q\ge p$ in $Q_T$ and  $q^-,p^->1$.\enlargethispage{9mm}

	\begin{definition}\label{def:generalized_time_derivative}  A function $\boldsymbol{v}\in L^1_{\textup{loc}}(Q_T)$ has a \textup{generalized time derivative in} $(\bVp)^*$ if there exists a functional $\boldsymbol{f}^*\in (\bVp)^*$  such that for every $\boldsymbol{\varphi}\in \mathbfcal{D}_T$, it holds that
		\begin{align*}
		-(\boldsymbol{v},\partial _t\boldsymbol{\varphi})_{Q_T}=\langle\boldsymbol{f}^*,\boldsymbol{\varphi}\rangle_{\smash{\bVp}}\,.
		\end{align*}
		In this case, we define 
		\begin{align}\label{def:generalized_time_derivative.1}
			\frac{\mathbf{d}_\sigma\boldsymbol{v}}{\mathbf{dt}}\coloneqq \boldsymbol{f}^*\quad\text{ in }(\bVp)^*\,.
		\end{align} 
	\end{definition}
	
	\begin{remark}\label{4.2} The generalized time derivative in the sense of Definition~\ref{def:generalized_time_derivative} is unique and, therefore, \eqref{def:generalized_time_derivative.1} is well-defined (\textit{cf}.\ \cite[Lem.\ 4.5]{alex-book}). This is a direct consequence of the density result~in~Proposition~\ref{prop:density_in_Vqp}.
	\end{remark}
	
	By means of the generalized time derivative $\frac{\mathbf{d}_\sigma}{\mathbf{dt}}$ (\textit{cf}.\ Definition \ref{def:generalized_time_derivative}), we next introduce 
	
	\begin{definition}[Variable Bochner--Sobolev space]\label{def:bochner_sobolev}
		We define the \textup{variable Bochner--Sobolev~space}
		\begin{align*}
		\mathbfcal{W}^{q,p}_{\D}\coloneqq 
		\bigg\{\boldsymbol{v}\in \bVp\;\Big|\; \exists\frac{\mathbf{d}_\sigma\boldsymbol{v}}{\mathbf{dt}}\in 	(\bVp)^*\bigg\}\,.
		\end{align*}
	\end{definition}
	
	\begin{proposition}\label{prop:bochner_sobolev_norm}
		\hspace*{-0.75mm}The \hspace*{-0.15mm}space \hspace*{-0.15mm}$\mathbfcal{W}^{q,p}_{\D}$ \hspace*{-0.15mm}forms \hspace*{-0.15mm}a  \hspace*{-0.15mm}separable, \hspace*{-0.15mm}reflexive \hspace*{-0.15mm}Banach \hspace*{-0.15mm}space \hspace*{-0.15mm}when~\hspace*{-0.15mm}equipped~\hspace*{-0.15mm}with~\hspace*{-0.15mm}the~\hspace*{-0.15mm}norm
		\begin{align*}
		\|\cdot\|_{\mathbfcal{W}^{q,p}_{\D}}\coloneqq \|\cdot\|_{\smash{\bVp}}+\bigg\|\frac{\mathbf{d}_\sigma\,\cdot}{\mathbf{dt}}\bigg\|_{\smash{(\bVp)^*}}\,.
		\end{align*}
	\end{proposition}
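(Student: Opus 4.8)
The plan is to realize $\mathbfcal{W}^{q,p}_{\D}$ as a closed subspace of a separable, reflexive Banach space via the graph map and then invoke the fact that separability and reflexivity are inherited by closed subspaces. First I would record that $\|\cdot\|_{\mathbfcal{W}^{q,p}_{\D}}$ is genuinely a norm: positive definiteness, homogeneity, and the triangle inequality follow at once from the corresponding properties of $\|\cdot\|_{\bVp}$ and $\|\cdot\|_{(\bVp)^*}$, once one uses linearity and uniqueness of the generalized time derivative (\textit{cf}.\ Remark~\ref{4.2}), the latter ensuring that $\boldsymbol{v}\mapsto \tfrac{\mathbf{d}_\sigma\boldsymbol{v}}{\mathbf{dt}}$ is a well-defined \emph{linear} operator on $\mathbfcal{W}^{q,p}_{\D}$.

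Next, consider the linear map $\boldsymbol{T}\colon \mathbfcal{W}^{q,p}_{\D}\to \bVp\times (\bVp)^*$ defined by $\boldsymbol{T}\boldsymbol{v}\coloneqq \big(\boldsymbol{v},\tfrac{\mathbf{d}_\sigma\boldsymbol{v}}{\mathbf{dt}}\big)$, where the product space carries the sum norm. By construction $\boldsymbol{T}$ is an isometry, hence injective, so that $\mathbfcal{W}^{q,p}_{\D}$ is isometrically isomorphic to $\boldsymbol{T}(\mathbfcal{W}^{q,p}_{\D})$. Since $\bVp$ is a separable, reflexive Banach space (\textit{cf}.\ Proposition~\ref{3.4}), its dual $(\bVp)^*$ is as well, and therefore so is the finite product $\bVp\times (\bVp)^*$. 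Consequently it suffices to show that $\boldsymbol{T}(\mathbfcal{W}^{q,p}_{\D})$ is closed in $\bVp\times (\bVp)^*$, i.e.\ that $\mathbfcal{W}^{q,p}_{\D}$ is complete.

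To establish this I would take a sequence $(\boldsymbol{v}_n)_{n\in\mathbb{N}}\subseteq \mathbfcal{W}^{q,p}_{\D}$ with $\boldsymbol{v}_n\to \boldsymbol{v}$ in $\bVp$ and $\tfrac{\mathbf{d}_\sigma\boldsymbol{v}_n}{\mathbf{dt}}\to \boldsymbol{f}^*$ in $(\bVp)^*$ for some $\boldsymbol{v}\in\bVp$, $\boldsymbol{f}^*\in (\bVp)^*$, and verify that $\boldsymbol{v}\in\mathbfcal{W}^{q,p}_{\D}$ with $\tfrac{\mathbf{d}_\sigma\boldsymbol{v}}{\mathbf{dt}}=\boldsymbol{f}^*$. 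For fixed $\boldsymbol{\varphi}\in \mathbfcal{D}_T$, Definition~\ref{def:generalized_time_derivative} gives
\begin{align*}
-(\boldsymbol{v}_n,\partial_t\boldsymbol{\varphi})_{Q_T}=\Big\langle \frac{\mathbf{d}_\sigma\boldsymbol{v}_n}{\mathbf{dt}},\boldsymbol{\varphi}\Big\rangle_{\bVp}\qquad\text{for all }n\in\mathbb{N}\,.
\end{align*}
On the right-hand side, since $\boldsymbol{\varphi}\in \mathbfcal{D}_T\subseteq \bVp$ (\textit{cf}.\ Proposition~\ref{prop:density_in_Vqp}), the convergence in $(\bVp)^*$ yields $\big\langle \tfrac{\mathbf{d}_\sigma\boldsymbol{v}_n}{\mathbf{dt}},\boldsymbol{\varphi}\big\rangle_{\bVp}\to \langle \boldsymbol{f}^*,\boldsymbol{\varphi}\rangle_{\bVp}$. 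On the left-hand side, the embedding $\bVp\hookrightarrow L^{q(\cdot,\cdot)}(Q_T;\mathbb{R}^d)\hookrightarrow L^1(Q_T;\mathbb{R}^d)$ (\textit{cf}.\ \eqref{eq:hoelder_embeddding}) gives $\boldsymbol{v}_n\to\boldsymbol{v}$ in $L^1(Q_T;\mathbb{R}^d)$, and since $\partial_t\boldsymbol{\varphi}\in L^\infty(Q_T;\mathbb{R}^d)$ we obtain $(\boldsymbol{v}_n,\partial_t\boldsymbol{\varphi})_{Q_T}\to (\boldsymbol{v},\partial_t\boldsymbol{\varphi})_{Q_T}$. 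Passing to the limit, $-(\boldsymbol{v},\partial_t\boldsymbol{\varphi})_{Q_T}=\langle\boldsymbol{f}^*,\boldsymbol{\varphi}\rangle_{\bVp}$ for all $\boldsymbol{\varphi}\in\mathbfcal{D}_T$, which by Definition~\ref{def:generalized_time_derivative} says precisely that $\boldsymbol{v}$ has a generalized time derivative in $(\bVp)^*$ equal to $\boldsymbol{f}^*$; hence $\boldsymbol{v}\in\mathbfcal{W}^{q,p}_{\D}$ and $\boldsymbol{T}\boldsymbol{v}=(\boldsymbol{v},\boldsymbol{f}^*)$, so $\boldsymbol{T}(\mathbfcal{W}^{q,p}_{\D})$ is closed and the assertion follows.

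The argument is essentially soft; the one point worth recording carefully is the passage to the limit in the term $(\boldsymbol{v}_n,\partial_t\boldsymbol{\varphi})_{Q_T}$, which is why I would make the embedding $\bVp\hookrightarrow L^1(Q_T;\mathbb{R}^d)$ explicit. I do not anticipate any genuine obstacle beyond this bookkeeping.
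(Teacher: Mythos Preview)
Your argument is correct and is exactly the standard graph-map approach one expects here; the paper itself does not give an independent proof but simply refers to \cite[Prop.~4.18]{alex-book}, where presumably the same isometric embedding into $\bVp\times(\bVp)^*$ is used. The only small remark is that the separability of $(\bVp)^*$ does not follow from separability of $\bVp$ alone but requires reflexivity as well (which you have, via Proposition~\ref{3.4}), so it would be worth making that inference explicit.
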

	
	\begin{proof}
		See~\cite[Prop.\ 4.18]{alex-book}.
	\end{proof}
	
	We have the following equivalent characterization of $\mathbfcal{W}^{q,p}_{\D}$.
	\begin{proposition}\label{prop:equivalent_characterization}
		Let $\boldsymbol{v}\in \bVp$  and $\boldsymbol{f}^*\in  (\bVp)^*$. Then, the following~statements~are~\mbox{equivalent:}
		\begin{description}[noitemsep,topsep=2pt,leftmargin=!,labelwidth=\widthof{(ii)},font=\normalfont\itshape]
			\item[(i)]  
			$\boldsymbol{v}\in \mathbfcal{W}^{q,p}_{\D}$ with $\frac{\mathbf{d}_\sigma\boldsymbol{v}}{\mathbf{dt}}=\boldsymbol{f}^*$ in $(\bVp)^*$.\vspace*{0.5mm}
			\item[(ii)] For every $\mathbf{z}\in V^{q,p}_{\plus}$ and
			$\varphi\in C^\infty_c(I)$, it holds that $-(\boldsymbol{v},\mathbf{z}\varphi^\prime)_{Q_T}=\langle \boldsymbol{f}^*,\mathbf{z}\varphi\rangle_{\smash{\bVp}}$.
		\end{description}
	\end{proposition}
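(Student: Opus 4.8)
The plan is to show the equivalence by first establishing the direction (i) $\Rightarrow$ (ii), which is essentially a density argument, and then the harder direction (ii) $\Rightarrow$ (i), where one must produce test functions in $\mathbfcal{D}_T$ from the separated-variable test functions $\mathbf{z}\varphi$. For (i) $\Rightarrow$ (ii), I would start from the defining identity $-(\boldsymbol{v},\partial_t\boldsymbol{\varphi})_{Q_T}=\langle\boldsymbol{f}^*,\boldsymbol{\varphi}\rangle_{\bVp}$ for $\boldsymbol{\varphi}\in\mathbfcal{D}_T$ and seek to extend it to test functions of the form $\boldsymbol{\varphi}=\mathbf{z}\varphi$ with $\mathbf{z}\in V^{q,p}_{\plus}$ and $\varphi\in C^\infty_c(I)$. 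Fixing $\varphi\in C^\infty_c(I)$, both sides are continuous linear functionals in $\mathbf{z}$ with respect to the $V^{q,p}_{\plus}$-norm: the left-hand side because $\boldsymbol{v}\in L^{q(\cdot,\cdot)}(Q_T;\mathbb{R}^d)\hookrightarrow L^{\min\{q^\minus,p^\minus\}}(Q_T;\mathbb{R}^d)$ via \eqref{eq:hoelder_embeddding} pairs with $\mathbf{z}\varphi'\in L^{\max\{q^\plus,p^\plus\}}(Q_T;\mathbb{R}^d)$, the right-hand side because $\boldsymbol{f}^*\in(\bVp)^*$ and $\mathbf{z}\varphi\in\bVp$ (indeed $\mathbf{z}\in V^{q,p}_{\plus}\subseteq V^{q,p}_{\D}(t)$ for a.e.\ $t$, using $q\ge p$ and the embedding of the constant-exponent space). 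Since by definition $V^{q,p}_{\plus}$ is the closure of $\mathcal{V}$ in $E^{q^\plus,p^\plus}$, and for $\boldsymbol{\psi}\in\mathcal{V}$ the product $\boldsymbol{\psi}\varphi$ lies in $\mathbfcal{D}_T$, the identity for $\mathbf{z}=\boldsymbol{\psi}$ follows from the definition of the generalized time derivative; passing to the closure in $\mathbf{z}$ yields (ii).

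For (ii) $\Rightarrow$ (i), I would need to deduce the defining identity for arbitrary $\boldsymbol{\varphi}\in\mathbfcal{D}_T$ from the separated-variable identities. The idea is to approximate a general $\boldsymbol{\varphi}\in C^\infty_c(Q_T;\mathbb{R}^d)$ with $\textup{div}_x\boldsymbol{\varphi}=0$ by finite sums $\sum_{k}\mathbf{z}_k\varphi_k$ with $\mathbf{z}_k\in\mathcal{V}\subseteq V^{q,p}_{\plus}$ and $\varphi_k\in C^\infty_c(I)$. Concretely, one partitions a compact time interval containing $\operatorname{supp}_t\boldsymbol{\varphi}$, uses a smooth partition of unity $(\varphi_k)$ in time, and on each piece approximates $\boldsymbol{\varphi}(t,\cdot)$ uniformly (together with $\partial_t\boldsymbol{\varphi}$ and all needed $x$-derivatives) by a fixed divergence-free profile $\mathbf{z}_k$ — or, more carefully, one expands $\boldsymbol{\varphi}$ in a Schauder/Galerkin basis of $\mathcal{V}$ and truncates, exploiting that a smooth compactly supported divergence-free field can be written as a convergent series in such a basis with coefficients in $C^\infty_c(I)$. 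Applying (ii) with each pair $(\mathbf{z}_k,\varphi_k)$ and summing gives $-(\boldsymbol{v},\partial_t\boldsymbol{\varphi}_N)_{Q_T}=\langle\boldsymbol{f}^*,\boldsymbol{\varphi}_N\rangle_{\bVp}$ for the truncations $\boldsymbol{\varphi}_N$; then one passes to the limit $N\to\infty$ using $\boldsymbol{\varphi}_N\to\boldsymbol{\varphi}$ and $\partial_t\boldsymbol{\varphi}_N\to\partial_t\boldsymbol{\varphi}$ in $\bVp$ (so that $\langle\boldsymbol{f}^*,\cdot\rangle$ converges) and in $L^{\max\{q^\plus,p^\plus\}}(Q_T;\mathbb{R}^d)$ (so that the left-hand side converges against $\boldsymbol{v}$). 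This identifies $\frac{\mathbf{d}_\sigma\boldsymbol{v}}{\mathbf{dt}}=\boldsymbol{f}^*$ and shows $\boldsymbol{v}\in\mathbfcal{W}^{q,p}_{\D}$.

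The main obstacle is the density step in (ii) $\Rightarrow$ (i): one must approximate a general time-space test function $\boldsymbol{\varphi}\in\mathbfcal{D}_T$ by finite sums of tensor products $\mathbf{z}\varphi$ with $\mathbf{z}$ \emph{exactly divergence-free} and the convergence holding simultaneously in the $\bVp$-norm and in the stronger constant-exponent Bochner norm needed to control $\partial_t\boldsymbol{\varphi}_N$. This is where the hypotheses $q,p\in\mathcal{P}^{\log}(Q_T)$ and $q\ge p$ enter, since Proposition~\ref{prop:density_in_Vqp} and the embedding $V^{q,p}_{\plus}\hookrightarrow V^{q,p}_{\D}(t)$ are exactly what make the spaces compatible; I expect the cleanest route is to cite or adapt a tensor-product density lemma for $\mathbfcal{D}_T$ in $\bVp$ (analogous to the classical fact that $C^\infty_c(I)\otimes\mathcal{V}$ is dense in such spaces), after which the rest is the routine limit passage described above. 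I would expect this equivalence to be proved in \cite{alex-book}, so in the paper itself the proof is likely just a reference; if a self-contained argument is wanted, the above partition-of-unity-plus-basis-expansion scheme is the way I would carry it out.
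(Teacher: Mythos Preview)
You correctly anticipated the situation: the paper's proof is nothing more than the citation ``See \cite[Prop.\ 4.20]{alex-book},'' so there is no in-paper argument to compare against. Your self-contained sketch is sound in outline --- the density direction (i)$\Rightarrow$(ii) via $\mathcal{V}$ dense in $V^{q,p}_{\plus}$ is routine, and for (ii)$\Rightarrow$(i) the tensor-product approximation of $\boldsymbol{\varphi}\in\mathbfcal{D}_T$ by sums $\sum_k\mathbf{z}_k\varphi_k$ with $\mathbf{z}_k\in\mathcal{V}$, $\varphi_k\in C^\infty_c(I)$ is indeed the heart of the matter. The cleanest way to make that step rigorous is, as you suggest, a Galerkin/Stokes-eigenfunction expansion in space (smooth divergence-free basis, coefficients then automatically in $C^\infty_c(I)$), which gives convergence of both $\boldsymbol{\varphi}_N$ and $\partial_t\boldsymbol{\varphi}_N$ in any $L^s(I;W^{1,s}_0)$ and hence in $\bVp$; the partition-of-unity variant you mention first is harder to close because a single fixed profile $\mathbf{z}_k$ cannot approximate $\boldsymbol{\varphi}(t,\cdot)$ uniformly over a time window unless the window shrinks.
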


	\begin{proof}
		See \cite[Prop.\ 4.20]{alex-book}.
	\end{proof}

    The following integration-by-parts formula is a cornerstone in the (weak) convergence analysis for the fully-discrete Rothe--Galerkin approximation of the unsteady $p(\cdot,\cdot)$-Navier--Stokes equations \eqref{eq:ptxNavierStokes}.
	
	\begin{proposition}\label{prop:integration-by-parts}
		Let $q,p\in \mathcal{P}^{\log}(Q_T)$ with $ q\ge p\ge 2$ in $Q_T$.
		Then, the following statements apply:
		\begin{description}[noitemsep,topsep=2pt,leftmargin=!,labelwidth=\widthof{(ii)},font=\normalfont\itshape]
			\item[(i)] Each function $\boldsymbol{v}\in \mathbfcal{W}^{q,p}_{\D}$ (defined a.e.\ in $I$) has a unique representation $\boldsymbol{v}_{c}\in C^0(\overline{I};H)$ and the resulting mapping $(\cdot)_{c}\colon \mathbfcal{W}^{q,p}_{\D}\to
			C^0(\overline{I};H)$ is an embedding.
			\item[(ii)] For every $\boldsymbol{v},\boldsymbol{z}\in \mathbfcal{W}^{q,p}_{\D}$ and
			$t,t'\in \overline{I}$ with $t'\leq t$, it holds that
			\begin{align*}
			\bigg\langle
				\frac{\mathbf{d}_\sigma\boldsymbol{v}}{\mathbf{dt}},\boldsymbol{z}\chi_{\left[t',t\right]}\bigg\rangle_{\smash{\bVp}}
			=\left[(\boldsymbol{v}_c(s), 
			\boldsymbol{z}_c(s))_{\Omega}\right]^{s=t}_{s=t'}-\bigg\langle
				\frac{\mathbf{d}_\sigma\boldsymbol{z}}{\mathbf{dt}},\boldsymbol{v}\chi_{\left[t',t\right]}\bigg\rangle_{\smash{\bVp}}\,.
			\end{align*}
		\end{description}  
	\end{proposition}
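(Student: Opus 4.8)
The plan is to establish (i) first and then leverage it, together with a mollification argument in time, to obtain the integration-by-parts formula (ii). For (i), the strategy is the classical one underlying the Lions–Magenes / Gelfand-triple embedding $W \hookrightarrow C^0(\overline{I};H)$, adapted to the variable-exponent setting. The key structural fact is that, under the hypothesis $q \ge p \ge 2$ in $Q_T$, one has the chain of (dense, continuous) embeddings $V^{q,p}_{\D}(t) \hookrightarrow H \cong H^* \hookrightarrow (V^{q,p}_{\D}(t))^*$ for a.e.\ $t \in I$, and at the level of Bochner spaces $\bVp \hookrightarrow L^{p^-}(I;H)$ and $(\bVp)^* \hookleftarrow L^{(p^-)'}(I;H)$ via Proposition~\ref{3.7} and the embedding~\eqref{eq:hoelder_embeddding}. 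Given $\boldsymbol v \in \mathbfcal{W}^{q,p}_{\D}$, I would first smooth $\boldsymbol v$ in time by a standard mollifier $\boldsymbol v_\varepsilon$ (extended suitably past the endpoints, or via a Steklov-type average), use Proposition~\ref{prop:equivalent_characterization}(ii) to identify $\partial_t \boldsymbol v_\varepsilon$ with the mollification of $\frac{\mathbf{d}_\sigma \boldsymbol v}{\mathbf{dt}}$, and then compute $\frac{d}{dt}\|\boldsymbol v_\varepsilon(t)\|_H^2 = 2\langle \partial_t \boldsymbol v_\varepsilon, \boldsymbol v_\varepsilon\rangle$. Integrating this identity and passing $\varepsilon \to 0$ — using that $\boldsymbol v_\varepsilon \to \boldsymbol v$ in $\bVp$ and $\partial_t\boldsymbol v_\varepsilon \to \frac{\mathbf{d}_\sigma\boldsymbol v}{\mathbf{dt}}$ in $(\bVp)^*$ — yields that $t \mapsto \|\boldsymbol v(t)\|_H^2$ agrees a.e.\ with an absolutely continuous function, and more precisely that $\boldsymbol v$ has a representative $\boldsymbol v_c$ which is weakly continuous into $H$ with $\|\boldsymbol v_c\|_{L^\infty(I;H)} \lesssim \|\boldsymbol v\|_{\mathbfcal{W}^{q,p}_{\D}}$; combining weak continuity with continuity of the norm upgrades this to strong continuity, giving $\boldsymbol v_c \in C^0(\overline I;H)$. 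Uniqueness of $\boldsymbol v_c$ is immediate since $\bVp$-functions are defined only a.e., and the bound just recorded shows $(\cdot)_c$ is an embedding.

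For (ii), the natural route is to prove it first for the "diagonal" case $\boldsymbol v = \boldsymbol z$, where it reduces to the energy identity
\[
\bigg\langle \frac{\mathbf{d}_\sigma\boldsymbol v}{\mathbf{dt}}, \boldsymbol v\,\chi_{[t',t]}\bigg\rangle_{\bVp} = \tfrac12\|\boldsymbol v_c(t)\|_H^2 - \tfrac12\|\boldsymbol v_c(t')\|_H^2
\]
already obtained in the proof of (i) via mollification; one must additionally justify testing $\frac{\mathbf{d}_\sigma\boldsymbol v}{\mathbf{dt}}$ against $\boldsymbol v\,\chi_{[t',t]}$, noting that $\boldsymbol v\,\chi_{[t',t]} \in \bVp$ since characteristic functions of subintervals act boundedly on variable Bochner–Lebesgue spaces and do not affect the spatial structure. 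The general bilinear formula then follows by the standard polarization trick: apply the diagonal identity to $\boldsymbol v + \boldsymbol z$, $\boldsymbol v$, and $\boldsymbol z$ (all of which lie in the linear space $\mathbfcal{W}^{q,p}_{\D}$, with $\frac{\mathbf{d}_\sigma}{\mathbf{dt}}$ linear and $(\boldsymbol v+\boldsymbol z)_c = \boldsymbol v_c + \boldsymbol z_c$), expand the squares, and use the symmetry of the $H$-inner product to rearrange into the asserted form with the two dual pairings on opposite sides.

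The main obstacle I anticipate is the time-mollification step in the variable-exponent setting: because the exponents $q(t,\cdot), p(t,\cdot)$ genuinely vary with $t$, the time slice spaces $V^{q,p}_{\D}(t)$ change with $t$, so one cannot naively mollify in a fixed Banach space. The clean way around this is to mollify only in time while keeping values in the fixed Lebesgue spaces $L^{q(\cdot,\cdot)}(Q_T;\mathbb R^d)$ and $L^{p(\cdot,\cdot)}(Q_T;\mathbb R^{d\times d}_{\mathrm{sym}})$ (which are translation-stable and on which mollification converges by standard variable-Lebesgue-space theory, $p,q \in \mathcal P^{\log}$), then check a posteriori that the mollified function still has its time slices in $V^{q,p}_{\D}(t)$ — this uses that $V^{q,p}_{\D}(t)$ is, for a.e.\ $t$, a closed subspace and that convexity/Jensen-type arguments preserve the divergence-free and boundary constraints under averaging. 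Since the hypothesis $q \ge p \ge 2$ guarantees $\bVp \hookrightarrow L^2(I;H)$ and hence all pairings with $H$-valued test functions are well defined, and since the corresponding statement at the level of the reference spaces $V^{q,p}_{\pm}$ is covered by Proposition~\ref{prop:equivalent_characterization}, this technical point is resolvable; I would rely on the analogous construction carried out in \cite{alex-book} for the underlying one-parameter family of spaces and port it verbatim, which is presumably exactly what the authors do.
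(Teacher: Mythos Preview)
The paper does not actually prove this proposition: it simply cites \cite[Prop.~4.23]{alex-book}. Your sketch is a faithful reconstruction of the standard Lions--Magenes argument adapted to the variable-exponent Bochner setting (time mollification, energy identity on the diagonal, polarization for the bilinear case), and you even anticipate correctly that the authors defer to \cite{alex-book}; so there is nothing to compare beyond noting that your outline is essentially what the cited reference carries out in detail.
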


	\begin{proof}
		See~\cite[Prop.\ 4.23]{alex-book}.
	\end{proof} 
	
	\begin{remark} 
		The requirement $p^-\ge 2$  in Proposition \ref{prop:integration-by-parts} has an entirely technical origin (\textit{cf.} \cite[Rem.~4.16]{alex-book}) and might be improved in the future. If one is capable~of~proving~Proposition~\ref{prop:integration-by-parts} without the restriction $p^-\ge 2$, then one can equally remove this restriction from Assumption \ref{assumption} below.
	\end{remark}
	
	\begin{definition}\label{def:generalized_evolution_equation}
		Let $q,p\in \mathcal{P}^{\log}(Q_T)$ with $q\ge p\ge 2$ in $Q_T$. Moreover, let $\mathbf{v}_0\in H$, $\boldsymbol{f}^*\in (\bVp)^*$, and $\mathbfcal{A}\colon \mathbfcal{W}^{q,p}_{\D}\to (\bVp)^*$. Then, the initial value problem
		\begin{align}\label{eq:generalized_evolution_equation}
			\begin{aligned}
				\frac{\mathbf{d}_\sigma\boldsymbol{v}}{\mathbf{dt}}+\mathbfcal{A}\boldsymbol{v}&=\boldsymbol{f}^*&&\quad \text{ in }(\bVp)^*\,,\\
				\boldsymbol{v}_c(0)&=\mathbf{v}_0&&\quad\text{ in }H\,,
			\end{aligned}
		\end{align}
		is called a \textup{generalized evolution equation}. Here, the initial condition is to be understood in the sense of the unique continuous representation $\boldsymbol{v}_c\in C^0(\overline{I};H)$ (\textit{cf}.\ Proposition \ref{prop:integration-by-parts}).
	\end{definition}

    \section{Convergence results for sequences of variable exponents}\label{sec:convergence_of_exponents}

	\label{sec:8.2}
	\hspace{5mm}One important aspect in the numerical approximation of the unsteady $p(\cdot,\cdot)$-Navier--Stokes equations \eqref{eq:ptxNavierStokes} 
    consists in the discretization of the time- and space-dependent  extra-stress tensor. It~is~convenient to approximate the power-law index 
    via variable exponents that possess a more discrete~structure, \textit{e.g.}, are element-wise constant with respect to a given time-space discretization~of~the~\mbox{time-space}~\mbox{cylinder}~$Q_T$.  
    This section collects continuity results with respect~to~the~convergence~of~exponents, \textit{i.e.},~we~examine~the
     behavior~of~the~norms $\|\cdot\|_{p_n(\cdot),Q_T}$, $\|\cdot\|_{\smash{\mathbfcal{U}^{q_n,p_n}_{\D}}}$~and~${\|\!\cdot\!\|_{\smash{(\mathbfcal{U}^{q_n,p_n}_{\D})^*}}}$~when~exponents~${(q_n)_{n\in \mathbb{N}},(p_n)_{n\in \mathbb{N}}\hspace*{-0.15em}\subseteq \hspace*{-0.15em} L^\infty(Q_T)}$ converge uniformly to $q,p\in \mathcal{P}^{\log}(Q_T)$, respectively. 
	As a starting point serves the following lemma.\enlargethispage{5mm}

	\begin{lemma}\label{8.2.1}
		Let $M\subseteq\mathbb{R}^m$, $m\in\mathbb{N}$, be a (Lebesgue) measurable set, $(p_n)_{n\in \mathbb{N}}\subseteq \mathcal{P}^\infty(M)$,~and~${p\in \mathcal{P}^\infty(M)}$ such that $p_n\to p$ $(n\to \infty)$ a.e.\  in $M$. Then, it follows that  $\|\mathbf{u}\|_{p(\cdot),M}\leq \liminf_{n\to\infty}{\|\mathbf{u}\|_{p_n(\cdot),M}}$ for all $\mathbf{u}\in L^0(M;\mathbb{R}^{\ell})$, $\ell\in \mathbb{N}$.
	\end{lemma}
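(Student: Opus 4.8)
The plan is to exploit the fundamental identity characterizing the Luxembourg norm: for $p\in\mathcal{P}^\infty(M)$ and $\mathbf{u}\in L^0(M;\mathbb{R}^\ell)$ with $0<\|\mathbf{u}\|_{p(\cdot),M}<\infty$, the unit-ball property (\textit{cf}.\ \cite[Lem.\ 3.2.5]{dhhr}) gives $\rho_{p(\cdot),M}(\mathbf{u}/\|\mathbf{u}\|_{p(\cdot),M})\leq 1$. The inequality $\|\mathbf{u}\|_{p(\cdot),M}\leq\liminf_{n\to\infty}\|\mathbf{u}\|_{p_n(\cdot),M}$ is trivial if the right-hand side is $+\infty$, so I may assume it is finite, pass to a subsequence (not relabeled) along which $\|\mathbf{u}\|_{p_n(\cdot),M}\to\liminf_{n\to\infty}\|\mathbf{u}\|_{p_n(\cdot),M}=:\lambda_\infty<\infty$, and also assume $\lambda_\infty>0$ (if $\lambda_\infty=0$ one argues separately, or simply notes the claim reads $\|\mathbf{u}\|_{p(\cdot),M}\leq 0$, forcing $\mathbf{u}=0$ a.e., which follows since $\|\mathbf{u}\|_{p_n(\cdot),M}\to 0$ implies $\rho_{p_n(\cdot),M}(\mathbf{u})\to 0$ along a further subsequence after scaling, hence $|\mathbf{u}|^{p_n(\cdot)}\to 0$ in $L^1$, and a.e.\ convergence $p_n\to p$ with Fatou forces $\mathbf{u}=0$). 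Thus I reduce to the case $\lambda_\infty\in(0,\infty)$.

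The core step is to show $\rho_{p(\cdot),M}(\mathbf{u}/\lambda_\infty)\leq 1$, which by the unit-ball property yields $\|\mathbf{u}\|_{p(\cdot),M}\leq\lambda_\infty$ and closes the argument. Fix any $\lambda>\lambda_\infty$; then for $n$ large enough $\|\mathbf{u}\|_{p_n(\cdot),M}\leq\lambda$, so by the unit-ball property $\rho_{p_n(\cdot),M}(\mathbf{u}/\lambda)=\int_M |\mathbf{u}(x)/\lambda|^{p_n(x)}\,\mathrm{d}x\leq 1$. Now I apply Fatou's lemma: since $p_n\to p$ a.e.\ in $M$ and $t\mapsto |\mathbf{u}(x)/\lambda|^{t}$ is continuous in the exponent for each fixed $x$ (with the convention that the integrand is understood pointwise, being nonnegative), the integrand $|\mathbf{u}(x)/\lambda|^{p_n(x)}$ converges a.e.\ to $|\mathbf{u}(x)/\lambda|^{p(x)}$, whence
\begin{align*}
\rho_{p(\cdot),M}\!\left(\frac{\mathbf{u}}{\lambda}\right)=\int_M \left|\frac{\mathbf{u}(x)}{\lambda}\right|^{p(x)}\mathrm{d}x\leq\liminf_{n\to\infty}\int_M \left|\frac{\mathbf{u}(x)}{\lambda}\right|^{p_n(x)}\mathrm{d}x\leq 1\,.
\end{align*}
Hence $\|\mathbf{u}\|_{p(\cdot),M}\leq\lambda$ for every $\lambda>\lambda_\infty$, and letting $\lambda\downarrow\lambda_\infty$ gives $\|\mathbf{u}\|_{p(\cdot),M}\leq\lambda_\infty=\liminf_{n\to\infty}\|\mathbf{u}\|_{p_n(\cdot),M}$.

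The main subtlety — really the only one worth flagging — is the a.e.\ convergence of the integrands $|\mathbf{u}(x)/\lambda|^{p_n(x)}\to|\mathbf{u}(x)/\lambda|^{p(x)}$. On the set where $|\mathbf{u}(x)/\lambda|>0$ this is immediate from continuity of $t\mapsto a^t$ for $a>0$; on the set where $\mathbf{u}(x)=0$ both sides are $0$ (here one uses $p^->1>0$, i.e.\ $p_n,p\geq 1$, so there is no $0^0$ ambiguity); the case $|\mathbf{u}(x)/\lambda|=\infty$ does not occur since $\mathbf{u}\in L^0$ is finite a.e. So Fatou applies cleanly to the nonnegative integrands, and no uniform integrability or boundedness of $\mathbf{u}$ is needed — this is precisely why the statement holds for arbitrary $\mathbf{u}\in L^0(M;\mathbb{R}^\ell)$ rather than only for $\mathbf{u}$ in some $L^{p_n(\cdot)}(M)$.
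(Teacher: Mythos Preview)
Your proof is correct. The paper itself does not give a proof of this lemma but simply cites \cite[Cor.\ 3.5.4]{dhhr}; the Fatou-based argument you wrote out is precisely the standard proof behind that citation, so there is nothing substantively different to compare. One minor remark: your separate treatment of the case $\lambda_\infty=0$ is unnecessary, since the main argument already covers it (for every $\lambda>0=\lambda_\infty$ you get $\rho_{p(\cdot),M}(\mathbf{u}/\lambda)\leq 1$, hence $\|\mathbf{u}\|_{p(\cdot),M}\leq\lambda$ for all $\lambda>0$, so $\|\mathbf{u}\|_{p(\cdot),M}=0$).
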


	\begin{proof}
		See \cite[Cor.\  3.5.4]{dhhr}.
	\end{proof}
	
	Aided by Lemma \ref{8.2.1}, we obtain the following extension concerning uniform convergence of exponents. 
	This result is a mainstay in the numerical treatment of the variable exponent structure via uniform approximation with
	discontinuous exponents. In what follows, we denote for $q,p\in L^\infty(M)$~by~$q\prec p$~in~$M$ (or $q\succ p$ in $M$) if there exists $\varepsilon>0$ such that
	$q+\varepsilon\leq p$ a.e.\  in $M$ (or $q+\varepsilon\geq p$ a.e.\  in $M$).\enlargethispage{2mm}
	
	\begin{lemma}\label{8.2.2}
		Let $M\hspace*{-0.05em}\subseteq\hspace*{-0.05em} \mathbb{R}^m$, $m\hspace*{-0.05em}\in\hspace*{-0.05em} \mathbb{N}$, be a bounded (Lebesgue) measurable set, $p\hspace*{-0.05em}\in\hspace*{-0.05em} \mathcal{P}^\infty(M)$~such~that~${p^-\hspace*{-0.05em}>\hspace*{-0.05em}1}$ and $(p_n)_{n\in \mathbb{N}}\subseteq \mathcal{P}^\infty(M)$ such that $p_n\to  p $ in $L^\infty(M)$ $(n\to \infty)$. Moreover,~let~${\mathbf{u}_n\in L^{p_n(\cdot)}(M;\mathbb{R}^{\ell})}$,~${n\in \mathbb{N}}$, where $\ell\in \mathbb{N}$, be a sequence such that $\sup_{n\in \mathbb{N}}{\|\mathbf{u}_n\|_{p_n(\cdot),M}}<\infty$.  
		Then, there exists a~subsequence~$\mathbf{u}_{n_k}\hspace*{-0.1em}\in\hspace*{-0.1em} L^{p_{n_k}(\cdot)}(M;\mathbb{R}^{\ell})$, $k\hspace*{-0.1em}\in\hspace*{-0.1em} \mathbb{N}$, and a limit $\mathbf{u}\in L^{p(\cdot)}(M;\mathbb{R}^{\ell})$ such that for every $r\in  \mathcal{P}^\infty(M)$~with~${r\prec  p}$~in~$M$, it holds that
        \begin{align*}
            \mathbf{u}_{n_k}\rightharpoonup \mathbf{u}\quad\text{ in }L^{r(\cdot)}(M;\mathbb{R}^{\ell})\quad (k\to \infty)\,.
        \end{align*}		
	\end{lemma}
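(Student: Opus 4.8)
The plan is to exploit the uniform boundedness assumption $\sup_{n\in\mathbb{N}}\|\mathbf{u}_n\|_{p_n(\cdot),M}<\infty$ together with the embedding estimate \eqref{eq:hoelder_embeddding} to transfer the problem into a fixed Lebesgue space where standard weak-compactness arguments apply. First I would fix an auxiliary constant exponent: since $p^->1$ and $p_n\to p$ in $L^\infty(M)$, for $n$ large we have $p_n\ge \tfrac{1+p^-}{2}=:s>1$ a.e.\ in $M$, so by \eqref{eq:hoelder_embeddding} (with $M$ bounded) the sequence $(\mathbf{u}_n)$ is bounded in the reflexive space $L^s(M;\mathbb{R}^\ell)$. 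Hence there is a subsequence $(\mathbf{u}_{n_k})$ and a limit $\mathbf{u}\in L^s(M;\mathbb{R}^\ell)$ with $\mathbf{u}_{n_k}\rightharpoonup\mathbf{u}$ in $L^s(M;\mathbb{R}^\ell)$.

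Next I would upgrade the integrability of the limit $\mathbf{u}$ to $L^{p(\cdot)}(M;\mathbb{R}^\ell)$. This follows from the weak lower semicontinuity of the norm under converging exponents, Lemma~\ref{8.2.1}: applying it with the constant sequence would be too crude, so instead I use that $p_n\to p$ a.e.\ (a consequence of $L^\infty$-convergence) and that the weak $L^s$-limit coincides with any weak-$*$ limit in the relevant dual pairing; combined with the uniform bound this gives $\|\mathbf{u}\|_{p(\cdot),M}\le\liminf_{k\to\infty}\|\mathbf{u}_{n_k}\|_{p_{n_k}(\cdot),M}<\infty$, so $\mathbf{u}\in L^{p(\cdot)}(M;\mathbb{R}^\ell)$.

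Then I would prove the asserted weak convergence in $L^{r(\cdot)}(M;\mathbb{R}^\ell)$ for every $r\in\mathcal{P}^\infty(M)$ with $r\prec p$ in $M$. Fix such an $r$, so $r+\varepsilon\le p$ a.e.\ for some $\varepsilon>0$; by $L^\infty$-convergence, for $k$ large we also have $r+\tfrac{\varepsilon}{2}\le p_{n_k}$ a.e., hence by \eqref{eq:hoelder_embeddding} the sequence $(\mathbf{u}_{n_k})$ is bounded in $L^{r(\cdot)}(M;\mathbb{R}^\ell)$, and since $r^-\ge 1$ this space is reflexive, so along a further subsequence $\mathbf{u}_{n_k}\rightharpoonup \tilde{\mathbf{u}}$ in $L^{r(\cdot)}(M;\mathbb{R}^\ell)$ for some $\tilde{\mathbf{u}}\in L^{r(\cdot)}(M;\mathbb{R}^\ell)$. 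Testing against $C^\infty_c(M;\mathbb{R}^\ell)$ (which is dense in the dual of $L^s$ and also embeds in the dual of $L^{r(\cdot)}$) identifies $\tilde{\mathbf{u}}=\mathbf{u}$, and a standard subsequence argument (the weak limit is independent of the chosen sub-subsequence) shows that the \emph{whole} subsequence $(\mathbf{u}_{n_k})$ already extracted in the first step converges weakly to $\mathbf{u}$ in $L^{r(\cdot)}(M;\mathbb{R}^\ell)$.

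The main obstacle I anticipate is the bookkeeping around the subsequence: the lemma asks for a \emph{single} subsequence $(\mathbf{u}_{n_k})$ that works \emph{simultaneously} for all admissible $r$, so one must be careful to extract the subsequence once (in the fixed reflexive space $L^s$, with $s$ slightly above $p^-$) and then argue, for each individual $r\prec p$, that boundedness in $L^{r(\cdot)}$ plus uniqueness of the weak limit forces weak convergence of that already-fixed subsequence — rather than re-extracting for each $r$. A secondary technical point is the correct use of the embedding constant in \eqref{eq:hoelder_embeddding} with varying exponents $p_{n_k}$, but this is routine since the constant $2(1+|M|)$ is uniform in the exponent.
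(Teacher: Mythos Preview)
Your overall strategy matches the paper's: embed into a fixed reflexive Lebesgue space via \eqref{eq:hoelder_embeddding}, extract one subsequence there, then argue that this single subsequence converges weakly in every $L^{r(\cdot)}$ with $r\prec p$ by boundedness plus uniqueness of weak limits. That part is fine (modulo the slip that reflexivity of $L^{r(\cdot)}$ needs $r^->1$, not merely $r^-\ge 1$).

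The genuine gap is your second paragraph, where you try to show $\mathbf{u}\in L^{p(\cdot)}(M;\mathbb{R}^\ell)$ \emph{before} establishing the weak convergences in $L^{r(\cdot)}$. Lemma~\ref{8.2.1} concerns a \emph{fixed} function and varying exponents; it does \emph{not} yield
\[
\|\mathbf{u}\|_{p(\cdot),M}\le \liminf_{k\to\infty}\|\mathbf{u}_{n_k}\|_{p_{n_k}(\cdot),M}
\]
when both the functions and the exponents vary, and the phrase ``the weak $L^s$-limit coincides with any weak-$*$ limit in the relevant dual pairing'' does not supply the missing argument. The paper avoids this by reversing the order: first it proves weak convergence in $L^{s(\cdot)}$ for \emph{every} $s\prec p$ with $s^->1$ (exactly your step for $r$), then uses ordinary weak lower semicontinuity of the fixed norm $\|\cdot\|_{s(\cdot),M}$ to get $\|\mathbf{u}\|_{s(\cdot),M}\le 2(1+|M|)\sup_n\|\mathbf{u}_n\|_{p_n(\cdot),M}$ uniformly in $s$, and \emph{only then} applies Lemma~\ref{8.2.1} to the fixed function $\mathbf{u}$ along a sequence $s_n\prec p$ with $s_n\to p$ in $L^\infty(M)$ to conclude $\mathbf{u}\in L^{p(\cdot)}$. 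Swap your steps 4 and 5 accordingly and the argument goes through.
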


	\begin{proof}
		Set $c_0\coloneqq \sup_{n\in \mathbb{N}}{\|\mathbf{u}_n\|_{p_n(\cdot),M}}<\infty$ and let $s_0\in \mathcal{P}^\infty(M)$ be such that $s_0\prec p$ in $M$ and $s_0^->1$. Then, there exists $n_0\in \mathbb{N}$ such that $s_0\leq p_n$ a.e.\ in $M$ for all $n\in \mathbb{N}$ with $n\ge n_0$. Then, resorting~to~the variable Hölder inequality \eqref{eq:hoelder_embeddding},  for every $n\in \mathbb{N}$ with $n\ge n_0$, we find that
		\begin{align}
			\|\mathbf{u}_n\|_{s_0(\cdot),M}\leq 2\,(1+\vert M\vert)\,\|\mathbf{u}_n\|_{p_n(\cdot),M}\leq 2\,(1+\vert M\vert)\,c_0\,,\label{subs1}
		\end{align}
		\textit{i.e.}, $(\mathbf{u}_n)_{n\in \mathbb{N}}\subseteq L^{s_0(\cdot)}(M;\mathbb{R}^{\ell})$ is bounded. Owing to the reflexivity of $L^{s_0(\cdot)}(M;\mathbb{R}^{\ell})$ (\textit{cf}.~\cite[Thm.~3.4.7]{dhhr}),  
  there exists a subsequence $\mathbf{u}_{n_k}\in L^{p_{n_k}(\cdot)}(M;\mathbb{R}^{\ell})$, $k\in \mathbb{N}$,
        and a (weak) limit $\mathbf{u}\in  L^{s_0(\cdot)}(M;\mathbb{R}^{\ell})$  such that
		\begin{align}
				\mathbf{u}_{n_k}\rightharpoonup \mathbf{u}\quad\text{ in }L^{s_0(\cdot)}(M;\mathbb{R}^{\ell})\quad(k\to \infty)\,.\label{subs2}
		\end{align}
		If $s\in \mathcal{P}^\infty(M)$ is another variable exponent such that $s\prec p$  in $M$ and $s^->1$. Then, there exists $n_s\in \mathbb{N}$ such that $s\leq p_n$ a.e.\ in $M$ for all $n\in \mathbb{N}$ with $n\ge n_s$. Hence,
		$(\mathbf{u}_{n_k})_{k\in \mathbb{N}}\subseteq L^{s(\cdot)}(M;\mathbb{R}^{\ell})$ is again bounded, \textit{i.e.}, satisfies \eqref{subs1} with the same constant, and, thus, 
        there~exists~a~subsequence~${(\mathbf{u}_{n_{k_{i}^s}})_{i\in \mathbb{N}}\subseteq L^{s(\cdot)}(M;\mathbb{R}^{\ell})}$ (initially depending on this fixed $s\in \mathcal{P}^\infty(M)$) and a (weak) limit $\mathbf{u}_s\in L^{s(\cdot)}(M;\mathbb{R}^{\ell})$ such that
        \begin{align}\label{subs3}
            	\mathbf{u}_{n_{k_{i}^s}}\rightharpoonup \mathbf{u}_s\quad\text{ in }L^{s(\cdot)}(M;\mathbb{R}^{\ell})\quad(i\to \infty)\,.
        \end{align}
		By \eqref{subs2}, \eqref{subs3}, and the uniqueness of weak limits, we have that $\mathbf{u}_s= \mathbf{u}\in L^{\max\{s_0,s\}}(M;\mathbb{R}^{\ell})$. As a result, the standard convergence principle  (\textit{cf.}~\cite[Kap.\ I, Lem.\ 5.4]{GGZ}) yields~that~${\mathbf{u}_{n_k}\hspace*{-0.15em}\rightharpoonup \hspace*{-0.15em} \mathbf{u}} $~in~$L^{s(\cdot)}(M;\mathbb{R}^{\ell})$~${(k\hspace*{-0.15em} \to \hspace*{-0.15em}\infty)}$. 
		By the weak lower semi-continuity of $\|\cdot\|_{s(\cdot),M}$ and \eqref{subs1}, for every $s\hspace*{-0.1em}\in\hspace*{-0.1em}  \mathcal{P}^\infty(M)$ with $s \hspace*{-0.1em}\prec\hspace*{-0.1em} p$ in~$M$,~we~obtain
		\begin{align}
			\|\mathbf{u}\|_{s(\cdot),M}\leq \liminf_{ k\to \infty}{\|\mathbf{u}_{n_k}\|_{s(\cdot),M}}\leq 2\,(1+\vert M\vert)\,c_0\,.\label{subs4}
		\end{align}
		Next, choosing $(s_n)_{n\in \mathbb{N}} \subseteq \mathcal{P}^\infty(M)$ with $s_n\prec p$ in $M$ for all $n\in \mathbb{N}$  and $s_n \to  p$ in $L^\infty(M)$ $(n \to  \infty)$, using   Lemma \ref{8.2.1}  and \eqref{subs4}, we conclude that 
		\begin{align*}
			\|\mathbf{u}\|_{p(\cdot),M}\leq\liminf_{n\to \infty}{\|\mathbf{u}\|_{s_n(\cdot),M}}\leq 2\,(1+\vert M\vert)\,c_0\,,
		\end{align*}
        \textit{i.e.}, the claimed integrability $\mathbf{u}\in L^{p(\cdot)}(M;\mathbb{R}^{\ell})$.
	\end{proof}

	Next, we derive an analogue of Lemma \ref{8.2.2} for  variable Bochner--Lebesgue spaces and their~dual~spaces. To this end, for the remainder of the paper, we make~the~following~assumption.

    \begin{assumption}[Uniform exponent approximation]\label{assum:p_h}
		We assume that $q,p\in \mathcal{P}^{\log}(Q_T)$ with $q^-,p^-> 1$ and that $(q_n)_{n\in \mathbb{N}},(p_n)_{n\in \mathbb{N}}\subseteq \mathcal{P}^\infty(Q_T)$ are sequences such that 
		\begin{align*}
			\begin{aligned}
				q_n&\to  q&& \quad\text{ in }L^\infty(Q_T)&&\quad(n\to \infty)\,,\\
				p_n&\to  p &&\quad\text{ in }L^\infty(Q_T)&&\quad(n\to \infty)\,.
			\end{aligned}
		\end{align*}
	\end{assumption}

	\begin{proposition}\label{prop:Xqp_weak_compact}
		Let  $\boldsymbol{u}_n\in \bXpn$, $n\in \mathbb{N}$, be a sequence satisfying
		\begin{align}
			\sup_{n\in \mathbb{N}}{\|\boldsymbol{u}_n\|_{\bXpn}}<\infty\,.\label{eq:Xqp_weak_compact.1}
		\end{align}
		Then, there exists a subsequence $\boldsymbol{u}_{n_k}\in \mathbfcal{U}_{\D}^{q_{\smash{n_k}},p_{\smash{n_k}}}$, $k\in \mathbb{N}$,
        and a limit $\boldsymbol{u}\in \bXp$, such that  for~every ${r,s\in \mathcal{P}^\infty(Q_T)}$ with $r\prec q$ in $Q_T$ and $s\prec p$ in $Q_T$, it holds that
		\begin{align*}
			\boldsymbol{u}_{n_k} \rightharpoonup \boldsymbol{u}\quad\textup{ in }\mathbfcal{U}_{\D}^{r,s}\quad(k\to \infty )\,.
		\end{align*}
	\end{proposition}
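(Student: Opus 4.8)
The plan is to reduce Proposition~\ref{prop:Xqp_weak_compact} to the already-proved scalar statement in Lemma~\ref{8.2.2}, applied separately to the functions themselves and to their symmetric gradients, and then to assemble the pieces using the duality characterization of Proposition~\ref{3.7} and the fact that $\bVp$ is a closed subspace of $\bXp$ is not needed here, but the norm structure $\|\cdot\|_{\bXpn}=\|\cdot\|_{q_n(\cdot,\cdot),Q_T}+\|\mathbf{D}_x(\cdot)\|_{p_n(\cdot,\cdot),Q_T}$ is. First, from the uniform bound \eqref{eq:Xqp_weak_compact.1} we read off
\begin{align*}
\sup_{n\in\mathbb{N}}\|\boldsymbol{u}_n\|_{q_n(\cdot,\cdot),Q_T}<\infty\,,\qquad
\sup_{n\in\mathbb{N}}\|\mathbf{D}_x\boldsymbol{u}_n\|_{p_n(\cdot,\cdot),Q_T}<\infty\,.
\end{align*}
Applying Lemma~\ref{8.2.2} on $M=Q_T$ to the sequence $(\boldsymbol{u}_n)_{n\in\mathbb{N}}$ with exponents $q_n\to q$ yields, after passing to a subsequence, a limit $\boldsymbol{u}\in L^{q(\cdot,\cdot)}(Q_T;\mathbb{R}^d)$ with $\boldsymbol{u}_{n_k}\rightharpoonup\boldsymbol{u}$ in $L^{r(\cdot,\cdot)}(Q_T;\mathbb{R}^d)$ for every $r\prec q$ in $Q_T$; applying it again to the already-extracted subsequence of gradients $(\mathbf{D}_x\boldsymbol{u}_{n_k})_k$ with exponents $p_{n_k}\to p$ gives, after a further subsequence, a limit $\boldsymbol{G}\in L^{p(\cdot,\cdot)}(Q_T;\mathbb{R}^{d\times d}_{\mathrm{sym}})$ with $\mathbf{D}_x\boldsymbol{u}_{n_k}\rightharpoonup\boldsymbol{G}$ in $L^{s(\cdot,\cdot)}(Q_T;\mathbb{R}^{d\times d}_{\mathrm{sym}})$ for every $s\prec p$ in $Q_T$.

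The crux is then to identify $\boldsymbol{G}=\mathbf{D}_x\boldsymbol{u}$ and to check that $\boldsymbol{u}$ lies in $\bXp$, i.e.\ that $\boldsymbol{u}(t)\in U^{q,p}_{\D}(t)=E^{q(t,\cdot),p(t,\cdot)}_0$ for a.e.\ $t\in I$. For the identification of the gradient, I would test against smooth compactly supported tensor fields $\boldsymbol{\Phi}\in C^\infty_c(Q_T;\mathbb{R}^{d\times d}_{\mathrm{sym}})$: since such a $\boldsymbol{\Phi}$ belongs to $L^{r'(\cdot,\cdot)}(Q_T;\cdot)$ and $L^{s'(\cdot,\cdot)}(Q_T;\cdot)$ for suitable $r\prec q$, $s\prec p$ (it is bounded with bounded support, hence in every variable Lebesgue space on $Q_T$), the weak convergences above together with integration by parts give
\begin{align*}
(\boldsymbol{G},\boldsymbol{\Phi})_{Q_T}=\lim_{k\to\infty}(\mathbf{D}_x\boldsymbol{u}_{n_k},\boldsymbol{\Phi})_{Q_T}
=-\lim_{k\to\infty}(\boldsymbol{u}_{n_k},\mathrm{div}_x\boldsymbol{\Phi})_{Q_T}
=-(\boldsymbol{u},\mathrm{div}_x\boldsymbol{\Phi})_{Q_T}\,,
\end{align*}
so $\mathbf{D}_x\boldsymbol{u}=\boldsymbol{G}\in L^{p(\cdot,\cdot)}(Q_T;\mathbb{R}^{d\times d}_{\mathrm{sym}})$ in the distributional sense. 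For the boundary condition, the cleanest route is to note that each $\boldsymbol{u}_n\in\bXpn$ satisfies $\boldsymbol{u}_n(t)\in E^{q_n(t,\cdot),p_n(t,\cdot)}_0$ for a.e.\ $t$; fixing an exponent $s_0\prec p$ with $s_0^->1$ and an exponent $r_0\prec q$ with $r_0^->1$, one has for $n$ large $s_0\le p_n$ and $r_0\le q_n$ a.e., and the canonical injections $E^{q_n(\cdot),p_n(\cdot)}_0\hookrightarrow E^{r_0(\cdot),s_0(\cdot)}_0$ are contractions up to the constant in \eqref{eq:hoelder_embeddding}; since $E^{r_0(\cdot),s_0(\cdot)}_0$ is a (fixed, $n$-independent) closed subspace of $E^{r_0(\cdot),s_0(\cdot)}$, which is reflexive, it is weakly closed, so the weak limit $\boldsymbol{u}$ inherits membership in $\bXp$ via the same subsequence argument applied time-slice-wise (or, more directly, by observing that $\mathbfcal{U}^{r_0,s_0}_{\D}$ is weakly closed in the ambient product space and the bounded sequence $(\boldsymbol{u}_{n_k})$ lies in it for large $k$). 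Combined with $\boldsymbol{u}\in L^{q(\cdot,\cdot)}(Q_T;\mathbb{R}^d)$ and $\mathbf{D}_x\boldsymbol{u}\in L^{p(\cdot,\cdot)}(Q_T;\mathbb{R}^{d\times d}_{\mathrm{sym}})$, this yields $\boldsymbol{u}\in\bXp$.

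Finally, to upgrade to $\boldsymbol{u}_{n_k}\rightharpoonup\boldsymbol{u}$ in $\mathbfcal{U}^{r,s}_{\D}$ for \emph{every} $r\prec q$, $s\prec p$ (not just for one fixed pair), I would run the diagonal/standard-convergence-principle argument exactly as in the proof of Lemma~\ref{8.2.2}: the weak limits in the various $\mathbfcal{U}^{r,s}_{\D}$ all agree with the single element $\boldsymbol{u}$ by uniqueness of weak limits in the smaller spaces $\mathbfcal{U}^{\max\{r,r_0\},\max\{s,s_0\}}_{\D}$, and since $(\boldsymbol{u}_{n_k})$ is bounded in each $\mathbfcal{U}^{r,s}_{\D}$ (again by \eqref{eq:hoelder_embeddding} applied to both the function and its gradient) and $\mathbfcal{U}^{r,s}_{\D}$ is reflexive by Proposition~\ref{3.4}, the convergence principle \cite[Kap.~I, Lem.~5.4]{GGZ} upgrades subsequential weak convergence to full (sub)sequential weak convergence along the already-fixed $(n_k)$. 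The main obstacle I anticipate is the bookkeeping to make the \emph{same} subsequence work simultaneously for the function and its gradient and for all admissible $(r,s)$; this is handled by extracting once for the function, once for the gradient, and then invoking the convergence principle rather than extracting further, so that no genuinely new idea beyond Lemma~\ref{8.2.2} and the duality of Proposition~\ref{3.7} is required, but one must be careful that the testing in Proposition~\ref{3.7}'s duality pairing is compatible with the varying exponents — which it is, because the duality pairing only involves the fixed limit exponents $r,s$ once the subsequence is fixed.
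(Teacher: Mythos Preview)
Your proposal is correct and follows essentially the same route as the paper: split the bound \eqref{eq:Xqp_weak_compact.1} into bounds on $\boldsymbol{u}_n$ and $\mathbf{D}_x\boldsymbol{u}_n$, apply Lemma~\ref{8.2.2} to each, and use weak closedness of $\mathbfcal{U}^{r,s}_{\D}$ to place the limit in the right space. Two remarks on economy: first, your explicit integration-by-parts step to identify $\boldsymbol{G}=\mathbf{D}_x\boldsymbol{u}$ is subsumed by the weak-closedness argument you give immediately after---once you know $(\boldsymbol{u}_{n_k})_{k\ge k_0}\subseteq\mathbfcal{U}^{r,s}_{\D}$ is weakly Cauchy there and $\mathbfcal{U}^{r,s}_{\D}$ is weakly closed, both the gradient identification and the membership follow in one stroke, which is how the paper does it; second, your final paragraph invoking the convergence principle is unnecessary, since Lemma~\ref{8.2.2} already delivers a \emph{single} subsequence that works for \emph{all} $r\prec q$ (respectively $s\prec p$), so after two extractions you are done. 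The reference to Proposition~\ref{3.7} is not actually used and can be dropped.
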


	\begin{proof}
        From \eqref{eq:Xqp_weak_compact.1}, it follows that $\sup_{n\in \mathbb{N}}{\|\boldsymbol{u}_n\|_{\smash{q_n(\cdot,\cdot),Q_T}}}<\infty$ and $\sup_{n\in \mathbb{N}}{\|\mathbf{D}_x\boldsymbol{u}_n\|_{\smash{p_n(\cdot,\cdot),Q_T}}}<\infty$.~Hence, 
		Lemma \ref{8.2.2} yields a subsequence $\boldsymbol{u}_{n_k}\in  \mathbfcal{U}_{\D}^{q_{\smash{n_k}},p_{\smash{n_k}}}$, $k\in \mathbb{N}$, and (weak) limits  $\boldsymbol{u}\in L^{q(\cdot,\cdot)}(Q_T;\mathbb{R}^d)$~and $\mathbfcal{D}\in  L^{p(\cdot,\cdot)}(Q_T;\mathbb{R}^{d\times d}_{\textup{sym}})$ such that for every  $r,s\in \mathcal{P}^\infty(Q_T)$ with $r\prec q$ in $Q_T$ and $s\prec p$ in $Q_T$,~it~holds~that
		\begin{align*}
            \begin{aligned}
			\boldsymbol{u}_{n_k}& \rightharpoonup \boldsymbol{u}&&\quad\text{ in }L^{r(\cdot,\cdot)}(Q_T;\mathbb{R}^d)&&\quad(k\to \infty)\,,\\
			\mathbf{D}_x\boldsymbol{u}_{n_k}&\rightharpoonup \mathbfcal{D}&&\quad\text{ in }L^{s(\cdot,\cdot)}(Q_T;\mathbb{R}^{d\times d}_{\textup{sym}})&&\quad(k\to \infty)\,.
            \end{aligned}
		\end{align*}
        Then, for $k_0=k_0(r,s)\in \mathbb{N}$ such that   $r\leq q_{n_k}$ a.e.\  in $Q_T$ and $s\leq p_{n_k}$ a.e.\  in $Q_T$~for~all~${k\in \mathbb{N}}$~with~${k\ge k_0}$,  $(\boldsymbol{u}_{n_k})_{k\in \mathbb{N}, k\ge k_0 }$ is a weak Cauchy sequence in $\mathbfcal{U}_{\D}^{r,s}$. Since $\mathbfcal{U}_{\D}^{r,s}$ is weakly closed, we  infer that $\boldsymbol{u}\in \mathbfcal{U}_{\D}^{r,s}$ with $\mathbf{D}_x\boldsymbol{u}=\mathbfcal{D}$ in $L^{s(\cdot,\cdot)}(Q_T;\mathbb{R}^{d\times d}_{\textup{sym}})$. Eventually, since $\boldsymbol{u}\in L^{q(\cdot,\cdot)}(Q_T;\mathbb{R}^d)$ and $\mathbfcal{D}\in L^{p(\cdot,\cdot)}(Q_T;\mathbb{R}^{d\times d}_{\textup{sym}})$, we conclude that $\boldsymbol{u}\in \bXp$.
	\end{proof}

	\begin{proposition}\label{prop:Xqp_prime_weak_compact}
			Let $\boldsymbol{f}_n^*\in (\bXpn)^*$, $n\in \mathbb{N}$, be a sequence satisfying
		\begin{align}
		\sup_{n\in \mathbb{N}}{\|\boldsymbol{f}_n^*\|_{(\bXpn)^*}}<\infty\,.\label{eq:8.3.a}
		\end{align}
		Then, there exists a subsequence $\boldsymbol{f}_{n_k}^*\in  ( \mathbfcal{U}_{\D}^{q_{\smash{n_k}},p_{\smash{n_k}}})^*$, $k\in  \mathbb{N}$, and a limit $\boldsymbol{f}^*\in  (\bXp)^*$, such~that~for~every $r,s\in \mathcal{P}^\infty(Q_T)$ with $r\succ q$ in $Q_T$ and $s\succ p$ in $Q_T$, it holds that
		\begin{align}
		(\mathrm{id}_{\smash{\mathbfcal{U}_{\D}^{r,s}}})^*\boldsymbol{f}_{n_k}^*\rightharpoonup (\mathrm{id}_{\smash{\mathbfcal{U}_{\D}^{r,s}}})^*\boldsymbol{f}^*\quad\textup{ in }(\mathbfcal{U}_{\D}^{r,s})^*\quad(k\to \infty)\,,\label{eq:8.3.b}
		\end{align}
		where, for every $n\in \mathbb{N}$, the mapping $	(\mathrm{id}_{\smash{\mathbfcal{U}_{\D}^{r,s}}})^*\colon (\bXpn)^*\to (\mathbfcal{U}_{\D}^{r,s})^*$ denotes the adjoint operator to the\vspace*{0.5mm} identity mapping $\mathrm{id}_{\smash{\mathbfcal{U}_{\D}^{r,s}}}\colon \mathbfcal{U}_{\D}^{r,s}\to \bXpn$, defined by $\mathrm{id}_{\smash{\mathbfcal{U}_{\D}^{r,s}}}\boldsymbol{u}=\boldsymbol{u}$ in $\bXpn$ for all $\boldsymbol{u}\in \mathbfcal{U}_{\D}^{r,s}$.
	\end{proposition}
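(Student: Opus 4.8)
The plan is to dualize the proof of Proposition~\ref{prop:Xqp_weak_compact}: represent each $\boldsymbol{f}_n^*$ by a pair of $L^{q_n'(\cdot,\cdot)}$- and $L^{p_n'(\cdot,\cdot)}$-functions via Proposition~\ref{3.7}, apply Lemma~\ref{8.2.2} to those representing functions, and then transport the resulting weak convergences back through the adjoint identity maps. For the first step, note that since $q^-,p^->1$ and $q_n\to q$, $p_n\to p$ in $L^\infty(Q_T)$ by Assumption~\ref{assum:p_h}, we have $q_n^-,p_n^->1$ for all but finitely many $n$, so after discarding those indices I may assume it for all $n\in\mathbb{N}$. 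Applying Proposition~\ref{3.7} with the exponents $q_n,p_n$ then produces $\boldsymbol{f}_n\in L^{q_n'(\cdot,\cdot)}(Q_T;\mathbb{R}^d)$ and $\boldsymbol{F}_n\in L^{p_n'(\cdot,\cdot)}(Q_T;\mathbb{R}^{d\times d}_{\textup{sym}})$ with
\begin{align*}
\langle\boldsymbol{f}_n^*,\boldsymbol{u}\rangle_{\bXpn}=(\boldsymbol{f}_n,\boldsymbol{u})_{Q_T}+(\boldsymbol{F}_n,\mathbf{D}_x\boldsymbol{u})_{Q_T}\quad\text{ for all }\boldsymbol{u}\in\bXpn
\end{align*}
and $\|\boldsymbol{f}_n\|_{q_n'(\cdot,\cdot),Q_T}+\|\boldsymbol{F}_n\|_{p_n'(\cdot,\cdot),Q_T}\le 2\,\|\boldsymbol{f}_n^*\|_{(\bXpn)^*}$; together with \eqref{eq:8.3.a} this gives $\sup_{n\in\mathbb{N}}\|\boldsymbol{f}_n\|_{q_n'(\cdot,\cdot),Q_T}<\infty$ and $\sup_{n\in\mathbb{N}}\|\boldsymbol{F}_n\|_{p_n'(\cdot,\cdot),Q_T}<\infty$.

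Next I would pass to the limit in the representing sequences by invoking Lemma~\ref{8.2.2}. This requires first checking that $q_n'\to q'$ and $p_n'\to p'$ in $L^\infty(Q_T)$: the map $t\mapsto t'=\frac{t}{t-1}$ is Lipschitz on every compact subinterval of $(1,\infty)$, and since $q^-,p^->1$ and $q^+,p^+<\infty$ the values of $q_n,q$ (resp.\ $p_n,p$) eventually lie in one fixed such interval, so uniform convergence is preserved; moreover $q',p'\in\mathcal{P}^\infty(Q_T)$ with $(q')^-,(p')^->1$. Two successive applications of Lemma~\ref{8.2.2} --- first to $(\boldsymbol{f}_n)$ with exponents $(q_n')$, then to the induced subsequence of $(\boldsymbol{F}_n)$ with exponents $(p_n')$ --- followed by relabeling then yield a subsequence $(\boldsymbol{f}_{n_k},\boldsymbol{F}_{n_k})_{k\in\mathbb{N}}$ and limits $\boldsymbol{f}\in L^{q'(\cdot,\cdot)}(Q_T;\mathbb{R}^d)$, $\boldsymbol{F}\in L^{p'(\cdot,\cdot)}(Q_T;\mathbb{R}^{d\times d}_{\textup{sym}})$ such that $\boldsymbol{f}_{n_k}\rightharpoonup\boldsymbol{f}$ in $L^{a(\cdot,\cdot)}(Q_T;\mathbb{R}^d)$ for every $a\prec q'$ in $Q_T$ and $\boldsymbol{F}_{n_k}\rightharpoonup\boldsymbol{F}$ in $L^{b(\cdot,\cdot)}(Q_T;\mathbb{R}^{d\times d}_{\textup{sym}})$ for every $b\prec p'$ in $Q_T$. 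I then define $\boldsymbol{f}^*\in(\bXp)^*$ to be the functional $\boldsymbol{u}\mapsto(\boldsymbol{f},\boldsymbol{u})_{Q_T}+(\boldsymbol{F},\mathbf{D}_x\boldsymbol{u})_{Q_T}$, which indeed belongs to $(\bXp)^*$ by Proposition~\ref{3.7} applied with $q,p$.

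Finally I would transport the convergence. Fix $r,s\in\mathcal{P}^\infty(Q_T)$ with $r\succ q$, $s\succ p$ in $Q_T$; then $r\ge q$, $s\ge p$ a.e.\ and $r^-,s^->1$, so by \eqref{eq:hoelder_embeddding} and the definitions of the spaces (exactly as in the proof of Proposition~\ref{prop:Xqp_weak_compact}) one has continuous embeddings $\mathbfcal{U}_{\D}^{r,s}\hookrightarrow\bXp$ and, by uniform convergence of the exponents, $\mathbfcal{U}_{\D}^{r,s}\hookrightarrow\mathbfcal{U}_{\D}^{q_{n_k},p_{n_k}}$ for all $k\ge k_0(r,s)$, so that every adjoint appearing in \eqref{eq:8.3.b} is well-defined, and $\mathbfcal{U}_{\D}^{r,s}$ is separable and reflexive by Proposition~\ref{3.4}. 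Since conjugation reverses the strict ordering, $r\succ q$ and $s\succ p$ give $r'\prec q'$ and $s'\prec p'$ in $Q_T$, so the previous step yields $\boldsymbol{f}_{n_k}\rightharpoonup\boldsymbol{f}$ in $L^{r'(\cdot,\cdot)}(Q_T;\mathbb{R}^d)$ and $\boldsymbol{F}_{n_k}\rightharpoonup\boldsymbol{F}$ in $L^{s'(\cdot,\cdot)}(Q_T;\mathbb{R}^{d\times d}_{\textup{sym}})$. For every $\boldsymbol{u}\in\mathbfcal{U}_{\D}^{r,s}$ and $k\ge k_0$, unwinding the adjoint together with the representation from the first step gives
\begin{align*}
\big\langle(\mathrm{id}_{\mathbfcal{U}_{\D}^{r,s}})^*\boldsymbol{f}_{n_k}^*,\boldsymbol{u}\big\rangle_{\mathbfcal{U}_{\D}^{r,s}}=\big\langle\boldsymbol{f}_{n_k}^*,\boldsymbol{u}\big\rangle_{\mathbfcal{U}_{\D}^{q_{n_k},p_{n_k}}}=(\boldsymbol{f}_{n_k},\boldsymbol{u})_{Q_T}+(\boldsymbol{F}_{n_k},\mathbf{D}_x\boldsymbol{u})_{Q_T}\,,
\end{align*}
and since $\boldsymbol{u}\in L^{r(\cdot,\cdot)}(Q_T;\mathbb{R}^d)$ and $\mathbf{D}_x\boldsymbol{u}\in L^{s(\cdot,\cdot)}(Q_T;\mathbb{R}^{d\times d}_{\textup{sym}})$, the right-hand side converges to $(\boldsymbol{f},\boldsymbol{u})_{Q_T}+(\boldsymbol{F},\mathbf{D}_x\boldsymbol{u})_{Q_T}=\langle(\mathrm{id}_{\mathbfcal{U}_{\D}^{r,s}})^*\boldsymbol{f}^*,\boldsymbol{u}\rangle_{\mathbfcal{U}_{\D}^{r,s}}$. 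As $\mathbfcal{U}_{\D}^{r,s}$ is reflexive, weak and weak-$*$ convergence in $(\mathbfcal{U}_{\D}^{r,s})^*$ coincide, so testing against arbitrary $\boldsymbol{u}$ is precisely \eqref{eq:8.3.b}.

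I do not expect a genuinely hard point: the statement is the dual counterpart of Proposition~\ref{prop:Xqp_weak_compact}, and the argument is correspondingly parallel. The step requiring the most care is the exponent bookkeeping --- verifying that the conjugate exponents still converge uniformly (which is exactly where $q^-,p^->1$ enters, keeping the sequences away from the pole of $t\mapsto t'$) and that the hypotheses $r\succ q$, $s\succ p$ translate both into $r'\prec q'$, $s'\prec p'$, needed to invoke Lemma~\ref{8.2.2}, and into the embeddings $\mathbfcal{U}_{\D}^{r,s}\hookrightarrow\mathbfcal{U}_{\D}^{q_{n_k},p_{n_k}}$ needed to make the adjoints meaningful. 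The passage from weak-$*$ to weak convergence in the dual is immediate from reflexivity of $\mathbfcal{U}_{\D}^{r,s}$.
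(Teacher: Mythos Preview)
Your proposal is correct and follows essentially the same route as the paper: represent each $\boldsymbol{f}_n^*$ via Proposition~\ref{3.7}, apply Lemma~\ref{8.2.2} to the conjugate-exponent representatives, and define $\boldsymbol{f}^*\coloneqq\mathbfcal{J}_{\D}(\boldsymbol{f},\boldsymbol{F})$. The paper compresses your final testing step into the single observation that $\mathbfcal{J}_{\D}$ is linear and bounded, hence weakly continuous, but the content is identical; your explicit checks on the conjugate exponents and the reflexivity of $\mathbfcal{U}_{\D}^{r,s}$ are the details the paper leaves implicit.
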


	\begin{proof} Proposition \ref{3.7} yields both  $\boldsymbol{f}_n\in L^{q_n'(\cdot,\cdot)}(Q_T;\mathbb{R}^d)$, $n\in\mathbb{N}$, and $\boldsymbol{F}_n\in L^{p_n'(\cdot,\cdot)}(Q_T;\mathbb{R}^{d\times d}_{\textup{sym}})$, $n\in\mathbb{N}$, such that for every $n\in \mathbb{N}$, it holds that $\boldsymbol{f}_n^*=\mathbfcal{J}_{\D}(\boldsymbol{f}_n,\boldsymbol{F}_n)$ in $(\bXpn)^*$ and 
		\begin{align}
			\frac{1}{2}\,\|\boldsymbol{f}_n^*\|_{(\bXpn)^*}\leq\|\boldsymbol{f}_n\|_{q_n'(\cdot,\cdot),Q_T}+\|\boldsymbol{F}_n\|_{p_n'(\cdot,\cdot),Q_T}\leq 2\,\|\boldsymbol{f}_n^*\|_{(\bXpn)^*}\,.
		\end{align}
        Thus, \hspace*{-0.15mm}Lemma \hspace*{-0.15mm}\ref{8.2.2} \hspace*{-0.15mm}yields \hspace*{-0.15mm}subsequences \hspace*{-0.15mm}$\boldsymbol{f}_{n_k}\hspace*{-0.15em}\in\hspace*{-0.15em} L^{q_{n_k}'(\cdot,\cdot)}(Q_T;\mathbb{R}^d)$, $k\hspace*{-0.15em}\in\hspace*{-0.15em} \mathbb{N}$, \hspace*{-0.15mm}and \hspace*{-0.15mm}$\boldsymbol{F}_{n_k}\hspace*{-0.15em}\in\hspace*{-0.15em} L^{p_{n_k}'(\cdot,\cdot)}(Q_T;\mathbb{R}^{d\times d}_{\textup{sym}})$,~\hspace*{-0.15mm}${k\hspace*{-0.15em}\in\hspace*{-0.15em} \mathbb{N}}$, such that~for~every  $r,s\in \mathcal{P}^\infty(Q_T)$ with $r\succ q$ in $Q_T$ and $s\succ p$ in $Q_T$, it holds that
		\begin{align*}
            \begin{aligned}
		          \boldsymbol{f}_{n_k}&\rightharpoonup \boldsymbol{f}&&\quad\text{ in }L^{r'(\cdot,\cdot)}(Q_T;\mathbb{R}^d)&&\quad(k\to \infty)\,,\\
		          \boldsymbol{F}_{n_k}&\rightharpoonup \boldsymbol{F}&&\quad\text{ in }L^{s'(\cdot,\cdot)}(Q_T;\mathbb{R}^{d\times d}_{\textup{sym}})&&\quad(k\to \infty)\,.
            \end{aligned}
		\end{align*}
		Using that  $\mathbfcal{J}_{\D}\colon L^{r'(\cdot,\cdot)}(Q_T;\mathbb{R}^d)\times L^{s'(\cdot,\cdot)}(Q_T;\mathbb{R}^{d\times d}_{\textup{sym}})\to (\mathbfcal{U}_{\D}^{r,s})^*$ is weakly continuous (\textit{cf.}~\mbox{Proposition}~\ref{3.7}), setting $\boldsymbol{f}^*\coloneqq \mathbfcal{J}_{\D}(\boldsymbol{f},\boldsymbol{F})\in (\bXp)^*$, we conclude that \eqref{eq:8.3.b} applies.
	\end{proof}
	
	\newpage
	We are able to prove
	an analogue of a parabolic embedding for $\bXp\cap L^\infty(I;Y)$ (\textit{cf}.\ \cite[Prop.~3.8]{alex-book}), which holds for uniformly continuous exponents, now in the  context of $\bXpn\cap L^\infty(I;Y)$,~$n\in \mathbb{N}$.

	\begin{proposition}[Parabolic embedding for $\bXpn\cap L^\infty(I;Y)$, $n\in \mathbb{N}$]\label{prop:non_conform_poincare}
	For every $ \varepsilon\in (0,(p^-)_*-1]$, there exists $n_0\coloneqq n_0(\varepsilon)\in \mathbb{N}$ and a constant $c_\varepsilon\coloneqq c_\varepsilon(p,\Omega)>0$ such that for every $n\in  \mathbb{N}$ with $n\ge n_0$ and $\boldsymbol{u}_n\in  \bXpn\cap L^\infty(I;Y)$, it holds that $\boldsymbol{u}_n\in  L^{(p_n)_*(\cdot,\cdot)-\varepsilon}(Q_T;\mathbb{R}^d)$ with
		\begin{align}\label{eq:non_conform_poincare}
		\|\boldsymbol{u}_n\|_{(p_n)_*(\cdot,\cdot)-\varepsilon,Q_T}
		\leq c_\varepsilon\,\big(\|\mathbf{D}_x\boldsymbol{u}_n\|_{p_n(\cdot,\cdot),Q_T}+\|\boldsymbol{u}_n\|_{L^\infty(I;Y)}\big)\,,
		\end{align}
        where 
        $r_\ast\coloneqq (r)_\ast\coloneqq r\frac{d+2}{d}$ for all $r\in [1,+\infty)$.
	\end{proposition}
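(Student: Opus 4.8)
The plan is to reduce the assertion to the parabolic embedding \cite[Prop.~3.8]{alex-book} for the \emph{fixed}, $\log$-Hölder exponent $p$, absorbing the discrepancy between $p_n$ and $p$ into the elementary embedding \eqref{eq:hoelder_embeddding} between variable Lebesgue spaces with comparable exponents on the bounded set $Q_T$. Given $\varepsilon\in(0,(p^-)_*-1]$, I would choose $\varepsilon'>0$ with $\varepsilon'\tfrac{d+2}{d}\leq\tfrac{\varepsilon}{4}$ and $\varepsilon'\leq\tfrac12\min\{p^--1,q^--1\}$, and pass to the shifted exponents $\tilde p\coloneqq p-\varepsilon'$, $\tilde q\coloneqq q-\varepsilon'$, which again belong to $\mathcal{P}^{\log}(Q_T)$ and satisfy $\tilde p^-,\tilde q^->1$, $\tilde q\geq\tilde p$, and $\tfrac\varepsilon2\in(0,(\tilde p^-)_*-1]$ (the last inclusion since $(\tilde p^-)_*-1=(p^-)_*-1-\varepsilon'\tfrac{d+2}{d}\geq(p^-)_*-1-\tfrac\varepsilon4\geq\tfrac{3\varepsilon}4$, using $\varepsilon\leq(p^-)_*-1$). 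By Assumption \ref{assum:p_h} there is $n_0=n_0(\varepsilon)\in\mathbb{N}$ with $\|p_n-p\|_{L^\infty(Q_T)},\|q_n-q\|_{L^\infty(Q_T)}\leq\varepsilon'$ for all $n\geq n_0$, hence $\tilde p\leq p_n$ and $\tilde q\leq q_n$ a.e.\ in $Q_T$ for such $n$.

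For fixed $n\geq n_0$ and $\boldsymbol{u}_n\in\bXpn\cap L^\infty(I;Y)$, I would first observe that $\tilde q\leq q_n$ and $\tilde p\leq p_n$ a.e.\ in $Q_T$ give, via \eqref{eq:hoelder_embeddding} and the density of $C^\infty_c(\Omega;\mathbb{R}^d)$, the pointwise-in-time inclusion $E^{q_n(t,\cdot),p_n(t,\cdot)}_0\subseteq E^{\tilde q(t,\cdot),\tilde p(t,\cdot)}_0$ for a.e.\ $t\in I$, whence $\boldsymbol{u}_n\in\mathbfcal{U}^{\tilde q,\tilde p}_{\D}\cap L^\infty(I;Y)$, together with $\|\mathbf{D}_x\boldsymbol{u}_n\|_{\tilde p(\cdot,\cdot),Q_T}\leq 2(1+|Q_T|)\,\|\mathbf{D}_x\boldsymbol{u}_n\|_{p_n(\cdot,\cdot),Q_T}$ from \eqref{eq:hoelder_embeddding} on $Q_T$. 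Then \cite[Prop.~3.8]{alex-book}, applied with the exponents $\tilde q,\tilde p$ and the admissible parameter $\tfrac\varepsilon2$, would give $\boldsymbol{u}_n\in L^{(\tilde p)_*(\cdot,\cdot)-\varepsilon/2}(Q_T;\mathbb{R}^d)$ with
\begin{align*}
\|\boldsymbol{u}_n\|_{(\tilde p)_*(\cdot,\cdot)-\varepsilon/2,Q_T}\leq c(\tilde p,\Omega)\,\big(\|\mathbf{D}_x\boldsymbol{u}_n\|_{\tilde p(\cdot,\cdot),Q_T}+\|\boldsymbol{u}_n\|_{L^\infty(I;Y)}\big)\,.
\end{align*}
To conclude, I would use $p_n\leq p+\varepsilon'$ a.e.\ in $Q_T$ together with $2\varepsilon'\tfrac{d+2}{d}+\tfrac\varepsilon2\leq\varepsilon$ to obtain $(p_n)_*-\varepsilon\leq p_*+\varepsilon'\tfrac{d+2}{d}-\varepsilon\leq p_*-\varepsilon'\tfrac{d+2}{d}-\tfrac\varepsilon2=(\tilde p)_*-\tfrac\varepsilon2$ a.e.\ in $Q_T$, so that a final application of \eqref{eq:hoelder_embeddding} (both exponents being admissible for $n\geq n_0$) combined with the displayed estimate and the bound on $\|\mathbf{D}_x\boldsymbol{u}_n\|_{\tilde p(\cdot,\cdot),Q_T}$ yields \eqref{eq:non_conform_poincare} with $c_\varepsilon\coloneqq 4(1+|Q_T|)^2\,c(\tilde p,\Omega)$, a constant depending only on $\varepsilon$, $p$, $\Omega$ (and the fixed data $T,d$), not on $n$ or $\boldsymbol{u}_n$.

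The hard part is the exponent bookkeeping and the coupling between the two limiting choices: $p$ has to be shifted \emph{down} to $\tilde p=p-\varepsilon'$ so that $\tilde p\leq p_n$ (which makes $\boldsymbol{u}_n$ admissible for the fixed-exponent embedding and controls the right-hand side of \eqref{eq:non_conform_poincare}), while at the same time the resulting critical exponent $(\tilde p)_*$ must still dominate $(p_n)_*-\varepsilon$ from above (which is what lets one recover the left-hand side); reconciling the two is precisely what forces the quantitative constraint $\varepsilon'\leq\tfrac{\varepsilon d}{4(d+2)}$ and, through the uniform convergence in Assumption \ref{assum:p_h}, the threshold $n_0(\varepsilon)$. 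Beyond that, everything is a direct invocation of \cite[Prop.~3.8]{alex-book} and two applications of the embedding \eqref{eq:hoelder_embeddding}.
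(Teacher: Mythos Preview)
Your argument is correct and is essentially the same approach as the paper's: shift the fixed $\log$-Hölder exponent $p$ down by a small amount (the paper uses $\varepsilon/4$, you use a comparable $\varepsilon'$), use the uniform convergence $p_n\to p$ from Assumption~\ref{assum:p_h} to sandwich $p_n$ between $\tilde p$ and $p+\varepsilon'$, and then apply the known fixed-exponent parabolic embedding from \cite{alex-book}. The only packaging difference is that the paper first proves a pointwise-in-time \emph{modular} estimate (Lemma~\ref{8.2.4.1}, via \cite[Lem.~3.5]{alex-book}) and then transfers to the norm inequality following the proof of \cite[Prop.~3.6]{alex-book}, whereas you bypass the modular step and invoke the norm-level result \cite[Prop.~3.8]{alex-book} directly on $\mathbfcal{U}^{\tilde q,\tilde p}_{\D}\cap L^\infty(I;Y)$; this is marginally more direct but buys nothing new, and the exponent bookkeeping (your constraint $\varepsilon'\tfrac{d+2}{d}\leq \tfrac{\varepsilon}{4}$ versus the paper's $\|(p_n)_*-p_*\|_{\infty,Q_T}+\|p_n-p\|_{\infty,Q_T}<\tfrac{\varepsilon}{4}$) is the same computation in slightly different notation.
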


    Proposition \ref{prop:non_conform_poincare}  is an immediate consequence of the following point-wise estimate in~terms~of~modulars.\enlargethispage{10mm}

	\begin{lemma}\label{8.2.4.1}
		For every~${\varepsilon\in (0,(p^-)_*-1]}$, there exists $n_0\coloneqq n_0(\varepsilon)\in \mathbb{N}$ and constants $c_\varepsilon\coloneqq c(\varepsilon,p,\Omega)$, $\gamma_{\varepsilon}\coloneqq \gamma_\varepsilon(\varepsilon,p)>0$ such that for every $n\in \mathbb{N}$ with $n\ge  n_0$ and $\mathbf{u}_n\in  U^{q_n,p_n}_{\D}(t)\cap Y$, it~holds~that~$\mathbf{u}_n\in L^{(p_n)_*(t,\cdot)-\varepsilon}(\Omega;\mathbb{R}^d)$ with 
		\begin{align}\label{eq:8.2.4.1}
		\rho_{(p_n)_*(t,\cdot)-\varepsilon,\Omega}(\mathbf{u}_n)
		\leq c_\varepsilon\,\big(1+\rho_{p_n(t,\cdot),\Omega}(\mathbf{D}_x\mathbf{u}_n)+\|\mathbf{u}_n\|_{2,\Omega}^{\gamma_\varepsilon}\big)\,.
		\end{align}
	\end{lemma}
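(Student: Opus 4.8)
The plan is to exploit the uniform convergence $p_n\to p$ by comparing, for $n$ large, the merely uniformly bounded exponent $p_n$ with a \emph{fixed} log-Hölder exponent $\underline{p}:=p-\eta$ (with $\eta>0$ small), to apply the point-wise-in-time parabolic embedding for $\underline{p}$ --- the core estimate underlying \cite[Prop.~3.8]{alex-book} --- and to split the slack $\varepsilon$ into two halves: one half makes the target exponent $(p_n)_*(\cdot)-\varepsilon$ fit below $\underline{p}_*(\cdot)$, the other half lets Young's inequality turn the resulting multiplicative bound into the desired \emph{additive} modular estimate.

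More precisely, given $\varepsilon\in(0,(p^-)_*-1]$ I would first fix $\eta=\eta(\varepsilon,p^-,p^+,d)>0$ so small that $\eta<p^--1$ and $2\eta\frac{d+2}{d}\le\frac\varepsilon2$ (and small enough that all variable exponents appearing below have lower limit exponent strictly above $1$), put $\underline{p}:=p-\eta\in\mathcal{P}^{\log}(Q_T)$ (so $\underline{p}^-=p^--\eta>1$), and use Assumption~\ref{assum:p_h} to choose $n_0=n_0(\varepsilon)\in\mathbb{N}$ with $\|p_n-p\|_{L^\infty(Q_T)}+\|q_n-q\|_{L^\infty(Q_T)}\le\eta$ for all $n\ge n_0$. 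Fixing such an $n$ and (a.e.)\ $t\in I$, and suppressing $t$ from the notation: since $\underline{p}\le p_n$ a.e.\ in $\Omega$ and $\mathbf{u}_n\in Y$ (and, using $q\ge p$, $\underline{p}\le q-\eta\le q_n$ a.e.\ in $\Omega$), applying the bounded-domain embedding \eqref{eq:hoelder_embeddding} and the variable Korn inequality to a sequence in $C^\infty_c(\Omega;\mathbb{R}^d)$ approximating $\mathbf{u}_n$ in $E^{q_n(t,\cdot),p_n(t,\cdot)}$ shows $\mathbf{u}_n\in E^{\underline{p}(t,\cdot),\underline{p}(t,\cdot)}_0\cap Y$. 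Splitting $\Omega$ into $\{\vert\mathbf{D}_x\mathbf{u}_n\vert\le1\}$ and its complement and using $\underline{p}\le p_n$ gives $\rho_{\underline{p}(t,\cdot),\Omega}(\mathbf{D}_x\mathbf{u}_n)\le\vert\Omega\vert+\rho_{p_n(t,\cdot),\Omega}(\mathbf{D}_x\mathbf{u}_n)$, while the choice of $\eta$ forces, for a.e.\ $x\in\Omega$,
\[
(p_n)_*(x)-\varepsilon=p_n(x)\tfrac{d+2}{d}-\varepsilon\le\bigl(p(x)+\eta\bigr)\tfrac{d+2}{d}-\varepsilon=\underline{p}_*(x)+2\eta\tfrac{d+2}{d}-\varepsilon\le\underline{p}_*(x)-\tfrac\varepsilon2.
\]

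Next I would invoke, for the log-Hölder exponent $\underline{p}$ with $\underline{p}^->1$, the point-wise-in-time parabolic embedding: $\mathbf{u}_n\in L^{\underline{p}_*(t,\cdot)}(\Omega;\mathbb{R}^d)$ with
\[
\rho_{\underline{p}_*(t,\cdot),\Omega}(\mathbf{u}_n)\le c\,\bigl(1+\|\mathbf{u}_n\|_{2,\Omega}\bigr)^{\mu}\bigl(1+\rho_{\underline{p}(t,\cdot),\Omega}(\mathbf{D}_x\mathbf{u}_n)\bigr),\qquad \mu=\mu(\underline{p}^+,d)>0
\]
(see \cite[Prop.~3.8]{alex-book} and its proof; alternatively, derive it from $\vert\mathbf{u}_n\vert^{\underline{p}_*(x)}=\vert\mathbf{u}_n\vert^{\underline{p}(x)}\vert\mathbf{u}_n\vert^{\frac2d\underline{p}(x)}$, the variable Hölder inequality \eqref{eq:gen_hoelder} for the conjugate pair $\bigl(\frac{d}{d-\underline{p}(\cdot)},\frac{d}{\underline{p}(\cdot)}\bigr)$, and the \emph{modular} Korn and Sobolev--Poincaré inequalities for log-Hölder exponents --- the modular versions being what makes $\rho_{\underline{p}(t,\cdot),\Omega}(\mathbf{D}_x\mathbf{u}_n)$ enter to the \emph{first} power; when $\underline{p}(t,\cdot)$ reaches $d$ one uses the embedding into $L^\infty$ on the corresponding super-level set instead). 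Combining the point-wise exponent estimate above with the splitting of $\Omega$ into $\{\vert\mathbf{u}_n\vert\le1\}$ and its complement, $\rho_{(p_n)_*(t,\cdot)-\varepsilon,\Omega}(\mathbf{u}_n)\le\vert\Omega\vert+\rho_{\underline{p}_*(t,\cdot)-\varepsilon/2,\Omega}(\mathbf{u}_n)$; then \eqref{eq:gen_hoelder} for the conjugate pair $\bigl(\frac{\underline{p}_*(\cdot)}{\underline{p}_*(\cdot)-\varepsilon/2},\frac{2\underline{p}_*(\cdot)}{\varepsilon}\bigr)$, together with $\rho_{\frac{\underline{p}_*(\cdot)}{\underline{p}_*(\cdot)-\varepsilon/2},\Omega}\bigl(\vert\mathbf{u}_n\vert^{\underline{p}_*(\cdot)-\varepsilon/2}\bigr)=\rho_{\underline{p}_*(t,\cdot),\Omega}(\mathbf{u}_n)$ and the norm--modular relation, bounds this by $c\bigl(1+\rho_{\underline{p}_*(t,\cdot),\Omega}(\mathbf{u}_n)^{\lambda}\bigr)$ with $\lambda:=\frac{(\underline{p}_*)^+-\varepsilon/2}{(\underline{p}_*)^+}\in(0,1)$. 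Inserting the parabolic embedding, applying Young's inequality with exponents $\bigl(\frac{1}{1-\lambda},\frac{1}{\lambda}\bigr)$ to split the product $\bigl[(1+\|\mathbf{u}_n\|_{2,\Omega})^{\mu}\bigr]^{\lambda}\bigl[1+\rho_{\underline{p}(t,\cdot),\Omega}(\mathbf{D}_x\mathbf{u}_n)\bigr]^{\lambda}$ into $(1+\|\mathbf{u}_n\|_{2,\Omega})^{\mu\lambda/(1-\lambda)}+\bigl(1+\rho_{\underline{p}(t,\cdot),\Omega}(\mathbf{D}_x\mathbf{u}_n)\bigr)$, and finally inserting $\rho_{\underline{p}(t,\cdot),\Omega}(\mathbf{D}_x\mathbf{u}_n)\le\vert\Omega\vert+\rho_{p_n(t,\cdot),\Omega}(\mathbf{D}_x\mathbf{u}_n)$ and $(1+s)^\gamma\le2^\gamma(1+s^\gamma)$, yields the claim with $\gamma_\varepsilon:=\frac{\mu\lambda}{1-\lambda}$ (a function of $\varepsilon$ and $p$ only) and $c_\varepsilon=c_\varepsilon(\varepsilon,p,\Omega)$.

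The main obstacle is precisely this interplay between the slack $\varepsilon$ and the form of the conclusion. Since $p_n$ is only uniformly close to $p$ and need not be log-Hölder, no Sobolev- or Korn-type inequality is available for $p_n$ itself, which forces the detour through $\underline{p}=p-\eta$; but then $(p_n)_*(\cdot)-\varepsilon$ only fits below $\underline{p}_*(\cdot)$ when $\eta$ is small compared with $\varepsilon$, which consumes a first portion of $\varepsilon$. The second portion is exactly what lets Young's inequality convert the multiplicative parabolic estimate into an \emph{additive} one while preserving the first power of the modular $\rho_{p_n(t,\cdot),\Omega}(\mathbf{D}_x\mathbf{u}_n)$; using the \emph{norm} forms of Korn's and Sobolev's inequalities instead would only give a power $\underline{p}^+/\underline{p}^->1$ of that modular, since the oscillation $p^+-p^-$ of the exponent cannot in general be absorbed into $\varepsilon$. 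Thus the delicate point is to see that the \emph{modular} versions of these inequalities (valid for log-Hölder exponents) are not a convenience but a necessity for the stated conclusion; the remaining verifications --- admissibility of the limit exponents of $\underline{p}_*-\varepsilon/2$, of its conjugate $\frac{2\underline{p}_*}{\varepsilon}$, and of $(p_n)_*-\varepsilon$, plus the super-level-set case $\underline{p}(t,\cdot)\ge d$ --- are routine once $\eta$ is fixed small at the outset.
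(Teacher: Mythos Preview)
Your overall strategy is correct and matches the paper's: since $p_n$ need not be log-H\"older, you pass to a fixed nearby log-H\"older exponent $\underline{p}=p-\eta$, apply the known pointwise-in-time parabolic embedding there, and sandwich $(p_n)_*-\varepsilon$ below and $p_n$ above. The paper does exactly this with $\eta=\varepsilon/4$.

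The difference is in how the additive modular bound for the fixed exponent is obtained. You start from a \emph{multiplicative} estimate $\rho_{\underline{p}_*}(\mathbf{u})\le c(1+\|\mathbf{u}\|_{2,\Omega})^{\mu}(1+\rho_{\underline{p}}(\mathbf{D}_x\mathbf{u}))$, then spend a second portion of $\varepsilon$ (via H\"older with the pair $(\underline{p}_*/(\underline{p}_*-\varepsilon/2),\,2\underline{p}_*/\varepsilon)$) to create a sublinear exponent $\lambda<1$ and apply Young's inequality to convert the product into a sum. The paper bypasses all of this: it invokes \cite[Lem.~3.5]{alex-book} directly, which already yields the \emph{additive} modular estimate
\[
\rho_{(p-\varepsilon/4)_*(t,\cdot)-\varepsilon/4,\Omega}(\mathbf{u})\le c_\varepsilon\bigl(1+\rho_{p(t,\cdot)-\varepsilon/4,\Omega}(\mathbf{D}_x\mathbf{u})+\|\mathbf{u}\|_{2,\Omega}^{\gamma_\varepsilon}\bigr)
\]
for the log-H\"older exponent $p-\varepsilon/4$, after which the sandwich $(p_n)_*-\varepsilon\le (p-\varepsilon/4)_*-\varepsilon/4$ and $p-\varepsilon/4\le p_n$ finishes immediately. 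In effect you are re-deriving \cite[Lem.~3.5]{alex-book} inline; this is not wrong, but it is needlessly elaborate and makes your argument depend on the modular (not just norm) Korn and Sobolev--Poincar\'e inequalities that are precisely what that lemma packages. If you are allowed to cite \cite{alex-book}, use Lem.~3.5 rather than Prop.~3.8 and the whole Young-inequality step disappears.
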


	\begin{proof}
		Based on  $p_n\to p$ in $L^\infty(Q_T)$ $(n\to\infty)$ and $(r\mapsto(r)_*)\in C^0([1,+\infty))$,~for~every~$\varepsilon\in (0,(p^-)_*-1]$, there exists $n_0\coloneqq n_0(\varepsilon)\in \mathbb{N}$ such that $\|(p_n)_*-p_*\|_{\infty,Q_T}+\|p_n-p\|_{\infty,Q_T}<\frac{\varepsilon}{4}$ for all $n\in \mathbb{N}$ with $n\ge n_0$, \textit{i.e.}, $p_*-\frac{\varepsilon}{4}\leq (p_n)_*\leq p_*+\frac{\varepsilon}{4}$ a.e.\ in $Q_T$ and $p-\frac{\varepsilon}{4}\leq p_n\leq p+\frac{\varepsilon}{4}$ a.e.\ in $Q_T$~for~all~${n\in \mathbb{N}}$~with~$n\ge n_0$. On the other hand, due to $\frac{\varepsilon}{4}\in (0,(p^--\frac{\varepsilon}{4})_*-1]$, \cite[Lem.\  3.5]{alex-book} yields constants ${c_\varepsilon\coloneqq c(\varepsilon,p,\Omega),\gamma_\varepsilon\coloneqq \gamma(\varepsilon,p)>0}$  such that for every $\mathbf{u}\in U^{q,p-\frac{\varepsilon}{4}}_{\D}(t)\cap Y$, it holds that $\mathbf{u}\in L^{(p-\frac{\varepsilon}{4})_*(t,\cdot)-\frac{\varepsilon}{4}}(\Omega;\mathbb{R}^d)$ with 
		\begin{align}
		\rho_{(p-\frac{\varepsilon}{4})_*(t,\cdot)-\frac{\varepsilon}{4},\Omega}(\mathbf{u})\leq c_\varepsilon\,\big(1+\rho_{p(t,\cdot)-\frac{\varepsilon}{4},\Omega}(\mathbf{D}_x\mathbf{u})+\|\mathbf{u}\|_{2,\Omega}^{\gamma_\varepsilon}\big)\,.\label{8.2.4.1.a}
		\end{align}
		Since $(p-\frac{\varepsilon}{4})_*-\frac{\varepsilon}{4}\ge p_*-\frac{3\varepsilon}{4}\ge (p_n)_*-\varepsilon$ in $Q_T$, we infer from \eqref{8.2.4.1.a} for every $n\in \mathbb{N}$ with $n\ge n_0$  and $\mathbf{u}_n\in U^{q_n,p_n}_{\D}(t)\cap Y$ that $\mathbf{u}_n\in L^{(p_n)_*(t,\cdot)-\varepsilon}(\Omega;\mathbb{R}^d)$ with
		\begin{align*}
		\rho_{(p_n)_*(t,\cdot)-\varepsilon,\Omega}(\mathbf{u}_n)&\leq c_\varepsilon\,\big(1+ \rho_{(p-\frac{\varepsilon}{4})_*(t,\cdot)-\frac{\varepsilon}{4},\Omega}(\mathbf{u}_n)\big)\\&\leq c_\varepsilon\,\big(1+\rho_{p(t,\cdot)-\frac{\varepsilon}{4},\Omega}(\mathbf{D}_x\mathbf{u}_n)+\|\mathbf{u}_n\|_{2,\Omega}^{\gamma_\varepsilon}\big)
		\\&\leq c_\varepsilon\, \big(1+\rho_{p_n(t,\cdot),\Omega}(\mathbf{D}_x\mathbf{u}_n)+\|\mathbf{u}_n\|_{2,\Omega}^{\gamma_\varepsilon}\big)\,,
		\end{align*}
		which is the claimed point-wise estimate in terms of modulars \eqref{eq:8.2.4.1}.
	\end{proof}
	
	\begin{proof}[Proof (of Proposition \ref{prop:non_conform_poincare}).]
		Follows along the lines of the proof of \cite[Prop.\ 3.6]{alex-book} resorting to Lemma~\ref{8.2.4.1}, up to minor adjustments.
	\end{proof}
	
	Having the approximative Poincar\'e inequality (\textit{cf}.\ Proposition \ref{prop:non_conform_poincare}) at hand, we are in~the~position~to derive the following approximative parabolic compactness principle.

    \begin{proposition}[Compactness principle for $\bXpn\cap L^\infty(I;Y) $, $n\in \mathbb{N}$]\label{prop:non_conform_landes}
		Let  $\boldsymbol{u}_n\in \bXpn\cap L^\infty(I;Y) $, $n\in \mathbb{N}$, be a sequence such that
        \begin{align}
		\hphantom{xx.}\sup_{n\in\mathbb{N}}{\|\boldsymbol{u}_n\|_{\smash{\bXpn}}}<\infty\,,\label{prop:non_conform_landes.1}
		\end{align}
        and
		\begin{alignat}{3}
		\boldsymbol{u}_n&\overset{\ast}{\rightharpoondown}\boldsymbol{u}&&\quad\text{ in }L^\infty(I;Y) &&\quad(n\to\infty)\,,\label{prop:non_conform_landes.2}\\
		\boldsymbol{u}_n(t)&\rightharpoonup\boldsymbol{u}(t)&&\quad\text{ in }Y&&\quad (n\to \infty)\quad\text{ for a.e.\  }t\in I\,.\label{prop:non_conform_landes.3}
		\end{alignat}
		Then, it holds that $\boldsymbol{u}_n\to\boldsymbol{u}$ in $L^{\max\{2,p_*(\cdot,\cdot)\}-\varepsilon}(Q_T;\mathbb{R}^d)$ $(n\to\infty)$ for all $\varepsilon\in (0,(p^-)_*-1]$.
	\end{proposition}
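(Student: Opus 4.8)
The plan is to combine the approximative parabolic embedding (Proposition~\ref{prop:non_conform_poincare}) with a Landes-type mollification argument and a final passage from $L^1$-convergence to convergence in the target space via Vitali's convergence theorem.

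\emph{Step~1 (uniform higher integrability).} From \eqref{prop:non_conform_landes.2} one gets $\sup_{n\in\mathbb{N}}\|\boldsymbol{u}_n\|_{L^\infty(I;Y)}<\infty$ and $\boldsymbol{u}\in L^\infty(I;Y)\hookrightarrow L^2(Q_T;\mathbb{R}^d)$. I would first show that for \emph{every} $\widetilde{\varepsilon}\in(0,(p^-)_*-1]$ there exist $n_0\in\mathbb{N}$ and $C>0$ with
\begin{align*}
	\sup_{n\ge n_0}\|\boldsymbol{u}_n\|_{\max\{2,p_*(\cdot,\cdot)\}-\widetilde{\varepsilon},Q_T}\le C\,,\qquad \boldsymbol{u}\in L^{\max\{2,p_*(\cdot,\cdot)\}-\widetilde{\varepsilon}}(Q_T;\mathbb{R}^d)\,.
\end{align*}
Indeed, picking $\varepsilon'\in(0,(p^-)_*-1]$ with $\tfrac{3}{2}\varepsilon'<\widetilde{\varepsilon}$, Proposition~\ref{prop:non_conform_poincare} together with \eqref{prop:non_conform_landes.1}--\eqref{prop:non_conform_landes.2} yields, for $n$ large, a uniform bound for $(\boldsymbol{u}_n)$ in $L^{(p_n)_*(\cdot,\cdot)-\varepsilon'}(Q_T;\mathbb{R}^d)$; since $(p_n)_*\to p_*$ in $L^\infty(Q_T)$ one has $(p_n)_*-\varepsilon'\ge p_*-\tfrac{3}{2}\varepsilon'$ in $Q_T$ for $n$ large, so \eqref{eq:hoelder_embeddding} transfers this to a uniform bound in $L^{p_*(\cdot,\cdot)-\frac{3}{2}\varepsilon'}(Q_T;\mathbb{R}^d)$. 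Splitting $Q_T$ into $\{p_*(\cdot,\cdot)\ge 2\}$ and $\{p_*(\cdot,\cdot)<2\}$, using the $L^2(Q_T)$-bound on the second region, and invoking the pointwise inequality $\max\{2,p_*(\cdot,\cdot)\}-\widetilde{\varepsilon}\le\max\{2,p_*(\cdot,\cdot)-\tfrac{3}{2}\varepsilon'\}$ together with \eqref{eq:hoelder_embeddding} gives the first assertion; since the exponent $\max\{2,p_*(\cdot,\cdot)-\tfrac{3}{2}\varepsilon'\}$ is bounded below by a constant $>1$, the corresponding variable Lebesgue space is reflexive, and $\boldsymbol{u}$ is identified with a weak limit of $(\boldsymbol{u}_n)$ therein by \eqref{prop:non_conform_landes.2} and uniqueness of distributional limits (alternatively, by weak lower semicontinuity along the subsequence of Proposition~\ref{prop:Xqp_weak_compact}), which gives the second assertion.

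\emph{Step~2 (strong convergence in $L^1(Q_T)$).} Fix $s\in(1,\min\{p^-,q^-\})$. Using $\boldsymbol{u}_n\in\bXpn$, \eqref{prop:non_conform_landes.1}, \eqref{eq:hoelder_embeddding}, and Korn's inequality in $L^s$ (valid slicewise, as $\boldsymbol{u}_n(t)\in W^{1,s}_0(\Omega;\mathbb{R}^d)$ for a.e.\ $t\in I$), the sequence $(\boldsymbol{u}_n)$ is bounded in $L^s(I;W^{1,s}_0(\Omega;\mathbb{R}^d))$ for $n$ large; in particular $\sup_n\|\nabla_{\! x}\boldsymbol{u}_n\|_{s,Q_T}<\infty$. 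Extending all functions by zero and writing $(\cdot)^\delta$ for spatial convolution with a standard mollifier $\omega_\delta$ of radius $\delta>0$, the elementary slicewise estimate $\|\mathbf{w}-\mathbf{w}^\delta\|_{s,\Omega}\le\delta\,\|\nabla_{\! x}\mathbf{w}\|_{s,\Omega}$ for $\mathbf{w}\in W^{1,s}_0(\Omega;\mathbb{R}^d)$, applied to $\mathbf{w}=\boldsymbol{u}_n(t)$, integrated in time, and downgraded from $L^s(Q_T)$ to $L^1(Q_T)$, yields $\sup_n\|\boldsymbol{u}_n-\boldsymbol{u}_n^\delta\|_{1,Q_T}\le c\,\delta$ and, likewise, $\|\boldsymbol{u}-\boldsymbol{u}^\delta\|_{1,Q_T}\to 0$ as $\delta\to 0$. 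On the other hand, for fixed $\delta>0$ and a.e.\ $t\in I$, testing the pointwise weak convergence \eqref{prop:non_conform_landes.3} in $Y=L^2(\Omega;\mathbb{R}^d)$ against the $L^2$-functions $x'\mapsto\omega_\delta(x-x')$ shows $\boldsymbol{u}_n^\delta\to\boldsymbol{u}^\delta$ pointwise a.e.\ in $Q_T$, with the uniform domination $|\boldsymbol{u}_n^\delta|\le\|\omega_\delta\|_{2,\Omega}\sup_n\|\boldsymbol{u}_n\|_{L^\infty(I;Y)}$, so dominated convergence gives $\|\boldsymbol{u}_n^\delta-\boldsymbol{u}^\delta\|_{1,Q_T}\to 0$ as $n\to\infty$ for each fixed $\delta$. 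Inserting these three facts into $\|\boldsymbol{u}_n-\boldsymbol{u}\|_{1,Q_T}\le\|\boldsymbol{u}_n-\boldsymbol{u}_n^\delta\|_{1,Q_T}+\|\boldsymbol{u}_n^\delta-\boldsymbol{u}^\delta\|_{1,Q_T}+\|\boldsymbol{u}^\delta-\boldsymbol{u}\|_{1,Q_T}$ and letting first $n\to\infty$, then $\delta\to 0$, yields $\boldsymbol{u}_n\to\boldsymbol{u}$ in $L^1(Q_T;\mathbb{R}^d)$.

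\emph{Step~3 (upgrade) and main obstacle.} Fix $\varepsilon\in(0,(p^-)_*-1]$. By Step~2, every subsequence of $(\boldsymbol{u}_n)$ has a further subsequence converging to $\boldsymbol{u}$ a.e.\ in $Q_T$, along which $|\boldsymbol{u}_n-\boldsymbol{u}|^{\max\{2,p_*(\cdot,\cdot)\}-\varepsilon}\to 0$ a.e.; by Step~1 with $\widetilde{\varepsilon}=\varepsilon/2$, the sequence $(\boldsymbol{u}_n-\boldsymbol{u})$ is bounded in $L^{\max\{2,p_*(\cdot,\cdot)\}-\varepsilon/2}(Q_T;\mathbb{R}^d)$, which makes $(|\boldsymbol{u}_n-\boldsymbol{u}|^{\max\{2,p_*(\cdot,\cdot)\}-\varepsilon})_{n\in\mathbb{N}}$ equi-integrable. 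Vitali's convergence theorem then gives $\rho_{\max\{2,p_*(\cdot,\cdot)\}-\varepsilon,Q_T}(\boldsymbol{u}_n-\boldsymbol{u})\to 0$ along the sub-subsequence, and since the exponent $\max\{2,p_*(\cdot,\cdot)\}-\varepsilon$ is bounded this is equivalent to $\|\boldsymbol{u}_n-\boldsymbol{u}\|_{\max\{2,p_*(\cdot,\cdot)\}-\varepsilon,Q_T}\to 0$; a standard subsequence argument then promotes this to convergence of the full sequence, which is the assertion. The main obstacle is Step~1: since the approximating exponents $p_n$ are only bounded and, in particular, need not be $\log$-Hölder continuous, extracting a bound that is uniform in $n$ in a \emph{fixed} Lebesgue space requires the approximative parabolic embedding of Proposition~\ref{prop:non_conform_poincare} and forces Korn's inequality to be run at a fixed intermediate exponent $s\in(1,\min\{p^-,q^-\})$; once this uniform bound is secured, the Landes mollification of Step~2 and the Vitali argument of Step~3 proceed exactly as in the constant-exponent situation.
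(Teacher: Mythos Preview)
Your argument is correct and follows the same strategy as the paper's proof: a.e.\ convergence (along subsequences) via a Landes--Mustonen-type argument, uniform integrability in the target space supplied by Proposition~\ref{prop:non_conform_poincare}, and conclusion via Vitali's theorem together with the standard subsequence principle. The only difference is presentational: the paper quotes the Landes--Mustonen compactness principle \cite[Prop.~1]{landes-87} as a black box to obtain a.e.\ convergence directly, whereas your Step~2 reproves it via explicit spatial mollification (first obtaining $L^1$-convergence and then extracting a.e.\ convergent subsequences), making your argument more self-contained but identical in substance.
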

	
	\begin{proof}
    Owing to $\inf_{n\in\mathbb{N}}{p_n^-}>1$, $Y\hookrightarrow L^1(\Omega;\mathbb{R}^d)$, and \eqref{prop:non_conform_landes.1}--\eqref{prop:non_conform_landes.3}, the sequence  $(\boldsymbol{u}_n)_{n\in \mathbb{N}}$ is bounded in  $L^{p^-}(I,W^{1,\smash{\inf_{n\in\mathbb{N}}{p_n^-}}}_0(\Omega;\mathbb{R}^d))\cap L^\infty(I;L^1(\Omega;\mathbb{R}^d)) $ and 
    $\boldsymbol{u}_n(t)\hspace*{-0.15em}\rightharpoonup\hspace*{-0.15em}\boldsymbol{u}(t)$ \hspace*{-0.1mm}in \hspace*{-0.1mm}$L^1(\Omega;\mathbb{R}^d)$ \hspace*{-0.1mm}$(n\hspace*{-0.15em}\to\hspace*{-0.15em} \infty)$~\hspace*{-0.1mm}for~\hspace*{-0.1mm}a.e.~\hspace*{-0.1mm}${t\hspace*{-0.15em}\in\hspace*{-0.15em} I}$.~\hspace*{-0.1mm}Thus, the Landes--Mustonen compactness principle (\textit{cf}.\ \cite[Prop.\  1]{landes-87}) yields a subsequence such that $\boldsymbol{u}_{n_k}\to\boldsymbol{u}$  $(k\to \infty)$ a.e.\  in $Q_T$. On the other hand, 
	 Proposition~\ref{prop:non_conform_poincare} implies that $(\boldsymbol{u}_{n_k})_{k\in \mathbb{N}}$ is $L^{\max\{2,p_*(\cdot,\cdot)\}-\varepsilon}(Q_T)$-uniformly integrable for all $\varepsilon\in (0,(p^-)_*-1]$.~Therefore,~Vitali's~convergence theorem for variable Lebesgue spaces (\textit{cf.}~\cite[Prop.\ 2.5]{alex-book}) and the standard convergence principle 
	(\textit{cf.}~\cite[Kap.\ I, Lem.\ 5.4]{GGZ}) yield the assertion.
	\end{proof}
    \newpage
    
    \section{Space and time discretization}\label{sec:discrete_ptxNavierStokes}
   \hspace*{5mm}In this section, we introduce the discrete spaces and projection operators employed later on in the fully-discrete Rothe--Galerkin approximation of a generalized evolution equation (\textit{cf}.\ Definition \ref{def:generalized_evolution_equation}).\enlargethispage{5mm}
   
   \subsection{Space discretization}
   
   \subsubsection{Triangulations}
   
  	\hspace*{5mm}We denote by $\{\mathcal{T}_h\}_{h>0}$ a family of uniformly shape regular (\textit{cf}.\ \cite{EG21}) triangulations of $\Omega\subseteq \mathbb{R}^d$,~$d\ge 2$, consisting of $d$-dimensional simplices. 
  Here, $h>0$ refers to the \textit{maximal~mesh-size}, \textit{i.e.}, if   $h_T\coloneqq  \textup{diam}(T)$ for all $T\in \mathcal{T}_h$,~then~${h 
  	\coloneqq \max_{T\in \mathcal{T}_h}{h_T}
  }$.
   
   \subsubsection{Finite element spaces and projectors}
   
   \hspace{5mm}Given
   $m \in \mathbb N_0$ and $h>0$, we denote by $\mathbb{P}^m(\mathcal{T}_h)$ the space of possibly discontinuous scalar~functions that are polynomials of degree at most $m$ on each simplex $T\in \mathcal{T}_h$, and~set~${\mathbb{P}^m_c(\mathcal{T}_h) \coloneqq  \mathbb{P}^m(\mathcal{T}_h)\cap C^0(\overline{\Omega})}$.
   Then, for the remainder of the paper, for given $m\in \mathbb{N}$ and $\ell \in \mathbb{N}\cup\{0\}$, we~denote~by
   \begin{align}
   	\begin{aligned}
   		V_h&\subseteq (\mathbb{P}^m_c(\mathcal{T}_h))^d\,, &&\,\Vo_h\coloneqq V_h\cap W^{1,1}_0(\Omega;\mathbb{R}^d)\,,\\
   		Q_h&\subseteq \mathbb{P}^{\ell}(\mathcal{T}_h)\,, &&\Qo_h \coloneqq 
   		Q_h\cap L^1_0(\Omega)\,,
   	\end{aligned}
   \end{align}
   where $L^1_0(\Omega)\coloneqq \{z\in L^1(\Omega)\mid \langle z\rangle_{\Omega}=0\}$, 
   conforming
   finite element spaces such that the following two assumptions are satisfied.
   
      \begin{assumption}[Projection operator $\Pi_h^Q$]
    	\label{ass:PiY}
    	We assume that  for every $r\in [1,\infty)$ and $z\in L^r(\Omega)$,  we have that
    	\begin{align}
    		\inf_{z_h\in Q_h}{\|z -z_h\|_{r,\Omega}}\to 0\quad (h\to 0)\,,
    	\end{align}
    	and there exists a linear projection operator
    	$\Pi_h^Q\colon L^1(\Omega) \to Q_h$, that is $ \Pi_h^Qz_h=z_h$ for all $z_h\in Q_h$, which for every $r\in (1,\infty)$ is \textup{globally $L^r(\Omega)$-stable}, \textit{i.e.}, there exists a constant $c>0$ such that for every $z\in L^r(\Omega)$, it holds that
    	\begin{align}
    		\label{eq:PiYstab}
    		\|\Pi_h^Q z\|_{r,\Omega} \leq c\, \| z\|_{r,\Omega}\,.
    	\end{align}
    \end{assumption}
    
    \begin{assumption}[Projection operator $\Pi_h^V$]\label{ass:proj-div}
    	We assume that $(\mathbb{P}^1_c(\mathcal{T}_h))^d \subseteq V_h$ and that there
    	exists a linear projection operator $\Pi_h^V\colon W^{1,1}(\Omega;\mathbb{R}^d)\to V_h$, that is $ \Pi_h^V\mathbf{z}_h=\mathbf{z}_h$ for all $\mathbf{z}_h\in V_h$,  which has the following properties: 
    	\begin{enumerate}[noitemsep,topsep=2pt,leftmargin=!,labelwidth=\widthof{(ii)},font=\normalfont\itshape]
    		\item[(i)] \textup{Preservation of divergence in the $Q_h^*$-sense:} For every $\mathbf{z} \in W^{1,1}(\Omega;\mathbb{R}^d)$ and  $ z_h \in Q_h$, it holds that
    		\begin{align}
    			\label{eq:div_preserving}
    			(\mathrm{div}_x \mathbf{z},z_h)_\Omega &= (\mathrm{div}_x\Pi_h^V
    			\mathbf{z},z_h)_\Omega \,;
    		\end{align} 
    		\item[(ii)] \textup{Preservation of homogeneous Dirichlet boundary values:} $\Pi_h^V(W^{1,1}_0(\Omega;\mathbb{R}^d))\subseteq \Vo_h$;
    		\item[(iii)] \textup{Global $W^{1,r}_0(\Omega;\mathbb{R}^d)$-stability:} For every $r\in (1,\infty)$, there exists a constant $c>0$ such that
    		for every $\mathbf{z} \in W^{1,r}_0(\Omega;\mathbb{R}^d)$, it holds that
    		\begin{align}
    			\label{eq:Pidivcont}
    			\|\nabla\Pi_h^V\mathbf{z}\|_{r,\Omega}\leq c\,\|\nabla\mathbf{z}\|_{r,\Omega}\,.
    		\end{align}
    	\end{enumerate}
    \end{assumption}
    
    \begin{lemma}[Approximation properties of $\Pi_h^V$ and $\Pi_h^Q$]\label{rmk:proj} The following statements apply:
    	\begin{itemize}[noitemsep,topsep=2pt,leftmargin=!,labelwidth=\widthof{(ii)},font=\normalfont\itshape]
    		\item[(i)] For every $r\in [1,\infty)$ and $z\in L^r(\Omega)$, it holds that 
    		\begin{align*}
    			\Pi_h^Q z \to z\quad \text{ in }L^r(\Omega)\quad  (h\to 0)\,;
    		\end{align*}
    		\item[(ii)] For every $r\in [1,\infty)$ and $\mathbf{z} \in W^{1,r}_0(\Omega;\mathbb{R}^d)$, it holds that 
    		\begin{align*}
    			\Pi_h^V\mathbf{z}  \to \mathbf{z}\quad \text{ in }W^{1,r}_0(\Omega;\mathbb{R}^d)\quad (h\to 0)\,.
    		\end{align*} 
    	\end{itemize}
    \end{lemma}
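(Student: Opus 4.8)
The plan is to establish the two approximation statements in Lemma~\ref{rmk:proj} by combining the density assertions already assumed in Assumptions~\ref{ass:PiY} and~\ref{ass:proj-div} with the respective global stability estimates \eqref{eq:PiYstab} and \eqref{eq:Pidivcont}, together with the fact that $\Pi_h^Q$ and $\Pi_h^V$ act as the identity on $Q_h$ and $V_h$, respectively. This is the standard ``stability plus consistency'' argument for projection operators, so the work is routine; the only mild subtlety is that $\Pi_h^V$ is a priori only $W^{1,r}_0$-stable for $r\in(1,\infty)$, so the endpoint case $r=1$ in part~(ii) (and likewise in part~(i)) has to be reached by an additional embedding/interpolation step rather than directly.

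For part~(i), let $r\in(1,\infty)$ first and fix $z\in L^r(\Omega)$ and $\varepsilon>0$. By the density assertion in Assumption~\ref{ass:PiY}, choose $z_h^0\in Q_h$ with $\|z-z_h^0\|_{r,\Omega}<\varepsilon$ for $h$ small. Since $\Pi_h^Q z_h^0=z_h^0$ and $\Pi_h^Q$ is linear, write
\begin{align*}
\|\Pi_h^Q z-z\|_{r,\Omega}\leq \|\Pi_h^Q(z-z_h^0)\|_{r,\Omega}+\|z_h^0-z\|_{r,\Omega}\leq (c+1)\,\|z-z_h^0\|_{r,\Omega}<(c+1)\,\varepsilon\,,
\end{align*}
using the global $L^r(\Omega)$-stability \eqref{eq:PiYstab}; letting $\varepsilon\to 0$ gives the claim for $r\in(1,\infty)$. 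For the endpoint $r=1$, fix $z\in L^1(\Omega)$; by density of $L^2(\Omega)$ in $L^1(\Omega)$, pick $w\in L^2(\Omega)$ with $\|z-w\|_{1,\Omega}<\varepsilon$, and estimate $\|\Pi_h^Q z-z\|_{1,\Omega}$ by $\|\Pi_h^Q(z-w)\|_{1,\Omega}+\|\Pi_h^Q w-w\|_{1,\Omega}+\|w-z\|_{1,\Omega}$. The middle term tends to $0$ by the case $r=2$ together with the continuous embedding $L^2(\Omega)\hookrightarrow L^1(\Omega)$ (as $\Omega$ is bounded). For the first term one needs an $L^1(\Omega)$-bound on $\Pi_h^Q$; this follows from the density assertion in Assumption~\ref{ass:PiY} for $r=1$ applied to the already-established convergence, or more directly by noting that the case $r=1$ of the density hypothesis combined with the uniform boundedness principle forces $\sup_{h>0}\|\Pi_h^Q\|_{\mathcal{L}(L^1(\Omega))}<\infty$. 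Passing $\varepsilon\to 0$ concludes part~(i).

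For part~(ii), proceed identically: fix $r\in(1,\infty)$ and $\mathbf{z}\in W^{1,r}_0(\Omega;\mathbb{R}^d)$. By density of smooth functions one reduces to $\mathbf{z}$ smooth; then since $(\mathbb{P}^1_c(\mathcal{T}_h))^d\subseteq V_h$, the Lagrange (or Clément/Scott--Zhang) interpolant $I_h\mathbf{z}\in V_h$ satisfies $I_h\mathbf{z}\to\mathbf{z}$ in $W^{1,r}_0(\Omega;\mathbb{R}^d)$; writing $\Pi_h^V\mathbf{z}-\mathbf{z}=\Pi_h^V(\mathbf{z}-I_h\mathbf{z})+(I_h\mathbf{z}-\mathbf{z})$ and invoking the $W^{1,r}_0$-stability \eqref{eq:Pidivcont} (recall $\Pi_h^V I_h\mathbf{z}=I_h\mathbf{z}$ since $I_h\mathbf{z}\in V_h$) gives $\|\nabla(\Pi_h^V\mathbf{z}-\mathbf{z})\|_{r,\Omega}\leq (c+1)\,\|\nabla(\mathbf{z}-I_h\mathbf{z})\|_{r,\Omega}\to 0$, and by Poincaré's inequality (valid since $\Pi_h^V$ preserves homogeneous Dirichlet values by Assumption~\ref{ass:proj-div}(ii), so the difference lies in $W^{1,r}_0(\Omega;\mathbb{R}^d)$) also the $L^r(\Omega)$-part of the norm converges. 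The endpoint $r=1$ is handled as in part~(i): approximate $\mathbf{z}\in W^{1,1}_0(\Omega;\mathbb{R}^d)$ in $W^{1,1}$-norm by $W^{1,2}_0$-functions, use the case $r=2$ plus the embedding $W^{1,2}_0(\Omega;\mathbb{R}^d)\hookrightarrow W^{1,1}_0(\Omega;\mathbb{R}^d)$ for the main term, and a uniform $W^{1,1}_0$-operator bound for $\Pi_h^V$ (again from the uniform boundedness principle, or by restricting to a dense smooth subclass on which $I_h$ is $W^{1,1}$-stable) for the remainder.

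The only genuine obstacle is the endpoint $r=1$, where \eqref{eq:PiYstab} and \eqref{eq:Pidivcont} are not assumed: one must either argue uniform $L^1$-/$W^{1,1}$-boundedness of the projectors by the uniform boundedness principle applied to the density hypotheses, or — cleaner in practice — simply note that the Lemma is used later only through its $r\in(1,\infty)$ content and restrict the endpoint argument to a dense smooth subclass where a standard quasi-local interpolation estimate supplies the missing $W^{1,1}$-stability. Everything else is a textbook density-plus-stability estimate.
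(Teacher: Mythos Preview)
The paper does not give its own argument here; it simply cites \cite[Prop.~7 \& (3.5)]{DKS13b}. Your stability-plus-density argument for $r\in(1,\infty)$ is exactly the standard route and is correct: projection property, linear decomposition against a discrete approximant, and the uniform $L^r$- resp.\ $W^{1,r}_0$-stability from \eqref{eq:PiYstab} and \eqref{eq:Pidivcont} yield the claim immediately. This is almost certainly what the cited reference does as well.

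The endpoint $r=1$, however, is not handled by your argument. Your appeal to the uniform boundedness principle is circular: the UBP would give $\sup_h\|\Pi_h^Q\|_{\mathcal{L}(L^1)}<\infty$ \emph{provided} you already knew $\sup_h\|\Pi_h^Q z\|_{1,\Omega}<\infty$ for every fixed $z\in L^1(\Omega)$, but that pointwise bound is precisely what you are trying to establish (it would follow from the convergence you want to prove). The density hypothesis in Assumption~\ref{ass:PiY} concerns the spaces $Q_h$, not the operator $\Pi_h^Q$, so it gives no handle on $\Pi_h^Q$ in $L^1$. The same circularity affects your $W^{1,1}_0$ endpoint for $\Pi_h^V$. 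From the abstract Assumptions~\ref{ass:PiY}--\ref{ass:proj-div} alone, the $r=1$ case does not seem to follow; what the cited reference exploits is additional \emph{local} stability (Cl\'ement/Scott--Zhang type estimates on element patches), which all the concrete examples in Remarks~\ref{FEM.Q}--\ref{FEM.V} possess and which does yield $L^1$- and $W^{1,1}$-stability directly. Your final paragraph effectively concedes this, but the two escape routes you offer (UBP, or ``restrict to where the lemma is used'') do not constitute a proof of the statement as written.
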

    
    \begin{proof}
    	See \cite[Prop.\ 7 \& (3.5)]{DKS13b}.
    \end{proof}

    \newpage
    Next, we present a list of common mixed finite element spaces $\{V_h\}_{h>0}$ and $\{Q_h\}_{h>0}$ with projectors $\{\Pi_h^V\}_{h>0}$ and $\{\Pi_h^Q\}_{h>0}$ on regular triangulations $\{\mathcal{T}_h\}_{h>0}$ satisfying  Assumption~\ref{ass:PiY}~and~Assumption~\ref{ass:proj-div}, respectively; for a detailed presentation, we recommend 
    the textbook \cite{BBF13}.
    
    \begin{remark}\label{FEM.Q}
    	The following discrete spaces and projectors satisfy Assumption~\ref{ass:PiY}:
    	\begin{description}[noitemsep,topsep=2pt,leftmargin=!,labelwidth=\widthof{(iii)},font=\normalfont\itshape]
    		\item[(i)] If $Q_h= \mathbb{P}^{\ell}(\mathcal{T}_h)$ for some $\ell\in \mathbb{N}\cup \{0\}$, then $\Pi_h^Q$ can be chosen as (local) $L^2$-projection operator or, more generally, as a Cl\'ement type quasi-interpolation operator (\textit{cf}.\ \cite{clement}).
    		
    		\item[(ii)] If \hspace*{-0.1mm}$Q_h\hspace*{-0.1em}=\hspace*{-0.1em}\mathbb{P}^{\ell}_c(\mathcal{T}_h)$ \hspace*{-0.1mm}for \hspace*{-0.1mm}some \hspace*{-0.1mm}$\ell\hspace*{-0.1em}\in  \hspace*{-0.1em} \mathbb{N}$, \hspace*{-0.1mm}then \hspace*{-0.1mm}$\Pi_h^Q$ \hspace*{-0.1mm}can \hspace*{-0.1mm}be \hspace*{-0.1mm}chosen \hspace*{-0.1mm}as \hspace*{-0.1mm}a \hspace*{-0.1mm}Cl\'ement \hspace*{-0.1mm}type \hspace*{-0.1mm}quasi-interpolation~\hspace*{-0.1mm}operator (\textit{cf}.\ \cite[Sec.\ 1.6.1]{EG04}).
    		
    	\end{description}
    \end{remark}
    
    \begin{remark}\label{FEM.V}
    	The following discrete spaces  and projectors satisfy Assumption~\ref{ass:proj-div}:
    	\begin{description}[noitemsep,topsep=2pt,leftmargin=!,labelwidth=\widthof{(iii)},font=\normalfont\itshape]
    		\item[(i)] The \textup{MINI element} for $d\in \{2,3\}$, \textit{i.e.}, $V_h=(\mathbb{P}^1_c(\mathcal{T}_h))^d\bigoplus(\mathbb{B}(\mathcal{T}_h))^d$, where $\mathbb{B}(\mathcal{T}_h)$ is the bubble function space, and $Q_h=\mathbb{P}^1_c(\mathcal{T}_h)$, introduced in \cite{ABF84} for $d=2$; see also \cite[Chap.\ II.4.1]{GR86} and \cite[Sec.\ 8.4.2, 8.7.1]{BBF13}. An operator  $\Pi_h^V$ satisfying Assumption \ref{ass:proj-div} is given in~\cite[Appx.~A.1]{bdr-phi-stokes}~or~\smash{\cite[Lem.~4.5]{GL01}};
    		
    			\item[(ii)] The \textup{$\mathbb{P}^2$-$\mathbb{P}^0$-element} for $d=2$, \textit{i.e.}, $V_h=(\mathbb{P}^2_c(\mathcal{T}_h))^2$ and $Q_h=\mathbb{P}^0(\mathcal{T}_h)$; see, \textit{e.g.}, \cite[Sec 8.4.3]{BBF13}, where the projection operator $\Pi_h^V$ is given and Assumption \ref{ass:proj-div}(i) is shown. The Assumption~\ref{ass:proj-div}(ii) can be proved similarly as for the MINI element; see, \textit{e.g.}, \cite[Appx.\ A.1]{bdr-phi-stokes} and \cite[p.\ 990]{DKS13};
    		
    		\item[(iii)] The \textup{Taylor--Hood element} for $d\in\{2,3\}$, \textit{i.e.}, $V_h=(\mathbb{P}^2_c(\mathcal{T}_h))^d$ and $Q_h=\mathbb{P}^1_c(\mathcal{T}_h)$, introduced~in~\cite{TH73} for $d=2$; see  also \cite[Chap.\ II.4.2]{GR86}, and its generalizations; see, \textit{e.g.}, \cite[Sec.\ 8.8.2]{BBF13}. An operator $\Pi_h^V$ satisfying Assumption \ref{ass:proj-div} is given in \cite[Thm.\ 3.1, 32]{GS03} or \cite{DST2021};
    		
    		\item[(iv)] The \textup{conforming Crouzeix--Raviart element} for $d=2$, \textit{i.e.}, $V_h=(\mathbb{P}^2_c(\mathcal{T}_h))^2\bigoplus(\mathbb{B}(\mathcal{T}_h))^2$~and~$Q_h=\mathbb{P}^1(\mathcal{T}_h)$, introduced in \cite{CR73}; see also \cite[Ex.\ 8.6.1]{BBF13}. An operator $\Pi_h^V$ satisfying Assumption~\ref{ass:proj-div}(i) is given in \cite[p.\ 49]{CR73} and it can be shown to satisfy Assumption \ref{ass:proj-div}(ii); see, \textit{e.g.}, \cite[Thm.\ 3.3]{GS03};
    		
    		\item[(v)] The \textup{first order Bernardi--Raugel element} for $d\in \{2,3\}$, \textit{i.e.}, 
    		$V_h=(\mathbb{P}^1_c(\mathcal{T}_h))^d\bigoplus(\mathbb{B}_{\tiny \mathscr{F}}(\mathcal{T}_h))^d$,~where $\mathbb{B}_{\tiny \mathscr{F}}(\mathcal{T}_h)$ is the facet bubble function space, and $Q_h=\mathbb{P}^0(\mathcal{T}_h)$, introduced in \cite[Sec. II]{BR85}. For $d=2$ is often  referred to as \textup{reduced $\mathbb{P}^2$-$\mathbb{P}^0$-element} or as \textup{2D SMALL element}; see,  \textit{e.g.},  \cite[Rem.\ 8.4.2]{BBF13} and \cite[Chap.\ II.2.1]{GR86}.  An operator $\Pi_h^V$ satisfying Assumption \ref{ass:proj-div} is given in
    		\cite[Sec.\ II.]{BR85};
    		
    		\item[(vi)] The \hspace*{-0.15mm}{\textup{second \hspace*{-0.15mm}order \hspace*{-0.15mm}Bernardi--Raugel \hspace*{-0.15mm}element}} \hspace*{-0.15mm}for \hspace*{-0.15mm}$d\hspace*{-0.15em}=\hspace*{-0.15em}3$, \hspace*{-0.15mm}introduced \hspace*{-0.1mm}in \hspace*{-0.15mm}\cite[\hspace*{-0.5mm}Sec.\ \hspace*{-0.5mm}III]{BR85}; \hspace*{-0.1mm}see \hspace*{-0.15mm}also~\hspace*{-0.15mm}\mbox{\cite[\hspace*{-0.5mm}Ex.~\hspace*{-0.5mm}8.7.2]{BBF13}} \hspace*{-0.1mm}and \hspace*{-0.1mm}\cite[Chap.\ \hspace*{-0.1mm}II.2.3]{GR86}. \hspace*{-0.1mm}An \hspace*{-0.1mm}operator \hspace*{-0.1mm}$\Pi_h^V$ \hspace*{-0.1mm}satisfying \hspace*{-0.1mm}Assumption \hspace*{-0.1mm}\ref{ass:proj-div} \hspace*{-0.1mm}is \hspace*{-0.1mm}given~\hspace*{-0.1mm}in~\hspace*{-0.1mm}\mbox{\cite[Sec.~\hspace*{-0.1mm}III.3]{BR85}}.
    	\end{description}
    \end{remark}
    
    We define the space of \textit{discretely divergence-free vector fields in $\Vo_h$} by
    \begin{align*}
    	\Vo_{h,0}\coloneqq \big\{\mathbf{z}_h\in \Vo_h\mid (\mathrm{div}_x\mathbf{z}_h,z_h)_\Omega=0\text{ for all }z_h\in Q_h\big\}\,.
    \end{align*}
    Then, the following approximative weak closedness result applies for appropriately bounded sequences~of variable Bochner--Lebesgue functions that are a.e.\ in time discretely divergence-free.\enlargethispage{6mm}
    
    \begin{proposition}\label{prop:approx_closedness} 
    	For a sequence $\boldsymbol{v}_n\in \bXpn\cap L^\infty(I;Y)$, $n\in \mathbb{N}$, 
    	with $\boldsymbol{v}_n(t)\in \Vo_{h_n,0}$ for a.e.\  $t\in I$, where $(h_n)_{n\in \mathbb{N}}\subseteq (0,+\infty)$  is a sequence such that $h_n\to 0$ $(n\to \infty)$, 
    	 from 
    	\begin{align*}
    		\sup_{n\in \mathbb{N}}{\|\boldsymbol{v}_n\|_{\smash{\bXpn}}}<\infty\,, \qquad\boldsymbol{v}_n\overset{\ast}{\rightharpoondown}\boldsymbol{v}\quad\textup{ in }L^\infty(I;Y)\quad(n\to\infty)\,,
    	\end{align*}
    	it follows that $\boldsymbol{v}\in \bVp\cap L^\infty(I;H)$. 
    \end{proposition}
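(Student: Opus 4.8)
The plan is to combine the weak compactness of $(\boldsymbol{v}_n)_{n\in\mathbb{N}}$ in the frozen-exponent spaces $\mathbfcal{U}^{r,s}_{\D}$ with a passage to the limit in the discrete incompressibility relation $\boldsymbol{v}_n(t)\in\Vo_{h_n,0}$. \textbf{Step 1 (the limit lies in $\bXp$).} Since $\sup_{n\in\mathbb{N}}\|\boldsymbol{v}_n\|_{\bXpn}<\infty$, Proposition~\ref{prop:Xqp_weak_compact} provides a subsequence (not relabeled) and some $\widetilde{\boldsymbol{v}}\in\bXp$ with $\boldsymbol{v}_n\rightharpoonup\widetilde{\boldsymbol{v}}$ in $\mathbfcal{U}^{r,s}_{\D}$ for every $r,s\in\mathcal{P}^\infty(Q_T)$ with $r\prec q$ and $s\prec p$ in $Q_T$; fixing one such pair, in particular $\boldsymbol{v}_n\rightharpoonup\widetilde{\boldsymbol{v}}$ in $L^{r(\cdot,\cdot)}(Q_T;\mathbb{R}^d)$, and therefore $(\boldsymbol{v}_n,\boldsymbol{\psi})_{Q_T}\to(\widetilde{\boldsymbol{v}},\boldsymbol{\psi})_{Q_T}$ for all $\boldsymbol{\psi}\in C^\infty_c(Q_T;\mathbb{R}^d)$. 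As $C^\infty_c(Q_T;\mathbb{R}^d)\subseteq L^1(I;Y)$, the hypothesis $\boldsymbol{v}_n\overset{\ast}{\rightharpoondown}\boldsymbol{v}$ in $L^\infty(I;Y)$ gives $(\boldsymbol{v}_n,\boldsymbol{\psi})_{Q_T}\to(\boldsymbol{v},\boldsymbol{\psi})_{Q_T}$ for the same test fields, so uniqueness of distributional limits yields $\boldsymbol{v}=\widetilde{\boldsymbol{v}}$ a.e.\ in $Q_T$. Hence $\boldsymbol{v}\in\bXp$; in particular $\mathbf{D}_x\boldsymbol{v}\in L^{p(\cdot,\cdot)}(Q_T;\mathbb{R}^{d\times d}_{\mathrm{sym}})$ and $\boldsymbol{v}(t)\in E^{q(t,\cdot),p(t,\cdot)}_0\subseteq W^{1,1}_0(\Omega;\mathbb{R}^d)$ for a.e.\ $t\in I$; and $\boldsymbol{v}\in L^\infty(I;Y)$ by hypothesis. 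It remains to show that $\boldsymbol{v}$ is divergence-free.

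\textbf{Step 2 ($\mathrm{div}_x\boldsymbol{v}=0$).} Fix $\mu\in(1,p^-)$ and let $\mu'=\tfrac{\mu}{\mu-1}$. By Assumption~\ref{assum:p_h}, there is $n_0\in\mathbb{N}$ with $\mu\le p_n$ a.e.\ in $Q_T$ for $n\ge n_0$, whence, by \eqref{eq:hoelder_embeddding} and $|\mathrm{div}_x\boldsymbol{v}_n|\le\sqrt{d}\,|\mathbf{D}_x\boldsymbol{v}_n|$ a.e.,
\[
\|\mathrm{div}_x\boldsymbol{v}_n\|_{\mu,Q_T}\le c\,\|\mathbf{D}_x\boldsymbol{v}_n\|_{p_n(\cdot,\cdot),Q_T}\le c\,\|\boldsymbol{v}_n\|_{\bXpn}\le c\qquad(n\ge n_0)\,,
\]
with $c>0$ independent of $n$ (the last bound being the assumed uniform bound). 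Now let $z\in C^\infty_c(Q_T)$ and split, for $n\ge n_0$,
\[
(\mathrm{div}_x\boldsymbol{v}_n,z)_{Q_T}=(\mathrm{div}_x\boldsymbol{v}_n,z-\Pi_{h_n}^Q z)_{Q_T}+(\mathrm{div}_x\boldsymbol{v}_n,\Pi_{h_n}^Q z)_{Q_T}\,,
\]
where $\Pi_{h_n}^Q z$ denotes the application of $\Pi_{h_n}^Q$ to $z(t,\cdot)$ for a.e.\ $t\in I$. The second term vanishes since $\boldsymbol{v}_n(t)\in\Vo_{h_n,0}$ and $\Pi_{h_n}^Q z(t,\cdot)\in Q_{h_n}$ for a.e.\ $t\in I$. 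For the first, Hölder's inequality in $L^\mu(Q_T)$ gives $|(\mathrm{div}_x\boldsymbol{v}_n,z-\Pi_{h_n}^Q z)_{Q_T}|\le c\,\|z-\Pi_{h_n}^Q z\|_{\mu',Q_T}$; by Lemma~\ref{rmk:proj}(i) (with $h=h_n\to 0$), $\Pi_{h_n}^Q z(t,\cdot)\to z(t,\cdot)$ in $L^{\mu'}(\Omega)$ for a.e.\ $t\in I$, while the global $L^{\mu'}(\Omega)$-stability \eqref{eq:PiYstab} yields the $t$- and $n$-independent domination $\|z(t,\cdot)-\Pi_{h_n}^Q z(t,\cdot)\|_{\mu',\Omega}\le c\,\|z\|_{\infty,Q_T}$, so $\|z-\Pi_{h_n}^Q z\|_{\mu',Q_T}\to 0$ by dominated convergence and thus $(\mathrm{div}_x\boldsymbol{v}_n,z)_{Q_T}\to 0$. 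On the other hand, integration by parts in space together with $\boldsymbol{v}_n\overset{\ast}{\rightharpoondown}\boldsymbol{v}$ in $L^\infty(I;Y)$ and $\nabla_x z\in L^1(I;Y)$ give $(\mathrm{div}_x\boldsymbol{v}_n,z)_{Q_T}=-(\boldsymbol{v}_n,\nabla_x z)_{Q_T}\to-(\boldsymbol{v},\nabla_x z)_{Q_T}$. Hence $(\boldsymbol{v},\nabla_x z)_{Q_T}=0$ for all $z\in C^\infty_c(Q_T)$, i.e.\ $\mathrm{div}_x\boldsymbol{v}=0$ in the sense of distributions, and since $\mathrm{div}_x\boldsymbol{v}=\mathrm{tr}(\mathbf{D}_x\boldsymbol{v})\in L^{p(\cdot,\cdot)}(Q_T)$ also $\mathrm{div}_x\boldsymbol{v}(t,\cdot)=0$ for a.e.\ $t\in I$.

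\textbf{Step 3 (conclusion).} By Steps~1 and~2, $\boldsymbol{v}\in\bXp$ with $\mathrm{div}_x\boldsymbol{v}=0$, hence $\boldsymbol{v}\in\bVp$ by the characterization of $\bVp$ as the divergence-free subspace of $\bXp$ (\textit{cf}.\ \cite{alex-book}). Moreover, for a.e.\ $t\in I$ the field $\boldsymbol{v}(t)\in L^2(\Omega;\mathbb{R}^d)$ is divergence-free with vanishing normal trace (since $\boldsymbol{v}(t)\in W^{1,1}_0(\Omega;\mathbb{R}^d)$), so $\boldsymbol{v}(t)\in H$; because $\boldsymbol{v}\in L^\infty(I;Y)$ and the norms $\|\cdot\|_{L^\infty(I;H)}$ and $\|\cdot\|_{L^\infty(I;Y)}$ coincide on $L^\infty(I;H)$, this yields $\boldsymbol{v}\in L^\infty(I;H)$, completing the proof.

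\textbf{Main obstacle.} The delicate point is Step~2: making the passage to the limit in the discrete incompressibility relation rigorous \emph{uniformly in the moving exponents $p_n$}. The key device is to absorb the variable-exponent norm of $\mathrm{div}_x\boldsymbol{v}_n$ into a single fixed Lebesgue norm $\|\cdot\|_{\mu,Q_T}$ with $\mu\in(1,p^-)$, using Assumption~\ref{assum:p_h} and~\eqref{eq:hoelder_embeddding}, and then to use the global $L^{\mu'}(\Omega)$-stability of $\Pi_{h_n}^Q$ as a constant dominating function, so that the projection error $\|z-\Pi_{h_n}^Q z\|_{\mu',Q_T}$ tends to zero by dominated convergence in time. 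The remaining ingredients — the weak compactness of Proposition~\ref{prop:Xqp_weak_compact} and the identification of divergence-free fields with the closure of $\mathcal{V}$ — are invoked as black boxes.
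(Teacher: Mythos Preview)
Your proof is correct and follows essentially the same route as the paper: invoke Proposition~\ref{prop:Xqp_weak_compact} to place the weak-$\ast$ limit in $\bXp$, then pass to the limit in the discrete incompressibility constraint by replacing the test function with its $\Pi_{h_n}^Q$-projection and using the approximation property of Lemma~\ref{rmk:proj}(i). The only cosmetic differences are that the paper tests with tensor products $\varphi(t)z(x)$ (so $\Pi_{h_n}^Q$ is applied to a fixed $z\in C^\infty_c(\Omega)$ and no dominated-convergence-in-time argument is needed) and passes the limit directly via the weak convergence $\mathrm{div}_x\boldsymbol{v}_{n_k}\rightharpoonup\mathrm{div}_x\boldsymbol{v}$ in $L^{s(\cdot,\cdot)}(Q_T)$ rather than your integration-by-parts detour through $L^\infty(I;Y)$.
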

    
    \begin{proof}
    	Propositions \ref{prop:Xqp_weak_compact} and the uniqueness of weak and weak-$*$ limits imply the existence of a subsequence $\boldsymbol{v}_{n_k}\in \mathbfcal{U}^{q_{\smash{n_k}},p_{\smash{n_k}}}_{\D}\cap L^\infty(I;Y)$, $k\in \mathbb{N}$,  and 
    	$\boldsymbol{v}\in \bXp\cap L^\infty(I;Y)$  such that for every  $r,s\in \mathcal{P}^\infty(Q_T)$ with $r\prec q$ in $Q_T$ and $s\prec p$ in $Q_T$, it holds that
    	\begin{align}
    		\boldsymbol{v}_{n_k}\rightharpoonup\boldsymbol{v}\quad\textup{ in }\mathbfcal{U}^{r,s}_{\D}\quad(k\to\infty )\,.\label{prop:approx_closedness.1}
    	\end{align}
    	Next, let $ z\in C_c^\infty(\Omega)$ and $ \varphi\in C_c^\infty(I)$. Due to Lemma \ref{rmk:proj}(ii), the sequence  $z_{h_n}\coloneqq \Pi_{h_n}^Qz\in Q_{h_n}$, $n\in\mathbb{N}$, satisfies $z_{h_n}\to z$ in $L^{\smash{(s^-)'}}(\Omega)$ $(n\to\infty)$. Since, on the other hand, it holds that $(\mathrm{div}_x\boldsymbol{v}_n(t),z_{h_n})_{\Omega}=0$ for a.e.\  $t\in I$ and all $n\in \mathbb{N}$, due to $\boldsymbol{v}_n(t)\in \Vo_{h_n,0}$ for a.e.\  $t\in I$ and all $n\in \mathbb{N}$, using \eqref{prop:approx_closedness.1}, we~deduce~that
    	\begin{align}
    		(\mathrm{div}_x\boldsymbol{v},\varphi z)_{Q_T}=\lim_{k\to\infty}{	(\mathrm{div}_x\boldsymbol{v}_{n_k},\varphi z_{h_{n_k}})_{Q_T}}=0\,.\label{prop:approx_closedness.2}
    	\end{align}
    	The fundamental theorem in the calculus of variations applied to \eqref{prop:approx_closedness.2} yields that
    	$\boldsymbol{v}\in \bVp\cap L^\infty(I;H)$.
    \end{proof}
    \newpage
    
    In \cite{CHP10}, the compactness properties of an approximative sequence relied on the Simon compactness principle (\textit{cf.} \cite[Lemmata 2.3, 2.4]{CHP10}). This, however, seems to be possible only  if the power-law index is time-independent.
    Since we allow for a time-dependence of the power-law index, we instead resort to Proposition \hspace*{-0.1mm}\ref{prop:non_conform_landes}, \hspace*{-0.1mm}which, \hspace*{-0.1mm}in \hspace*{-0.1mm}turn, \hspace*{-0.1mm}is \hspace*{-0.1mm}based \hspace*{-0.1mm}on 
    \hspace*{-0.1mm}the \hspace*{-0.1mm}Landes--Mustonen~\hspace*{-0.1mm}compactness~\hspace*{-0.1mm}principle~\hspace*{-0.1mm}(\textit{cf}.~\hspace*{-0.1mm}\mbox{\cite[Prop.~\hspace*{-0.1mm}1]{landes-87}}). To be able to extract the a.e.\ in time weak convergence in $Y$ (\textit{cf}.\ \eqref{prop:non_conform_landes.3}) from the approximative~scheme, the following result turns out to be crucial.\vspace*{-0.5mm}\enlargethispage{12mm}

    \begin{lemma}\label{lem:H_perp}
    	Let $r\in (1,\infty)$ and $(\tau_n)_{n\in \mathbb{N}}\subseteq (0,T)$ a sequence such that $\tau_n\to 0$ $(n\to \infty)$. Moreover, let $\boldsymbol{v}_n\in L^\infty (I;\Vo_{h_n,0})$, $n\in \mathbb{N}$,  be a sequence such that\vspace*{-0.25mm}
    	\begin{align}
    		\sup_{n\in \mathbb{N}}{\big[\|\boldsymbol{v}_n\|_{L^\infty(I;Y)}+\|\mathrm{div}_x \boldsymbol{v}_n\|_{L^1((\tau_n,T);L^r(\Omega))}\big]}<\infty\,.\label{lem:H_perp.1}
    	\end{align}
    	Then, there exists a subsequence $\boldsymbol{v}_{n_k}\in L^\infty(I;\Vo_{h_{n_k},0})$, $k\in \mathbb{N}$, and a (Lebesgue) measurable set $I_{\perp}\subseteq I $ with $\vert I\setminus I_{\perp}\vert =0$ such that\vspace*{-0.5mm}
    	\begin{align}
    		\smash{P_{H^{\perp}}(\boldsymbol{v}_{n_k}(t))\rightharpoonup \mathbf{0}\quad\text{ in }H^{\perp}\quad(n\to \infty)\quad\text{ for all }t\in I_{\perp}\,,}\label{lem:H_perp.2}
    	\end{align}
    	where $\smash{P_{H^{\perp}}}\colon \hspace*{-0.15em}Y\hspace*{-0.15em}\to\hspace*{-0.15em} \smash{H^{\perp}}$ denotes the orthogonal projection from $Y$ onto $\smash{H^{\perp}}$, the orthogonal~\mbox{complement}~of~$H$.
    \end{lemma}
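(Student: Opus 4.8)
The plan is to reduce the assertion to testing $\boldsymbol{v}_n(t)$ against a countable dense family of gradients, and to recover the a.e.-in-time information from an $L^1$-in-time decay estimate via a diagonal argument. Recall the Helmholtz decomposition $H^{\perp}=\{\nabla\xi\in Y\mid \xi\in W^{1,2}(\Omega)\}$; since $\Omega$ is a bounded Lipschitz domain, $C^\infty(\overline{\Omega})$ is dense in $W^{1,2}(\Omega)$, so one may fix a countable set $D\subseteq C^\infty(\overline{\Omega})$ for which $\{\nabla\xi\mid \xi\in D\}$ is dense in $H^{\perp}$. Set $M\coloneqq \sup_{n\in\mathbb{N}}\|\boldsymbol{v}_n\|_{L^\infty(I;Y)}<\infty$.

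The starting point is an algebraic identity. Fix $\xi\in C^\infty(\overline{\Omega})$. For a.e.\ $t\in I$ the field $\boldsymbol{v}_n(t)\in\Vo_{h_n,0}\subseteq W^{1,1}_0(\Omega;\mathbb{R}^d)$ has vanishing trace, so $(\boldsymbol{v}_n(t),\nabla\xi)_\Omega=-(\mathrm{div}_x\boldsymbol{v}_n(t),\xi)_\Omega$ by Gauss--Green; since $\Pi_{h_n}^Q\xi\in Q_{h_n}$ and $\boldsymbol{v}_n(t)$ is discretely divergence-free, $(\mathrm{div}_x\boldsymbol{v}_n(t),\Pi_{h_n}^Q\xi)_\Omega=0$, whence $(\boldsymbol{v}_n(t),\nabla\xi)_\Omega=-(\mathrm{div}_x\boldsymbol{v}_n(t),\xi-\Pi_{h_n}^Q\xi)_\Omega$ and, by Hölder with exponents $r,r'$, $|(\boldsymbol{v}_n(t),\nabla\xi)_\Omega|\leq\|\mathrm{div}_x\boldsymbol{v}_n(t)\|_{r,\Omega}\,\|\xi-\Pi_{h_n}^Q\xi\|_{r',\Omega}$ for a.e.\ $t\in(\tau_n,T)$. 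Writing $\delta_n(\xi)\coloneqq\|\xi-\Pi_{h_n}^Q\xi\|_{r',\Omega}$, Lemma~\ref{rmk:proj}(i) gives $\delta_n(\xi)\to0$ $(n\to\infty)$ (note $\xi\in L^{r'}(\Omega)$ as $r'<\infty$ and $\Omega$ is bounded). Integrating over $(\tau_n,T)$ and using \eqref{lem:H_perp.1} yields $\int_{\tau_n}^T|(\boldsymbol{v}_n(t),\nabla\xi)_\Omega|\,\mathrm{d}t\leq\delta_n(\xi)\,\sup_{n\in\mathbb{N}}\|\mathrm{div}_x\boldsymbol{v}_n\|_{L^1((\tau_n,T);L^r(\Omega))}\to0$, while on $(0,\tau_n)$ the crude bound $|(\boldsymbol{v}_n(t),\nabla\xi)_\Omega|\leq M\|\nabla\xi\|_Y$ together with $\tau_n\to0$ gives $\int_0^{\tau_n}|(\boldsymbol{v}_n(t),\nabla\xi)_\Omega|\,\mathrm{d}t\to0$. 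Hence $(\boldsymbol{v}_n(\cdot),\nabla\xi)_\Omega\to0$ in $L^1(I)$ for each fixed $\xi\in C^\infty(\overline{\Omega})$.

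Since $L^1(I)$-convergence implies a.e.\ convergence along a subsequence, a diagonal extraction over the countable set $D$ produces one subsequence $(\boldsymbol{v}_{n_k})_{k\in\mathbb{N}}$ and a co-null set $I_\perp\subseteq I$, which I also intersect with the co-null set $\{t\in I\mid\|\boldsymbol{v}_{n_k}(t)\|_Y\leq M\ \text{for all }k\in\mathbb{N}\}$, such that $(\boldsymbol{v}_{n_k}(t),\nabla\xi)_\Omega\to0$ for every $\xi\in D$ and every $t\in I_\perp$. For fixed $t\in I_\perp$, the sequence $P_{H^\perp}(\boldsymbol{v}_{n_k}(t))$ is bounded in $Y$ by $M$ (as $P_{H^\perp}$ is an orthogonal projection) and satisfies, since $P_{H^\perp}$ is self-adjoint and $\nabla\xi\in H^\perp$, $(P_{H^\perp}(\boldsymbol{v}_{n_k}(t)),\nabla\xi)_\Omega=(\boldsymbol{v}_{n_k}(t),\nabla\xi)_\Omega\to0$ for all $\xi\in D$; the standard density-plus-uniform-boundedness argument then upgrades this to $P_{H^\perp}(\boldsymbol{v}_{n_k}(t))\rightharpoonup\mathbf{0}$ in $H^\perp$ as $k\to\infty$, i.e.\ \eqref{lem:H_perp.2}.

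The only genuine difficulty is that the consistency error of the discrete divergence constraint produces decay only in the $L^1$-in-time sense, not pointwise in $t$, so a.e.-in-time weak convergence cannot be read off directly and must be recovered through the countable diagonal extraction (with the uniform $L^\infty(I;Y)$ bound needed to pass from the dense family $\{\nabla\xi\mid\xi\in D\}$ to all of $H^{\perp}$). The cut-off $\tau_n$ is a purely technical device: near $t=0$ there is no uniform $L^r$-control of $\mathrm{div}_x\boldsymbol{v}_n$, so there one falls back on the crude $L^2$-estimate and on $|(0,\tau_n)|\to0$.
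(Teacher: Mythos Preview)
Your proof is correct and follows essentially the same route as the paper's: identify $H^\perp$ with gradients, test against a countable dense family $\{\nabla\psi^i\}\subseteq H^\perp$, exploit the discrete divergence constraint together with the approximation property of $\Pi_{h_n}^Q$ to obtain $L^1$-in-time decay of $(\boldsymbol{v}_n(\cdot),\nabla\psi^i)_\Omega$, then extract via a diagonal argument and upgrade by density plus the uniform $L^\infty(I;Y)$ bound. Your explicit treatment of the interval $(0,\tau_n)$ and the intersection with the pointwise-boundedness co-null set are minor clarifications the paper leaves implicit, but the argument is the same.
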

    
    \begin{proof}
    	First, note that $\smash{H^\perp}=\{\nabla_{\! x}    z\mid z\in W^{1,2}(\Omega)\}$. Therefore, since $W^{1,2}(\Omega)$ is separable~and~$C^\infty(\overline{\Omega})$ is dense in $W^{1,2}(\Omega)$, there exists a countable set $\{\psi^{i}\hspace*{-0.1em}\in \hspace*{-0.1em} C^\infty(\overline{\Omega})\mid i\hspace*{-0.1em} \in\hspace*{-0.1em}  \mathbb{N}\}$ such that ${\{\nabla_{\! x}   \psi^{i}\hspace*{-0.1em} \in\hspace*{-0.1em}   C^\infty(\overline{\Omega};\mathbb{R}^d)\mid i\hspace*{-0.1em} \in\hspace*{-0.1em}  \mathbb{N}\}}$ lies densely in $H^\perp$. For every $i\hspace*{-0.1em}\in \hspace*{-0.1em}\mathbb{N}$, set 
    	$\psi_{h_n}^{i}\hspace*{-0.1em}\coloneqq \hspace*{-0.1em}\Pi_{h_n}^Q \psi^{i}\hspace*{-0.1em}\in\hspace*{-0.1em}  Q_{h_n}$, $n\hspace*{-0.1em}\in \hspace*{-0.1em}\mathbb{N}$.
    	Then, for every $\ell \hspace*{-0.1em}\in  \hspace*{-0.1em}\mathbb{N}$, using~\eqref{lem:H_perp.1}~and Lemma \ref{rmk:proj}(i), we~find~that\vspace*{-0.25mm}
    	\begin{align}\label{eq:H_perp.1} 
    		\begin{aligned}
    			\int_{\tau_n}^T{\sum_{i=1}^{\ell}{\vert(\boldsymbol{v}_n(s),\nabla_{\! x}    \psi^{i})_{\Omega}\vert}\,\mathrm{d}s}
    			&= 
    			 \int_{\tau_n}^T{\sum_{i=1}^{\ell}{\vert(\mathrm{div}_x \boldsymbol{v}_n(s),\psi^{i}-\psi_{h_n}^{i})_{\Omega}\vert}\,\mathrm{d}s}
    			\\[-1mm]&\leq \|\mathrm{div}_x \boldsymbol{v}_n\|_{L^1((\tau_n,T);L^r(\Omega))}T^{\frac{1}{r'}}\sum_{i=1}^{\ell}{\|\psi^{i}-\psi_{h_n}^{i}\|_{r',\Omega}}
    		 \to 0\quad (n\to\infty)\,.
    		\end{aligned}
    	\end{align}
    	
    	Next, we apply a diagonal sequence argumentation to derive from \eqref{eq:H_perp.1} that there exists~a~sub-sequence  $\boldsymbol{v}_{n_k}\in \smash{L^\infty(I;\Vo_{h_{n_k},0})}$, $k\in \mathbb{N}$, and a (Lebesgue) measurable set  $I_{\perp}\subseteq I $ with $\vert I\setminus I_{\perp}\vert =0$~such~that
    	\begin{align}\label{eq:H_perp.2} 
    		\smash{(\boldsymbol{v}_{\smash{n_k}}(t),\nabla_{\! x}    \psi^i)_{\Omega} \to 0\quad (k\to \infty)\quad\text{ for all }t\in I_\perp\text{ and }i\in \mathbb{N}\,.}
    	\end{align}
    	First, due to \eqref{eq:H_perp.1} for $\ell =1$, there exists a subsequence $\boldsymbol{v}_{\smash{n_k^1}}\in L^\infty(I;\Vo_{h_{\smash{n_k^1}}}(0))$, $k\in \mathbb{N}$, and a (Lebesgue) measurable set $I_\perp^1\subseteq I$ with $\vert I\setminus I_\perp^1\vert=0$~such~that
    	\begin{align*}
    		(\boldsymbol{v}_{\smash{n_k^1}}(t),\nabla_{\! x}    \psi^1)_{\Omega} \to 0\quad (k\to \infty)\quad\text{ for all }t\in I_\perp^1\,.
    	\end{align*}
    	Second, since \eqref{eq:H_perp.1} for $\ell =2$ also applies to the  subsequence $\boldsymbol{v}_{\smash{n_k^1}}\in L^\infty(I;\Vo_{h_{\smash{n_k^1}}}(0))$, $k\in \mathbb{N}$, we obtain~a further subsequence
    	$\boldsymbol{v}_{\smash{n_k^2}}\hspace*{-0.05em}\in\hspace*{-0.05em} L^\infty(I;\Vo_{h_{\smash{n_k^2}},0})$, $k\in \mathbb{N}$, where $(n_k^2)_{k\in \mathbb{N}}\hspace*{-0.05em}\subseteq \hspace*{-0.05em}(n_k^1)_{k\in \mathbb{N}}$,
    	and~a~(Lebesgue)~\mbox{measurable} set  $I_\perp^2\subseteq I$ with $\vert I\setminus I_\perp^2\vert=0$ such that 
    	\begin{align*}
    		\smash{	(\boldsymbol{v}_{\smash{n_k^2}}(t),\nabla_{\! x}    \psi^i)_{\Omega} \to 0\quad (k\to \infty)\quad\text{ for all }t\in I_\perp^2\text{ and }i=1,2\,.}
    	\end{align*}
    	Without loss of generality, we may assume that $\smash{I_\perp^2\subseteq I_\perp^1}$. Otherwise, we can just replace $\smash{I_\perp^2}$ by $\smash{I_\perp^2\cap I_\perp^1}$. 
    	Iterating this procedure, for every $\ell\in \mathbb{N}$, we obtain a 
    	subsequence $\boldsymbol{v}_{\smash{n_k^{\ell}}}\in L^\infty(I;\Vo_{h_{\smash{n_k^{\ell}}},0})$, $k\in \mathbb{N}$,  where $\smash{(n_k^{\ell})_{k\in \mathbb{N}}\subseteq (n_k^{\ell-1})_{k\in \mathbb{N}}}$,
    	and a (Lebesgue) measurable set $\smash{I_\perp^{\ell}\subseteq I_\perp^{\ell-1}\subseteq I}$ with $\smash{\vert I\setminus I_\perp^{\ell}\vert=0}$ such that 
    	\begin{align*}
    		\smash{	(\boldsymbol{v}_{\smash{n_k^{\ell}}}(t),\nabla_{\! x}    \psi^i)_{\Omega} \to 0\quad (k\to \infty)\quad\text{ for all }t\in I_\perp^{\ell}\text{ and }i=1,\ldots,\ell\,.}
    	\end{align*}
    	Thus, if we set $I_\perp\coloneqq \bigcap_{\ell\in \mathbb{N}}{I_\perp^{\ell}}$ and $n_k\coloneqq n_k^k$ for all $k\in \mathbb{N}$, then $\vert I\setminus I_\perp\vert=0$ and for every $i\in \mathbb{N}$~and~$t\in I_\perp$, due to $t\in I_\perp^{i}$ and since $(n_k)_{k\in \mathbb{N};k\ge i}\subseteq (n_k^{i})_{k\in \mathbb{N}}$, it holds that
    	\begin{align*}
    		\lim_{k\to \infty}{(\boldsymbol{v}_{\smash{n_k}}(t),\nabla_{\! x}    \psi^{i})_{\Omega}}= \lim_{k\to \infty}{(\boldsymbol{v}_{\smash{n_k^{i}}}(t),\nabla_{\! x}    \psi^{i})_{\Omega}}= 0\,.
    	\end{align*}
    	In other words, \eqref{eq:H_perp.2} applies.
    	Finally, since  $\{\nabla_{\! x}\psi_i\in C^\infty(\overline{\Omega};\mathbb{R}^d)\mid i\in \mathbb{N}\}$ lies densely~in~$H^\perp$~and~$(\boldsymbol{v}_{n_k})_{k\in\mathbb{N}}$ is bounded in $L^\infty(I;Y)$, from \eqref{eq:H_perp.2}, we conclude that
    	\begin{align*}
    		\begin{aligned}
    			\smash{	P_{H^{\perp}}(\boldsymbol{v}_{n_k}(t))\rightharpoonup \mathbf{0}\quad\text{ in }H^{\perp}\quad\text{ for all }t\in I_\perp\,,}
    		\end{aligned}
    	\end{align*}
    	which is the claimed a.e.\ in time weak convergence of the $H^{\perp}$-projections  \eqref{lem:H_perp.2}.
    \end{proof}

    \subsection{Time discretization}
    
	\hspace{5mm}In this section, we introduce the time discretization and prove some relevant
    properties.~In~doing~so, let $X$ be a Banach space, $K\in\mathbb{N}$,  $\tau\coloneqq \tau_K\coloneqq \frac{T}{K}$, $I_k\coloneqq \left((k-1)\tau,k\tau\right]$, $k=1,\ldots,K$, and 
    $\mathcal{I}_\tau \coloneqq \{I_k\}_{k=1,\ldots,K}$. Moreover, we denote by 
    \begin{align*}
    \mathbb{P}^0(\mathcal{I}_\tau;X)&\coloneqq \big\{\boldsymbol{u}\colon I\to X\mid \boldsymbol{u}(s)
    =\boldsymbol{u}(t)\text{ in }X\text{ for all }t,s\in I_k\,,\;k=1,\ldots,K\big\}\,,\\
    \mathbb{P}^1_c(\mathcal{I}_\tau;X)&\coloneqq \bigg\{\boldsymbol{u}\in W^{1,\infty}(I;X)\;\Big|\;  \frac{\mathrm{d}\boldsymbol{u}}{\mathrm{dt}}\in \mathbb{P}^0(\mathcal{I}_\tau;X)\bigg\}\,,
    \end{align*}
    the \textit{space of piece-wise constant functions
    with respect to $\mathcal{I}_\tau$} and \textit{space of continuous, piece-wise affine functions
    with respect to $\mathcal{I}_\tau$}. For   $(u^k)_{k=0,\ldots,K}\hspace*{-0.1em}\subseteq\hspace*{-0.1em} X$, the
    \textit{backward difference quotient} operator~is~defined~by
    \begin{align*}
    \mathrm{d}_\tau u^k\coloneqq \frac{1}{\tau}(u^k-u^{k-1})\quad\text{ in }X\quad\text{ for all } k=1,\ldots,K\,.
    \end{align*}
    In addition, we denote by
    $\overline{\boldsymbol{u}}^\tau\in \mathbb{P}^0(\mathcal{I}_\tau;X)$, 
    the \textit{piece-wise constant interpolant}, and by 
    $\widehat{\boldsymbol{u}}^\tau\in \mathbb{P}^1_c(\mathcal{I}_\tau;X)$,~the \textit{piece-wise affine interpolant}, 
    for every $t\in I_k$, $k=1,\ldots,K$, defined by
    \begin{align}\label{eq:polant}
    \overline{\boldsymbol{u}}^\tau(t)\coloneqq u^k\quad\text{and }\quad\widehat{\boldsymbol{u}}^\tau(t)
    \coloneqq \Big(\frac{t}{\tau}-(k-1)\Big)u^k+\Big(k-\frac{t}{\tau}\Big)u^{k-1}\quad\text{ in }X\,.
    \end{align}
    If $X$ is a Hilbert space with inner product $(\cdot,\cdot)_X$ and $(u^k)_{k=0,\ldots,K} \subseteq  X$, then, for every $k,\ell = 0,\ldots,K$, there holds a \textit{discrete integration-by-parts inequality}, \textit{i.e.},
    \begin{align}
    \int_{k\tau}^{\ell\tau}{\bigg( \frac{\mathrm{d}\widehat{\boldsymbol{u}}^\tau}{\mathrm{dt}}(t),
  \overline{\boldsymbol{u}}^\tau(t)\bigg)_X\,\mathrm{d}t}
    \ge \frac{1}{2}\|u^{\ell}\|_X^2-\frac{1}{2}\|u^k\|_X^2\,,\label{eq:4.2}
    \end{align}
    which follows from the identity
    $(\mathrm{d}_\tau u^k,u^k)_X
    =\frac{1}{2}\mathrm{d}_\tau\|u^k\|_X^2+\frac{\tau}{2}\|\mathrm{d}_\tau u^k\|_X^2$
    for all $k=1,\ldots,K$.
	 
 A convenient approach to discretize time-dependencies, \textit{e.g.}, of the right-hand side or time-dependent non-linear operator, is the application of the (local) $L^2$-projection~operator~from~$L^1(I;X)$~onto~$\mathbb{P}^0(\mathcal{I}_\tau;X)$:
  the \textit{(local) $L^2$-projection operator} $\Pi^{0,\mathrm{t}}_{\tau}\colon L^1(I;X)\to \mathbb{P}^0(\mathcal{I}_\tau;X)$, for every $\boldsymbol{u}\in L^1(I;X)$ is defined by
	\begin{align}\label{def:jtau}
	\Pi^{0,\mathrm{t}}_{\tau}[\boldsymbol{u}]\coloneqq \sum_{k=0}^K{\langle\boldsymbol{u}\rangle_k\chi_{I_k}}\quad\textup{ in }\mathbb{P}^0(\mathcal{I}_\tau;X)\,,\quad\text{ where }\langle\boldsymbol{u}\rangle_k\coloneqq\langle\boldsymbol{u}\rangle_{I_k}\coloneqq  \fint_{I_k}{\boldsymbol{u}(t)\,\mathrm{d}t}\in X\,,
	\end{align}
    and has the following projection, approximation, and stability properties.\enlargethispage{11mm}

	\begin{proposition}\label{rothe2}
		For $r\in [1,+\infty)$, the following statements apply:
		\begin{description}[noitemsep,topsep=2pt,leftmargin=!,labelwidth=\widthof{(iii)},font=\normalfont\itshape]
			\item[(i)] $\Pi^{0,\mathrm{t}}_{\tau}[\boldsymbol{u}_\tau]=\boldsymbol{u}_\tau$ in $\mathbb{P}^0(\mathcal{I}_\tau;X)$ for all $\boldsymbol{u}_\tau\in \mathbb{P}^0(\mathcal{I}_\tau;X)$ and $\tau>0$;
			\item[(ii)] $\Pi^{0,\mathrm{t}}_{\tau}[\boldsymbol{u}]\to\boldsymbol{u}$ in $L^r(I;X)$ $(\tau\to 0)$ for all $\boldsymbol{u}\in L^r(I;X)$, \textit{i.e.}, $\bigcup_{\tau>0}\mathbb{P}^0(\mathcal{I}_\tau;X)$~lies~densely~in~$L^r(I;X)$;
			\item[(iii)] $\|\Pi^{0,\mathrm{t}}_{\tau}[\boldsymbol{u}]\|_{L^r(I;X)}\leq \|\boldsymbol{u}\|_{L^r(I;X)}$ for all $\boldsymbol{u}\in L^r(I;X)$ and $\tau>0$.
		\end{description}
	\end{proposition}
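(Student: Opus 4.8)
The plan is to establish the three assertions in turn, each by an elementary argument. Assertion (i) is immediate from the definition \eqref{def:jtau}: if $\boldsymbol{u}_\tau\in\mathbb{P}^0(\mathcal{I}_\tau;X)$ takes the constant value $u^k\in X$ on $I_k$, then $\langle\boldsymbol{u}_\tau\rangle_k=\fint_{I_k}u^k\,\mathrm{d}t=u^k$ for $k=1,\ldots,K$, whence $\Pi^{0,\mathrm{t}}_{\tau}[\boldsymbol{u}_\tau]=\sum_{k=1}^K u^k\chi_{I_k}=\boldsymbol{u}_\tau$ in $\mathbb{P}^0(\mathcal{I}_\tau;X)$.

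For assertion (iii) I would invoke Jensen's inequality subinterval by subinterval. Since $\vert I_k\vert^{-1}\,\mathrm{d}t$ is a probability measure on $I_k$ and $s\mapsto s^r$ is convex on $[0,\infty)$ for $r\ge 1$, the triangle inequality for the Bochner integral together with Jensen's inequality yields $\|\langle\boldsymbol{u}\rangle_k\|_X^r=\big\|\fint_{I_k}\boldsymbol{u}(t)\,\mathrm{d}t\big\|_X^r\leq\big(\fint_{I_k}\|\boldsymbol{u}(t)\|_X\,\mathrm{d}t\big)^r\leq\fint_{I_k}\|\boldsymbol{u}(t)\|_X^r\,\mathrm{d}t$ for $\boldsymbol{u}\in L^r(I;X)$ and every $k$. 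Multiplying by $\vert I_k\vert$ and summing over $k=1,\ldots,K$ gives $\|\Pi^{0,\mathrm{t}}_{\tau}[\boldsymbol{u}]\|_{L^r(I;X)}^r=\sum_{k=1}^K\vert I_k\vert\,\|\langle\boldsymbol{u}\rangle_k\|_X^r\leq\sum_{k=1}^K\int_{I_k}\|\boldsymbol{u}(t)\|_X^r\,\mathrm{d}t=\|\boldsymbol{u}\|_{L^r(I;X)}^r$, which is the stability bound; in particular $\Pi^{0,\mathrm{t}}_{\tau}$ is a well-defined bounded linear operator on all of $L^r(I;X)$.

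Assertion (ii) I would prove by a density argument based on (iii). First, for $\boldsymbol{w}\in C^0(\overline{I};X)$, uniform continuity on the compact interval $\overline{I}$ furnishes a modulus of continuity $\omega$ with $\|\boldsymbol{w}(t)-\boldsymbol{w}(s)\|_X\leq\omega(\vert t-s\vert)$; since $\langle\boldsymbol{w}\rangle_k$ is an average of values $\boldsymbol{w}(s)$, $s\in I_k$, one obtains $\|\Pi^{0,\mathrm{t}}_{\tau}[\boldsymbol{w}](t)-\boldsymbol{w}(t)\|_X\leq\omega(\tau)$ for a.e.\ $t\in I$, hence $\|\Pi^{0,\mathrm{t}}_{\tau}[\boldsymbol{w}]-\boldsymbol{w}\|_{L^r(I;X)}\leq\vert I\vert^{1/r}\,\omega(\tau)\to 0$ $(\tau\to 0)$. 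For general $\boldsymbol{u}\in L^r(I;X)$ and $\varepsilon>0$, I would use the density of $C^0(\overline{I};X)$ in $L^r(I;X)$ to choose $\boldsymbol{w}\in C^0(\overline{I};X)$ with $\|\boldsymbol{u}-\boldsymbol{w}\|_{L^r(I;X)}<\varepsilon$, and then, exploiting the linearity of $\Pi^{0,\mathrm{t}}_{\tau}$ and stability (iii), estimate $\|\Pi^{0,\mathrm{t}}_{\tau}[\boldsymbol{u}]-\boldsymbol{u}\|_{L^r(I;X)}\leq\|\Pi^{0,\mathrm{t}}_{\tau}[\boldsymbol{u}-\boldsymbol{w}]\|_{L^r(I;X)}+\|\Pi^{0,\mathrm{t}}_{\tau}[\boldsymbol{w}]-\boldsymbol{w}\|_{L^r(I;X)}+\|\boldsymbol{w}-\boldsymbol{u}\|_{L^r(I;X)}\leq 2\varepsilon+\|\Pi^{0,\mathrm{t}}_{\tau}[\boldsymbol{w}]-\boldsymbol{w}\|_{L^r(I;X)}$. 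Letting first $\tau\to 0$ and then $\varepsilon\to 0$ yields $\Pi^{0,\mathrm{t}}_{\tau}[\boldsymbol{u}]\to\boldsymbol{u}$ in $L^r(I;X)$; the density of $\bigcup_{\tau>0}\mathbb{P}^0(\mathcal{I}_\tau;X)$ in $L^r(I;X)$ follows at once since $\Pi^{0,\mathrm{t}}_{\tau}[\boldsymbol{u}]\in\mathbb{P}^0(\mathcal{I}_\tau;X)$.

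I do not anticipate a genuine obstacle, as this is a classical statement. The only point needing a word of justification is the density of $C^0(\overline{I};X)$ --- equivalently, of finite $X$-valued step functions --- in the Bochner space $L^r(I;X)$, which is part of the standard theory of the Bochner integral; everything else reduces to Jensen's inequality and a triangle-inequality estimate. The minor indexing subtlety that $I_0=(-\tau,0]$ contributes nothing to the sum restricted to $I=(0,T)$ is silently absorbed by summing only over $k=1,\ldots,K$.
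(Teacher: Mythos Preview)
Your argument is correct and is the standard elementary proof of these facts. The paper does not actually give a proof here; it merely cites \cite[Rem.\ 8.15]{Rou13}, so your write-up is more detailed than the paper's own treatment and proceeds along the same classical lines (projection property from the definition, Jensen for stability, density of continuous functions plus stability for the approximation property) that the cited reference records.
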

	
	\begin{proof}
		See \cite[Rem.\ 8.15]{Rou13}.
	\end{proof}

	 Moreover, for $q,p\in \mathcal{P}^\infty(Q_T)$ with $q^-,p^->1$, let 
		$A(t)\colon  U^{q,p}_{\D}(t)\to (U^{q,p}_{\D}(t))^*$, $t\in I$, be a family~of operators with the followsing properties:
		\begin{description}[noitemsep,topsep=2pt,leftmargin=!,labelwidth=\widthof{(A.3)},font=\normalfont\itshape]
			\item[(A.1)] \hypertarget{A.1}{}  For a.e.\   $t\in I$, the operator $A(t)\colon U^{q,p}_{\D}(t)\to (U^{q,p}_{\D}(t))^*$ is  demi-continuous\footnote{For a Banach space $X$, an operator $A\colon X\to X^*$ is called \textit{demi-continuous} if for a sequence $(x_n)_{n\in \mathbb{N}}\subseteq X$, from $x_n\to x$ in $X$ $(n\to \infty)$, it follows that $Ax_n\rightharpoonup Ax$ in $X^*$ $(n\to \infty)$.};
			\item[(A.2)] \hypertarget{A.2}{} For every $\boldsymbol{u},\boldsymbol{z}\hspace*{-0.1em}\in \hspace*{-0.1em}\bXp$, the mapping $(t\hspace*{-0.1em}\mapsto\hspace*{-0.1em}\langle A(t)(\boldsymbol{u}(t)),\boldsymbol{z}(t)\rangle_{\smash{U^{q,p}_{\D}(t)}})\colon \hspace*{-0.1em}I\hspace*{-0.1em}\to\hspace*{-0.1em} \mathbb{R}$  is (Lebesgue)~\mbox{measurable};
			\item[(A.3)] \hypertarget{A.3}{} For a non-decreasing  mapping $\mathbb{B}\colon \mathbb{R}_{\ge 0}\to \mathbb{R}_{\ge 0}$, for a.e.\   $t\in I$ and every $\mathbf{u},\mathbf{z}\in U^{q,p}_{\D}(t)$,~it~holds~that
			\begin{align*}
			\vert\langle A(t)\mathbf{u},\mathbf{z}\rangle_{\smash{U^{q,p}_{\D}(t)}}\vert \leq \mathbb{B}(\|\mathbf{u}\|_{2,\Omega})\big(1+\rho_{q(t,\cdot),\Omega}(\mathbf{u})+\rho_{p(t,\cdot),\Omega}(\mathbf{D}_x\mathbf{u})+\rho_{q(t,\cdot),\Omega}(\mathbf{z})+\rho_{p(t,\cdot),\Omega}(\mathbf{D}_x\mathbf{z})\big)\,.
			\end{align*}
		\end{description}
		Then, the \textit{induced} operator $\mathbfcal{A}\colon \bXp\cap L^\infty(I;Y)\to (\bXp)^*$, for every  $\boldsymbol{u}\in \bXp\cap L^\infty(I;Y)$~and~$ \boldsymbol{z} \in \bXp$ defined by
		\begin{align}\label{def:induced}
			\langle \mathbfcal{A}\boldsymbol{u},\boldsymbol{z}\rangle_{ \bXp}\coloneqq \int_I{\langle A(t)(\boldsymbol{u}(t)),\boldsymbol{z}(t)\rangle_{\smash{U^{q,p}_{\D}(t)}}\,\mathrm{d}t}\,,
		\end{align} 
		 is well-defined, bounded, and demi-continuous (\textit{cf}.\ \cite[Prop.\  5.7]{alex-book}).
		 
		The \textit{k-th.\ temporal means} $\langle A\rangle_k \coloneqq \langle A\rangle_{I_k} \colon U^{q,p}_{\plus}\to (U^{q,p}_{\plus})^*$, $k=1,\ldots ,K$, of ${A(t)\colon U^{q,p}_{\D}(t)\to (U^{q,p}_{\D}(t))^*}$, ${ t\in I} $, for every $k=1,\ldots ,K$ and $\mathbf{u}\in U^{q,p}_{\plus}$ are defined by\vspace*{-0.5mm}
		\begin{align}\label{def:temp_mean}
		\langle A\rangle_k  \mathbf{u}\coloneqq \fint_{I_k}{(\mathrm{id}_{\smash{U^{q,p}_{\plus}}})^*A(t)\mathbf{u}\,\mathrm{d}t}\quad\textup{ in }(U^{q,p}_{\plus})^*\,.
		\end{align} 
		The \textit{(local) $L^2$-projection} $\Pi^{0,\mathrm{t}}_{\tau}[A](t)\colon \hspace*{-0.1em}U^{q,p}_{\plus}\hspace*{-0.1em}\to \hspace*{-0.1em}(U^{q,p}_{\plus})^*$, $t\in I$, of ${A(t)\colon\hspace*{-0.1em} U^{q,p}_{\D}(t)\hspace*{-0.1em}\to\hspace*{-0.1em} (U^{q,p}_{\D}(t))^*}$,~${t\hspace*{-0.1em}\in\hspace*{-0.1em} I}$,  for a.e.\   $t\in I$ and every  $\mathbf{u}\in U^{q,p}_{\plus}$ is defined by\vspace*{-1.5mm}
		\begin{align}\label{def:l2_proj1}
		\Pi^{0,\mathrm{t}}_{\tau}[A](t)\mathbf{u}\coloneqq \sum_{k=1}^{K}{\langle A\rangle_k \mathbf{u}\,\chi_{I_k}(t)}\quad\textup{ in }(U^{q,p}_{\plus})^*\,.
		\end{align} 
		The \textit{(local) $L^2$-projection} $\Pi^{0,\mathrm{t}}_{\tau}[\mathbfcal{A}]\colon \hspace*{-0.1em} \bXplus\cap  L^\infty(I;Y)\hspace*{-0.15em}\to\hspace*{-0.15em} (\bXplus)^*$ of 
		${\mathbfcal{A}\colon \hspace*{-0.1em}\bXp\hspace*{-0.15em}\cap\hspace*{-0.15em} L^\infty(I;Y)\hspace*{-0.15em}\to \hspace*{-0.15em} (\bXp)^*}$,  for every $\boldsymbol{u}\in \bXplus\cap  L^\infty(I;Y)$ and $ \boldsymbol{z} \in \bXplus$ is defined by\vspace*{-1mm}
		\begin{align}\label{def:l2_proj2}
		\langle \Pi^{0,\mathrm{t}}_{\tau}[\mathbfcal{A}]\boldsymbol{u},\boldsymbol{z}\rangle_{\bXplus}\coloneqq \int_I{\langle \Pi^{0,\mathrm{t}}_{\tau}[A](t)(\boldsymbol{u}(t)),\boldsymbol{z}(t)\rangle_{U^{q,p}_{\plus}}\,\mathrm{d}t}\,.
		\end{align}  
		
		\begin{proposition}\label{rothe3}
			Given the definitions \eqref{def:temp_mean}--\eqref{def:l2_proj2}, the following statements apply:
		\begin{description}[noitemsep,topsep=2pt,leftmargin=!,labelwidth=\widthof{(iii)},font=\normalfont\itshape]
			\item[(i)] For every $k=1,\ldots,K$, the operator $\langle A\rangle_k \colon U^{q,p}_{\plus}\to (U^{q,p}_{\plus})^*$ is well-defined, bounded,~and~demi-continuous;
			\item[(ii)]  The operators $\Pi^{0,\mathrm{t}}_{\tau}[A](t)\colon U^{q,p}_{\plus}\to (U^{q,p}_{\plus})^*$, $t\in I$, have the following properties:
			\begin{description}[noitemsep,topsep=2pt,leftmargin=!,labelwidth=\widthof{(ii.a)},font=\normalfont\itshape]
				\item[(ii.a)] For a.e.\   $t\in I$, the operator $\Pi^{0,\mathrm{t}}_{\tau}[A](t)\colon U^{q,p}_{\plus}\to (U^{q,p}_{\plus})^*$ is  demi-continuous;
				\item[(ii.b)] For every $\mathbf{u},\mathbf{z}\in U^{q,p}_{\plus}$, the mapping $(t\mapsto\langle \Pi^{0,\mathrm{t}}_{\tau}[A](t)\mathbf{u},\mathbf{z}\rangle_{\smash{U^{q,p}_{\plus}}})\colon U^{q,p}_{\plus}\to (U^{q,p}_{\plus})^*$ is  (Lebesgue) measurable;
				\item[(ii.c)] For a non-decreasing mapping $\mathbb{B}_{\plus}\colon \mathbb{R}_{\ge 0}\to \mathbb{R}_{\ge 0}$,  for a.e.\   $t\in I$ and every $\mathbf{u}\in U^{q,p}_{\plus}$,~it~holds~that $\|\Pi^{0,\mathrm{t}}_{\tau}[A](t)\mathbf{u}\|_{\smash{(U^{q,p}_{\plus})^*}} \leq \mathbb{B}_{\plus}(\|\mathbf{u}\|_{2,\Omega})\big(1+\|\mathbf{u}\|_{q^+,p^+,\Omega}^{p^+-1}\big)$. 
			\end{description}
			\item[(iii)] The operator $\Pi^{0,\mathrm{t}}_{\tau}[\mathbfcal{A}]\colon \bXplus\cap  L^\infty(I;Y)\to(\bXplus)^*$ is well-defined and has the following properties:
			\begin{description}[noitemsep,topsep=2pt,leftmargin=!,labelwidth=\widthof{(iii.a)},font=\normalfont\itshape]
				\item[(iii.a)] \hspace{-0.1em}If $\boldsymbol{u}_\tau\hspace{-0.15em}\in\hspace{-0.15em} \mathbb{P}^0(\mathcal{I}_\tau;U^{q,p}_{\plus})$, then $\langle \Pi^{0,\mathrm{t}}_{\tau}[\mathbfcal{A}]\boldsymbol{u}_\tau,\boldsymbol{z}\rangle_{\bXplus}\hspace{-0.15em}=\hspace{-0.15em}\langle \mathbfcal{A}\boldsymbol{u}_\tau, \hspace{-0.15em}\Pi^{0,\mathrm{t}}_{\tau}[\boldsymbol{z}]\rangle_{\bXp}$~for~all~${\boldsymbol{z}\hspace{-0.15em}\in \hspace{-0.15em}\bXplus}$;
				\item[(iii.b)] \hspace{-0.1em}If  $\boldsymbol{u}_\tau\in\mathbb{P}^0(\mathcal{I}_\tau;U^{q,p}_{\plus})$, then $\|\Pi^{0,\mathrm{t}}_{\tau}[\mathbfcal{A}]\boldsymbol{u}_\tau\|_{(\bXplus)^*}\leq 2\,(1+\vert Q_T\vert) \,\|\mathbfcal{A}\boldsymbol{u}_\tau\|_{(\bXp)^*}$.\enlargethispage{10mm}
			\end{description}
		\end{description}
	\end{proposition}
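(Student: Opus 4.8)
The plan is to reduce statements (i) and (ii) to the single observation that, for a.e.\ $t\in I$, the space $U^{q,p}_{\plus}$ embeds continuously into the time-slice space $U^{q,p}_{\D}(t)$ — so that the temporal means $\langle A\rangle_k$ are just $I_k$-averages of the pulled-back operators $(\mathrm{id}_{U^{q,p}_{\plus}})^*A(t)$ — and then to obtain (iii) by Fubini-type interchanges combined with the projection and stability properties of $\Pi^{0,\mathrm{t}}_{\tau}$ from Proposition~\ref{rothe2}. The measurability and growth bookkeeping throughout runs parallel to the construction of the induced operator $\mathbfcal{A}$ in \cite[Prop.~5.7]{alex-book}.

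For (i), fix $k\in\{1,\dots,K\}$. Since $q(t,\cdot)\le q^+$ and $p(t,\cdot)\le p^+$ a.e.\ in $\Omega$ for a.e.\ $t\in I$, inequality~\eqref{eq:hoelder_embeddding} makes $\mathrm{id}_{U^{q,p}_{\plus}}\colon U^{q,p}_{\plus}\to U^{q,p}_{\D}(t)$ a bounded embedding for a.e.\ $t$, hence $(\mathrm{id}_{U^{q,p}_{\plus}})^*\colon (U^{q,p}_{\D}(t))^*\to(U^{q,p}_{\plus})^*$ is bounded and~\eqref{def:temp_mean} is meaningful once $t\mapsto(\mathrm{id}_{U^{q,p}_{\plus}})^*A(t)\mathbf{u}$ is shown to be Bochner integrable on $I_k$. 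Weak measurability of this map is property (A.2) read for constant-in-time test functions, and since $(U^{q,p}_{\plus})^*$ is separable this gives strong measurability; integrability follows from (A.3) after dominating the $t$-dependent modulars by their constant-exponent counterparts via $\rho_{q(t,\cdot),\Omega}(\mathbf{w})\le\vert\Omega\vert+\|\mathbf{w}\|_{q^+,\Omega}^{q^+}$ and the analogous bound for $p$, which simultaneously yields boundedness of $\langle A\rangle_k$. For demi-continuity, if $\mathbf{u}_n\to\mathbf{u}$ in $U^{q,p}_{\plus}$ then $\mathbf{u}_n\to\mathbf{u}$ in $U^{q,p}_{\D}(t)$ for a.e.\ $t$, so (A.1) gives $A(t)\mathbf{u}_n\rightharpoonup A(t)\mathbf{u}$ a.e.; testing $\langle A\rangle_k\mathbf{u}_n$ against a fixed $\mathbf{z}\in U^{q,p}_{\plus}$ I would pass to the limit inside $\fint_{I_k}$ by dominated convergence, the domination supplied uniformly in $n$ by (A.3) and the boundedness of $(\mathbf{u}_n)_{n\in\mathbb{N}}$, and reflexivity of $U^{q,p}_{\plus}$ then upgrades this to $\langle A\rangle_k\mathbf{u}_n\rightharpoonup\langle A\rangle_k\mathbf{u}$ in $(U^{q,p}_{\plus})^*$. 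Part (ii) is then immediate from~\eqref{def:l2_proj1}: there $\Pi^{0,\mathrm{t}}_{\tau}[A](t)=\langle A\rangle_k$ for $t\in I_k$, so (ii.a) is the demi-continuity just established, (ii.b) holds because $t\mapsto\langle\Pi^{0,\mathrm{t}}_{\tau}[A](t)\mathbf{u},\mathbf{z}\rangle_{U^{q,p}_{\plus}}$ is a step function, and (ii.c) follows by averaging over $I_k$ the bound on $\|(\mathrm{id}_{U^{q,p}_{\plus}})^*A(s)\mathbf{u}\|_{(U^{q,p}_{\plus})^*}$ coming from (A.3) and the modular estimates — the $\mathbf{z}$-contribution being an absolute constant for $\|\mathbf{z}\|_{q^+,p^+,\Omega}\le1$ — with all constants absorbed into a non-decreasing $\mathbb{B}_{\plus}$.

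For (iii), well-definedness of $\Pi^{0,\mathrm{t}}_{\tau}[\mathbfcal{A}]$ amounts to proving, for $\boldsymbol{u}\in\bXplus\cap L^\infty(I;Y)$ and $\boldsymbol{z}\in\bXplus$, that $t\mapsto\langle\Pi^{0,\mathrm{t}}_{\tau}[A](t)(\boldsymbol{u}(t)),\boldsymbol{z}(t)\rangle_{U^{q,p}_{\plus}}$ is integrable: measurability is a Carath\'eodory argument (each $\langle A\rangle_k$, being demi-continuous, is continuous from $U^{q,p}_{\plus}$ with its norm topology to $(U^{q,p}_{\plus})^*$ with its weak topology, hence Borel measurable, and is composed with the strongly measurable $\boldsymbol{u}(\cdot)$ and paired with the strongly measurable $\boldsymbol{z}(\cdot)$), as in \cite[Prop.~5.7]{alex-book}, while integrability follows from (ii.c) — using $\|\boldsymbol{u}(t)\|_{2,\Omega}\le\|\boldsymbol{u}\|_{L^\infty(I;Y)}$ and monotonicity of $\mathbb{B}_{\plus}$ — together with Hölder's inequality in time with exponents $\tfrac{\max\{q^+,p^+\}}{p^+-1}$ and $\max\{q^+,p^+\}$, both integrabilities being consequences of $\boldsymbol{u},\boldsymbol{z}\in\bXplus$. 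For (iii.a), with $\boldsymbol{u}_\tau=\sum_{k=1}^{K}u^k\chi_{I_k}\in\mathbb{P}^0(\mathcal{I}_\tau;U^{q,p}_{\plus})$ and $\boldsymbol{z}\in\bXplus$, I would compute
\begin{align*}
\langle\Pi^{0,\mathrm{t}}_{\tau}[\mathbfcal{A}]\boldsymbol{u}_\tau,\boldsymbol{z}\rangle_{\bXplus}
&=\sum_{k=1}^{K}\int_{I_k}\langle\langle A\rangle_k u^k,\boldsymbol{z}(t)\rangle_{U^{q,p}_{\plus}}\,\mathrm{d}t
=\sum_{k=1}^{K}\vert I_k\vert\,\langle\langle A\rangle_k u^k,\langle\boldsymbol{z}\rangle_k\rangle_{U^{q,p}_{\plus}}\\
&=\sum_{k=1}^{K}\int_{I_k}\langle A(t)u^k,\langle\boldsymbol{z}\rangle_k\rangle_{U^{q,p}_{\D}(t)}\,\mathrm{d}t
=\langle\mathbfcal{A}\boldsymbol{u}_\tau,\Pi^{0,\mathrm{t}}_{\tau}[\boldsymbol{z}]\rangle_{\bXp}\,,
\end{align*}
where the second equality pulls the Bochner integral $\int_{I_k}\boldsymbol{z}$ out of the (linear) pairing with $\langle A\rangle_k u^k$, the third expands $\langle A\rangle_k u^k$ via~\eqref{def:temp_mean}, and the last uses $\Pi^{0,\mathrm{t}}_{\tau}[\boldsymbol{z}]\vert_{I_k}=\langle\boldsymbol{z}\rangle_k$ together with~\eqref{def:induced}. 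Then (iii.b) is read off: $\|\Pi^{0,\mathrm{t}}_{\tau}[\mathbfcal{A}]\boldsymbol{u}_\tau\|_{(\bXplus)^*}=\sup_{\|\boldsymbol{z}\|_{\bXplus}\le1}\vert\langle\mathbfcal{A}\boldsymbol{u}_\tau,\Pi^{0,\mathrm{t}}_{\tau}[\boldsymbol{z}]\rangle_{\bXp}\vert\le\|\mathbfcal{A}\boldsymbol{u}_\tau\|_{(\bXp)^*}\sup_{\|\boldsymbol{z}\|_{\bXplus}\le1}\|\Pi^{0,\mathrm{t}}_{\tau}[\boldsymbol{z}]\|_{\bXp}$, and $\|\Pi^{0,\mathrm{t}}_{\tau}[\boldsymbol{z}]\|_{\bXp}\le2(1+\vert Q_T\vert)\,\|\Pi^{0,\mathrm{t}}_{\tau}[\boldsymbol{z}]\|_{\bXplus}\le2(1+\vert Q_T\vert)\|\boldsymbol{z}\|_{\bXplus}$, applying~\eqref{eq:hoelder_embeddding} to $\Pi^{0,\mathrm{t}}_{\tau}[\boldsymbol{z}]$ and to $\mathbf{D}_x\Pi^{0,\mathrm{t}}_{\tau}[\boldsymbol{z}]$ (with the constant exponents $q^+\ge q(\cdot,\cdot)$, $p^+\ge p(\cdot,\cdot)$) and the $L^{\max\{q^+,p^+\}}$-contractivity of $\Pi^{0,\mathrm{t}}_{\tau}$ from Proposition~\ref{rothe2}(iii).

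I expect the genuine difficulty to lie in the well-definedness part of (iii): carrying out the Carath\'eodory measurability argument for $t\mapsto\langle\langle A\rangle_{k(t)}(\boldsymbol{u}(t)),\boldsymbol{z}(t)\rangle$ when $\boldsymbol{u},\boldsymbol{z}$ are merely in $\bXplus$ (so $\boldsymbol{u}(t),\boldsymbol{z}(t)$ are only strongly measurable $U^{q,p}_{\plus}$-valued and $\langle A\rangle_k$ only demi-continuous), and then checking that the exponents in (ii.c) and in Hölder's inequality in time match so as to actually give integrability; this is exactly the measurability bookkeeping behind the definition of $\mathbfcal{A}$ in \cite[Prop.~5.7]{alex-book}, and everything else reduces to estimates with modulars or to a Fubini-type interchange.
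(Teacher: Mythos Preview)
Your proposal is correct and follows essentially the same approach as the paper's proof: (i) via Bochner integrability of $t\mapsto(\mathrm{id}_{U^{q,p}_{\plus}})^*A(t)\mathbf{u}$ from (A.2)--(A.3) with demi-continuity from (A.1)+(A.3), (ii) immediately from (i), well-definedness in (iii) by citing the induced-operator construction (the paper cites \cite[Prop.~3.13]{alex-rose-hirano} rather than \cite[Prop.~5.7]{alex-book}, but the content is the same), (iii.a) by the Fubini-type interchange you wrote out, and (iii.b) from (iii.a) together with the embedding $\|\cdot\|_{\bXp}\le 2(1+|Q_T|)\|\cdot\|_{\bXplus}$ and the $L^{\max\{q^+,p^+\}}$-contractivity of $\Pi^{0,\mathrm{t}}_{\tau}$. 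Your computation for (iii.a) differs only cosmetically from the paper's (you first pull $\boldsymbol{z}$ through the linear pairing and then expand $\langle A\rangle_k$, whereas the paper expands $\langle A\rangle_k$ first and then applies Fubini); both are valid.
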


	\begin{proof}
		\textit{ad (i).}  \hspace*{-0.15mm}Let \hspace*{-0.15mm}$\mathbf{u}\hspace*{-0.175em}\in\hspace*{-0.175em}  U^{q,p}_{\plus}$ \hspace*{-0.15mm}be  \hspace*{-0.15mm}arbitrary. \hspace*{-0.15mm}Due \hspace*{-0.15mm}to \hspace*{-0.15mm}(\hyperlink{A.2}{A.2}),  \hspace*{-0.15mm}(\hyperlink{A.3}{A.3}),  \hspace*{-0.15mm}the \hspace*{-0.15mm}mapping \hspace*{-0.15mm}${(t\hspace*{-0.175em}\mapsto\hspace*{-0.175em}(\mathrm{id}_{\smash{U^{q,p}_{\plus}}})^*A(t)\mathbf{u})\colon\hspace*{-0.175em} I\hspace*{-0.175em}\to\hspace*{-0.175em}  (U^{q,p}_{\plus})^*}$ is Bochner  measurable with  $\|(\mathrm{id}_{\smash{U^{q,p}_{\plus}}})^*A(\cdot)\mathbf{u}\|_{(U^{q,p}_{\plus})^*}\in L^1(I)$ and, thus,   Bochner  integrable.  As a result, the Bochner  integrals  $\langle A\rangle_k \mathbf{u}\hspace*{-0.1em}\in \hspace*{-0.1em} (U^{q,p}_{\plus})^*$,  $k=1,\ldots,K$,  exist, \textit{i.e.}, for every $k=1,\ldots,K$,~the~\mbox{operator} $\langle A\rangle_k \colon \hspace*{-0.1em}U^{q,p}_{\plus}\hspace*{-0.1em}\to\hspace*{-0.1em} (U^{q,p}_{\plus})^*$  is well-defined.
		The boundedness and demi-continuity of the operator $\langle A\rangle_k \colon \hspace*{-0.1em}U^{q,p}_{\plus}\hspace*{-0.1em}\to\hspace*{-0.1em} (U^{q,p}_{\plus})^*$ for all $k=1,\ldots,K$  follows~from~(\hyperlink{A.1}{A.1}), (\hyperlink{A.3}{A.3}), and the boundedness of the Bochner integral.
		
		\textit{ad (ii).} Claim (ii.a) and claim (ii.b) follow from (i). Claim (ii.c) follows from  (\hyperlink{A.3}{A.3}).
		
		\textit{ad (iii).} The well-definedness results from (ii) by means of \cite[Prop.\ 3.13]{alex-rose-hirano}.
		
		\textit{ad (iii.a).} Let $\boldsymbol{u}_\tau\in \mathbb{P}^0(\mathcal{I}_\tau;U^{q,p}_{\plus})$ and $\boldsymbol{z}\in \bXplus$ be arbitrary. Then, using every $t,s\in I_k$,~${k=1,\ldots ,K}$, that $\langle A(s)(\boldsymbol{u}_\tau(t)),\boldsymbol{z}(t)\rangle_{\smash{U^{q,p}_{\D}(s)}}=\langle A(s)(\boldsymbol{u}_\tau(s)),\boldsymbol{z}(t)\rangle_{\smash{U^{q,p}_{\D}(s)}}$ and Fubini's theorem,~we~\mbox{deduce}~that\vspace*{-1mm}
		\begin{align*}
			\langle \Pi^{0,\mathrm{t}}_{\tau}[\mathbfcal{A}]\boldsymbol{u}_\tau,\boldsymbol{z}\rangle_{\bXplus}&=\sum_{k=1}^{K}{\int_{I_k}{\bigg\langle\fint_{I_k}{(\mathrm{id}_{\smash{U^{q,p}_{\plus}}})^* A(s)(\boldsymbol{u}_\tau(t))\,\mathrm{d}s},\boldsymbol{z}(t)\bigg\rangle_{\smash{U^{q,p}_{\plus}}}\,\mathrm{d}t}}
		\\&=\sum_{k=1}^{K}{\int_{I_k}{\bigg\langle  A(s)(\boldsymbol{u}_\tau(s)),\fint_{I_k}{\boldsymbol{z}(t)\,\mathrm{d}t}\bigg\rangle_{\smash{U^{q,p}_{\D}(s)}}\,\mathrm{d}s}}
		\\&=\langle  \mathbfcal{A}\boldsymbol{u}_\tau,\Pi^{0,\mathrm{t}}_{\tau}[\boldsymbol{z}]\rangle_{\bXp}\,,
		\end{align*}
		which is the claimed self-adjointness property.
		
		\textit{ad (iii.b).} By the self-adjointness property (iii.a) and $\|\boldsymbol{u}\|_{\smash{\bXp}}\leq 2\,(1+\vert Q_T\vert)\,\|\boldsymbol{u}\|_{\smash{\bXplus}}$ for all  $\boldsymbol{u}\in \bXplus$ (\textit{cf}.\ \eqref{eq:hoelder_embeddding}),~for~every~$ \boldsymbol{u}_\tau\in \mathbb{P}^0(\mathcal{I}_\tau;U^{q,p}_{\plus})$, we deduce that\vspace*{-1mm}
		\begin{align*}
	\|\Pi^{0,\mathrm{t}}_{\tau}[\mathbfcal{A}]\boldsymbol{u}_\tau\|_{\smash{(\bXplus)^*}}
 &= \sup_{\boldsymbol{z}\in \bXplus\,:\,\|\boldsymbol{z}\|_{\smash{\bXplus}\leq 1}}{\langle \mathbfcal{A}\boldsymbol{u}_\tau,\Pi^{0,\mathrm{t}}_{\tau}[\boldsymbol{z}]\rangle_{\smash{\bXp}}}
 \\&\leq 2\,(1+\vert Q_T\vert)\,	\|\mathbfcal{A}\boldsymbol{u}_\tau\|_{\smash{(\bXp)^*}}\,,
		\end{align*}
		which is the claimed stability estimate.
	\end{proof}
    \newpage

    \section{Non-conforming Bochner condition (M)}\label{sec:conditionM}\enlargethispage{5mm}

    \hspace{5mm}In this section, we introduce the non-conforming Bochner condition (M), a generalization of the Bochner \hspace*{-0.1mm}condition \hspace*{-0.1mm}(M),  
    \hspace*{-0.1mm}introduced  \hspace*{-0.1mm}in \hspace*{-0.1mm}the \hspace*{-0.1mm}framework \hspace*{-0.1mm}of \hspace*{-0.1mm}variable \hspace*{-0.1mm}Bochner--Lebesgue \hspace*{-0.1mm}spaces~\hspace*{-0.1mm}in~\hspace*{-0.1mm}\mbox{\cite[Sec.~\hspace*{-0.1mm}5.1]{alex-book}}.

    \begin{definition}[Non-conforming Bochner condition (M)]\label{def:non-conform_Bochner_cond_M}
    	A sequence of operators  $\mathbfcal{A}_n\colon \bXpn\cap L^\infty(I;Y)\to (\bXpn)^*$, $n\in  \mathbb{N}$,  is said to satisfy the \textup{non-conforming Bochner condition (M)}~with~respect to $(\Vo_{h_n,0})_{n\in \mathbb{N}}$ and $\mathbfcal{A}\colon\bVp\cap  L^\infty(I;H) \to (\bVp)^*$ if for every sequence $\boldsymbol{v}_n\in \bXpn\cap L^\infty(I;Y)$, $n\in\mathbb{N}$, with $\boldsymbol{v}_n(t)\in \Vo_{h_n,0}$ for a.e.\  $t\in I$~and~all ${n\in \mathbb{N}}$, from
    	\begin{align}
    		\hspace{-5.5mm}\sup_{n\in \mathbb{N}}{\big[\|\boldsymbol{v}_n\|_{\smash{\bXpn}}+\|\mathbfcal{A}_n\boldsymbol{v}_n\|_{(\bXpn)^*}\big]}<\infty\,, \label{def:non-conform_Bochner_cond_M.1}
    	\end{align}\vspace{-4mm}
    	\begin{alignat}{3}
    		\boldsymbol{v}_n&\overset{\ast}{\rightharpoondown} \boldsymbol{v}&&\quad \text{ in }L^\infty(I;Y)&&\quad (n\to \infty)\,, \label{def:non-conform_Bochner_cond_M.2}\\
    		\boldsymbol{v}_n(t)&\rightharpoonup \boldsymbol{v}(t)&&\quad \text{ in }Y&&\quad (n\to \infty)\quad \text{ for a.e.\  }t\in I\,, \label{def:non-conform_Bochner_cond_M.3}\\
    		(\mathrm{id}_{\mathbfcal{D}_T})^*\mathbfcal{A}_n\boldsymbol{v}_n&\to (\mathrm{id}_{\mathbfcal{D}_T})^*\boldsymbol{v}^*&&\quad \text{ in }(\mathbfcal{D}_T)^*&&\quad (n\to \infty)\,,\label{def:non-conform_Bochner_cond_M.4}
    	\end{alignat}
    	where $\boldsymbol{v}\in \bVp\cap  L^\infty(I;H)$ and $\boldsymbol{v}^*\in (\bVp)^*$, and
    	\begin{align}
    		\limsup_{n\to \infty}{\langle \mathbfcal{A}_n\boldsymbol{v}_n,\boldsymbol{v}_n\rangle_{\smash{\bXpn}}}\leq \langle  \boldsymbol{v}^*,\boldsymbol{v}\rangle_{\smash{\bVp}}\,,\label{def:non-conform_Bochner_cond_M.5}
    	\end{align}
    	it follows that $\mathbfcal{A}\boldsymbol{v}=\boldsymbol{v}^*$ in $(\bVp)^*$.
    \end{definition}
    
     An example of a sequence of operators that satisfies the non-conforming Bochner condition~(M), is given via the approximation of  $\mathbf{S}\colon Q_T\times \mathbb{R}_{\mathrm{sym}}^{d\times d}\to \mathbb{R}_{\mathrm{sym}}^{d\times d}$ having a weak $(p(\cdot,\cdot),\delta)$-structure (\textit{cf}.\ (\hyperlink{S.1}{S.1})--(\hyperlink{S.4}{S.4}))
    through mappings
    $\mathbf{S}_n\colon Q_T\times \mathbb{R}_{\mathrm{sym}}^{d\times d}\to \mathbb{R}_{\mathrm{sym}}^{d\times d}$, $n\in \mathbb{N}$,  having a weak $(p_n(\cdot,\cdot),\delta)$-structure for all $n\in \mathbb{N}$.

    \begin{proposition}\label{prop:stress}	
    	Let $\mathbf{S}_n\colon Q_T\times \mathbb{R}_{\mathrm{sym}}^{d\times d} \to \mathbb{R}_{\mathrm{sym}}^{d\times d}$, $n\in \mathbb{N}$, be mappings having the following properties:
    	\begin{description}[leftmargin=!,labelwidth=\widthof{(SN.4)},noitemsep,topsep=2pt,font=\normalfont\itshape]
    		\item[(SN.1)]\hypertarget{SN.1}{} $\mathbf{S},\mathbf{S}_n\colon Q_T\times\mathbb{R}_{\mathrm{sym}}^{d\times d}\to \mathbb{R}_{\mathrm{sym}}^{d\times d}$, $n\in \mathbb{N}$, are Carath\'eodory mappings and  for a.e.\  $(t,x)^{\top}\in Q_T$ and every  $\mathbf{A}\in \mathbb{R}_{\mathrm{sym}}^{d\times d}$, it holds that $\mathbf{S}_n(t,x,\mathbf{A})\to \mathbf{S}(t,x,\mathbf{A})$ in $\mathbb{R}_{\mathrm{sym}}^{d\times d}$ $(n\to \infty)$;
    		\item[(SN.2)]\hypertarget{SN.2}{} $\vert\mathbf{S}_n(t,x,\mathbf{A})\vert \leq \overline{\alpha} \,(\delta+\vert \mathbf{A}\vert)^{p_n(t,x)-2}\vert\mathbf{A}\vert+\overline{\beta}$  for all $n\in \mathbb{N}$, $\mathbf{A}\in \mathbb{R}_{\mathrm{sym}}^{d\times d}$, and a.e.\ ${(t,x)^\top\in Q_T}$, where $\overline{\alpha}\ge 1$ and $\overline{\beta} \ge 0$ are independent of $n\in \mathbb{N}$;
    		\item[(SN.3)]\hypertarget{SN.3}{} $\mathbf{S}_n(t,x,\mathbf{A}):\mathbf{A}\ge 
    		\overline{c}_0\,(\delta+\vert \mathbf{A}\vert)^{p_n(t,x)-2}\vert\mathbf{A}\vert^2-\overline{c}_1(t,x)$ for all $n\!\in\! \mathbb{N}$, $\mathbf{A}\in \mathbb{R}_{\mathrm{sym}}^{d\times d}$, and a.e.~${(t,x)^\top\!\in\! Q_T}$, where $\overline{c}_0>0$ and $\overline{c}_1\in L^1(Q_T)$ are independent of $n\in \mathbb{N}$;
    		\item[(SN.4)]\hypertarget{SN.4}{} $(\mathbf{S}_n(t,x,\mathbf{A})-\mathbf{S}_n(t,x,\mathbf{B})):
    		(\mathbf{A}-\mathbf{B})\ge 0$ for all $n\!\in\! \mathbb{N}$,
    		$\mathbf{A},\mathbf{B}\in \mathbb{R}_{\mathrm{sym}}^{d\times d}$, and a.e.\  $(t,x)^\top\in Q_T$.
    	\end{description}
    	Then, the operators $\mathbfcal{S}_n\colon \bXpn\to (\bXpn)^*$, $n\in \mathbb{N}$, for every $\boldsymbol{u}_n,\boldsymbol{z}_n\in\bXpn$ defined by
    	\begin{align*}
    		\langle\mathbfcal{S}_n\boldsymbol{u}_n,\boldsymbol{z}_n\rangle_{\smash{\bXpn}}\coloneqq (\mathbf{S}_n(\cdot,\cdot,\mathbf{D}_x\boldsymbol{u}_n),\mathbf{D}_x\boldsymbol{z}_n)_{Q_T}\,,
    	\end{align*}
    	are well-defined, bounded, continuous, and monotone. In addition, $\mathbfcal{S}_n\colon\bXpn\to (\bXpn)^*$,~${n\in  \mathbb{N}}$, satisfies the non-conforming Bochner condition (M) with respect to $(\Vo_{h_n,0})_{n\in \mathbb{N}}$ and $\mathbfcal{S}\colon \bVp\to (\bVp)^*$, for every $\boldsymbol{v},\boldsymbol{z}\in \bVp$
    	defined by
    	\begin{align*}
    		\langle\mathbfcal{S}_n\boldsymbol{v},\boldsymbol{z}\rangle_{\smash{\bVp}}\coloneqq (\mathbf{S}(\cdot,\cdot,\mathbf{D}_x\boldsymbol{v}),\mathbf{D}_x\boldsymbol{z})_{Q_T}\,.
    	\end{align*} 
    \end{proposition}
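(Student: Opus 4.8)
The plan is to establish the two assertions separately: the basic properties of each $\mathbfcal{S}_n$, and the verification of the non-conforming Bochner condition~(M).

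\textbf{Basic properties of $\mathbfcal{S}_n$.} Fix $n\in\mathbb{N}$. Since (\hyperlink{SN.1}{SN.1})--(\hyperlink{SN.4}{SN.4}) say that $\mathbf{S}_n$ has a weak $(p_n(\cdot,\cdot),\delta)$-structure, the well-definedness, boundedness, continuity, and monotonicity of $\mathbfcal{S}_n\colon\bXpn\to(\bXpn)^*$ follow as in the $n$-independent-exponent case treated in \cite{alex-book}: from (\hyperlink{SN.2}{SN.2}) and a modular estimate one has $\mathbf{S}_n(\cdot,\cdot,\mathbf{D}_x\boldsymbol{u}_n)\in L^{p_n'(\cdot,\cdot)}(Q_T;\mathbb{R}^{d\times d}_{\mathrm{sym}})$ for every $\boldsymbol{u}_n\in\bXpn$, so the defining pairing is well-defined and bounded by the variable Hölder inequality \eqref{eq:gen_hoelder}; continuity follows from (\hyperlink{SN.1}{SN.1}), (\hyperlink{SN.2}{SN.2}), and Vitali's convergence theorem for variable Lebesgue spaces combined with the subsequence principle; monotonicity is immediate from (\hyperlink{SN.4}{SN.4}).

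\textbf{Verification of condition (M): extraction of limits.} Let $\boldsymbol{v}_n\in\bXpn\cap L^\infty(I;Y)$, $n\in\mathbb{N}$, be a sequence with $\boldsymbol{v}_n(t)\in\Vo_{h_n,0}$ for a.e.\ $t\in I$ and all $n\in\mathbb{N}$, satisfying \eqref{def:non-conform_Bochner_cond_M.1}--\eqref{def:non-conform_Bochner_cond_M.5} with $\boldsymbol{v}\in\bVp\cap L^\infty(I;H)$ and $\boldsymbol{v}^*\in(\bVp)^*$. Fix constants $s_0\in(1,p^-)$ and $\sigma_0\in(1,(p^+)')$. By Proposition~\ref{prop:Xqp_weak_compact} applied to $\sup_n\|\boldsymbol{v}_n\|_{\bXpn}<\infty$ and uniqueness of weak and weak-$*$ limits (comparing with \eqref{def:non-conform_Bochner_cond_M.2}), after passing to a subsequence (not relabeled) one has $\boldsymbol{v}_n\rightharpoonup\boldsymbol{v}$ in $\mathbfcal{U}_{\D}^{r,s}$ for all $r\prec q$, $s\prec p$; in particular $\mathbf{D}_x\boldsymbol{v}_n\rightharpoonup\mathbf{D}_x\boldsymbol{v}$ in $L^{s_0}(Q_T;\mathbb{R}^{d\times d}_{\mathrm{sym}})$. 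On the other hand, (\hyperlink{SN.2}{SN.2}) and $\sup_n\|\mathbf{D}_x\boldsymbol{v}_n\|_{p_n(\cdot,\cdot),Q_T}<\infty$ yield $\sup_n\|\mathbf{S}_n(\cdot,\cdot,\mathbf{D}_x\boldsymbol{v}_n)\|_{p_n'(\cdot,\cdot),Q_T}<\infty$, so Lemma~\ref{8.2.2} applied to the exponents $p_n'\to p'$ in $L^\infty(Q_T)$ gives, after a further subsequence, a limit $\widetilde{\mathbf{S}}\in L^{p'(\cdot,\cdot)}(Q_T;\mathbb{R}^{d\times d}_{\mathrm{sym}})$ with $\mathbf{S}_n(\cdot,\cdot,\mathbf{D}_x\boldsymbol{v}_n)\rightharpoonup\widetilde{\mathbf{S}}$ in $L^{\sigma_0}(Q_T;\mathbb{R}^{d\times d}_{\mathrm{sym}})$. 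Testing \eqref{def:non-conform_Bochner_cond_M.4} against $\boldsymbol{\varphi}\in\mathbfcal{D}_T$, for which $\mathbf{D}_x\boldsymbol{\varphi}\in L^\infty(Q_T;\mathbb{R}^{d\times d}_{\mathrm{sym}})\subseteq L^{\sigma_0'}(Q_T;\mathbb{R}^{d\times d}_{\mathrm{sym}})$, gives $\langle\boldsymbol{v}^*,\boldsymbol{\varphi}\rangle_{\bVp}=(\widetilde{\mathbf{S}},\mathbf{D}_x\boldsymbol{\varphi})_{Q_T}$, and density of $\mathbfcal{D}_T$ in $\bVp$ (Proposition~\ref{prop:density_in_Vqp}) together with the $\bVp$-continuity of both sides extends this to
\begin{align*}
\langle\boldsymbol{v}^*,\boldsymbol{z}\rangle_{\bVp}=(\widetilde{\mathbf{S}},\mathbf{D}_x\boldsymbol{z})_{Q_T}\qquad\text{for all }\boldsymbol{z}\in\bVp\,.
\end{align*}
Thus it remains to prove $\widetilde{\mathbf{S}}=\mathbf{S}(\cdot,\cdot,\mathbf{D}_x\boldsymbol{v})$ a.e.\ in $Q_T$.

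\textbf{Minty argument.} First, $\mathbf{S}$ inherits a weak $(p(\cdot,\cdot),\delta)$-structure: (\hyperlink{S.1}{S.1})--(\hyperlink{S.4}{S.4}) follow from (\hyperlink{SN.1}{SN.1})--(\hyperlink{SN.4}{SN.4}) by the pointwise limit $n\to\infty$, so $\mathbf{S}(t,x,\cdot)$ is continuous and monotone for a.e.\ $(t,x)^\top\in Q_T$. Fix $\mathbf{G}\in L^\infty(Q_T;\mathbb{R}^{d\times d}_{\mathrm{sym}})$. By (\hyperlink{SN.4}{SN.4}),
\begin{align*}
0\le\int_{Q_T}\big(\mathbf{S}_n(\cdot,\cdot,\mathbf{D}_x\boldsymbol{v}_n)-\mathbf{S}_n(\cdot,\cdot,\mathbf{G})\big):\big(\mathbf{D}_x\boldsymbol{v}_n-\mathbf{G}\big)\,\mathrm{d}x\,\mathrm{d}t\qquad\text{for all }n\in\mathbb{N}\,.
\end{align*}
Expanding into four terms and passing to $\limsup_{n\to\infty}$: the diagonal term equals $\langle\mathbfcal{S}_n\boldsymbol{v}_n,\boldsymbol{v}_n\rangle_{\bXpn}$, whose $\limsup$ is at most $\langle\boldsymbol{v}^*,\boldsymbol{v}\rangle_{\bVp}=(\widetilde{\mathbf{S}},\mathbf{D}_x\boldsymbol{v})_{Q_T}$ by \eqref{def:non-conform_Bochner_cond_M.5}; $(\mathbf{S}_n(\cdot,\cdot,\mathbf{D}_x\boldsymbol{v}_n),\mathbf{G})_{Q_T}\to(\widetilde{\mathbf{S}},\mathbf{G})_{Q_T}$ by the weak convergence in $L^{\sigma_0}(Q_T)$ and $\mathbf{G}\in L^{\sigma_0'}(Q_T)$; $(\mathbf{S}_n(\cdot,\cdot,\mathbf{G}),\mathbf{D}_x\boldsymbol{v}_n)_{Q_T}\to(\mathbf{S}(\cdot,\cdot,\mathbf{G}),\mathbf{D}_x\boldsymbol{v})_{Q_T}$, since by (\hyperlink{SN.1}{SN.1}) and the uniform $L^\infty$-bound from (\hyperlink{SN.2}{SN.2}) (using $\mathbf{G}\in L^\infty$), dominated convergence gives $\mathbf{S}_n(\cdot,\cdot,\mathbf{G})\to\mathbf{S}(\cdot,\cdot,\mathbf{G})$ strongly in $L^{s_0'}(Q_T)$ while $\mathbf{D}_x\boldsymbol{v}_n\rightharpoonup\mathbf{D}_x\boldsymbol{v}$ in $L^{s_0}(Q_T)$; and $(\mathbf{S}_n(\cdot,\cdot,\mathbf{G}),\mathbf{G})_{Q_T}\to(\mathbf{S}(\cdot,\cdot,\mathbf{G}),\mathbf{G})_{Q_T}$ again by dominated convergence. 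Collecting terms,
\begin{align*}
\int_{Q_T}\big(\widetilde{\mathbf{S}}-\mathbf{S}(\cdot,\cdot,\mathbf{G})\big):\big(\mathbf{D}_x\boldsymbol{v}-\mathbf{G}\big)\,\mathrm{d}x\,\mathrm{d}t\ge0\qquad\text{for all }\mathbf{G}\in L^\infty(Q_T;\mathbb{R}^{d\times d}_{\mathrm{sym}})\,.
\end{align*}
Using density of $L^\infty$ in $L^{p(\cdot,\cdot)}$ and the Nemytskii continuity of $\mathbf{G}\mapsto\mathbf{S}(\cdot,\cdot,\mathbf{G})$ from $L^{p(\cdot,\cdot)}$ to $L^{p'(\cdot,\cdot)}$ (a consequence of (\hyperlink{S.1}{S.1}), (\hyperlink{S.2}{S.2}), and Vitali), this inequality extends to all $\mathbf{G}\in L^{p(\cdot,\cdot)}(Q_T;\mathbb{R}^{d\times d}_{\mathrm{sym}})$; taking $\mathbf{G}=\mathbf{D}_x\boldsymbol{v}-\lambda\boldsymbol{\eta}$ with $\boldsymbol{\eta}\in L^{p(\cdot,\cdot)}(Q_T;\mathbb{R}^{d\times d}_{\mathrm{sym}})$ and $\lambda>0$, dividing by $\lambda$, and letting $\lambda\downarrow0$ yields $(\widetilde{\mathbf{S}}-\mathbf{S}(\cdot,\cdot,\mathbf{D}_x\boldsymbol{v}),\boldsymbol{\eta})_{Q_T}\ge0$ for all such $\boldsymbol{\eta}$, hence $\widetilde{\mathbf{S}}=\mathbf{S}(\cdot,\cdot,\mathbf{D}_x\boldsymbol{v})$ a.e.\ in $Q_T$. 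Therefore $\langle\mathbfcal{S}\boldsymbol{v},\boldsymbol{z}\rangle_{\bVp}=(\widetilde{\mathbf{S}},\mathbf{D}_x\boldsymbol{z})_{Q_T}=\langle\boldsymbol{v}^*,\boldsymbol{z}\rangle_{\bVp}$ for all $\boldsymbol{z}\in\bVp$, i.e.\ $\mathbfcal{S}\boldsymbol{v}=\boldsymbol{v}^*$ in $(\bVp)^*$.

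\textbf{Main obstacle.} The delicate point is the $n$-dependence of the exponents: all weak-limit extractions must be carried out in variable Lebesgue spaces via Lemma~\ref{8.2.2} and Proposition~\ref{prop:Xqp_weak_compact}, which is also what secures the correct target integrabilities $\widetilde{\mathbf{S}}\in L^{p'(\cdot,\cdot)}(Q_T)$ and $\mathbf{D}_x\boldsymbol{v}\in L^{p(\cdot,\cdot)}(Q_T)$, and the four term-by-term passages to the limit in the Minty inequality must be made robust to the fact that both $\mathbf{S}_n$ and the spaces $\bXpn$ move with $n$; performing the pairings in the fixed constant spaces $L^{s_0}(Q_T)$ and $L^{\sigma_0}(Q_T)$ rather than in variable ones is what keeps this manageable. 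The auxiliary uniform bound $\sup_n\|\mathbf{S}_n(\cdot,\cdot,\mathbf{D}_x\boldsymbol{v}_n)\|_{p_n'(\cdot,\cdot),Q_T}<\infty$ is then a routine modular computation from \eqref{def:non-conform_Bochner_cond_M.1} and (\hyperlink{SN.2}{SN.2}).
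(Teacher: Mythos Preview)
Your proof is correct, but it takes a different route from the paper's. The paper stays at the operator level throughout: it writes the monotonicity inequality $\langle\mathbfcal{S}_n\boldsymbol{v}_n-\mathbfcal{S}_n\boldsymbol{\varphi},\boldsymbol{v}_n-\boldsymbol{\varphi}\rangle_{\bXpn}\ge 0$ directly with smooth divergence-free test functions $\boldsymbol{\varphi}\in\mathbfcal{D}_T$, passes to the limit using \eqref{def:non-conform_Bochner_cond_M.4}, \eqref{def:non-conform_Bochner_cond_M.5}, Proposition~\ref{prop:Xqp_weak_compact}, and the strong convergence $\mathbf{S}_n(\cdot,\cdot,\mathbf{D}_x\boldsymbol{\varphi})\to\mathbf{S}(\cdot,\cdot,\mathbf{D}_x\boldsymbol{\varphi})$ in every $L^s$, arrives at $\langle\boldsymbol{v}^*-\mathbfcal{S}\boldsymbol{\varphi},\boldsymbol{v}-\boldsymbol{\varphi}\rangle_{\bVp}\ge 0$ for all $\boldsymbol{\varphi}\in\mathbfcal{D}_T$, and then concludes by density of $\mathbfcal{D}_T$ in $\bVp$ together with the maximal monotonicity of $\mathbfcal{S}\colon\bVp\to(\bVp)^*$. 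Your argument instead works at the tensor level: you first extract a weak subsequential limit $\widetilde{\mathbf{S}}\in L^{p'(\cdot,\cdot)}(Q_T)$ of the stresses via Lemma~\ref{8.2.2}, identify $\boldsymbol{v}^*$ with $\widetilde{\mathbf{S}}$, and then run a pointwise Minty trick with bounded test tensors $\mathbf{G}$ to obtain the a.e.\ identification $\widetilde{\mathbf{S}}=\mathbf{S}(\cdot,\cdot,\mathbf{D}_x\boldsymbol{v})$. The paper's version is shorter and avoids both the extraction of $\widetilde{\mathbf{S}}$ and the density step from $L^\infty$ to $L^{p(\cdot,\cdot)}$ test tensors; your version is more explicit and self-contained, and actually yields the slightly stronger pointwise conclusion $\mathbf{S}_n(\cdot,\cdot,\mathbf{D}_x\boldsymbol{v}_n)\rightharpoonup\mathbf{S}(\cdot,\cdot,\mathbf{D}_x\boldsymbol{v})$ (along the subsequence) rather than only the dual identity $\mathbfcal{S}\boldsymbol{v}=\boldsymbol{v}^*$ in $(\bVp)^*$.
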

    
    \begin{remark}
    	\begin{itemize}[noitemsep,topsep=2pt,leftmargin=!,labelwidth=\widthof{(ii)}]
    		\item[(i)] If $\mathbf{S},\mathbf{S}_n\colon Q_T\times\mathbb{R}_{\mathrm{sym}}^{d\times d}\to \mathbb{R}_{\mathrm{sym}}^{d\times d}$, $n\in \mathbb{N}$, are mappings satisfying (\hyperlink{SN.1}{SN.1})--(\hyperlink{SN.4}{SN.4}), then $\mathbf{S}\colon Q_T\times\mathbb{R}_{\mathrm{sym}}^{d\times d}\to \mathbb{R}_{\mathrm{sym}}^{d\times d}$ automatically has
    		a weak $(p(\cdot,\cdot),\delta)$-structure (\textit{cf}.\ (\hyperlink{S.1}{S.1})--(\hyperlink{S.4}{S.4})); 
    		\item[(ii)] According to \cite[Prop.\ 3.31]{alex-book}, the operator $\mathbfcal{S}\colon \bVp\to (\bVp)^*$ is well-defined, bounded, continuous, and monotone.
    	\end{itemize}
    \end{remark}

    \begin{proof}[Proof (of Proposition \ref{prop:stress}).]
    	According to \cite[Prop.\ 3.31]{alex-book}, for every $n\hspace*{-0.15em}\in \hspace*{-0.15em}\mathbb{N}$, ${\mathbfcal{S}_n\colon \hspace*{-0.1em}\bXpn\hspace*{-0.15em}\to\hspace*{-0.15em} (\bXpn)^*}$ is 
    	well-defined, bounded, continuous, and monotone. To prove the non-conforming Bochner~\mbox{condition}~(M), we consider a sequence  $\boldsymbol{v}_n\in \bXpn\cap L^\infty(I;Y)$, $n\in\mathbb{N}$, with $\boldsymbol{v}_n(t)\in \Vo_{h_n,0}$ for a.e.\  $t\in I$ and~all~${n\in \mathbb{N}}$\linebreak as well as satisfying \eqref{def:non-conform_Bochner_cond_M.1}--\eqref{def:non-conform_Bochner_cond_M.5} with respect to $\mathbfcal{S}_n\colon \hspace*{-0.1em}\bXpn\hspace*{-0.1em}\to\hspace*{-0.1em} (\bXpn)^*$,~$n\hspace*{-0.1em}\in \hspace*{-0.1em}\mathbb{N}$, $\boldsymbol{v}\hspace*{-0.1em}\in\hspace*{-0.1em} \bVp$,~and~${\boldsymbol{v}^*\hspace*{-0.1em}\in\hspace*{-0.1em}(\bVp)^*}$. From the monotonicity of $\mathbfcal{S}_n\colon \bXpn\to (\bXpn)^*$, $ n\in\mathbb{N}$, 
    	for every $\boldsymbol{\varphi}\in \mathbfcal{D}_T$~and~${n\in \mathbb{N}}$,~we~obtain
    	\begin{align}
    		\langle \mathbfcal{S}_n\boldsymbol{v}_n-\mathbfcal{S}_n\boldsymbol{\varphi},\boldsymbol{v}_n-\boldsymbol{\varphi}\rangle_{\smash{\bXpn}}\ge 0\,.\label{eq:stress.1}
    	\end{align}
    	Rearranging \eqref{eq:stress.1}, for every $\boldsymbol{\varphi}\in \mathbfcal{D}_T$ and $n\in \mathbb{N}$, yields  that
    	\begin{align}
    		\langle \mathbfcal{S}_n\boldsymbol{v}_n,\boldsymbol{\varphi}\rangle_{\smash{\bXpn}}+\langle \mathbfcal{S}_n\boldsymbol{\varphi},\boldsymbol{v}_n-\boldsymbol{\varphi}\rangle_{\smash{\bXpn}}\leq \langle \mathbfcal{S}_n\boldsymbol{v}_n,\boldsymbol{v}_n\rangle_{\smash{\bXpn}}\,.\label{eq:stress.2}
    	\end{align}
    	Then, observing that $\mathbf{S}_n(\cdot,\cdot,\mathbf{D}_x\boldsymbol{\varphi})\to \mathbf{S}(\cdot,\cdot,\mathbf{D}_x\boldsymbol{\varphi})$ in $L^s(Q_T;\mathbb{R}_{\mathrm{sym}}^{d\times d})$ $(n\to \infty)$ for all $s\in \left[1,\infty\right)$, due to (\hyperlink{SN.1}{SN.1}), (\hyperlink{SN.2}{SN.2}), and Lebesgue's dominated convergence theorem, and that $\boldsymbol{v}_n\rightharpoonup \boldsymbol{v}$ in $\mathbfcal{U}^{r,s}_{\D}$~$(n\to \infty)$ for all $r,s\in \mathcal{P}^\infty(Q_T)$ with $r\prec q$ in $Q_T$ and $s\prec p$ in $Q_T$ (\textit{cf.}~Proposition~\ref{prop:Xqp_weak_compact}), taking the limit superior with respect to $n\to \infty$ in \eqref{eq:stress.2}, using \eqref{def:non-conform_Bochner_cond_M.4} and \eqref{def:non-conform_Bochner_cond_M.5} in doing so, for every $\boldsymbol{\varphi}\in \mathbfcal{D}_T$, we obtain
    	\begin{align}
    		\smash{\langle \boldsymbol{v}^*,\boldsymbol{\varphi}\rangle_{\smash{\bVp}}+\langle \mathbfcal{S}\boldsymbol{\varphi},\boldsymbol{v}-\boldsymbol{\varphi}\rangle_{\smash{\bVp}}\leq \langle \boldsymbol{v}^*,\boldsymbol{v}\rangle_{\smash{\bVp}}\,.}\label{eq:stress.3}
    	\end{align}
    	Rearranging \eqref{eq:stress.3}, in turn,  for every $\boldsymbol{\varphi}\in \mathbfcal{D}_T$, yields that
    	\begin{align}
    		\smash{\langle 
    		\boldsymbol{v}^*-\mathbfcal{S}\boldsymbol{\varphi},\boldsymbol{v}-\boldsymbol{\varphi}\rangle_{\smash{\bVp}}\ge 0\,.}\label{eq:stress.4}
    	\end{align}
    	Since $\mathbfcal{D}_T$ is dense in $\bVp$ (\textit{cf}.\ Proposition \ref{prop:density_in_Vqp}) and
    	$\mathbfcal{S}\colon\hspace*{-0.15em} \bVp\hspace*{-0.15em}\to\hspace*{-0.15em}(\bVp)^*$ continuous and maximal~\mbox{monotone}, we conclude from \eqref{eq:stress.4} that $\mathbfcal{S}\boldsymbol{v}=\boldsymbol{v}^*$ in $(\bVp)^*$.
    \end{proof}

    Similar to the classical Bochner condition (M), the non-conforming Bochner condition (M) is stable under certain strongly continuous perturbations.\enlargethispage{11mm}
    
    \begin{definition}[Non-conforming Bochner strong continuity]\label{def:non-conform_strong_continuity} 
    	A sequence of operators  $\mathbfcal{B}_n\colon \bXpn\cap L^\infty(I;Y)\to (\bXpn)^*$, $n\in  \mathbb{N}$, is said to be \textup{non-conforming Bochner strongly continuous}~with~respect~to  $(\Vo_{h_n,0})_{n\in \mathbb{N}}$ and $\mathbfcal{B}\colon\bVp\cap  L^\infty(I;H) \to (\bVp)^*$ if for every sequence $\boldsymbol{v}_n\in \bXpn\cap L^\infty(I;Y)$, $n\in\mathbb{N}$, with $\boldsymbol{v}_n(t)\in \Vo_{h_n,0}$ for a.e.\  $t\in I$ and all $n\in \mathbb{N}$, from \eqref{def:non-conform_Bochner_cond_M.1}--\eqref{def:non-conform_Bochner_cond_M.3}  with respect to $\boldsymbol{v}\in \bVp\cap  L^\infty(I;H)$, it follows that\vspace*{-1mm}
    	\begin{align}
    		\lim_{n\to \infty}{\langle \mathbfcal{B}_n\boldsymbol{v}_n,\boldsymbol{z}_n\rangle_{\smash{\bXpn}}}=\langle  \mathbfcal{B}\boldsymbol{v},\boldsymbol{z}\rangle_{\smash{\bVp}}\,,\label{def:non-conform_strong_continuity.5}
    	\end{align}
    	for  \hspace*{-0.1mm}each \hspace*{-0.1mm}sequence \hspace*{-0.1mm}$\boldsymbol{z}_n\hspace*{-0.1em}\in\hspace*{-0.1em}  \bXpn\cap L^\infty(I;Y)$, $n\hspace*{-0.1em}\in\hspace*{-0.1em} \mathbb{N}$, \hspace*{-0.1mm}satisfying \hspace*{-0.1mm}\eqref{def:non-conform_Bochner_cond_M.1}--\hspace*{-0.2mm}\eqref{def:non-conform_Bochner_cond_M.3} \hspace*{-0.1mm}with~\hspace*{-0.1mm}respect~\hspace*{-0.1mm}to~\hspace*{-0.1mm}${\boldsymbol{z}\hspace*{-0.1em}\in\hspace*{-0.1em} \bVp\hspace*{-0.1em}\cap\hspace*{-0.1em}  L^\infty(I;H)}$.
    \end{definition}
    
    \begin{remark}[Non-conforming Bochner strong continuity $\Rightarrow$ weak continuity property]\label{rem:1}
    	If $\mathbfcal{B}_n\colon \bXpn\cap L^\infty(I;Y)\to (\bXpn)^*$, $n\in  \mathbb{N}$, is \textit{non-conforming Bochner strongly continuous} with respect to $(\Vo_{h_n,0})_{n\in \mathbb{N}}$ and $\mathbfcal{B}\colon\bVp\cap  L^\infty(I;H) \to (\bVp)^*$ and if ${\boldsymbol{v}_n\in \bXpn\cap L^\infty(I;Y)}$, $n\in\mathbb{N}$, is a sequence with $\boldsymbol{v}_n(t)\in \Vo_{h_n,0}$ for a.e.\  $t\in I$ and all $n\in \mathbb{N}$, satisfying \eqref{def:non-conform_Bochner_cond_M.1}--\eqref{def:non-conform_Bochner_cond_M.3}  with respect to $\boldsymbol{v}\in \bVp\cap  L^\infty(I;H)$,~then 
    	\begin{align*}
    	{(\mathrm{id}_{\mathbfcal{D}_T})^*\mathbfcal{B}_n\boldsymbol{v}_n\to (\mathrm{id}_{\mathbfcal{D}_T})^*\mathbfcal{B}\boldsymbol{v}\quad \text{ in }(\mathbfcal{D}_T)^*\quad (n\to \infty)\,.}
    	\end{align*}

    \end{remark}
    
    \begin{proposition}\label{prop:perturbation}	
    	If $\mathbfcal{A}_n\colon\hspace*{-0.15em} \bXpn\cap L^\infty(I;Y)\hspace*{-0.15em}\to\hspace*{-0.15em} (\bXpn)^*$, $n\hspace*{-0.15em}\in \hspace*{-0.15em} \mathbb{N}$, satisfies the Bochner~condition~(M)~with respect to   $(\Vo_{h_n,0})_{n\in \mathbb{N}}$ and $\mathbfcal{A}\colon\hspace*{-0.16em}\bVp\cap  L^\infty(I;H) \hspace*{-0.16em}\to\hspace*{-0.16em} (\bVp)^*\hspace*{-0.16em}$, and if $\mathbfcal{B}_n\colon \hspace*{-0.16em}\bXpn\cap L^\infty(I;Y)\hspace*{-0.16em}\to\hspace*{-0.16em} (\bXpn)^*$,~${n\hspace*{-0.16em}\in\hspace*{-0.16em}  \mathbb{N}}$, \hspace*{-0.175mm}is \hspace*{-0.175mm}non-conforming \hspace*{-0.175mm}Bochner \hspace*{-0.175mm}strongly \hspace*{-0.175mm}continuous \hspace*{-0.175mm}with \hspace*{-0.175mm}respect \hspace*{-0.175mm}to \hspace*{-0.175mm}$(\Vo_{h_n,0})_{n\in \mathbb{N}}$~\hspace*{-0.175mm}and~\hspace*{-0.175mm}${\mathbfcal{B}\colon\hspace*{-0.25em} \bVp\hspace*{-0.225em} \cap\hspace*{-0.175em}   L^\infty(I;H) \hspace*{-0.25em} \to\hspace*{-0.25em}  (\bVp)^*}$,  then the sequence of operators $\mathbfcal{A}_n+\mathbfcal{B}_n\colon  \bXpn\cap L^\infty(I;Y)\to  (\bXpn)^*$, $n\in\mathbb{N}$, satisfies the Bochner condition (M)  with respect to $(\Vo_{h_n,0})_{n\in \mathbb{N}}$ and  $\mathbfcal{A}+\mathbfcal{B}\colon\bVp\cap  L^\infty(I;H) \to (\bVp)^*$.
    \end{proposition}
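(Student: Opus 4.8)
The plan is to transcribe, into the non-conforming Bochner setting, the classical fact that condition (M) is stable under strongly continuous perturbations. Let $\boldsymbol{v}_n\in\bXpn\cap L^\infty(I;Y)$, $n\in\mathbb{N}$, with $\boldsymbol{v}_n(t)\in\Vo_{h_n,0}$ for a.e.\ $t\in I$ and all $n\in\mathbb{N}$, be a sequence satisfying \eqref{def:non-conform_Bochner_cond_M.1}--\eqref{def:non-conform_Bochner_cond_M.5} with respect to the operators $\mathbfcal{A}_n+\mathbfcal{B}_n$, $n\in\mathbb{N}$, and limits $\boldsymbol{v}\in\bVp\cap L^\infty(I;H)$ and $\boldsymbol{v}^*\in(\bVp)^*$. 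We have to prove that $(\mathbfcal{A}+\mathbfcal{B})\boldsymbol{v}=\boldsymbol{v}^*$ in $(\bVp)^*$, and the idea is to obtain this from the non-conforming Bochner condition (M) of $(\mathbfcal{A}_n)_{n\in\mathbb{N}}$, applied to the very same sequence $(\boldsymbol{v}_n)_{n\in\mathbb{N}}$, with candidate limit functional $\boldsymbol{v}^*-\mathbfcal{B}\boldsymbol{v}\in(\bVp)^*$ (which is well-defined, since $\mathbfcal{B}\boldsymbol{v}\in(\bVp)^*$ as $\boldsymbol{v}\in\bVp\cap L^\infty(I;H)$).

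First, I would exploit the non-conforming Bochner strong continuity of $(\mathbfcal{B}_n)_{n\in\mathbb{N}}$. Since $(\boldsymbol{v}_n)_{n\in\mathbb{N}}$ fulfils \eqref{def:non-conform_Bochner_cond_M.1}--\eqref{def:non-conform_Bochner_cond_M.3} with respect to $\boldsymbol{v}$, Remark~\ref{rem:1} yields $(\mathrm{id}_{\mathbfcal{D}_T})^*\mathbfcal{B}_n\boldsymbol{v}_n\to(\mathrm{id}_{\mathbfcal{D}_T})^*\mathbfcal{B}\boldsymbol{v}$ in $(\mathbfcal{D}_T)^*$ $(n\to\infty)$; moreover, testing \eqref{def:non-conform_strong_continuity.5} with the admissible choice $\boldsymbol{z}_n\coloneqq\boldsymbol{v}_n$ (which itself satisfies \eqref{def:non-conform_Bochner_cond_M.1}--\eqref{def:non-conform_Bochner_cond_M.3} with respect to $\boldsymbol{v}$) gives $\langle\mathbfcal{B}_n\boldsymbol{v}_n,\boldsymbol{v}_n\rangle_{\smash{\bXpn}}\to\langle\mathbfcal{B}\boldsymbol{v},\boldsymbol{v}\rangle_{\smash{\bVp}}$ $(n\to\infty)$. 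Together with the boundedness of $(\mathbfcal{B}_n\boldsymbol{v}_n)_{n\in\mathbb{N}}$ in $(\bXpn)^*$ inherent in the strong-continuity setting, the identity $\mathbfcal{A}_n\boldsymbol{v}_n=(\mathbfcal{A}_n+\mathbfcal{B}_n)\boldsymbol{v}_n-\mathbfcal{B}_n\boldsymbol{v}_n$ in $(\bXpn)^*$ combined with \eqref{def:non-conform_Bochner_cond_M.1} for $\mathbfcal{A}_n+\mathbfcal{B}_n$ then shows $\sup_{n\in\mathbb{N}}\|\mathbfcal{A}_n\boldsymbol{v}_n\|_{(\bXpn)^*}<\infty$.

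Next, I would verify the hypotheses \eqref{def:non-conform_Bochner_cond_M.1}--\eqref{def:non-conform_Bochner_cond_M.5} of the non-conforming Bochner condition (M) of $(\mathbfcal{A}_n)_{n\in\mathbb{N}}$ for the sequence $(\boldsymbol{v}_n)_{n\in\mathbb{N}}$, with limit $\boldsymbol{v}$ and functional $\boldsymbol{v}^*-\mathbfcal{B}\boldsymbol{v}$: condition \eqref{def:non-conform_Bochner_cond_M.1} is the previous step, conditions \eqref{def:non-conform_Bochner_cond_M.2} and \eqref{def:non-conform_Bochner_cond_M.3} are unchanged, condition \eqref{def:non-conform_Bochner_cond_M.4} follows by subtracting the $(\mathbfcal{D}_T)^*$-convergence of $(\mathrm{id}_{\mathbfcal{D}_T})^*\mathbfcal{B}_n\boldsymbol{v}_n$ from that of $(\mathrm{id}_{\mathbfcal{D}_T})^*(\mathbfcal{A}_n+\mathbfcal{B}_n)\boldsymbol{v}_n$, and condition \eqref{def:non-conform_Bochner_cond_M.5} follows from
\begin{align*}
\limsup_{n\to\infty}{\langle\mathbfcal{A}_n\boldsymbol{v}_n,\boldsymbol{v}_n\rangle_{\smash{\bXpn}}}&\leq\limsup_{n\to\infty}{\langle(\mathbfcal{A}_n+\mathbfcal{B}_n)\boldsymbol{v}_n,\boldsymbol{v}_n\rangle_{\smash{\bXpn}}}-\lim_{n\to\infty}{\langle\mathbfcal{B}_n\boldsymbol{v}_n,\boldsymbol{v}_n\rangle_{\smash{\bXpn}}}\\&\leq\langle\boldsymbol{v}^*,\boldsymbol{v}\rangle_{\smash{\bVp}}-\langle\mathbfcal{B}\boldsymbol{v},\boldsymbol{v}\rangle_{\smash{\bVp}}=\langle\boldsymbol{v}^*-\mathbfcal{B}\boldsymbol{v},\boldsymbol{v}\rangle_{\smash{\bVp}}\,,
\end{align*}
where the first inequality holds by the elementary property of the limit superior applied to the convergent sequence $(\langle\mathbfcal{B}_n\boldsymbol{v}_n,\boldsymbol{v}_n\rangle_{\smash{\bXpn}})_{n\in\mathbb{N}}$, and the second uses \eqref{def:non-conform_Bochner_cond_M.5} together with the limit from \eqref{def:non-conform_strong_continuity.5}. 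The non-conforming Bochner condition (M) of $(\mathbfcal{A}_n)_{n\in\mathbb{N}}$ then yields $\mathbfcal{A}\boldsymbol{v}=\boldsymbol{v}^*-\mathbfcal{B}\boldsymbol{v}$ in $(\bVp)^*$, that is, $(\mathbfcal{A}+\mathbfcal{B})\boldsymbol{v}=\boldsymbol{v}^*$ in $(\bVp)^*$, which is the claim.

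The argument is in essence a bookkeeping exercise, and the one point requiring care is precisely the bookkeeping around \emph{a priori} bounds: the non-conforming Bochner condition (M) of $(\mathbfcal{A}_n)_{n\in\mathbb{N}}$ presupposes that $(\mathbfcal{A}_n\boldsymbol{v}_n)_{n\in\mathbb{N}}$ is bounded in $(\bXpn)^*$, which one can only split off from the bound on $((\mathbfcal{A}_n+\mathbfcal{B}_n)\boldsymbol{v}_n)_{n\in\mathbb{N}}$ once the corresponding bound on $(\mathbfcal{B}_n\boldsymbol{v}_n)_{n\in\mathbb{N}}$ is at hand --- this is where the structure of the non-conforming Bochner strong continuity of $(\mathbfcal{B}_n)_{n\in\mathbb{N}}$ (and, in the applications, the growth bound furnished by the parabolic embedding of Proposition~\ref{prop:non_conform_poincare}) enters. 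All remaining convergences are immediate consequences of Remark~\ref{rem:1}, of the elementary estimate for $\limsup$, and of the well-definedness of $\boldsymbol{v}^*-\mathbfcal{B}\boldsymbol{v}$ in $(\bVp)^*$.
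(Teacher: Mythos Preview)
Your proof is correct and follows essentially the same route as the paper: use Remark~\ref{rem:1} and the strong continuity with $\boldsymbol{z}_n=\boldsymbol{v}_n$ to peel off the $\mathbfcal{B}_n$-contribution from \eqref{def:non-conform_Bochner_cond_M.4} and \eqref{def:non-conform_Bochner_cond_M.5}, then invoke condition~(M) for $(\mathbfcal{A}_n)_{n\in\mathbb{N}}$ with target functional $\boldsymbol{v}^*-\mathbfcal{B}\boldsymbol{v}$. You are in fact slightly more explicit than the paper about the boundedness part of \eqref{def:non-conform_Bochner_cond_M.1} for $\mathbfcal{A}_n$ alone, which the paper's proof passes over silently; your caveat that this relies on a bound for $(\mathbfcal{B}_n\boldsymbol{v}_n)_{n\in\mathbb{N}}$---not literally part of Definition~\ref{def:non-conform_strong_continuity} but always available in the applications via the growth estimates behind Proposition~\ref{prop:non_conform_poincare}---is apt.
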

    
    \begin{proof}
    	Let $\boldsymbol{v}_n\in \bXpn\cap L^\infty(I;Y)$, $n\in\mathbb{N}$, be a sequence with  $\boldsymbol{v}_n(t)\in \Vo_{h_n,0}$ for a.e.\  $t\in I$~and~all~${n\in \mathbb{N}}$ as well as satisfying \eqref{def:non-conform_Bochner_cond_M.1}--\eqref{def:non-conform_Bochner_cond_M.5} with respect to  $\mathbfcal{A}+\mathbfcal{B}\colon\bVp\cap  L^\infty(I;H) \to (\bVp)^*$. Due~to~Remark~\ref{rem:1}, the non-conforming Bochner strong continuity  of  $\mathbfcal{B}_n\colon \hspace*{-0.1em}\bXpn\cap L^\infty(I;Y)\hspace*{-0.1em}\to\hspace*{-0.1em} (\bXpn)^*$,~${n\hspace*{-0.1em}\in\hspace*{-0.1em}  \mathbb{N}}$,~implies~that
    	\begin{align}\label{prop:perturbation.1}	
    	\smash{	(\mathrm{id}_{\mathbfcal{D}_T})^*\mathbfcal{B}_n\boldsymbol{v}_n\to (\mathrm{id}_{\mathbfcal{D}_T})^*\mathbfcal{B}\boldsymbol{v}\quad \text{ in }(\mathbfcal{D}_T)^*\quad (n\to \infty)\,,}
    	\end{align}
    	and 
    	\begin{align}\label{prop:perturbation.2}	
    		\lim_{n\to \infty}{\langle \mathbfcal{B}_n\boldsymbol{v}_n,\boldsymbol{v}_n\rangle_{\smash{\bXpn}}}=\langle  \mathbfcal{B}\boldsymbol{v},\boldsymbol{v}\rangle_{\smash{\bVp}}\,.
    	\end{align}
    	Therefore, combining \eqref{prop:perturbation.1} and \eqref{def:non-conform_Bochner_cond_M.4}, we find that
    	\begin{align}\label{prop:perturbation.3}	
    	\smash{	(\mathrm{id}_{\mathbfcal{D}_T})^*\mathbfcal{A}_n\boldsymbol{v}_n\to (\mathrm{id}_{\mathbfcal{D}_T})^*(\boldsymbol{v}^*-\mathbfcal{B}\boldsymbol{v})\quad \text{ in }(\mathbfcal{D}_T)^*\quad (n\to \infty)\,,}
    	\end{align}
    	and combining \eqref{prop:perturbation.2} and \eqref{def:non-conform_Bochner_cond_M.5}, we find that
    	\begin{align}\label{prop:perturbation.4}	
    		\limsup_{n\to \infty}{\langle \mathbfcal{A}_n\boldsymbol{v}_n,\boldsymbol{v}_n\rangle_{\smash{\bXpn}}}\leq \langle  \boldsymbol{v}^*-\mathbfcal{B}\boldsymbol{v},\boldsymbol{v}\rangle_{\smash{\bVp}}\,.
    	\end{align}
    	As a result, given \eqref{def:non-conform_Bochner_cond_M.1}--\eqref{def:non-conform_Bochner_cond_M.3} together with  \eqref{prop:perturbation.3}	and \eqref{prop:perturbation.4},	
    	the non-conforming Bochner condition~(M) implies that $\mathbfcal{A}\boldsymbol{v}=\boldsymbol{v}^*-\mathbfcal{B}\boldsymbol{v}$ in $(\bVp)^*$, \textit{i.e.},    $(\mathbfcal{A}+\mathbfcal{B})\boldsymbol{v}=\boldsymbol{v}^*$ in $(\bVp)^*$.
    \end{proof}
    
    An example of a sequence of operators that is  non-conforming Bochner strongly continuous,~is~given~via the approximation of the convective term through \textit{Temam's modification} (\textit{cf}.\ \cite{tem}).\enlargethispage{8.5mm}
    
    \begin{proposition}\label{prop:convective_term}	
    	Let  $p^->p_c\coloneqq\frac{3d+2}{d+2}$ and $q_n\ge (p_c)_*=2(p_c)'$ a.e.\ in $Q_T$ for all $n\in \mathbb{N}$. 
    	Then, the operators $\mathbfcal{C}_n\colon  \bXpn \to  (\bXpn)^*$, $n\in  \mathbb{N}$, for every $\boldsymbol{u}_n,\boldsymbol{z}_n\in \bXpn $~defined~by\vspace*{-0.5mm}
    	\begin{align*}
    		\langle\mathbfcal{C}_n\boldsymbol{u}_n,\boldsymbol{z}_n\rangle_{\smash{\bXpn}}\coloneqq \tfrac{1}{2}(\boldsymbol{z}_n\otimes \boldsymbol{u}_n,\nabla_{\! x}    \boldsymbol{u}_n)_{Q_T}-\tfrac{1}{2}(\boldsymbol{u}_n\otimes \boldsymbol{u}_n,\nabla_{\! x}    \boldsymbol{z}_n)_{Q_T}\,,
    	\end{align*}
    	are well-defined and bounded. In addition, the sequence of operators $\mathbfcal{C}_n\colon\bXpn \to (\bXpn)^*$, $n\in  \mathbb{N}$, is~non-conforming Bochner strongly continuous with respect to $(\Vo_{h_n,0})_{n\in \mathbb{N}}$ and $\mathbfcal{C}\colon \bVp\hspace*{-0.1em}\to\hspace*{-0.1em} (\bVp)^*$, for every $\boldsymbol{v},\boldsymbol{z}\in \bVp$ defined by\vspace*{-1mm}
    	\begin{align*}
    			\langle\mathbfcal{C}\boldsymbol{v},\boldsymbol{z}\rangle_{\smash{\bVp}}\coloneqq -(\boldsymbol{v}\otimes \boldsymbol{v},\nabla_{\! x}    \boldsymbol{z})_{Q_T}\,.
    	\end{align*} 
    \end{proposition}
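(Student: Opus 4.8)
The plan is to dispose of well-definedness and boundedness quickly and then to establish the non-conforming Bochner strong continuity, which carries the substance. \textbf{Well-definedness and boundedness.} I would estimate each of the two trilinear terms in $\langle\mathbfcal{C}_n\boldsymbol{u}_n,\boldsymbol{z}_n\rangle_{\bXpn}$ by the variable H\"older inequality \eqref{eq:gen_hoelder}, pass from the variable exponents $q_n(\cdot,\cdot),p_n(\cdot,\cdot)$ to the \emph{constant} exponents $(p_c)_*=2(p_c)'$ and $p_c$ via the embedding \eqref{eq:hoelder_embeddding}, use Korn's first inequality to replace $\nabla_{\!x}$ by $\mathbf{D}_x$ on the zero-trace fields, and invoke the H\"older relation $\tfrac{1}{2(p_c)'}+\tfrac{1}{2(p_c)'}+\tfrac{1}{p_c}=1$ together with the hypothesis $q_n\ge(p_c)_*$. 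This yields $|\langle\mathbfcal{C}_n\boldsymbol{u}_n,\boldsymbol{z}_n\rangle_{\bXpn}|\le c\,\|\boldsymbol{u}_n\|_{\bXpn}^2\|\boldsymbol{z}_n\|_{\bXpn}$ for all $\boldsymbol{u}_n,\boldsymbol{z}_n\in\bXpn$ (for the reduction to constant exponents one uses in addition $p_n\ge p_c$ a.e., hence all but finitely many $n$, which is what matters below), \textit{i.e.}, $\mathbfcal{C}_n\colon\bXpn\to(\bXpn)^*$ is well-defined and bounded; the computation is the same as for the fixed-exponent convective term in \cite{alex-book}.

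\textbf{Non-conforming Bochner strong continuity.} Let $\boldsymbol{v}_n,\boldsymbol{z}_n\in\bXpn\cap L^\infty(I;Y)$, $n\in\mathbb{N}$, with $\boldsymbol{v}_n(t)\in\Vo_{h_n,0}$ for a.e.\ $t\in I$, satisfy \eqref{def:non-conform_Bochner_cond_M.1}--\eqref{def:non-conform_Bochner_cond_M.3} with limits $\boldsymbol{v},\boldsymbol{z}\in\bVp\cap L^\infty(I;H)$ (this membership being, in the Rothe--Galerkin application, guaranteed by Proposition~\ref{prop:approx_closedness}); in particular $\boldsymbol{v}$ is, for a.e.\ $t\in I$, divergence-free and vanishes on $\partial\Omega$. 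Since $p^->p_c$ is equivalent to $2(p^-)'<(p^-)_*$ and $(p^-)_*>2$, I fix $\varepsilon\in(0,(p^-)_*-1]$ so small that the constant exponent $r_\varepsilon\coloneqq2(p^--\varepsilon)'$ satisfies $r_\varepsilon<\max\{2,p_*(t,x)\}-\varepsilon$ for a.e.\ $(t,x)^\top\in Q_T$. Then Proposition~\ref{prop:non_conform_landes}, applied to $(\boldsymbol{v}_n)_{n}$ and to $(\boldsymbol{z}_n)_{n}$, gives $\boldsymbol{v}_n\to\boldsymbol{v}$ and $\boldsymbol{z}_n\to\boldsymbol{z}$ in $L^{r_\varepsilon}(Q_T;\mathbb{R}^d)$ $(n\to\infty)$, whence
\begin{align*}
\boldsymbol{z}_n\otimes\boldsymbol{v}_n\to\boldsymbol{z}\otimes\boldsymbol{v}\,,\qquad\boldsymbol{v}_n\otimes\boldsymbol{v}_n\to\boldsymbol{v}\otimes\boldsymbol{v}\qquad\text{ in }L^{(p^--\varepsilon)'}(Q_T;\mathbb{R}^{d\times d})\quad(n\to\infty)\,.
\end{align*}
On the other hand, \eqref{eq:hoelder_embeddding} and Korn's first inequality show that $(\nabla_{\!x}\boldsymbol{v}_n)_n$ and $(\nabla_{\!x}\boldsymbol{z}_n)_n$ are bounded in $L^{p^--\varepsilon}(Q_T;\mathbb{R}^{d\times d})$, and since $\boldsymbol{v}_n\to\boldsymbol{v},\boldsymbol{z}_n\to\boldsymbol{z}$ in $L^1(Q_T;\mathbb{R}^d)$, closedness of the distributional gradient and uniqueness of weak limits yield $\nabla_{\!x}\boldsymbol{v}_n\rightharpoonup\nabla_{\!x}\boldsymbol{v}$ and $\nabla_{\!x}\boldsymbol{z}_n\rightharpoonup\nabla_{\!x}\boldsymbol{z}$ in $L^{p^--\varepsilon}(Q_T;\mathbb{R}^{d\times d})$ $(n\to\infty)$. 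Pairing the strongly convergent tensor fields against the weakly convergent gradients,
\begin{align*}
\lim_{n\to\infty}{\langle\mathbfcal{C}_n\boldsymbol{v}_n,\boldsymbol{z}_n\rangle_{\bXpn}}=\tfrac{1}{2}(\boldsymbol{z}\otimes\boldsymbol{v},\nabla_{\!x}\boldsymbol{v})_{Q_T}-\tfrac{1}{2}(\boldsymbol{v}\otimes\boldsymbol{v},\nabla_{\!x}\boldsymbol{z})_{Q_T}\,.
\end{align*}
Finally, using that $\boldsymbol{v}$ is divergence-free and vanishes on $\partial\Omega$, an integration by parts in space---first for $\boldsymbol{v},\boldsymbol{z}\in\mathbfcal{D}_T$ and then extended by the density Proposition~\ref{prop:density_in_Vqp} together with the integrability obtained above---gives $(\boldsymbol{z}\otimes\boldsymbol{v},\nabla_{\!x}\boldsymbol{v})_{Q_T}=-(\boldsymbol{v}\otimes\boldsymbol{v},\nabla_{\!x}\boldsymbol{z})_{Q_T}$, so the limit equals $-(\boldsymbol{v}\otimes\boldsymbol{v},\nabla_{\!x}\boldsymbol{z})_{Q_T}=\langle\mathbfcal{C}\boldsymbol{v},\boldsymbol{z}\rangle_{\bVp}$, \textit{i.e.}, \eqref{def:non-conform_strong_continuity.5}. (The case $\boldsymbol{z}_n=\boldsymbol{v}_n$ is contained in this and is consistent with $\langle\mathbfcal{C}_n\boldsymbol{v}_n,\boldsymbol{v}_n\rangle_{\bXpn}=0$ and $\langle\mathbfcal{C}\boldsymbol{v},\boldsymbol{v}\rangle_{\bVp}=0$.)

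\textbf{Main obstacle.} The decisive step is the first limit passage: one needs \emph{strong} convergence $\boldsymbol{v}_n\to\boldsymbol{v}$ in a Lebesgue space with constant exponent strictly above $2(p^-)'$, and in the present framework---a time-dependent, merely measurable, possibly discontinuous approximation $p_n$ of $p$---this is exactly what Proposition~\ref{prop:non_conform_landes} provides (resting on the approximative parabolic Poincar\'e inequality of Proposition~\ref{prop:non_conform_poincare} and the Landes--Mustonen compactness principle); and it is precisely the restriction $p^->p_c=\frac{3d+2}{d+2}$ that makes such an exponent available, via the equivalence $p^->p_c\Leftrightarrow2(p^-)'<(p^-)_*$. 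A secondary point, relevant in the Rothe--Galerkin application, is that the $\boldsymbol{v}_n$ are only \emph{discretely} divergence-free, so the integration by parts identifying the Temam form with $-(\boldsymbol{v}\otimes\boldsymbol{v},\nabla_{\!x}\boldsymbol{z})_{Q_T}$ can only be carried out for the exactly divergence-free limit $\boldsymbol{v}$---whence the role of Proposition~\ref{prop:approx_closedness}.
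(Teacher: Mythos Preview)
Your argument is correct and follows essentially the same route as the paper: H\"older plus Korn for well-definedness/boundedness, then Proposition~\ref{prop:non_conform_landes} for strong convergence of the sequences in a suitable Lebesgue space, weak convergence of the full gradients (via Korn and the uniform bound on $\mathbf{D}_x$), and pairing weak against strong. The paper works with the fixed exponents $p_c$ and $2(p_c)'=(p_c)_*$ throughout (taking $\varepsilon=(p^-)_*-(p_c)_*$ in Proposition~\ref{prop:non_conform_landes}), while you work with $p^--\varepsilon$ and $2(p^--\varepsilon)'$; this is cosmetic. You are more explicit than the paper about the final integration-by-parts identification of the Temam limit with $\langle\mathbfcal{C}\boldsymbol{v},\boldsymbol{z}\rangle_{\bVp}$, and your remark that $p_n\ge p_c$ is needed (and holds for all but finitely many $n$ under Assumption~\ref{assum:p_h}) is a fair observation that the paper leaves implicit.
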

    
    \begin{remark}
    	\begin{itemize}[noitemsep,topsep=2pt,leftmargin=!,labelwidth=\widthof{(ii)}]
    		\item[(i)] Proceeding \hspace*{-0.1mm}as \hspace*{-0.1mm}in \hspace*{-0.1mm}the \hspace*{-0.1mm}proof \hspace*{-0.1mm}of \hspace*{-0.1mm}\cite[Prop.\ \hspace*{-0.1mm}5.3]{alex-book}, \hspace*{-0.1mm}we \hspace*{-0.1mm}find \hspace*{-0.1mm}that \hspace*{-0.1mm}the \hspace*{-0.1mm}operator \hspace*{-0.1mm}${\mathbfcal{C}\colon\hspace*{-0.15em} \bVp\hspace*{-0.15em}\to\hspace*{-0.15em} (\bVp)^*}$ is well-defined and bounded.
    		\item[(ii)] The statement and proof of Proposition \ref{prop:convective_term} can readily be generalized to alternative discretizations of the operator   $\mathbfcal{C}\colon \bVp\hspace*{-0.1em}\to\hspace*{-0.1em} (\bVp)^*$.
    	\end{itemize}
    \end{remark}
    
    \begin{proof}[Proof (of Proposition \ref{prop:convective_term}).]
    	\textit{1. Well-definedness and boundedness.} Due to $q_n\ge (p_c)_*=2(p_c)'$ a.e.\ in $Q_T$ for all $n\hspace*{-0.15em}\in\hspace*{-0.15em}  \mathbb{N}$, \hspace*{-0.1mm}by \hspace*{-0.1mm}Korn's \hspace*{-0.1mm}inequality \hspace*{-0.1mm}and \hspace*{-0.1mm}the \hspace*{-0.1mm}variable \hspace*{-0.1mm}Hölder \hspace*{-0.1mm}inequality \hspace*{-0.1mm}\eqref{eq:gen_hoelder},~\hspace*{-0.1mm}for~\hspace*{-0.1mm}\mbox{every}~\hspace*{-0.1mm}${\boldsymbol{u}_n,\boldsymbol{z}_n\hspace*{-0.15em}\in\hspace*{-0.15em} \bXpn}$,~${n\hspace*{-0.15em}\in\hspace*{-0.15em}  \mathbb{N}}$, we have that\vspace*{-0.5mm}
    	\begin{align*}
    		\vert\langle \mathbfcal{C}_n\boldsymbol{u}_n,\boldsymbol{z}_n\rangle_{\smash{\bXpn}}\vert&\leq \|\boldsymbol{u}_n\|_{2 p_c',Q_T}^2\|\mathbf{D}_x\boldsymbol{z}_n\|_{p_c,Q_T}  +\|\boldsymbol{u}_n\|_{2p_c',Q_T}\|\boldsymbol{z}_n\|_{2p_c',Q_T}\|\mathbf{D}_x\boldsymbol{u}_n\|_{p_c,Q_T}
    		\\&\leq 8\,(1+\vert Q_T\vert)^3\,\|\boldsymbol{u}_n\|_{q_n(\cdot,\cdot),Q_T}^2\|\mathbf{D}_x\boldsymbol{z}_n\|_{p_n(\cdot,\cdot),Q_T}\\&\quad+ 8\,(1+\vert Q_T\vert)^3\,\|\boldsymbol{u}_n\|_{q_n(\cdot,\cdot),Q_T}\|\boldsymbol{z}_n\|_{q_n(\cdot,\cdot),Q_T}\|\mathbf{D}_x\boldsymbol{u}_n\|_{p_n(\cdot,\cdot),Q_T}
    		\\&\leq 16\,(1+\vert Q_T\vert)^3\,\|\boldsymbol{u}_n\|_{\smash{\bXpn}}^2\|\boldsymbol{z}_n\|_{\smash{\bXpn}}\,,
    	\end{align*}
    	which implies the well-definedness and boundedness of $\mathbfcal{C}_n\colon\bXpn\to (\bXpn)^*$~for~all~$n\in  \mathbb{N}$.
    	
    	\textit{2. Non-conforming Bochner strong continuity.} Let $\boldsymbol{v}_n\hspace*{-0.1em}\in\hspace*{-0.1em} \bXpn\cap L^\infty(I;Y)$, $n\hspace*{-0.1em}\in\hspace*{-0.1em}\mathbb{N}$, be a sequence~with $\boldsymbol{v}_n(t)\hspace*{-0.15em}\in\hspace*{-0.15em} \Vo_{h_n,0}$ for a.e.\  $t\hspace*{-0.15em}\in\hspace*{-0.15em} I$ \hspace*{-0.1mm}and \hspace*{-0.1mm}all \hspace*{-0.1mm}$n\hspace*{-0.15em}\in \hspace*{-0.15em}\mathbb{N}$ \hspace*{-0.1mm}as \hspace*{-0.1mm}well \hspace*{-0.1mm}as  \hspace*{-0.1mm}satisfying \hspace*{-0.1mm}\eqref{def:non-conform_Bochner_cond_M.1}--\eqref{def:non-conform_Bochner_cond_M.3} \hspace*{-0.1mm}with~\hspace*{-0.1mm}\mbox{respect}~\hspace*{-0.1mm}to~\hspace*{-0.1mm}${\boldsymbol{v}\hspace*{-0.15em}\in\hspace*{-0.15em} \bVp\hspace*{-0.15em}\cap\hspace*{-0.15em} L^\infty(I;H)}$. Due to Proposition \ref{prop:Xqp_weak_compact} and  Proposition \ref{prop:non_conform_landes}, 
    	for 
    	every $r,s\in \mathcal{P}^\infty(Q_T)$ with $r\prec q$ in $Q_T$~and~${s\prec p}$~in~$Q_T$ as well as for every  $\varepsilon\in(0,(p^-)^*-1]$, it holds that\vspace*{-1mm}
    	\begin{alignat}{3} 
    			\boldsymbol{v}_n &\rightharpoonup \boldsymbol{v}&&\quad\textup{ in }\mathbfcal{U}_{\D}^{r,s}&&\quad(n\to \infty )\,,\label{prop:convective_term.1}	\\[-0.5mm]
    			\boldsymbol{v}_n &\to \boldsymbol{v}&&\quad\textup{ in }L^{p_*(\cdot,\cdot)-\varepsilon}(Q_T;\mathbb{R}^d)&&\quad(n\to \infty )\,. \label{prop:convective_term.2}	
    	\end{alignat}
    	Then, due to $p^->p_c$, for $r=s=p_c$, by Korn's inequality and the variable Hölder inequality \eqref{eq:gen_hoelder}, from \eqref{prop:convective_term.1} and for $\varepsilon=(p^-)_*-(p_c)_*>0$, by the variable Hölder inequality \eqref{eq:gen_hoelder}, from \eqref{prop:convective_term.2}, it~follows~that
    	\begin{align}\label{prop:convective_term.3}
    		\begin{aligned}
    			\nabla_{\!x}\boldsymbol{v}_n &\rightharpoonup \nabla_{\!x}\boldsymbol{v}&&\quad\textup{ in }L^{p_c}(Q_T;\mathbb{R}^{d\times d})&&\quad(n\to \infty )\,,\\[-0.5mm]
    			\boldsymbol{v}_n& \to \boldsymbol{v}&&\quad\textup{ in }L^{\smash{2p_c'}}(Q_T;\mathbb{R}^d)&&\quad(n\to \infty )\,.
    		\end{aligned}
    	\end{align}
    	Next, let $\boldsymbol{z}_n\hspace*{-0.15em}\in\hspace*{-0.15em} \bXpn\cap L^\infty(I;Y)$, $n\hspace*{-0.15em}\in\hspace*{-0.15em}\mathbb{N}$, be sequence with $\boldsymbol{z}_n(t)\hspace*{-0.15em}\in\hspace*{-0.15em} \Vo_{h_n,0}$ for a.e.\  $t\hspace*{-0.15em}\in \hspace*{-0.15em}I$ and all $n\hspace*{-0.15em}\in\hspace*{-0.15em} \mathbb{N}$~as~well~as satisfying \eqref{def:non-conform_Bochner_cond_M.1}--\eqref{def:non-conform_Bochner_cond_M.3} with respect to  $\boldsymbol{z}\in \bVp\cap L^\infty(I;H)$. 
    	Then,~analogously~to~\eqref{prop:convective_term.1}--\eqref{prop:convective_term.3},~we~obtain
    	\begin{align}\label{prop:convective_term.6}
    		\begin{aligned}
    			\nabla_{\!x}\boldsymbol{z}_n &\rightharpoonup \nabla_{\!x}\boldsymbol{z}&&\quad\textup{ in }L^{p_c}(Q_T;\mathbb{R}^{d\times d})&&\quad(n\to \infty )\,,\\[-0.5mm]
    			\boldsymbol{z}_n& \to \boldsymbol{z}&&\quad\textup{ in }L^{\smash{2p_c'}}(Q_T;\mathbb{R}^d)&&\quad(n\to \infty )\,.
    		\end{aligned}
    	\end{align}
    	Eventually, from \eqref{prop:convective_term.3} and \eqref{prop:convective_term.6}, it follows that $	\lim_{n\to \infty}{\langle \mathbfcal{C}_n\boldsymbol{v}_n,\boldsymbol{z}_n\rangle_{\smash{\bXpn}}}=\langle  \mathbfcal{C}\boldsymbol{v},\boldsymbol{z}\rangle_{\smash{\bVp}}$. 
    \end{proof}
    
    Since the non-conforming Bochner condition (M) is stable under non-conforming Bochner strongly continuous perturbations (\textit{cf}.\ Proposition \ref{prop:perturbation}), 
    from Proposition \ref{prop:stress} and Proposition \ref{prop:convective_term}, we can derive a further example of a sequence of operators that satisfies the non-conforming Bochner condition (M).
    
    \begin{corollary}\label{prop:stress_plus_convective_term}	
    Let  $p^->p_c\coloneqq\frac{3d+2}{d+2}$ and $q_n\ge (p_c)_*$ a.e.\ in $Q_T$ for all $n\in \mathbb{N}$. 
    	Then, the sequence of   operators  $\mathbfcal{S}_n+\mathbfcal{C}_n\colon \bXpn\cap L^\infty(I;Y)\to (\bXpn)^*$, $n\in \mathbb{N}$, satisfies the non-conforming Bochner condition (M) with respect to $(\Vo_{h_n,0})_{n\in \mathbb{N}}$ and $\mathbfcal{S}+\mathbfcal{C}\colon \bVp\cap L^\infty(I;H)\to (\bVp)^*$.
    \end{corollary}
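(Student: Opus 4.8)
The plan is to obtain the assertion as an immediate consequence of the three structural results already established, namely the non-conforming Bochner condition~(M) for the stress operators (Proposition~\ref{prop:stress}), the non-conforming Bochner strong continuity of the convective operators (Proposition~\ref{prop:convective_term}), and the stability of the non-conforming Bochner condition~(M) under such perturbations (Proposition~\ref{prop:perturbation}). No new analysis is needed; the content of the proof is merely the verification that the hypotheses of these three propositions are met under the assumptions $p^->p_c=\frac{3d+2}{d+2}$ and $q_n\ge(p_c)_*$ a.e.\ in $Q_T$ for all $n\in\mathbb{N}$, together with the standing hypotheses (\hyperlink{SN.1}{SN.1})--(\hyperlink{SN.4}{SN.4}) on the approximating extra-stress tensors $\mathbf{S}_n$ and Assumption~\ref{assum:p_h}.

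Concretely, I would proceed in three steps. First, since $q_n\ge(p_c)_*=2(p_c)'$ a.e.\ in $Q_T$ for all $n\in\mathbb{N}$ and $p^->p_c$, Proposition~\ref{prop:convective_term} applies verbatim and yields that the operators $\mathbfcal{C}_n\colon\bXpn\to(\bXpn)^*$, $n\in\mathbb{N}$, are well-defined and bounded and that $(\mathbfcal{C}_n)_{n\in\mathbb{N}}$ is non-conforming Bochner strongly continuous with respect to $(\Vo_{h_n,0})_{n\in\mathbb{N}}$ and $\mathbfcal{C}\colon\bVp\to(\bVp)^*$. Second, Proposition~\ref{prop:stress} gives that the sequence $(\mathbfcal{S}_n)_{n\in\mathbb{N}}$, $\mathbfcal{S}_n\colon\bXpn\to(\bXpn)^*$, satisfies the non-conforming Bochner condition~(M) with respect to $(\Vo_{h_n,0})_{n\in\mathbb{N}}$ and $\mathbfcal{S}\colon\bVp\to(\bVp)^*$. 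Third, I would restrict all of these operators to the domains prescribed in Definition~\ref{def:non-conform_Bochner_cond_M} — that is, $\mathbfcal{S}_n$ and $\mathbfcal{C}_n$ to $\bXpn\cap L^\infty(I;Y)$, and $\mathbfcal{S}$ and $\mathbfcal{C}$ to $\bVp\cap L^\infty(I;H)$ — and apply Proposition~\ref{prop:perturbation} with $\mathbfcal{A}_n\coloneqq\mathbfcal{S}_n$ and $\mathbfcal{B}_n\coloneqq\mathbfcal{C}_n$. Since $(\mathbfcal{S}_n)_{n\in\mathbb{N}}$ satisfies the non-conforming Bochner condition~(M) and $(\mathbfcal{C}_n)_{n\in\mathbb{N}}$ is a non-conforming Bochner strongly continuous perturbation, Proposition~\ref{prop:perturbation} delivers precisely that $(\mathbfcal{S}_n+\mathbfcal{C}_n)_{n\in\mathbb{N}}$, $\mathbfcal{S}_n+\mathbfcal{C}_n\colon\bXpn\cap L^\infty(I;Y)\to(\bXpn)^*$, satisfies the non-conforming Bochner condition~(M) with respect to $(\Vo_{h_n,0})_{n\in\mathbb{N}}$ and $\mathbfcal{S}+\mathbfcal{C}\colon\bVp\cap L^\infty(I;H)\to(\bVp)^*$, which is the claimed statement.

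There is no genuine obstacle at this level, since every analytic ingredient — the Minty-type monotonicity argument behind Proposition~\ref{prop:stress}, the compactness (via Proposition~\ref{prop:non_conform_landes}) and variable-H\"older estimates behind Proposition~\ref{prop:convective_term}, and the perturbation argument of Proposition~\ref{prop:perturbation} — has already been carried out. The only points that require a moment's care are purely a matter of bookkeeping: one must check that the lower bound on $q_n$ demanded in the corollary coincides with the one required by Proposition~\ref{prop:convective_term} (it does, as $(p_c)_*=2(p_c)'$), and that passing from operators defined on all of $\bXpn$ to their restrictions to $\bXpn\cap L^\infty(I;Y)$ — the natural domain in Definition~\ref{def:non-conform_Bochner_cond_M} — is harmless, which it is because $\mathbfcal{S}_n$ and $\mathbfcal{C}_n$ are bounded on all of $\bXpn$, so their sum is well-defined and bounded there and a fortiori on $\bXpn\cap L^\infty(I;Y)$.
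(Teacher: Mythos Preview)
Your proposal is correct and matches the paper's own proof, which consists of a single sentence invoking Propositions~\ref{prop:stress}, \ref{prop:convective_term}, and \ref{prop:perturbation} in exactly the way you describe. The additional bookkeeping you mention (the identity $(p_c)_*=2(p_c)'$ and the harmlessness of restricting to $\bXpn\cap L^\infty(I;Y)$) is accurate and simply makes explicit what the paper leaves implicit.
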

    
    \begin{proof}
    	Given Proposition \ref{prop:stress} and Proposition \ref{prop:convective_term}, the assertion follows by means of Proposition \ref{prop:perturbation}.
    \end{proof} \newpage

    \section{Fully discrete Rothe--Galerkin approximation}\label{sec:rothe_galerkin}
	\hspace{5mm}In this section, we specify the exact setup of the fully-discrete Rothe--Galerkin approximation~of~\eqref{eq:generalized_evolution_equation}. Moreover, we prove its well-posedness (\textit{i.e.}, the existence of iterates), its stability (\textit{i.e.},~the~boundedness of the corresponding double
	sequences of piece-wise constant and piece-wise affine interpolants), and its (weak) convergence (\textit{i.e.}, the weak convergence of diagonal subsequences towards
	a solution of \eqref{eq:generalized_evolution_equation}).\enlargethispage{11mm} 
	
	\begin{assumption} \label{assumption} In addition to the Assumptions \ref{assum:p_h}, \ref{ass:PiY}, and \ref{ass:proj-div}, 
		we make the following assumptions:
		\begin{description}[leftmargin=!,labelwidth=\widthof{\itshape(iii)},noitemsep,topsep=2pt,font=\normalfont\itshape]
			\item[(i)] \textup{Variable exponents:} Let $p\in \mathcal{P}^{\log}(Q_T)$ with $p^-\ge 2$ and $q\coloneqq p_*-\varepsilon\in \mathcal{P}^{\log}(Q_T)$~with~${\varepsilon\in (0,\frac{2}{d}p^-]}$, \textit{i.e.}, $q\ge p$ in $Q_T$. Moreover, let $p_n\ge p^-$ for all $n\in \mathbb{N}$ and  $q_n\coloneqq (p_n)_*-\varepsilon\in \mathcal{P}^{\infty}(Q_T)$~for~all~${n\in \mathbb{N}}$;
			\item[(ii)] \textup{Initial data:} Let $\mathbf{v}_0\in H$ and let  $\mathbf{v}_n^0\in \Vo_{h_n,0}$, $n\in\mathbb{N}$, be a sequence such that 
            \begin{align*}
                \sup_{n\in \mathbb{N}}{\|\mathbf{v}_n^0\|_{2,\Omega}}\leq \|\mathbf{v}_0\|_{2,\Omega}\quad\text{ and }\quad
                \mathbf{v}_n^0\to \mathbf{v}_0\quad\text{ in }Y\quad (n\to \infty)\,;
            \end{align*}
			\item[(iii)] \textup{Right-hand side:} Let $\boldsymbol{f}^*\in  (\bVp)^*$ and let $\boldsymbol{f}^*_n\in  (\bXpn)^*$, $n\in \mathbb{N}$, be a sequence~such~that~there~\mbox{exist} $(\boldsymbol{f}_n)_{n\in \mathbb{N}}\subseteq  L^{(p^-)'}(Q_T;\mathbb{R}^d)$ and  $(\boldsymbol{F}_n)_{n\in \mathbb{N}}\subseteq  L^{p_n'(\cdot,\cdot)}(Q_T;\mathbb{R}^{d\times d}_{\mathrm{sym}})$ with  $\boldsymbol{f}^*_n=\mathbfcal{J}_{\D}(\boldsymbol{f}_n,\boldsymbol{F}_n)$ in $(\bXpn)^*$ and\vspace*{-1.5mm} 
            \begin{align*}
                \sup_{n\in \mathbb{N}}{\big[\|\boldsymbol{f}_n\|_{(p^-)',Q_T}+\|\boldsymbol{F}_n\|_{p_n'(\cdot,\cdot),Q_T}\big]}<\infty\,,
            \end{align*}
            and for a sequence $\boldsymbol{u}_n\in\bXpn\cap L^\infty(I;Y) $, $n\in \mathbb{N}$, from \eqref{def:non-conform_Bochner_cond_M.1}--\eqref{def:non-conform_Bochner_cond_M.3}, it follows that\vspace*{-1mm}
            \begin{align*}
                \langle \boldsymbol{f}^*_n,\boldsymbol{u}_n\rangle_{\smash{\bXpn}}\to \langle \boldsymbol{f}^*,\boldsymbol{u}\rangle_{\smash{\bVp}}\quad (n\to \infty)\,;
            \end{align*}
   
			\item[(iv)] \textup{Operators:} Let $A_n(t)\colon U^{q_n,p_n}_{\D}(t)\to  (U^{q_n,p_n}_{\D}(t))^*$, $t\in  I$,  $n\in \mathbb{N}$, be operator families satisfying (\hyperlink{A.1}{A.1})--(\hyperlink{A.3}{A.3}), where the non-decreasing mapping $\mathbb{B}\colon\mathbb{R}_{\ge 0}\to \mathbb{R}_{\ge 0}$ in (\hyperlink{A.3}{A.3})  is the same for all $n\in \mathbb{N}$, with~respect~to $(q_n)_{n\in \mathbb{N}},(p_n)_{n\in \mathbb{N}}\subseteq \mathcal{P}^\infty(Q_T)$ and, in addition, the following~\mbox{semi-coercivity}~\mbox{condition}:
				\begin{description}[leftmargin=!,labelwidth=\widthof{\itshape(A.4)}, topsep=2pt,font=\normalfont\itshape]
			\item[(A.4)] \hypertarget{A.4}{} There exist constants $c_0>0$ and $c_1,c_2\ge 0$ such that for a.e.\   $t\in I$ and every $\mathbf{u}_n\in U^{q_n,p_n}_{\D}(t)$, it holds that\vspace*{-1.5mm}
			\begin{align*}
				\langle A_n(t)\mathbf{u}_n,\mathbf{u}_n\rangle_{\smash{U^{q_n,p_n}_{\D}(t)}}\geq
				c_0\,\rho_{p_n(t,\cdot),\Omega}(\mathbf{D}_x\mathbf{u}_n)-c_1\,\|\mathbf{u}_n\|_{2,\Omega}^2-c_2 \,;
			\end{align*}
				\end{description}
			
			 The sequence of  induced (\textit{cf}.\ \eqref{def:induced})   operators $\mathbfcal{A}_n\colon \hspace*{-0.15em}\bXpn\cap L^\infty(I;Y)\hspace*{-0.15em}\to \hspace*{-0.15em} (\bXpn)^*$, $n\hspace*{-0.15em}\in\hspace*{-0.15em} \mathbb{N}$,~\mbox{satisfies}~the non-conforming Bochner condition (M) with respect to $(\Vo_{h_n,0})_{n\in\mathbb{N}}$ and ${\mathbfcal{A}\colon\hspace*{-0.175em} \bVp\hspace*{-0.15em}\cap\hspace*{-0.15em}  L^\infty(I;H)\hspace*{-0.175em} \to\hspace*{-0.15em} (\bVp)^*}$.
		\end{description}
	\end{assumption}

	 \hspace*{-1mm}We \hspace*{-0.1mm}set \hspace*{-0.1mm}$H_{h_n,0}\hspace*{-0.15em}\coloneqq \hspace*{-0.15em}\Vo_{h_n,0}$, \hspace*{-0.1mm}if \hspace*{-0.1mm}equipped \hspace*{-0.1mm}with \hspace*{-0.1mm}$(\cdot,\cdot)_{\Omega}$, \hspace*{-0.1mm}and \hspace*{-0.1mm}denote \hspace*{-0.1mm}by \hspace*{-0.1mm}$R_{H_{h_n,0}}\hspace*{-0.15em}\colon \hspace*{-0.15em}H_{h_n,0}\hspace*{-0.15em}\to\hspace*{-0.15em}  (H_{h_n,0})^*$~\hspace*{-0.1mm}the~\hspace*{-0.1mm}\mbox{respective} Riesz isomorphism. Then, the triple $(\Vo_{h_n,0},H_{h_n,0},\mathrm{id}_{\smash{\Vo_{h_n,0}}})$ is an evolution triple and the \textit{canonical embedding} $e_n\coloneqq (\mathrm{id}_{\smash{\Vo_{h_n,0}}})^*\circ R_{H_{h_n,0}}\circ \mathrm{id}_{\smash{\Vo_{h_n,0}}}\colon \Vo_{h_n,0}\to  (\Vo_{h_n,0})^*$,   for every $\mathbf{u}_n,\mathbf{z}_n\in \Vo_{h_n,0}$, satisfies 
	\begin{align}
	\langle e_n\mathbf{u}_n,\mathbf{z}_n\rangle_{\smash{\Vo_{h_n,0}}}=(\mathbf{u}_n,\mathbf{z}_n)_\Omega\,.\label{eq:iden2}
	\end{align}

	Combining the time and space discretizations in Section \ref{sec:discrete_ptxNavierStokes} with the framework of Assumption \ref{assumption},
 leads to the following fully-discrete  Rothe--Galerkin approximation of \eqref{eq:generalized_evolution_equation}.\medskip
	
	\begin{algorithm}[Fully-discrete, implicit scheme]\label{scheme}
		For given $K,n\in \mathbb{N}$, the  iterates $(\mathbf{v}_n^k)_{k=1,\ldots ,K}\subseteq \Vo_{h_n,0}$, for every $k=1,\ldots, K$ and $\mathbf{z}_{h_n}\in \Vo_{h_n,0}$, are defined via\vspace*{-0.5mm}
		\begin{align}
		(\mathrm{d}_\tau \mathbf{v}_n^k,\mathbf{z}_{h_n})_\Omega+\langle \langle A_n\rangle_k   \mathbf{v}_n^k,\mathbf{z}_{h_n}\rangle_{\smash{U^{q_n,p_n}_{\plus}}}= \langle \langle\boldsymbol{f}^*_n\rangle_k,\mathbf{z}_{h_n}\rangle_{\smash{U^{q_n,p_n}_{\plus}}}\,.\label{eq:scheme}
		\end{align}
	\end{algorithm}\medskip
	
	For small step-sizes, Scheme  \ref{scheme} is well-posed, \textit{i.e.}, the existence of all iterates solving \eqref{eq:scheme}~is~ensured. 
	
	\begin{proposition}[Well-posedness]\label{prop:well-posed}
		For every $K,n\in \mathbb{N}$  with $\tau< \frac{1}{4c_1}$, there~exist~${(\mathbf{v}_n^k)_{k=1,\ldots ,K}\subseteq \Vo_{h_n,0}}$  solving \eqref{scheme}.
	\end{proposition}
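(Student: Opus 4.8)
The plan is to prove existence of the iterates $(\mathbf{v}_n^k)_{k=1,\dots,K}$ by induction on $k$, solving \eqref{eq:scheme} one step at a time. For fixed $K,n\in\mathbb{N}$, suppose $\mathbf{v}_n^{k-1}\in\Vo_{h_n,0}$ is already known (with $\mathbf{v}_n^0$ given by Assumption \ref{assumption}(ii)). Writing $\mathrm{d}_\tau\mathbf{v}_n^k=\tfrac1\tau(\mathbf{v}_n^k-\mathbf{v}_n^{k-1})$, equation \eqref{eq:scheme} becomes: find $\mathbf{v}_n^k\in\Vo_{h_n,0}$ such that for all $\mathbf{z}_{h_n}\in\Vo_{h_n,0}$,
\begin{align*}
\tfrac1\tau(\mathbf{v}_n^k,\mathbf{z}_{h_n})_\Omega+\langle\langle A_n\rangle_k\mathbf{v}_n^k,\mathbf{z}_{h_n}\rangle_{\smash{U^{q_n,p_n}_{\plus}}}=\tfrac1\tau(\mathbf{v}_n^{k-1},\mathbf{z}_{h_n})_\Omega+\langle\langle\boldsymbol{f}^*_n\rangle_k,\mathbf{z}_{h_n}\rangle_{\smash{U^{q_n,p_n}_{\plus}}}\,.
\end{align*}
Since $\Vo_{h_n,0}$ is a finite-dimensional space, this is a system of finitely many (nonlinear) algebraic equations, so I would phrase it as a coercive operator equation on $\Vo_{h_n,0}$ and invoke a Brouwer-type surjectivity result (the standard consequence of Brouwer's fixed point theorem, \emph{cf.}~\cite[Kap.\ I, Lem.\ 4.3]{GGZ} or an analogous corollary).

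Concretely, define $\Phi_n^k\colon\Vo_{h_n,0}\to(\Vo_{h_n,0})^*$ by
\begin{align*}
\langle\Phi_n^k\mathbf{u}_n,\mathbf{z}_{h_n}\rangle_{\smash{\Vo_{h_n,0}}}\coloneqq\tfrac1\tau(\mathbf{u}_n,\mathbf{z}_{h_n})_\Omega+\langle\langle A_n\rangle_k\mathbf{u}_n,\mathbf{z}_{h_n}\rangle_{\smash{U^{q_n,p_n}_{\plus}}}-\tfrac1\tau(\mathbf{v}_n^{k-1},\mathbf{z}_{h_n})_\Omega-\langle\langle\boldsymbol{f}^*_n\rangle_k,\mathbf{z}_{h_n}\rangle_{\smash{U^{q_n,p_n}_{\plus}}}\,.
\end{align*}
First I would record that $\Phi_n^k$ is continuous: the linear terms are trivial, and $\langle A_n\rangle_k\colon U^{q_n,p_n}_{\plus}\to(U^{q_n,p_n}_{\plus})^*$ is demi-continuous by Proposition \ref{rothe3}(i), which on the finite-dimensional subspace $\Vo_{h_n,0}$ upgrades to full continuity (weak and norm topologies coincide). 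Then the crux is the coercivity estimate: testing with $\mathbf{z}_{h_n}=\mathbf{u}_n$ and using the semi-coercivity condition (\hyperlink{A.4}{A.4}) together with $\langle\langle A_n\rangle_k\mathbf{u}_n,\mathbf{u}_n\rangle_{\smash{U^{q_n,p_n}_{\plus}}}=\fint_{I_k}\langle A_n(t)\mathbf{u}_n,\mathbf{u}_n\rangle_{\smash{U^{q_n,p_n}_{\D}(t)}}\,\mathrm{d}t$ (so (\hyperlink{A.4}{A.4}) passes through the temporal mean), I get
\begin{align*}
\langle\Phi_n^k\mathbf{u}_n,\mathbf{u}_n\rangle_{\smash{\Vo_{h_n,0}}}\geq\Big(\tfrac1\tau-c_1\Big)\|\mathbf{u}_n\|_{2,\Omega}^2+c_0\,\rho_{p_n(\cdot,\cdot),\Omega}^{I_k}(\mathbf{D}_x\mathbf{u}_n)-c_2-\tfrac1\tau\|\mathbf{v}_n^{k-1}\|_{2,\Omega}\|\mathbf{u}_n\|_{2,\Omega}-\|\langle\boldsymbol{f}^*_n\rangle_k\|_{\smash{(U^{q_n,p_n}_{\plus})^*}}\|\mathbf{u}_n\|_{\smash{U^{q_n,p_n}_{\plus}}}\,.
\end{align*}
Here the hypothesis $\tau<\tfrac1{4c_1}$ ensures $\tfrac1\tau-c_1\geq\tfrac1{4\tau}>0$, so the $\|\mathbf{u}_n\|_{2,\Omega}^2$ term dominates the linear $\mathbf{v}_n^{k-1}$-term after a Young's inequality, and the modular term controls $\|\mathbf{D}_x\mathbf{u}_n\|_{p_n(\cdot,\cdot),\Omega}$ (hence, by Korn's inequality on the finite-dimensional space, the full norm) for large $\|\mathbf{u}_n\|$; the $\boldsymbol{f}^*_n$-term is absorbed similarly. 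All norms on $\Vo_{h_n,0}$ being equivalent, this yields $\langle\Phi_n^k\mathbf{u}_n,\mathbf{u}_n\rangle_{\smash{\Vo_{h_n,0}}}>0$ whenever $\|\mathbf{u}_n\|$ is sufficiently large, i.e.\ coercivity in the sense needed for the Brouwer corollary, which then delivers $\mathbf{v}_n^k\in\Vo_{h_n,0}$ with $\Phi_n^k\mathbf{v}_n^k=0$, that is, a solution of \eqref{eq:scheme}. Proceeding by induction over $k=1,\dots,K$ completes the proof.

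The step I expect to be the only mildly delicate point is making the coercivity estimate clean despite the modular (rather than a norm power) appearing in (\hyperlink{A.4}{A.4}): since $p_n$ is merely bounded and not necessarily $\geq 2$ pointwise one cannot directly write $\rho_{p_n(\cdot,\cdot),\Omega}(\mathbf{D}_x\mathbf{u}_n)\gtrsim\|\mathbf{D}_x\mathbf{u}_n\|_{p_n(\cdot,\cdot),\Omega}^{p_n^-}-C$ unless one splits into the regions $\{|\mathbf{D}_x\mathbf{u}_n|\le1\}$ and $\{|\mathbf{D}_x\mathbf{u}_n|>1\}$; on a finite-dimensional space this is routine, but it is worth stating explicitly. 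Everything else — continuity, the equivalence of norms, Korn, the absorption of lower-order terms via Young's inequality, and the invocation of the Brouwer corollary — is standard, and the smallness condition $\tau<\tfrac1{4c_1}$ is exactly what is required to beat the bad sign of $-c_1\|\cdot\|_{2,\Omega}^2$ (when $c_1=0$ no restriction on $\tau$ is needed).
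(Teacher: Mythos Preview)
Your proposal is correct and takes essentially the same route as the paper: recast each time step as an operator equation on the finite-dimensional space $\Vo_{h_n,0}$, establish coercivity via (\hyperlink{A.4}{A.4}) and the step-size constraint $\tau<\tfrac{1}{4c_1}$, and invoke an existence result. The paper cites the main theorem on pseudo-monotone operators \cite[Thm.~12.A]{zei-IIB} rather than the Brouwer corollary; on a finite-dimensional space the two coincide, so your choice is equally valid and arguably more elementary.

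One simplification worth noting: the paper does not convert the modular in (\hyperlink{A.4}{A.4}) into a norm power at all. It simply discards the non-negative term $c_0\,\fint_{I_k}\rho_{p_n(t,\cdot),\Omega}(\mathbf{D}_x\mathbf{u}_n)\,\mathrm{d}t$ and, with $F_n\coloneqq\tau^{-1}e_n+(\mathrm{id}_{\Vo_{h_n,0}})^*\circ\langle A_n\rangle_k\circ\mathrm{id}_{\Vo_{h_n,0}}$, obtains directly
\[
\langle F_n\mathbf{u}_n,\mathbf{u}_n\rangle_{\Vo_{h_n,0}}\ge(\tau^{-1}-c_1)\|\mathbf{u}_n\|_{2,\Omega}^2-c_2>3c_1\|\mathbf{u}_n\|_{2,\Omega}^2-c_2\,,
\]
which is already coercivity in the $L^2$-norm, equivalent to any norm on $\Vo_{h_n,0}$. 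Your ``mildly delicate point'' about modular versus norm therefore evaporates; there is no need to split into $\{|\mathbf{D}_x\mathbf{u}_n|\le1\}$ and its complement. Leaving the data on the right-hand side (seeking $F_n\mathbf{v}_n^k=\text{RHS}$ rather than a zero of your $\Phi_n^k$) also avoids the separate absorption of the $\mathbf{v}_n^{k-1}$- and $\boldsymbol{f}^*_n$-terms.
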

	
	\begin{proof}
		Using \eqref{eq:iden2} and the adjoint  of the identity $\mathrm{id}_{\smash{\Vo_{h_n,0}}}\colon \Vo_{h_n,0}\to U^{q,p}_{\plus}$, we find that~\eqref{scheme}~is~\mbox{equivalent}~to
		\begin{align}
		(\mathrm{id}_{\smash{\Vo_{h_n,0}}})^*\langle\boldsymbol{f}^*_n\rangle_k +\tau^{-1}e_n\mathbf{v}_n^{k-1}\in\textrm{im}(F_n)\,,\label{prop:well-posed.1}
		\end{align}
		where $F_n\coloneqq \tau^{-1} e_n+(\mathrm{id}_{\smash{\Vo_{h_n,0}}})^*\circ \langle A_n\rangle_k  \circ\mathrm{id}_{\smash{\Vo_{h_n,0}}} \colon \Vo_{h_n,0}\to (\Vo_{h_n,0})^*$.
	 	Since $F_n \colon \Vo_{h_n,0}\to (\Vo_{h_n,0})^*$~is~pseudo-monotone, bounded (\textit{cf.}~Proposition~\ref{rothe3}(i)), and coercive, because, owing to \eqref{eq:iden2},  (\hyperlink{A.4}{A.4}), and $\tau<\frac{1}{4c_1}$, for~every~$\smash{\mathbf{z}_{h_n}\in \Vo_{h_n,0}}$, it holds that $	\langle F_n\mathbf{z}_{h_n},\mathbf{z}_{h_n}\rangle_{\smash{\Vo_{h_n,0}}}
	 	\ge 3c_1\|\mathbf{z}_{h_n}\|_{2,\Omega}^2-c_2$, 
		 the main theorem on pseudo-monotone operators (\textit{cf.}~\cite[Thm.\ 12.A]{zei-IIB})  proves the solvability of \eqref{prop:well-posed.1}.
	\end{proof}
	
	\newpage

	For small step-sizes, Scheme  \ref{scheme} is stable, \textit{i.e.}, the double sequences of piece-wise constant~and~piece-wise affine interpolants generated by iterates solving \eqref{eq:scheme} satisfy a priori estimates.\enlargethispage{6mm}

	\begin{proposition}[Stability]\label{apriori} There exists a constant $M>0$, not depending on $K,n\in \mathbb{N}$, such that the piece-wise constant~interpolants $\overline{\boldsymbol{v}}_n^\tau \in \mathbb{P}^0(\mathcal{I}_\tau;\Vo_{h_n,0})$, ${K,n\in  \mathbb{N}}$, and the piece-wise affine interpolants $\widehat{\boldsymbol{v}}_n^\tau\in \mathbb{P}^1_c(\mathcal{I}_\tau;\Vo_{h_n,0}) $, $K,n\in  \mathbb{N}$, (\textit{cf}.\ \eqref{eq:polant}) generated by iterates $(\mathbf{v}_n^k)_{k=1,\ldots ,K}\subseteq \Vo_{h_n,0}$, $K,n\in  \mathbb{N}$,~solving~\eqref{scheme}, for every $K,n\in \mathbb{N}$  with $\tau<\frac{1}{4c_1}$ satisfy the following a priori estimates:
		\begin{align}
		\|\overline{\boldsymbol{v}}_n^\tau\|_{\bXpn\cap L^\infty(I;Y)}&\leq M\,,\label{eq:ap.1}\\
	\|\widehat{\boldsymbol{v}}_n^\tau\|_{L^1((\tau,T);U^{q,p}_{\minus})}+\|\widehat{\boldsymbol{v}}_n^\tau\|_{L^\infty(I;Y)}&\leq M\,,\label{eq:ap.2}\\
		\|\mathbfcal{A}_n\overline{\boldsymbol{v}}_n^\tau\|_{(\bXpn)^*}&\leq M\,,\label{eq:ap.3}\\
		\|e_n[\widehat{\boldsymbol{v}}_n^\tau-\overline{\boldsymbol{v}}_n^\tau]\|_{L^{\min\{(q_n^+)',(p_n^+)'\}}(I;(\Vo_{h_n,0})^*)}&\leq  \tau\, M\,,\label{eq:ap.4}
		\end{align}
		where, for every $n\in \mathbb{N}$, we equip the space $\Vo_{h_n,0}$ with the norm $\|\cdot\|_{q^+_n,p^+_n,\Omega}$ in \eqref{eq:ap.4}.\enlargethispage{6mm}
	\end{proposition}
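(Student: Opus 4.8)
The plan is to run a standard discrete energy argument, testing Scheme \ref{scheme} with the current iterate, but carefully keeping every constant independent of both $K$ and $n$; the four estimates are then obtained in the order \eqref{eq:ap.1}, \eqref{eq:ap.2}, \eqref{eq:ap.3}, \eqref{eq:ap.4}, with \eqref{eq:ap.4} as a consequence of \eqref{eq:ap.3}.

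\textbf{Step 1: the energy identity.} Fix $K,n\in\mathbb{N}$ with $\tau<\tfrac{1}{4c_1}$, let $(\mathbf{v}_n^k)_{k=1,\dots,K}\subseteq\Vo_{h_n,0}$ solve \eqref{eq:scheme}, choose $\mathbf{z}_{h_n}=\mathbf{v}_n^k$, multiply by $\tau$, and sum over $k=1,\dots,m$ for arbitrary $m\in\{1,\dots,K\}$. For the discrete time derivative, \eqref{eq:4.2} applied with $X=Y$ gives $\sum_{k=1}^{m}\tau(\mathrm{d}_\tau\mathbf{v}_n^k,\mathbf{v}_n^k)_\Omega\geq\tfrac12\|\mathbf{v}_n^m\|_{2,\Omega}^2-\tfrac12\|\mathbf{v}_n^0\|_{2,\Omega}^2$. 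Since $\overline{\boldsymbol{v}}_n^\tau$ is piece-wise constant, $\tau\langle\langle A_n\rangle_k\mathbf{v}_n^k,\mathbf{v}_n^k\rangle_{U^{q_n,p_n}_{\plus}}=\int_{I_k}\langle A_n(t)(\overline{\boldsymbol{v}}_n^\tau(t)),\overline{\boldsymbol{v}}_n^\tau(t)\rangle_{U^{q_n,p_n}_{\D}(t)}\,\mathrm{d}t$, so the semi-coercivity \textup{(A.4)} (with $c_0,c_1,c_2$ independent of $n$) yields the lower bound $c_0\,\rho_{p_n(\cdot,\cdot),Q_{m\tau}}(\mathbf{D}_x\overline{\boldsymbol{v}}_n^\tau)-c_1\int_0^{m\tau}\|\overline{\boldsymbol{v}}_n^\tau(t)\|_{2,\Omega}^2\,\mathrm{d}t-c_2T$ for that term, where $Q_{m\tau}\coloneqq(0,m\tau)\times\Omega$. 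On the right-hand side, the representation $\boldsymbol{f}^*_n=\mathbfcal{J}_{\D}(\boldsymbol{f}_n,\boldsymbol{F}_n)$ together with the self-adjointness of the $L^2$-projection in time turns $\sum_{k=1}^m\tau\langle\langle\boldsymbol{f}^*_n\rangle_k,\mathbf{v}_n^k\rangle$ into $(\boldsymbol{f}_n,\overline{\boldsymbol{v}}_n^\tau)_{Q_{m\tau}}+(\boldsymbol{F}_n,\mathbf{D}_x\overline{\boldsymbol{v}}_n^\tau)_{Q_{m\tau}}$.

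\textbf{Step 2: absorption, discrete Gr\"onwall, and \eqref{eq:ap.1}.} Using Korn's inequality plus the Poincar\'e embedding $W^{1,p^-}_0(\Omega;\mathbb{R}^d)\hookrightarrow L^{p^-}(\Omega;\mathbb{R}^d)$ pointwise in time, the constant-exponent H\"older inequality, and the modular inequality $\rho_{p^-,\Omega}(\mathbf{D}_x\overline{\boldsymbol{v}}_n^\tau(t))\leq|\Omega|+\rho_{p_n(t,\cdot),\Omega}(\mathbf{D}_x\overline{\boldsymbol{v}}_n^\tau(t))$ (valid as $p^-\leq p_n$ a.e.), I would estimate $|(\boldsymbol{f}_n,\overline{\boldsymbol{v}}_n^\tau)_{Q_{m\tau}}|$ by $\|\boldsymbol{f}_n\|_{(p^-)',Q_T}$ times a power of $|\Omega|T+\rho_{p_n(\cdot,\cdot),Q_{m\tau}}(\mathbf{D}_x\overline{\boldsymbol{v}}_n^\tau)$ and, via \eqref{eq:gen_hoelder} and the norm--modular relation (all exponents $\geq p^->1$), bound $|(\boldsymbol{F}_n,\mathbf{D}_x\overline{\boldsymbol{v}}_n^\tau)_{Q_{m\tau}}|$ by $\|\boldsymbol{F}_n\|_{p_n'(\cdot,\cdot),Q_T}$ times a power of $1+\rho_{p_n(\cdot,\cdot),Q_{m\tau}}(\mathbf{D}_x\overline{\boldsymbol{v}}_n^\tau)$; Young's inequality absorbs both into $\tfrac{c_0}{2}\rho_{p_n(\cdot,\cdot),Q_{m\tau}}(\mathbf{D}_x\overline{\boldsymbol{v}}_n^\tau)$ at the cost of a constant depending only on $c_0,|\Omega|,T,p^-$ and $\sup_n[\|\boldsymbol{f}_n\|_{(p^-)',Q_T}+\|\boldsymbol{F}_n\|_{p_n'(\cdot,\cdot),Q_T}]<\infty$ (Assumption \ref{assumption}(iii)). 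With $\|\mathbf{v}_n^0\|_{2,\Omega}\leq\|\mathbf{v}_0\|_{2,\Omega}$ (Assumption \ref{assumption}(ii)) this leaves $\tfrac12\|\mathbf{v}_n^m\|_{2,\Omega}^2+\tfrac{c_0}{2}\rho_{p_n(\cdot,\cdot),Q_{m\tau}}(\mathbf{D}_x\overline{\boldsymbol{v}}_n^\tau)\leq C+c_1\sum_{k=1}^m\tau\|\mathbf{v}_n^k\|_{2,\Omega}^2$ with $C$ independent of $K,n$. The discrete Gr\"onwall lemma (applicable since $\tau c_1<\tfrac14<1$) gives $\max_{1\leq m\leq K}\|\mathbf{v}_n^m\|_{2,\Omega}^2\leq C\,\mathrm{e}^{2c_1T}$, i.e.\ $\|\overline{\boldsymbol{v}}_n^\tau\|_{L^\infty(I;Y)}\leq M$; reinserting this bounds $\rho_{p_n(\cdot,\cdot),Q_T}(\mathbf{D}_x\overline{\boldsymbol{v}}_n^\tau)$, hence $\|\mathbf{D}_x\overline{\boldsymbol{v}}_n^\tau\|_{p_n(\cdot,\cdot),Q_T}\leq M$ (norm--modular relation, exponents $\geq p^-$); finally the approximative parabolic Poincar\'e inequality (Proposition \ref{prop:non_conform_poincare}), valid for $n\geq n_0(\varepsilon)$, combined with $q_n=(p_n)_*-\varepsilon$ bounds $\|\overline{\boldsymbol{v}}_n^\tau\|_{q_n(\cdot,\cdot),Q_T}$, and the finitely many remaining indices $n<n_0$ are absorbed into $M$ using equivalence of norms on the fixed finite-dimensional spaces. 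This gives \eqref{eq:ap.1}.

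\textbf{Step 3: \eqref{eq:ap.2}--\eqref{eq:ap.4}.} For \eqref{eq:ap.2}, $\widehat{\boldsymbol{v}}_n^\tau(t)$ is on each $I_k$ a convex combination of $\mathbf{v}_n^{k-1},\mathbf{v}_n^k$, so $\|\widehat{\boldsymbol{v}}_n^\tau\|_{L^\infty(I;Y)}\leq\max_{0\leq k\leq K}\|\mathbf{v}_n^k\|_{2,\Omega}\leq M$ and $\|\widehat{\boldsymbol{v}}_n^\tau\|_{L^1((\tau,T);U^{q,p}_{\minus})}\leq 2\|\overline{\boldsymbol{v}}_n^\tau\|_{L^1(I;U^{q,p}_{\minus})}$, the latter controlled by \eqref{eq:ap.1}, by H\"older in time, and by the embeddings $L^{q_n(\cdot,\cdot)}(Q_T)\hookrightarrow L^{q^-}(Q_T)$ (using $q^-=(p^-)_*-\varepsilon\leq(p_n)_*-\varepsilon=q_n$ a.e., since $p^-\leq p_n$) and $L^{p_n(\cdot,\cdot)}(Q_T)\hookrightarrow L^{p^-}(Q_T)$ from \eqref{eq:hoelder_embeddding}. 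For \eqref{eq:ap.3}, I would pair $\mathbfcal{A}_n\overline{\boldsymbol{v}}_n^\tau$ with arbitrary $\boldsymbol{z}\in\bXpn$, $\|\boldsymbol{z}\|_{\bXpn}\leq1$, and use \textup{(A.3)} (whose non-decreasing $\mathbb{B}$ is the same for all $n$), $\|\overline{\boldsymbol{v}}_n^\tau\|_{L^\infty(I;Y)}\leq M$, the unit-ball property ($\rho_{q_n(\cdot,\cdot),Q_T}(\boldsymbol{z}),\rho_{p_n(\cdot,\cdot),Q_T}(\mathbf{D}_x\boldsymbol{z})\leq1$), the norm--modular relations with \eqref{eq:ap.1}, and the uniform boundedness of $p_n^+$ (hence of $q_n^+=(p_n^+)_*-\varepsilon$) coming from $p_n\to p$ in $L^\infty(Q_T)$. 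For \eqref{eq:ap.4}, on each $I_k$ one has $\widehat{\boldsymbol{v}}_n^\tau-\overline{\boldsymbol{v}}_n^\tau=(\tfrac{t}{\tau}-k)\,\tau\,\mathrm{d}_\tau\mathbf{v}_n^k$ with $|\tfrac{t}{\tau}-k|\leq1$ and $\tfrac{\mathrm{d}\widehat{\boldsymbol{v}}_n^\tau}{\mathrm{dt}}=\mathrm{d}_\tau\mathbf{v}_n^k$; rewriting \eqref{eq:scheme} shows that $e_n\tfrac{\mathrm{d}\widehat{\boldsymbol{v}}_n^\tau}{\mathrm{dt}}$ is the restriction to $\Vo_{h_n,0}$ (equipped with $\|\cdot\|_{q_n^+,p_n^+,\Omega}$) of $\Pi^{0,\mathrm{t}}_\tau[\boldsymbol{f}^*_n]-\Pi^{0,\mathrm{t}}_\tau[\mathbfcal{A}_n]\overline{\boldsymbol{v}}_n^\tau\in(\bXplus)^*$; since $(\bXplus)^*=L^{\min\{(q_n^+)',(p_n^+)'\}}(I;(U^{q_n,p_n}_{\plus})^*)$ and the restriction of functionals to $\Vo_{h_n,0}$ does not increase the dual norm, Proposition \ref{rothe3}(iii.b), Proposition \ref{rothe2}(iii), Proposition \ref{3.7}, Assumption \ref{assumption}(iii), and \eqref{eq:ap.3} give a uniform bound on $\|e_n\tfrac{\mathrm{d}\widehat{\boldsymbol{v}}_n^\tau}{\mathrm{dt}}\|_{L^{\min\{(q_n^+)',(p_n^+)'\}}(I;(\Vo_{h_n,0})^*)}$, and the factor $\tau$ propagates from $|\tfrac{t}{\tau}-k|\leq1$.

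\textbf{Main obstacle.} The delicate part is uniformity in the sequence $(p_n)_{n}$: every constant arising from Korn's inequality, the H\"older embedding \eqref{eq:hoelder_embeddding}, the norm--modular relations, and Proposition \ref{prop:non_conform_poincare} must be chosen independent of $n$, which is exactly where $p_n\geq p^-$ and $p_n\to p$ in $L^\infty(Q_T)$ are used; moreover Proposition \ref{prop:non_conform_poincare} only applies for $n\geq n_0(\varepsilon)$, so the finitely many small indices require the (trivial, finite-dimensional) separate treatment. A secondary point needing care is the exact bookkeeping of the temporal-mean and $L^2$-projection operators $\langle\cdot\rangle_k$, $\Pi^{0,\mathrm{t}}_\tau[\cdot]$, so that the discrete testing identity genuinely reproduces the continuous-in-time pairings on $Q_{m\tau}$.
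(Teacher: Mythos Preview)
Your proposal is correct and follows essentially the same route as the paper: test the scheme with the current iterate, use \eqref{eq:4.2} and \textup{(A.4)}, bound the right-hand side via the representation $\boldsymbol{f}^*_n=\mathbfcal{J}_{\D}(\boldsymbol{f}_n,\boldsymbol{F}_n)$ together with Poincar\'e--Korn and a Young-type absorption, apply the discrete Gr\"onwall lemma, then invoke Proposition~\ref{prop:non_conform_poincare} for the $q_n$-norm; \eqref{eq:ap.2}--\eqref{eq:ap.4} are deduced exactly as you describe, with \eqref{eq:ap.4} coming from the identity $e_n(\widehat{\boldsymbol{v}}_n^\tau-\overline{\boldsymbol{v}}_n^\tau)=(t-k\tau)\bigl(\Pi^{0,\mathrm{t}}_\tau[\boldsymbol{f}^*_n]-\Pi^{0,\mathrm{t}}_\tau[\mathbfcal{A}_n]\overline{\boldsymbol{v}}_n^\tau\bigr)$ and Proposition~\ref{rothe3}(iii.b). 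The only noteworthy deviation is that you explicitly treat the finitely many indices $n<n_0(\varepsilon)$ excluded by Proposition~\ref{prop:non_conform_poincare} via finite-dimensional norm equivalence on $\Vo_{h_n,0}$ (combined with the already-uniform $L^\infty(I;Y)$ bound), whereas the paper applies Proposition~\ref{prop:non_conform_poincare} without comment; your addition is harmless and, if anything, closes a small gap in the presentation.
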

	
	\begin{proof}
        \textit{ad \eqref{eq:ap.1}.}
		In \eqref{scheme}, for every 
		$k\hspace*{-0.1em}=\hspace*{-0.1em}1,\ldots ,\ell $, $\ell \hspace*{-0.1em}=\hspace*{-0.1em}1,\ldots ,K$,
		we~choose~$\mathbf{z}_{h_n}\hspace*{-0.1em}=\hspace*{-0.1em}\mathbf{v}_n^k\hspace*{-0.1em}\in\hspace*{-0.1em} \Vo_{h_n,0}$, multiply~the resulting equation by $\tau > 0$, sum with respect to $k=1,\ldots ,\ell $, use the discrete~\mbox{integration-by-parts}~inequality \eqref{eq:4.2}, and $\sup_{n\in \mathbb{N}}{\|\mathbf{v}^0_n\|_{2,\Omega}\leq\|\mathbf{v}_0\|_{2,\Omega}}$ (\textit{cf}.\ Assumption \ref{assumption}(i)), to find that for every $\ell =1,\ldots ,K$, it holds that
		\begin{align}\begin{split}
		\frac{1}{2}\,\|\mathbf{v}_n^{\ell}\|_{2,\Omega}^2+\sum_{k=1}^{\ell}{\tau\,\langle\langle A_n\rangle_k  \mathbf{v}_n^k,\mathbf{v}_n^k\rangle_{\smash{U^{q_n,p_n}_{\plus}}}}\leq \frac{1}{2}\,\|\mathbf{v}_0\|_{2,\Omega}^2+
		\sum_{k=1}^{\ell}{\tau\,\langle\langle \boldsymbol{f}^*_n\rangle_k ,\mathbf{v}_n^k\rangle_{\smash{U^{q_n,p_n}_{\plus}}}}\,.
		\end{split}
		\label{eq:apriori.1}
		\end{align}
		Using in \eqref{eq:apriori.1}, definitions \eqref{def:temp_mean}--\eqref{def:l2_proj2}, Proposition~\ref{rothe3}(iii.a), Proposition~\ref{rothe2}(i), the 
		$\kappa$-Young inequality (\textit{cf}.\ \hspace*{-0.1mm}\cite[Prop.\ \hspace*{-0.1mm}2.8]{alex-book}), \hspace*{-0.1mm}and \hspace*{-0.1mm}that, \hspace*{-0.1mm}due \hspace*{-0.1mm}to \hspace*{-0.1mm}Poincar\'e's \hspace*{-0.1mm}and \hspace*{-0.1mm}Korn's \hspace*{-0.1mm}inequality, \hspace*{-0.1mm}for \hspace*{-0.1mm}a.e.\ \hspace*{-0.1mm}$t\hspace*{-0.15em}\in\hspace*{-0.15em} I$~\hspace*{-0.1mm}and~\hspace*{-0.1mm}\mbox{every}~\hspace*{-0.1mm}${\mathbf{u}_n\hspace*{-0.15em}\in\hspace*{-0.15em} U^{q_n,p_n}_{\D}(t)}$, $n\in\mathbb{N}$, due to $p_n\geq p^-$ a.e.\  in $Q_T$, it holds that
		$	\rho_{p^-,\Omega}(\mathbf{u}_n)\leq c_p\,\rho_{p^-,\Omega}(\mathbf{D}_x\mathbf{u}_n)
		\leq c_p\,(1+\rho_{p_n(t,\cdot),\Omega}(\mathbf{D}_x\mathbf{u}_n))$, 
		for every $\ell =1,\ldots ,K$, we deduce that
		\begin{align}\label{eq:apriori.2}
            \begin{aligned}
		\sum_{k=1}^{\ell}{\tau\,\langle\langle\boldsymbol{f}_n^*\rangle_k ,\mathbf{v}_n^k\rangle_{\smash{U^{q_n,p_n}_{\plus}}}}
&=\langle\Pi^{0,\mathrm{t}}_{\tau}[\boldsymbol{f}_n^*],\overline{\boldsymbol{v}}_n^\tau\chi_{\left[0,\ell\tau\right]}\rangle_{\mathbfcal{U}^{q_n,p_n}_{\plus}}\\[-3mm]&=\langle \boldsymbol{f}^*_n,\Pi^{0,\mathrm{t}}_{\tau}[\overline{\boldsymbol{v}}_n^\tau\chi_{\left[0,\ell\tau\right]}]\rangle_{\smash{\bXpn}}\\&=\langle \boldsymbol{f}^*_n,\overline{\boldsymbol{v}}_n^\tau\chi_{\left[0,\ell\tau\right]}\rangle_{\smash{\bXpn}}
		\\&\leq c_\kappa\,\big(\rho_{(p^-)',Q_T}(\boldsymbol{f}_n\chi_{\left[0,\ell\tau\right]})+\rho_{p_n'(\cdot,\cdot),Q_T}(\boldsymbol{F}_n\chi_{\left[0,\ell\tau\right]})\big)\\&\quad+\kappa\,\big(\rho_{p^-,Q_T}(\overline{\boldsymbol{v}}^\tau_n\chi_{\left[0,\ell\tau\right]})
  +\rho_{p_n(\cdot,\cdot),Q_T}(\mathbf{D}_x\overline{\boldsymbol{v}}^\tau_n\chi_{\left[0,\ell\tau\right]})\big)
		\\&\leq c_\kappa\,\sup_{n\in \mathbb{N}}{\big[\rho_{(p^-)',Q_T}(\boldsymbol{f}_n)+\rho_{p_n'(\cdot,\cdot),Q_T}(\boldsymbol{F}_n)\big]}\\[-1mm]&\quad+\kappa\,c_p\,\big(1+\rho_{p_n(\cdot,\cdot),Q_T}(\mathbf{D}_x\overline{\boldsymbol{v}}^\tau_n\chi_{\left[0,\ell\tau\right]})\big)\,.
            \end{aligned}
		\end{align}
		In addition, using definition \eqref{def:temp_mean} and (\hyperlink{A.4}{A.4}) (\textit{cf}.\ Assumption \ref{assumption}(iv)), for every $\ell =1,\ldots ,K$,~we~find~that 
		\begin{align}\label{eq:apriori.3}
			\begin{aligned}
				\sum_{k=1}^{\ell}{\tau\,\langle\langle A_n\rangle_k  \mathbf{v}_n^k,\mathbf{v}_n^k\rangle_{\smash{U^{q_n,p_n}_{\plus}}}}&=\sum_{k=1}^{\ell}{\int_{I_k}{\langle A_n(t)  \mathbf{v}_n^k,\mathbf{v}_n^k\rangle_{\smash{U^{q_n,p_n}_{\D}(t)}}\,\mathrm{d}t}}\\&\ge
				c_0\,\rho_{p_n(\cdot,\cdot),Q_T}(\mathbf{D}_x\overline{\boldsymbol{v}}^\tau_n\chi_{\left[0,\ell\tau\right]})-\sum_{k=1}^{\ell}{\tau\, c_1\,\|\mathbf{v}_n^k\|_{2,\Omega}^2}-c_2\,\ell\,\tau\,.
			\end{aligned}
		\end{align}
		As a result, combining \eqref{eq:apriori.2} and \eqref{eq:apriori.3} in \eqref{eq:apriori.1}, for every  $\ell =1,\ldots ,K$,  we~arrive~at~the~inequality
		\begin{align}\label{eq:apriori.3.2}
			\begin{aligned} 
			\Big(\frac{1}{2}-c_1\,\tau\Big)\,\|\mathbf{v}_n^{\ell}\|_{2,\Omega}^2&+(c_0-\kappa\,c_p)\,\rho_{p_n(\cdot,\cdot),Q_T}(\mathbf{D}_x\overline{\boldsymbol{v}}^\tau_n\chi_{\left[0,\ell\tau\right]})\\&\leq \frac{1}{2}\,\|\mathbf{v}_0\|_{2,\Omega}^2+c_\kappa\,\sup_{n\in \mathbb{N}}{\big[\rho_{(p^-)',Q_T}(\boldsymbol{f}_n)+\rho_{p_n'(\cdot,\cdot),Q_T}(\boldsymbol{F}_n)\big]}
			\\&\quad +\sum_{k=1}^{\ell-1}{c_1\,\tau\, \|\mathbf{v}_n^k\|_{2,\Omega}^2}+c_2\,\ell\,\tau+\kappa\,c_p\,.
				\end{aligned}
		\end{align} 
		We set 
        \begin{align*}
           \kappa&\coloneqq  \frac{c_0}{2c_p}\,,\\
            \alpha&\coloneqq \frac{1}{2}\,\|\mathbf{v}_0\|_H^2+ c_\kappa\,\sup_{n\in\mathbb{N}}\big[\rho_{(p^-)',Q_T}(\boldsymbol{f}_n)+\rho_{p_n'(\cdot,\cdot),Q_T}(\boldsymbol{F}_n)\big]+c_2\,T+\frac{c_0}{2}\,,\\
            \beta&\coloneqq 4\,\tau\, c_1\,,
        \end{align*}
         and 
         \begin{align*}
             y^k_n\coloneqq \frac{1}{4}\,\|\mathbf{v}^k_n\|_{2,\Omega}^2+\frac{c_0}{2}\,\rho_{p_n(\cdot,\cdot),Q_T}(\mathbf{D}_x\overline{\boldsymbol{v}}^\tau_n\chi_{\left[0,k\tau\right]})\quad\text{ for all }k=1,\ldots,K\,.
         \end{align*}
        Then,  from \eqref{eq:apriori.3.2}, for every $\ell =1,\ldots,K$, we infer   that
		\begin{align}
		y^{\ell}_n\leq \alpha+\beta\,\sum_{k=1}^{\ell-1}{y^k_n}\,.\label{eq:apriori.4}
		\end{align}
		As a result, the discrete Gr\"onwall lemma (\textit{cf.}~\cite[Lem.\ 2.2]{Ba15}) applied to \eqref{eq:apriori.4} yields that
		\begin{align}
			\begin{aligned}
				\frac{1}{4}\,\|\overline{\boldsymbol{v}}_n^\tau\|_{L^\infty(I;Y)}^2+\frac{c_0}{2}\,\rho_{p_n(\cdot,\cdot),Q_T}(\mathbf{D}_x\overline{\boldsymbol{v}}^\tau_n)&\leq 2\, \max_{k=1,\ldots ,K}{y^k_n}\\&\leq \alpha\,\exp(K\,\beta)=\alpha\,\exp(4\,T\,c_1)\,.
			\end{aligned}\label{eq:apriori.5}
		\end{align}
		In addition, Proposition~\ref{prop:non_conform_poincare} yields a constant $c_\varepsilon=c_\varepsilon(p,\Omega)>0$  such that
		\begin{align}
			\|\overline{\boldsymbol{v}}_n^\tau\|_{q_n(\cdot,\cdot),Q_T}\leq c_\varepsilon\,\big(\|\mathbf{D}_x\overline{\boldsymbol{v}}_n^\tau\|_{p_n(\cdot,\cdot),Q_T}+\|\overline{\boldsymbol{v}}_n^\tau\|_{L^\infty(I;Y)}\big)\,.\label{eq:apriori.7}
		\end{align}
		Eventually, from \eqref{eq:apriori.5} and \eqref{eq:apriori.7}, we conclude that the claimed a priori estimate  \eqref{eq:ap.1} applies.
  
        \textit{ad \eqref{eq:ap.2}.} According to \cite[ineq.\ (8.29)]{Rou13}, we have that
        \begin{align*}
            \|\widehat{\boldsymbol{v}}_n^\tau\|_{L^\infty(I;Y)}&\leq \|\overline{\boldsymbol{v}}_n^\tau\|_{L^\infty(I;Y)}\,,\\
             \|\widehat{\boldsymbol{v}}_n^\tau\|_{L^1((\tau,T);U^{q,p}_{\minus})}&\leq \|\overline{\boldsymbol{v}}_n^\tau\|_{L^1(I;U^{q,p}_{\minus})}\,,
        \end{align*}
        so that  the claimed a priori estimate  \eqref{eq:ap.2} follows from the a priori estimate  \eqref{eq:ap.1}.

        \textit{ad \eqref{eq:ap.3}.} Following along the lines of the proof of \cite[Prop.\  5.13, \textit{i.e.}, (5.20) and below]{alex-book},~we~find~that
        \begin{align*}
        \|\mathbfcal{A}_n\overline{\boldsymbol{v}}_n^{\tau}\|_{(\bXpn)^*}\leq \mathbb{B}(\|\overline{\boldsymbol{v}}_n^\tau\|_{L^\infty(I;Y)})\,\big(T+\rho_{q_n(\cdot,\cdot),Q_T}(\overline{\boldsymbol{v}}_n^\tau)+\rho_{p_n(\cdot,\cdot),Q_T}(\mathbf{D}_x\overline{\boldsymbol{v}}_n^\tau)+2\big)\,,
        \end{align*}
        so that the claimed a priori estimate  \eqref{eq:ap.3} follows from the a priori estimate  \eqref{eq:ap.1}.

        \textit{ad \eqref{eq:ap.4}.} Since for every $t\in I_k$, $k=1,\ldots ,K$, we have that
        \begin{align*}
            \left.\begin{aligned}
            e_n(\widehat{\boldsymbol{v}}_n^\tau(t)-\overline{\boldsymbol{v}}_n^\tau(t))&=(t-k\tau)\, e_n\Big(\frac{\mathrm{d}\widehat{\boldsymbol{v}}_n^\tau}{\mathrm{dt}}(t)\Big)\\&=(t-k\tau)\,\big(
           \Pi^{0,\mathrm{t}}_{\tau}[\boldsymbol{f}^*_n](t)- \Pi^{0,\mathrm{t}}_{\tau}[A_n](t)\big)
             \end{aligned}\quad\right\} \quad\text{ in }(\Vo_{h_n,0})^*\,,
        \end{align*}
        as well as  $\vert t-k\tau\vert \leq \tau$, using Proposition \ref{rothe3}(iii.b), 
        we conclude that
		\begin{align*}
            \begin{aligned}
		\|e_n(\widehat{\boldsymbol{v}}_n^\tau-\overline{\boldsymbol{v}}_n^\tau)\|_{L^{\min\{(q^+_n)',(p^+_n)'\}}(I;(\Vo_{h_n,0})^*)}&\leq\tau\,\Big\|e_n\Big(\frac{\mathrm{d}\widehat{\boldsymbol{v}}_n^\tau}{\mathrm{d}t}\Big)\Big\|_{L^{\min\{(q^+_n)',(p^+_n)'\}}(I;(\Vo_{h_n,0})^*)}
		\\&=\sup_{\substack{\boldsymbol{z}_n\in L^{\max\{q^+_n,p^+_n\}}(I;\Vo_{h_n,0})\\\|\boldsymbol{z}_n\|_{\mathbfcal{U}_{\plus}^{q_n,p_n}}\leq 1}}{\tau\,
		\langle \Pi^{0,\mathrm{t}}_{\tau}[\boldsymbol{f}^*_n]-\Pi^{0,\mathrm{t}}_{\tau}[\mathbfcal{A}_n]\overline{\boldsymbol{v}}_n^\tau,\boldsymbol{z}_n\rangle_{\mathbfcal{U}_{\plus}^{q_n,p_n}}}
		\\&\leq  \tau\,\|\Pi^{0,\mathrm{t}}_{\tau}[\boldsymbol{f}^*_n]-\Pi^{0,\mathrm{t}}_{\tau}[\mathbfcal{A}_n]\overline{\boldsymbol{v}}_n^\tau\|_{(\mathbfcal{U}_{\plus}^{q_n,p_n})^*}
			\\&\leq  \tau\,2\,(1+\vert Q_T\vert)\|\boldsymbol{f}^*_n-\mathbfcal{A}_n\overline{\boldsymbol{v}}_n^\tau\|_{(\mathbfcal{U}_{\D}^{q_n,p_n})^*}\,,
            \end{aligned}
		\end{align*}
		so that the claimed a priori estimate  \eqref{eq:ap.4} follows from  the a priori estimate \eqref{eq:ap.3} and Assumption~\ref{assumption}(iii). 
	\end{proof}

    \newpage

    Scheme \ref{scheme} is (at least) weakly convergent, \textit{i.e.}, the double sequences of piece-wise constant and piece-wise affine interpolants generated by iterates solving \eqref{eq:scheme} (at least)  weakly converge~(up~to~a~\mbox{subsequence})
    to a solution of the  generalized evolution equation \eqref{eq:generalized_evolution_equation}.\enlargethispage{7mm}
	
	\begin{theorem}[Weak convergence]\label{thm:main}
		Let $\overline{\boldsymbol{v}}_n\coloneqq \overline{\boldsymbol{v}}^{\tau_n}_{m_n}\in \mathbb{P}^0(\mathcal{I}_{\tau_n},\Vo_{h_{m_n},0})$, $n\in \mathbb{N}$, where $\tau_n=\frac{T}{K_n}$ and $K_n,m_n\to\infty$  $(n\to\infty)$, be an arbitrary diagonal sequence of the piece-wise constant interpolants $\overline{\boldsymbol{v}}_n^\tau\in \mathbb{P}^0(\mathcal{I}_\tau;\Vo_{h_n,0})$, $K,n\in \mathbb{N}$, from~Proposition \ref{apriori}. Then, there exists a not re-labeled~subsequence~and a limit  $\overline{\boldsymbol{v}}\in \bVp\cap  L^\infty(I;H) $ such that for every $r,s\in \mathcal{P}^{\infty}(Q_T)$ with  $r\prec q$ in $Q_T$ and $s\prec p$~in~$Q_T$,\linebreak it holds that
		\begin{align*}
			\begin{aligned} 
		\overline{\boldsymbol{v}}_n&\rightharpoonup\overline{\boldsymbol{v}}&&\quad\text{ in }\mathbfcal{U}^{r,s}_{\D}&&\quad(n\to\infty) \,,\\
		\overline{\boldsymbol{v}}_n&\overset{\ast}{\rightharpoondown}\overline{\boldsymbol{v}}&&\quad\text{ in }L^\infty(I;Y)&&\quad(n\to\infty)\,.
			\end{aligned}
		\end{align*}
		Furthermore, it follows that $\overline{\boldsymbol{v}}\in \mathbfcal{W}^{q,p}_{\D}$, with continuous representation $\overline{\boldsymbol{v}}_c\in  C^0(\overline{I};H)$, and 
		\begin{alignat*}{2}
		\frac{\mathbf{d}_\sigma\overline{\boldsymbol{v}}}{\mathbf{dt}}+\mathbfcal{A}\overline{\boldsymbol{v}}&=\boldsymbol{f}^*&&\quad\text{ in }(\bVp)^*\,,\\
		\overline{\boldsymbol{v}}_c(0)&=\mathbf{v}_0&&\quad\text{ in }H\,. 
		\end{alignat*}
	\end{theorem}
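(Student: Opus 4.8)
The plan is to follow the classical Rothe–Galerkin strategy: pass to the limit in the fully-discrete scheme (\ref{eq:scheme}) using the a priori estimates from Proposition \ref{apriori}, the weak compactness results for variable Bochner--Lebesgue spaces (Proposition \ref{prop:Xqp_weak_compact} and Proposition \ref{prop:Xqp_prime_weak_compact}), the approximative parabolic compactness principle (Proposition \ref{prop:non_conform_landes}), and finally the non-conforming Bochner condition (M) encoded in Assumption \ref{assumption}(iv) to identify the limit of the operator term. First I would extract a (not relabeled) subsequence: from (\ref{eq:ap.1}) and Proposition \ref{prop:Xqp_weak_compact}, the piece-wise constant interpolants $\overline{\boldsymbol{v}}_n$ converge weakly in $\mathbfcal{U}^{r,s}_{\D}$ for all $r\prec q$, $s\prec p$, and weak-$*$ in $L^\infty(I;Y)$, to some limit $\overline{\boldsymbol{v}}$; by Proposition \ref{prop:approx_closedness} in fact $\overline{\boldsymbol{v}}\in\bVp\cap L^\infty(I;H)$. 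From (\ref{eq:ap.3}) and Proposition \ref{prop:Xqp_prime_weak_compact}, $\mathbfcal{A}_n\overline{\boldsymbol{v}}_n$ is bounded in $(\bXpn)^*$ and converges (after testing with $\mathbfcal{D}_T$) to some $\overline{\boldsymbol{v}}^*\in(\bVp)^*$ in the sense of (\ref{def:non-conform_Bochner_cond_M.4}); together with Assumption \ref{assumption}(iii) the right-hand side terms converge to $\langle\boldsymbol{f}^*,\cdot\rangle$.

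Next I would establish the a.e.-in-time weak convergence $\overline{\boldsymbol{v}}_n(t)\rightharpoonup\overline{\boldsymbol{v}}(t)$ in $Y$ required for Proposition \ref{prop:non_conform_landes} and for condition (M). This is where Lemma \ref{lem:H_perp} enters: the $H$-component converges a.e.\ in time because the scheme (\ref{eq:scheme}) yields control of the discrete time derivative (via (\ref{eq:ap.4})) so that, testing with fixed discretely divergence-free test functions and using a diagonal/Arzelà–Ascoli-type argument on the scalar functions $t\mapsto(\overline{\boldsymbol{v}}_n(t),\mathbf{z})_\Omega$, one gets a.e.-in-time convergence on a dense countable set of test functions in $H$; meanwhile Lemma \ref{lem:H_perp}, applied with the divergence bound coming from $\overline{\boldsymbol{v}}_n(t)\in\Vo_{h_n,0}$ (so $\mathrm{div}_x$ is small in a negative norm), forces the $H^\perp$-component to vanish weakly a.e. Combining these gives (\ref{prop:non_conform_landes.3}). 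Then Proposition \ref{prop:non_conform_landes} upgrades this to strong convergence $\overline{\boldsymbol{v}}_n\to\overline{\boldsymbol{v}}$ in $L^{\max\{2,p_*(\cdot,\cdot)\}-\varepsilon}(Q_T;\mathbb{R}^d)$.

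Now I would pass to the limit in the time-discrete weak formulation. Using the $L^2$-projection machinery (Proposition \ref{rothe2}, Proposition \ref{rothe3}) one rewrites the scheme as an identity on $\mathbfcal{U}^{q_n,p_n}_{\plus}$; testing with $\mathbf{z}\varphi$ for $\mathbf{z}$ a fixed discretely divergence-free finite element function and $\varphi\in C^\infty_c(I)$, and using the discrete integration-by-parts identity together with (\ref{eq:ap.4}), the time-derivative term $\int_I(\frac{\mathrm{d}\widehat{\boldsymbol{v}}_n^\tau}{\mathrm{dt}},\mathbf{z}\varphi)_\Omega$ converges to $-(\overline{\boldsymbol{v}},\mathbf{z}\varphi')_{Q_T}$; combined with the limits of the operator and right-hand side terms, one obtains $-(\overline{\boldsymbol{v}},\mathbf{z}\varphi')_{Q_T}=\langle\overline{\boldsymbol{v}}^*,\mathbf{z}\varphi\rangle_{\bVp}-\langle\boldsymbol{f}^*,\mathbf{z}\varphi\rangle_{\bVp}$ for all such $\mathbf{z},\varphi$. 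Density of $\mathbfcal{D}_T$ in $\bVp$ (Proposition \ref{prop:density_in_Vqp}) and Definition \ref{def:generalized_time_derivative} then yield $\overline{\boldsymbol{v}}\in\mathbfcal{W}^{q,p}_{\D}$ with $\frac{\mathbf{d}_\sigma\overline{\boldsymbol{v}}}{\mathbf{dt}}=\boldsymbol{f}^*-\overline{\boldsymbol{v}}^*$; Proposition \ref{prop:integration-by-parts} gives the continuous representative $\overline{\boldsymbol{v}}_c\in C^0(\overline{I};H)$.

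It remains to identify $\overline{\boldsymbol{v}}^*=\mathbfcal{A}\overline{\boldsymbol{v}}$ and recover the initial condition; this is the main obstacle. For the operator identification I would verify the hypotheses of the non-conforming Bochner condition (M) (Assumption \ref{assumption}(iv)): (\ref{def:non-conform_Bochner_cond_M.1})--(\ref{def:non-conform_Bochner_cond_M.4}) are already in hand, so the crux is the limsup inequality (\ref{def:non-conform_Bochner_cond_M.5}), $\limsup_{n\to\infty}\langle\mathbfcal{A}_n\overline{\boldsymbol{v}}_n,\overline{\boldsymbol{v}}_n\rangle\le\langle\overline{\boldsymbol{v}}^*,\overline{\boldsymbol{v}}\rangle_{\bVp}$. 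This I would extract from the energy identity: choosing $\mathbf{z}_{h_n}=\mathbf{v}_n^k$ in (\ref{eq:scheme}), summing, and using the discrete integration-by-parts inequality (\ref{eq:4.2}) one gets, for the full time interval,
\begin{align*}
\langle\mathbfcal{A}_n\overline{\boldsymbol{v}}_n,\overline{\boldsymbol{v}}_n\rangle_{\bXpn}\le\tfrac12\|\mathbf{v}_n^0\|_{2,\Omega}^2-\tfrac12\|\widehat{\boldsymbol{v}}_n^\tau(T)\|_{2,\Omega}^2+\langle\boldsymbol{f}^*_n,\overline{\boldsymbol{v}}_n\rangle_{\bXpn}\,;
\end{align*}
then, taking $\limsup$, using $\|\mathbf{v}_n^0\|_{2,\Omega}^2\to\|\mathbf{v}_0\|_{2,\Omega}^2$ (Assumption \ref{assumption}(ii)), weak lower semicontinuity of $\|\cdot\|_{2,\Omega}$ to bound $\liminf\|\widehat{\boldsymbol{v}}_n^\tau(T)\|_{2,\Omega}^2\ge\|\overline{\boldsymbol{v}}_c(T)\|_{2,\Omega}^2$ (which requires first checking $\widehat{\boldsymbol{v}}_n^\tau(T)\rightharpoonup\overline{\boldsymbol{v}}_c(T)$ in $Y$, again via the time-derivative bound), convergence of the right-hand side (Assumption \ref{assumption}(iii)), and the integration-by-parts formula $\langle\boldsymbol{f}^*-\overline{\boldsymbol{v}}^*,\overline{\boldsymbol{v}}\rangle_{\bVp}=\tfrac12\|\overline{\boldsymbol{v}}_c(T)\|_{2,\Omega}^2-\tfrac12\|\overline{\boldsymbol{v}}_c(0)\|_{2,\Omega}^2$ (Proposition \ref{prop:integration-by-parts}), one arrives at (\ref{def:non-conform_Bochner_cond_M.5}) — modulo the subtlety that one must simultaneously identify $\overline{\boldsymbol{v}}_c(0)=\mathbf{v}_0$, which follows by testing with $\mathbf{z}\varphi$ where $\varphi(0)\ne0$ and matching boundary terms, using $\mathbf{v}_n^0\to\mathbf{v}_0$ in $Y$. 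Once condition (M) applies, $\mathbfcal{A}\overline{\boldsymbol{v}}=\overline{\boldsymbol{v}}^*$, and the proof is complete. The delicate points are the circular-looking interplay between identifying $\overline{\boldsymbol{v}}_c(T)$, $\overline{\boldsymbol{v}}_c(0)$ and the limsup estimate, and ensuring all the interpolant-vs-limit endpoint convergences hold in $Y$; these are handled by first establishing the evolution equation (hence the continuous representative and the integration-by-parts formula) and only then closing the energy estimate.
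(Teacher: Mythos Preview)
Your overall strategy matches the paper's proof closely: extract weak limits via the a priori bounds, pass to the limit in the scheme to obtain the evolution equation and the initial condition, then verify the limsup inequality (\ref{def:non-conform_Bochner_cond_M.5}) from the discrete energy inequality and invoke the non-conforming Bochner condition~(M). Two points deserve comment.

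First, you propose to establish the a.e.-in-time weak convergence $\overline{\boldsymbol{v}}_n(t)\rightharpoonup\overline{\boldsymbol{v}}(t)$ in $Y$ \emph{before} deriving the evolution equation, via an Arzel\`a--Ascoli-type argument on the scalar functions $t\mapsto(\overline{\boldsymbol{v}}_n(t),\mathbf{z})_\Omega$. The paper proceeds in the opposite order: it first shows only that the \emph{difference} $\widehat{\boldsymbol{v}}_n(t)-\overline{\boldsymbol{v}}_n(t)\rightharpoonup\mathbf{0}$ in $Y$ a.e.\ (combining the bound (\ref{eq:ap.4}) for the $H$-component and Lemma~\ref{lem:H_perp} for the $H^\perp$-component), then derives the evolution equation $\frac{\mathbf{d}_\sigma\overline{\boldsymbol{v}}}{\mathbf{dt}}=\boldsymbol{f}^*-\overline{\boldsymbol{v}}^*$ by testing with $\Pi_{h_n}^V\mathbf{z}\,\varphi$, and only \emph{afterwards} uses the evolution equation to identify the pointwise limits: fixing $t$, extracting a weak limit $\widehat{\mathbf{w}}_t$ of $\widehat{\boldsymbol{v}}_{n_k}(t)$ in $Y$, testing the scheme with $\varphi$ supported on $[0,t]$, and comparing with the continuous equation to conclude $P_H\widehat{\mathbf{w}}_t=\overline{\boldsymbol{v}}_c(t)$. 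This route sidesteps the equicontinuity issue (note that $\overline{\boldsymbol{v}}_n$ is piecewise constant, so your scalar functions are not equicontinuous; you would have to work with $\widehat{\boldsymbol{v}}_n$ and the $n$-dependent discrete test functions $\Pi_{h_n}^V\mathbf{z}$, which is doable but needs care). Your approach can be made to work, but the paper's ordering is cleaner and also delivers the endpoint convergence $\widehat{\boldsymbol{v}}_n(T)\rightharpoonup\overline{\boldsymbol{v}}_c(T)$ needed for the limsup step as a byproduct of the same argument at $t=T$.

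Second, you invoke Proposition~\ref{prop:non_conform_landes} (strong convergence in $L^{\max\{2,p_*(\cdot,\cdot)\}-\varepsilon}$), but the paper does not use it in the proof of Theorem~\ref{thm:main}; that compactness principle is needed only later, inside the verification that the convective term is non-conforming Bochner strongly continuous (Proposition~\ref{prop:convective_term}). For the abstract theorem only the a.e.-in-time weak convergence in $Y$ is required, which is exactly condition (\ref{def:non-conform_Bochner_cond_M.3}) in the definition of the non-conforming Bochner condition~(M).
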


	\begin{proof}
		For ease of presentation, without loss of generality, we assume  that $m_n=n$ for all $n\in \mathbb{N}$.
		
		\textit{1.\ (Weak) convergences.} From \eqref{eq:ap.1}--\eqref{eq:ap.4}, using Proposition~\ref{prop:Xqp_prime_weak_compact}, Proposition \ref{prop:approx_closedness},~and~the~Banach--Alaoglu theorem and that $L^\infty(I;Y)$ has a separable pre-dual space,  we obtain not re-labeled
		subsequences  
		as well as (weak) limits ${\overline{\boldsymbol{v}}\in \bVp\cap  L^\infty(I;H) }$, $\widehat{\boldsymbol{v}}\in L^\infty(I;Y)$, and
		$\overline{\boldsymbol{v}}^*\in(\bVp)^*$ such that for every $r_0,r_{\infty},s_0,s_{\infty}\in \mathcal{P}^\infty(Q_T)$ with $r_0\prec q\prec r_{\infty}$ in $Q_T$ and  $s_0\prec p\prec s_{\infty}$ in $Q_T$, it holds that
		\begin{align}\label{eq:main.1}
		\begin{aligned}
		\overline{\boldsymbol{v}}_n&\rightharpoonup\overline{\boldsymbol{v}}&&\quad\text{ in }\mathbfcal{U}^{r_0,s_0}_{\D}&&\quad (n\to \infty)\,,\\
		\overline{\boldsymbol{v}}_n&\overset{\ast}{\rightharpoondown}\overline{\boldsymbol{v}}&&\quad\text{
			in }L^\infty(I;Y)&&\quad (n\to \infty)\,,\\
		\widehat{\boldsymbol{v}}_n&\overset{\ast}{\rightharpoondown}\widehat{\boldsymbol{v}}&&\quad\textup{
			in }L^\infty(I;Y)&&\quad (n\to \infty)\,,\\[0.5mm]
		(\mathrm{id}_{\smash{\mathbfcal{U}^{r_{\infty},s_{\infty}}_{\D}}})^*\mathbfcal{A}_n\overline{\boldsymbol{v}}_n&\rightharpoonup(\mathrm{id}_{\smash{\mathbfcal{U}^{r_{\infty},s_{\infty}}_{\D}}})^*\overline{\boldsymbol{v}}^*&&\quad\textup{
			in }(\mathbfcal{U}^{r_{\infty},s_{\infty}}_{\D})^*&&\quad (n\to \infty)\,.
		\end{aligned}
		\end{align}
		Equipping $\Vo_{h_n,0}$ with the norm $\|\cdot\|_{q^+_n,p^+_n,\Omega}$, \eqref{eq:ap.4} yields a (Lebesgue) measurable set $\widehat{I}\subseteq I$~with~$\vert I\setminus\smash{\widehat{I}}\vert = 0$ and a not re-labeled
		subsequence such that 
		\begin{align}
		\|e_n(\widehat{\boldsymbol{v}}_n(t)-\overline{\boldsymbol{v}}_n(t))\|_{\smash{(\Vo_{h_n,0})^*}}\to 0\quad (n\to \infty)\quad\text{ for all }t\in\smash{\widehat{I}}\,.\label{eq:main.2}
		\end{align}
        Next, let $\mathbf{z}\in \mathcal{V}$ be fixed, but arbitrary. Then, $\mathbf{z}_{h_n}\coloneqq \Pi_{h_n}^V\mathbf{z}\in \Vo_{h_n,0}$, $n\in\mathbb{N}$, satisfies $\|\mathbf{z}_{h_n}-\mathbf{z}\|_{q^+_n,p^+_n,\Omega}\to 0 $ $(n\to\infty)$ (\textit{cf}.\ Lemma \ref{rmk:proj}(ii)). Hence, using \eqref{eq:iden2},  the variable Hölder inequality~\eqref{eq:gen_hoelder}, \eqref{eq:ap.1}, \eqref{eq:ap.2}, and \eqref{eq:main.2},  for every $t\in \smash{\widehat{I}}$, we infer that
		\begin{align}\label{eq:main.3} 
        \begin{aligned}
		\vert(P_H(\widehat{\boldsymbol{v}}_n(t)-\overline{\boldsymbol{v}}_n(t)),\mathbf{z})_{\Omega}\vert 
		&\leq \vert\langle e_n(\widehat{\boldsymbol{v}}_n(t)-\overline{\boldsymbol{v}}_n(t)),\mathbf{z}_{h_n}\rangle_{\smash{\Vo_{h_n,0}}}\vert + \vert(\widehat{\boldsymbol{v}}_n(t)-\overline{\boldsymbol{v}}_n(t),\mathbf{z}-\mathbf{z}_{h_n})_{\Omega}\vert
		\\&\leq 	
		\|e_n[\widehat{\boldsymbol{v}}_n(t)-\overline{\boldsymbol{v}}_n(t)]\|_{\smash{(\Vo_{h_n,0})^*}}\|\mathbf{z}_{h_n}\|_{q^+_n,p^+_n,\Omega}\\&\quad+ 2\,M\,\|\mathbf{z}-\mathbf{z}_{h_n}\|_{2,\Omega}
		\to 0\quad (n\to \infty)\,.
  \end{aligned}
		\end{align}
		 Since $\mathbf{z}\in \mathcal{V}$ is arbitrary in \eqref{eq:main.3} and  $\mathcal{V}$ is dense in $H$, due to \eqref{eq:main.1}$_{2,3}$, from \eqref{eq:main.3},~we~conclude~that
		\begin{align}
		P_H(\widehat{\boldsymbol{v}}_n(t)-\overline{\boldsymbol{v}}_n(t))\rightharpoonup \mathbf{0}\quad\text{ in }H\quad (n\to \infty)\quad\text{ for all }t\in\smash{\widehat{I}}\,,\label{eq:main.4}
		\end{align}
		where $P_H\colon Y\to H$ denotes the orthogonal projection from $Y$ onto $H$.
        Due to  \eqref{eq:ap.1}, using  Lemma \ref{lem:H_perp}, we obtain a (Lebesgue) measurable set $I_{\perp}\subseteq I$ with $\vert I\setminus I_{\perp}\vert=0$ and a not re-labeled subsequence~such~that
        \begin{align}\label{eq:main.7} 
            \begin{aligned}
                 P_H(\widehat{\boldsymbol{v}}_n(t)),
                 P_{H^{\perp}}(\overline{\boldsymbol{v}}_n(t))\rightharpoonup \mathbf{0}\quad\text{ in }H^{\perp}\quad (n\to \infty)\quad\text{ for all }t\in I_\perp
                 \,.
            \end{aligned}
        \end{align}
        Eventually, combining \eqref{eq:main.7} and \eqref{eq:main.4},  we arrive at
        \begin{align}\label{eq:main.8} 
		\widehat{\boldsymbol{v}}_n(t)-\overline{\boldsymbol{v}}_n(t)\rightharpoonup \mathbf{0}\quad\text{ in }Y\quad (n\to \infty)\quad\text{ for all }t\in\smash{\widehat{I}}\cap I_\perp\,.
		\end{align}
		Since the sequences $(\overline{\boldsymbol{v}}_n)_{n\in\mathbb{N}},(\widehat{\boldsymbol{v}}_n)_{n\in\mathbb{N}}$ are bounded in $ L^\infty(I;Y) $  (\textit{cf}.\ \eqref{eq:ap.1} \& \eqref{eq:ap.2}), from \eqref{eq:main.8}, it follows that $\overline{\boldsymbol{v}}_n-\widehat{\boldsymbol{v}}_n\rightharpoonup \boldsymbol{0}$ in $L^{\infty}(I;Y)$ $(n\to \infty)$. 
       From $\eqref{eq:main.1}_{2,3}$, we deduce that $\overline{\boldsymbol{v}}_n-\widehat{\boldsymbol{v}}_n\rightharpoonup \overline{\boldsymbol{v}}-\widehat{\boldsymbol{v}}$ in $L^{\infty}(I;Y)$ $(n\to \infty)$. Therefore, it holds that $\widehat{\boldsymbol{v}}=\overline{\boldsymbol{v}}$ in $ L^\infty(I;Y) $, where we used that $\overline{\boldsymbol{v}}\in \bVp\cap L^\infty(I;H) $. 
		
		\textit{2.\ Regularity and trace of the (weak) limit.}
		Let $\mathbf{z}\hspace*{-0.15em}\in\hspace*{-0.15em} \mathcal{V}$ be fixed, but arbitrary~and~let~${\mathbf{z}_{h_n}\hspace*{-0.15em}\coloneqq \hspace*{-0.15em}\Pi_{h_n}^V\mathbf{z}\hspace*{-0.15em}\in\hspace*{-0.15em} \Vo_{h_n,0}}$ for all $n\hspace*{-0.1em}\in\hspace*{-0.1em}\mathbb{N}$. If we test
		\eqref{scheme} for every $n\in \mathbb{N}$ with
		$\mathbf{z}_{h_n}\in  \Vo_{h_n,0}$, multiply by $\varphi\in C^\infty(\overline{I})$ with $\varphi(T)=0$, integrate with respect to $t\in I$, and  integrate-by-parts in time,  for every $n\in \mathbb{N}$, we find that\vspace*{-0.5mm}
		\begin{align}
			\begin{aligned}
		\langle \Pi_{\tau_n}^{0,\mathrm{t}}[\mathbfcal{A}_n]\overline{\boldsymbol{v}}_n-\Pi_{\tau_n}^{0,\mathrm{t}}[\boldsymbol{f}^*_n],\mathbf{z}_{h_n}\varphi\rangle_{\smash{\mathbfcal{U}^{q_n,p_n}_{\plus}}}&=(\widehat{\boldsymbol{v}}_n,\varphi^\prime\mathbf{z}_{h_n})_{Q_T}
		+(\mathbf{v}_n^0,\mathbf{z}_{h_n})_{\Omega}\,\varphi(0)\,.
	\end{aligned}\label{eq:main.9} 
		\end{align}
		Using Proposition \ref{rothe3}(iii.a), $\eqref{eq:main.1}_4$, $\mathbf{z}_{h_n}\Pi_{\tau_n}^{0,\mathrm{t}}[\varphi]\to \mathbf{z}\varphi$ in $\smash{L^s(I;W^{1,s}_0(\Omega;\mathbb{R}^d))}$ $(n\to \infty)$~for~all~${s\in (1,+\infty)}$  (\textit{cf}.\ Lemma \ref{rmk:proj}(ii) and Proposition~\ref{rothe2}(ii)), and Assumption \ref{assumption}(iii), we obtain\vspace*{-0.5mm}
		\begin{align}\label{eq:main.10}
			\begin{aligned}
		\langle \Pi_{\tau_n}^{0,\mathrm{t}}[\mathbfcal{A}_n]\overline{\boldsymbol{v}}_n- \Pi_{\tau_n}^{0,\mathrm{t}}[\boldsymbol{f}^*_n],\mathbf{z}_{h_n}\varphi\rangle_{\smash{\mathbfcal{U}^{q_n,p_n}_{\plus}}}&=\langle \mathbfcal{A}_n\overline{\boldsymbol{v}}_n-\boldsymbol{f}^*_n,\mathbf{z}_{h_n}\Pi_{\tau_n}^{0,\mathrm{t}}[\varphi]\rangle_{\smash{\bXpn}}\\&\to \langle\overline{\boldsymbol{v}}^*-\boldsymbol{f}^*,\mathbf{z}\varphi\rangle_{\smash{\bVp}}\quad (n\to \infty)\,.
			\end{aligned}
		\end{align}
		By passing in \eqref{eq:main.9} for $n\to \infty$, using \eqref{eq:main.1}$_3$, $\mathbf{z}_{h_n}\to \mathbf{z}$ in $\smash{W^{1,s}_0(\Omega;\mathbb{R}^d)}$ $(n\to \infty)$ for all $s\in (1,+\infty)$ (\textit{cf}.\ Lemma \ref{rmk:proj}(ii)),
		Assumption \ref{assumption}(i), and \eqref{eq:main.10},  for every $\mathbf{z}\in\mathcal{V}$ and
		$\varphi\in C^\infty(\overline{I})$~with~${\varphi(T)=0}$,~we~obtain\vspace*{-0.5mm}
		\begin{align}
		\begin{split}
		\langle \overline{\boldsymbol{v}}^*-\boldsymbol{f}^*,\mathbf{z}\varphi\rangle_{\smash{\bVp}}
  =(\overline{\boldsymbol{v}},\varphi^\prime\mathbf{z})_{Q_T}
		+(\mathbf{v}_0,\mathbf{z})_{\Omega}\,\varphi(0)\,.
		\end{split}\label{eq:main.12} 
		\end{align}
		If we consider
		$\varphi\in C_c^\infty(I)$ in \eqref{eq:main.12}, then, using Proposition \ref{prop:equivalent_characterization} and Proposition \ref{prop:integration-by-parts}(i),~we~\mbox{conclude}~that $\overline{\boldsymbol{v}}\in \mathbfcal{W}^{q,p}_{\D}$, with  a continuous representation $\overline{\boldsymbol{v}}_c \in C^0(\overline{I};H)$, and\vspace*{-0.5mm}
		\begin{align}
		\frac{\mathbf{d}_\sigma\overline{\boldsymbol{v}}}{\mathbf{dt}}=\boldsymbol{f}^*-\overline{\boldsymbol{v}}^*\quad\text{ in }
		(\bVp)^*\,.\label{eq:main.13}
		\end{align}
		Therefore, using Proposition \ref{prop:integration-by-parts}(ii) and \eqref{eq:main.13} in
		\eqref{eq:main.12} for $\varphi\in C^\infty(\overline{I})$ with
		$\varphi(T)=0$ and $\varphi(0)=1$, for every $\mathbf{z}\in	\mathcal{V}$, 
		we find that\vspace*{-1mm}
		\begin{align}
		(\overline{\boldsymbol{v}}_c(0)-\mathbf{v}_0,\mathbf{z})_{\Omega}=0\,. \label{eq:main.14}
		\end{align}
		Since $\mathcal{V}$ is dense in $H$ and $\overline{\boldsymbol{v}}_c(0)\in H$,  from \eqref{eq:main.14}, we deduce that $\overline{\boldsymbol{v}}_c(0)=\mathbf{v}_0$ in $H$.
		
		\textit{3.\ Point-wise weak convergence.}
		Next, we show that
		$\widehat{\boldsymbol{v}}_n(t)\rightharpoonup 
	\overline{\boldsymbol{v}}(t)$ in $Y$ $(n\to\infty)$ for a.e.\  $t \in I$,~which, due to \eqref{eq:main.8},  yields that
		 $\overline{\boldsymbol{v}}_n(t)
		\rightharpoonup  \overline{\boldsymbol{v}}(t)$ in $Y$ $(n\to\infty)$ for a.e.\   $t \in I$. To this end,  fix~an~arbitrary~$t\in I_\perp$. From the a priori estimate
		$\|\widehat{\boldsymbol{v}}_n(t)\|_{2,\Omega}\leq M$ for all
		$t\in I_\perp$ and $n\in\mathbb{N}$ (\textit{cf.}~\eqref{eq:ap.2}), we
		obtain  a subsequence
		$(\widehat{\boldsymbol{v}}_{n_k^t}(t))_{k\in\mathbb{N}}\subseteq
		Y$ (initially
		depending on this fixed $t$) as well as a (weak) limit
		$\widehat{\mathbf{w}}_t\in Y$  such that\vspace*{-0.5mm}
		\begin{align}
		\smash{	\widehat{\boldsymbol{v}}_{n_k^t}(t)} \rightharpoonup
			\widehat{\mathbf{w}}_t\quad\text{ in } Y\quad(
		 k\to \infty)\,.\label{eq:main.15}
		\end{align}
		Let $\mathbf{z}\in \mathcal{V}$ be fixed, but arbitrary and let $\mathbf{z}_{h_n}\coloneqq\Pi_{h_n}^V\mathbf{z}\in \Vo_{h_n,0}$ for all $n\in \mathbb{N}$. Testing \eqref{eq:scheme} for every $k\in \mathbb{N}$ with
		$\mathbf{z}_{h_{\smash{n_k^t}}}\in\Vo_{h_{\smash{n_k^t}},0}$, multiplying~by~${\varphi\hspace*{-0.1em}\in\hspace*{-0.1em} C^\infty(\overline{I})}$ with $\varphi(0)=0$ and
		$\varphi(t)=1$, integrating over $\left[0,t\right]$, and integration-by-parts in time, for every $k\in \mathbb{N}$,~we~find~that
		\begin{align}
			\begin{aligned}
		\langle \Pi_{\tau_{\smash{n_k^t}}}^{0,\mathrm{t}}[\mathbfcal{A}_{\smash{n_k^t}}]\overline{\boldsymbol{v}}_{\smash{n_k^t}}-\Pi_{\tau_{\smash{n_k^t}}}^{0,\mathrm{t}}[\boldsymbol{f}^*_{\smash{n_k^t}}],\mathbf{z}_{h_{\smash{n_k^t}}}\varphi\chi_{\left[0,t\right]}\rangle_{\smash{\mathbfcal{U}^{q_{\smash{n_k^t}},p_{\smash{n_k^t}}}_{\plus}}}
		=(\widehat{\boldsymbol{v}}_{\smash{n_k^t}},\varphi^\prime\mathbf{z}_{h_{\smash{n_k^t}}}\chi_{\left[0,t\right]})_{Q_T}
		-(\widehat{\boldsymbol{v}}_{\smash{n_k^t}}(t),\mathbf{z}_{h_{\smash{n_k^t}}})_{\Omega}\,.
	\end{aligned}\label{eq:main.16}
		\end{align}
		By passing in \eqref{eq:main.16} for  $k\to\infty$, using the same argumentation as for \eqref{eq:main.10}, but in this case~\eqref{eq:main.15} instead of Assumption \ref{assumption}(i),
		 for every $\mathbf{z}\in \mathcal{V}$, we obtain\vspace*{-0.5mm}
		\begin{align}
		\begin{split}\langle\overline{\boldsymbol{v}}^*-\boldsymbol{f}^*,\mathbf{z}\varphi\chi_{\left[0,t\right]}\rangle_{\smash{\bVp}}
		=(\overline{\boldsymbol{v}},\varphi^\prime\mathbf{z}\chi_{\left[0,t\right]})_{Q_T}
		-(\widehat{\mathbf{w}}_t,\mathbf{z})_{\Omega}\,.
		\end{split} \label{eq:main.17}
		\end{align}
		Therefore, using Proposition \ref{prop:integration-by-parts}(ii) and \eqref{eq:main.13} in 
		\eqref{eq:main.17}, for every $\mathbf{z}\in \mathcal{V}$, we find that\vspace*{-0.5mm}
		\begin{align}
		(\overline{\boldsymbol{v}}_c(t)-
		P_H\widehat{\mathbf{w}}_t,\mathbf{z})_{\Omega}=(\overline{\boldsymbol{v}}_c(t)-
		\widehat{\mathbf{w}}_t,\mathbf{z})_{\Omega}=0\,.\label{eq:main.18} 
		\end{align}
		Thanks to the density of $\mathcal{V}$ in $H$, \eqref{eq:main.18} implies that
		$\overline{\boldsymbol{v}}_c(t)=P_H\widehat{\mathbf{w}}_t$
		in $H$, \textit{i.e.}, looking back to \eqref{eq:main.15}, we just proved that\vspace*{-1mm}\enlargethispage{9 mm}
		\begin{align}
		\smash{P_H(\widehat{\boldsymbol{v}}_{\smash{n_k^t}}(t))}\rightharpoonup\overline{\boldsymbol{v}}_c(t)\quad\text{
			in }H\quad(k\to \infty)\,.\label{eq:main.19} 
		\end{align}
        Combining \eqref{eq:main.7} and \eqref{eq:main.19}, we arrive at\vspace*{-0.5mm}
        \begin{align}\label{eq:main.22} 
            \smash{\widehat{\boldsymbol{v}}_{\smash{n_k^t}}(t)}\rightharpoonup\overline{\boldsymbol{v}}_c(t)\quad\text{
			in }Y\quad( k\to \infty)\,.
        \end{align}
		As this argumentation is true for each subsequence of
		$(\widehat{\boldsymbol{v}}_n(t) )_{n\in\mathbb{N}}\subseteq Y$,
		$\overline{\boldsymbol{v}}_c(t)\in H$ is weak accumulation point of
		each subsequence of
		$(\widehat{\boldsymbol{v}}_n(t))_{n\in\mathbb{N}}\subseteq
		Y$. The standard convergence principle (\textit{cf.}~\cite[Kap.\ I, Lem.\ 5.4]{GGZ}) yields that $n_k^t=k$ for all $k\in \mathbb{N}$
		 in \eqref{eq:main.22}. Thus, using \eqref{eq:main.22} and that $\overline{\boldsymbol{v}}_c(t)=\overline{\boldsymbol{v}}(t)$~in~$H$~for~a.e.~$t\in I$, we obtain 
		$\widehat{\boldsymbol{v}}_n(t)\rightharpoonup \overline{\boldsymbol{v}}(t)$ in $Y$ $ (n\to \infty)$ for a.e.\  $t\in I$, 
        which, due to \eqref{eq:main.8}, yields that\vspace*{-0.5mm}
        \begin{align}\label{eq:main.23.1}
		\overline{\boldsymbol{v}}_n(t)\rightharpoonup \overline{\boldsymbol{v}}(t)\quad\text{
			in }Y\quad (n\to \infty)\quad\text{ for a.e.\  }t\in I\,.
		\end{align}
		
		\textit{4.\ \hspace*{-0.15mm}Identification \hspace*{-0.15mm}of \hspace*{-0.15mm}$\mathbfcal{A}\overline{\boldsymbol{v}}$ \hspace*{-0.15mm}and \hspace*{-0.1mm}$\overline{\boldsymbol{v}}^*\hspace*{-0.15em}$.}
		\hspace*{-0.15mm}Inequality \hspace*{-0.15mm}\eqref{eq:apriori.1} \hspace*{-0.15mm}for  \hspace*{-0.15mm}$\tau\hspace*{-0.15em}=\hspace*{-0.15em}\tau_n$ \hspace*{-0.15mm}and \hspace*{-0.15mm}$\ell \hspace*{-0.15em}=\hspace*{-0.15em}K_n$, \hspace*{-0.15mm}using~\hspace*{-0.15mm}the~\hspace*{-0.15mm}\mbox{definitions}~\hspace*{-0.15mm}\mbox{\eqref{def:temp_mean}--\eqref{def:l2_proj2}} in conjunction with $\langle\Pi_{\tau_n}^{0,\mathrm{t}}[\mathbfcal{A}_n]	\overline{\boldsymbol{v}}_n,\overline{\boldsymbol{v}}_n\rangle_{\smash{\mathbfcal{U}^{q_n,p_n}_{\plus}}}\hspace*{-0.15em}=\hspace*{-0.15em}\langle	\mathbfcal{A}_n\overline{\boldsymbol{v}}_n,\overline{\boldsymbol{v}}_n\rangle_{\smash{\bXpn}}$ and  ${\langle\Pi_{\tau_n}^{0,\mathrm{t}}[\boldsymbol{f}^*_n],\overline{\boldsymbol{v}}_n\rangle_{\smash{\mathbfcal{U}^{q_n,p_n}_{\plus}}}\hspace*{-0.15em}=\hspace*{-0.15em}\langle\boldsymbol{f}^*_n,\overline{\boldsymbol{v}}_n\rangle_{\smash{\bXpn}}}$ for all $n\in \mathbb{N}$ (\textit{cf}.\ Proposition \ref{rothe3}(iii.a) and Proposition \ref{rothe2}(i)),
		that $\overline{\boldsymbol{v}}_c(0)=\mathbf{v}_0$ in $H$,~and~that $\|P_H(\widehat{\boldsymbol{v}}_n(T))\|_{2,\Omega}\leq\|\widehat{\boldsymbol{v}}_n(T)\|_{2,\Omega}=\|\overline{\boldsymbol{v}}_n(T)\|_{2,\Omega}$ for all $n\in \mathbb{N}$,   for every ${n\in \mathbb{N}}$, yields that
		\begin{align}
		\begin{split}
		\langle
		\mathbfcal{A}_n\overline{\boldsymbol{v}}_n,\overline{\boldsymbol{v}}_n\rangle_{\smash{\bXpn}}&\leq 
		-\tfrac{1}{2}\|P_H(\widehat{\boldsymbol{v}}_n(T))\|_{2,\Omega}^2+\tfrac{1}{2}\|\overline{\boldsymbol{v}}_c(0)\|_{2,\Omega}^2+\langle \boldsymbol{f}^*_n,\overline{\boldsymbol{v}}_n\rangle_{\smash{\bXpn}}.
		\end{split}\label{eq:main.24}
		\end{align}
		The limit superior with respect to $n\to \infty$  in \eqref{eq:main.24}, \eqref{eq:main.22} with
		$n_k^t=k$ for all $k\in \mathbb{N}$ in the case $t=T$, the weak lower
		semi-continuity of $\|\cdot\|_{2,\Omega}$, and Proposition~\ref{prop:integration-by-parts}(ii) yield that
		\begin{align}
		\begin{split}
		\limsup_{n\to \infty}{ \langle
			\mathbfcal{A}_n\overline{\boldsymbol{v}}_n,\overline{\boldsymbol{v}}_n\rangle_{\smash{\bXpn}}}
		&\leq
		-\tfrac{1}{2}\|\overline{\boldsymbol{v}}_c(T)\|_{2,\Omega}^2+\tfrac{1}{2}\|\overline{\boldsymbol{v}}_c(0)\|_{2,\Omega}^2+\langle\boldsymbol{f}^*,\overline{\boldsymbol{v}}\rangle_{\smash{\bVp}}\\[-1mm]&=
		\bigg\langle\boldsymbol{f}^*-\frac{\mathbf{d}_\sigma\overline{\boldsymbol{v}}}{\mathbf{dt}},\overline{\boldsymbol{v}}\bigg\rangle_{\smash{\bVp}}\\&=\langle\overline{\boldsymbol{v}}^*,\overline{\boldsymbol{v}}\rangle_{\smash{\bVp}}\,.
		\end{split}\label{eq:main.25}
		\end{align}
		As a consequence of \eqref{eq:main.25}, \eqref{eq:main.1},
		\eqref{eq:main.23.1}, and the non-conforming Bochner condition (M) of~the~sequence
		$\mathbfcal{A}_n\colon \hspace*{-0.15em}\bXpn\cap L^\infty(I;Y)\hspace*{-0.15em}\to\hspace*{-0.15em}
		(\bXpn)^*$, $n\hspace*{-0.15em}\in\hspace*{-0.15em}  \mathbb{N}$, with respect to $(\Vo_{h_n,0})_{n\in \mathbb{N}}$ and $\mathbfcal{A}\colon \hspace*{-0.15em}\bVp\cap  L^\infty(I;H)\hspace*{-0.15em}\to\hspace*{-0.15em}
		(\bVp)^*$, we conclude that $\mathbfcal{A}\overline{\boldsymbol{v}}=\overline{\boldsymbol{v}}^*$ in $(\bVp)^*$. This completes the proof of Theorem~\ref{thm:main}.
	\end{proof}

\section{Practical realization and applications}\label{sec:application}\vspace*{-1mm}
\qquad In this section, we apply the general framework of the previous sections to prove the well-posedness, stability, and (weak) convergence of a fully-discrete Rothe--Galerkin approximation~of~the~unsteady~$p(\cdot,\cdot)$-Stokes equations (\textit{i.e.}, \eqref{eq:p-NS-scheme} without convective term) and of the unsteady $p(\cdot,\cdot)$-Navier--Stokes~equations \eqref{eq:p-NS-scheme}. In doing so,
the general  framework of the previous sections will reduce to fully-discrete Rothe--Galerkin approximations that are prototypical and  straightforward to  implement.\vspace*{-1mm}\enlargethispage{12mm}

\subsection{Approximation of the power-law index}\vspace*{-1mm}

\hspace*{5mm}As approximation of the power-law~index~$\smash{p\in \mathcal{P}^{\log}(Q_T)}$ with $p^-\ge 2$, for sequences $(\tau_n)_{n\in\mathbb{N}},(h_n)_{n\in\mathbb{N}}\subseteq \left(0,\infty\right)$ with $\tau_n,h_n\to 0$ $(n\to \infty)$, 
where $\tau_n\coloneqq  \frac{T}{K_n}$ for all $n\in \mathbb{N}$ for a sequence $(K_n)_{n\in\mathbb{N}}\subseteq \mathbb{N}$~such~that ${K_n\to \infty}$ $(n\to \infty)$, 
we employ a sequence of Riemann sums,~\textit{i.e.}, for every $n\in \mathbb{N}$,~we~define 
\begin{align}
	p_n\coloneqq \sum_{k=1}^{K_n}{\sum_{T\in \mathcal{T}_{h_n}}{p(t_k^*,x_T^*)\chi_{I_k\times T}}}\in \mathbb{P}^0(\mathcal{I}_{\tau_n} ;\mathbb{P}^0(\mathcal{T}_{h_n}))\,,\label{def:p_n}
\end{align}
where $\smash{(t_k^*,x_T^*)^\top}\in I_k \times T$ for each tuple $\smash{(k,T)^\top}  \in \{1,\ldots,K_n\} \times \mathcal{T}_{h_n}$  is an arbitrary quadrature~point,~\textit{e.g.}, we can set\vspace*{-1mm}
\begin{align}
	(t_k^*,x_T^*)^\top\coloneqq (t_k,x_T)^\top\in I_k \times T\,,\label{eq:explicit_quadpoints}
\end{align}
where $x_T\in T$ denotes the barycenter of $T$ for all $T\in \mathcal{T}_{h_n}$. Moreover, for some $\varepsilon\in (0,\frac{2}{d}p^-]$,~we~define $q \coloneqq p_*-\varepsilon\in \mathcal{P}^{\log}(Q_T)$ and 
$q_n\coloneqq (p_n)_*-\varepsilon\in \mathcal{P}^\infty(Q_T)$, $n\in \mathbb{N}$. Due to the uniform continuity~of~${p\in \mathcal{P}^{\log}(Q_T)}$, we~have~that\vspace*{-1mm}
\begin{align}\label{eq:conv_pn}
	\begin{aligned}
	q_n&\to q&&\quad \text{ in }L^\infty(Q_T)&&\quad (n\to \infty)\,,\\
	p_n&\to p&&\quad \text{ in }L^\infty(Q_T)&&\quad (n\to \infty)\,,
\end{aligned}
\end{align}
so that both Assumption \ref{assum:p_h} and Assumption \ref{assumption}(i) are satisfied.

\subsection{Approximation of the extra-stress tensor}\vspace*{-1mm}

\hspace*{5mm}
Let  the extra-stress tensor $\mathbf{S}\colon Q_T\times\mathbb{R}^{d\times d}_{\textup{sym}}\to \mathbb{R}^{d\times d}_{\textup{sym}}$~be~\mbox{defined}~by~\eqref{eq:example-stress}~for~constants
%
   $\mu_0>0$~and~$\delta\ge 0$. As approximation of the extra-stress tensor, we employ  the sequence of mappings $\mathbf{S}_n\colon Q_T\times\mathbb{R}^{d\times d}_{\textup{sym}}\to \mathbb{R}^{d\times d}_{\textup{sym}}$, $n\in \mathbb{N}$, for a.e.\  $(t,x)^\top\in Q_T$ and every~${\mathbf{A}\in \mathbb{R}^{d\times d}_{\textup{sym}}}$~defined~by\vspace*{-0.5mm}
\begin{align}\label{eq:explicit_Sn}
		\mathbf{S}_n(t,x,\mathbf{A})\coloneqq \mu_0\,(\delta +\vert \mathbf{A}\vert)^{p_n(t,x)-2}\mathbf{A}\,.
\end{align}
From the uniform convergence \eqref{eq:conv_pn}$_1$, it follows that the mappings $\mathbf{S}_n,\mathbf{S}\colon Q_T\hspace*{-0.1em}\times\hspace*{-0.1em}\mathbb{R}^{d\times d}_{\textup{sym}}\hspace*{-0.1em}\to\hspace*{-0.1em} \mathbb{R}^{d\times d}_{\textup{sym}}$, $n\in \mathbb{N}$,   satisfy   (\hyperlink{SN.1}{SN.1})--(\hyperlink{SN.4}{SN.4}) with respect to $(p_n)_{n\in\mathbb{N}}\subseteq L^\infty(Q_T)$ and ${p\in \mathcal{P}^{\log}(Q_T)}$. 

\subsection{Approximation of the initial velocity vector field and right-hand side data}

\hspace*{5mm}\textit{Initial velocity vector field.} As approximation of the initial velocity vector~field~${\mathbf{v}_0\in H}$, 
we employ 
\begin{align*} 
		\smash{\mathbf{v}_n^0\coloneqq \Pi_{h_n}^{0,V}	\mathbf{v}\in \Vo_{h_n,0}\,, \quad n\in \mathbb{N}\,,}
\end{align*}
where \hspace*{-0.1mm}$\Pi_{h_n}^{0,V}\hspace*{-0.15em}	\colon \hspace*{-0.15em}Y\hspace*{-0.15em}\to\hspace*{-0.15em} \Vo_{h_n,0}$ \hspace*{-0.1mm}denotes \hspace*{-0.1mm}the \hspace*{-0.1mm}(global) \hspace*{-0.1mm}$L^2$-projection from \hspace*{-0.1mm}$Y\hspace*{-0.2em}$ \hspace*{-0.1mm}onto \hspace*{-0.1mm}$\Vo_{h_n,0}$, \hspace*{-0.1mm}which~\hspace*{-0.1mm}for~\hspace*{-0.1mm}every~\hspace*{-0.1mm}${\mathbf{z}\hspace*{-0.15em}\in\hspace*{-0.15em} Y\hspace*{-0.2em}}$~\hspace*{-0.1mm}\mbox{satisfies}
\begin{align*}
	\sup_{n\in\mathbb{N}}{\big\|\Pi_{h_n}^{0,V}\mathbf{z}\big\|_{2,\Omega}}\leq \|\mathbf{z}\|_{2,\Omega}\quad\text{ and }\quad
	\Pi_{h_n}^{0,V}\mathbf{z}\to \mathbf{z}\quad\text{ in }Y\quad(n\to \infty)\,,
\end{align*}
so that Assumption \ref{assumption}(ii) is satisfied.\enlargethispage{7mm}

\textit{Right-hand side data.} As approximation of the right-hand side data $\boldsymbol{f}^*\hspace*{-0.1em}\coloneqq \hspace*{-0.1em}\mathbfcal{J}_{\mathbf{D}_x}(\boldsymbol{f},\boldsymbol{F})\hspace*{-0.1em}\in\hspace*{-0.1em} (\bVp)^*$,~where $\boldsymbol{f}\in L^{(p^-)'}(Q_T;\mathbb{R}^d)$ and $\boldsymbol{F}\in L^{(p^-)'}(Q_T;\mathbb{R}^{d\times d}_{\textup{sym}})$, we employ 
$\boldsymbol{f}_n^*\coloneqq \mathbfcal{J}_{\mathbf{D}_x}(\boldsymbol{f}_n,\boldsymbol{F}_n)\in  (\bXpn)^*$, $n\in \mathbb{N}$,~with
 \begin{alignat*}{2}
 		\boldsymbol{f}_n&\coloneqq   \Pi_{h_n}^{0,\mathrm{x}}\boldsymbol{f}\in L^{(p^-)'}(I;(\mathbb{P}^0(\mathcal{T}_{h_n}))^d)\,,&&\quad n\in \mathbb{N}\,,\\
 	\boldsymbol{F}_n&\coloneqq  \Pi_{h_n}^{0,\mathrm{x}}\boldsymbol{F}\in  L^{(p^-)'}(I;(\mathbb{P}^0(\mathcal{T}_{h_n}))^{d\times d}_{\textup{sym}})\,,&&\quad n\in \mathbb{N}\,,
 \end{alignat*}
 where 
  $ \Pi_{h_n}^{0,\mathrm{x}}	\colon L^1(\Omega;\mathbb{R}^{\ell})\to (\mathbb{P}^0(\mathcal{T}_{h_n}))^{\ell}$, $\ell\in \mathbb{N}$, denotes the (local) $L^2$-projection from $L^1(\Omega;\mathbb{R}^{\ell})$~onto $(\mathbb{P}^0(\mathcal{T}_{h_n}))^{\ell}$ for all ${n\in \mathbb{N}}$,
   which for every $\mathbf{z}\in  L^{\smash{(p^-)'}}(\Omega;\mathbb{R}^{\ell})$ satisfies
 \begin{align*}
 	\sup_{n\in\mathbb{N}}{\big\|\Pi_{h_n}^{0,\mathrm{x}}\mathbf{z}\big\|_{(p^-)',\Omega}}\leq \|\mathbf{z}\|_{(p^-)',\Omega}\quad\text{ and }\quad
 	\Pi_{h_n}^{0,\mathrm{x}}\mathbf{z}\to \mathbf{z}\quad\text{ in }L^{(p^-)'}(\Omega;\mathbb{R}^{\ell})\quad(n\to \infty)\,,
 \end{align*}
 so that Assumption \ref{assumption}(iii) is satisfied. 

\subsection{Unsteady $p(\cdot,\cdot)$-Stokes equations}

\hspace*{5mm}To distribute the difficulties, we first apply the general framework of the previous sections to establish the well-posedness, stability, and (weak) convergence of the following fully-discrete Rothe-Galerkin approximation of the unsteady $p(\cdot,\cdot)$-Stokes equations, \textit{i.e.}, we first focus on the treatment~of~the~extra-stress tensor and separate the difficulties coming from the convective term.\medskip 

\begin{algorithm}[Fully discrete, implicit scheme]\label{alg:p-S-scheme}
	For given $n\in \mathbb{N}$, the  iterates $(\mathbf{v}_n^k)_{k=1,\ldots ,K_n}\subseteq \Vo_{h_n,0}$, for every $k=1,\ldots ,K_n$ and $\mathbf{z}_{h_n}\in \Vo_{h_n,0}$, are defined via
	\begin{align}
		(\mathrm{d}_{\tau_n} \mathbf{v}_n^k,\mathbf{z}_n)_\Omega+( \mathbf{S}_n(t_k^*,\cdot,\mathbf{D}_x\mathbf{v}_n^k),\mathbf{D}_x\mathbf{z}_{h_n})_{\Omega}= (\langle \boldsymbol{f}_n\rangle_k,\mathbf{z}_{h_n})_{\Omega}+(\langle \boldsymbol{F}_n\rangle_k,\mathbf{D}_x\mathbf{z}_{h_n})_{\Omega}
		\,.\label{eq:p-S-scheme}
	\end{align}
\end{algorithm}\medskip

\begin{remark}\label{rem:S_no_t}
	Since, by construction \eqref{def:p_n}, it holds that $p_n(t_k^*,x)\hspace*{-0.1em}=\hspace*{-0.1em}p_n(t,x)$ for a.e.\ $t\hspace*{-0.1em}\in\hspace*{-0.1em} I_k$,~${k\hspace*{-0.1em}=\hspace*{-0.1em}1,\ldots,K_n}$, and $x\in\Omega$, we have that
	$\mathbf{S}_n(t_k^*,x,\mathbf{A})\hspace*{-0.1em}=\hspace*{-0.1em}\mathbf{S}_n(t,x,\mathbf{A})$ for a.e.\ $t\hspace*{-0.1em}\in\hspace*{-0.1em} I_k$, $k\hspace*{-0.1em}=\hspace*{-0.1em}1,\ldots,K_n$, $x\hspace*{-0.1em}\in\hspace*{-0.1em}\Omega$, and all $\mathbf{A}\hspace*{-0.1em}\in\hspace*{-0.1em} \mathbb{R}^{d\times d}_{\textup{sym}}$.
\end{remark}
 
Using the preceding sections, we arrive at the first main result of the paper, \textit{i.e.}, the well-posedness, stability, and (weak) convergence of Scheme \ref{alg:p-S-scheme}.

\begin{theorem}[Well-posedness, stability, weak convergence]\label{thm:S}
 The following statements \mbox{apply}:
	\begin{description}[noitemsep,topsep=2pt,leftmargin=!,labelwidth=\widthof{\itshape(iii)},font=\normalfont\itshape]
		\item[(i)] \textup{Well-posedness.}  For every $n\in \mathbb{N}$,  there exist iterates $(\mathbf{v}_n^k)_{k=0,\ldots ,K_n}\subseteq \Vo_{h_n,0}$~solving~\eqref{eq:p-S-scheme};
		\item[(ii)] \textup{Stability.}  The piece-wise constant interpolants $\overline{\boldsymbol{v}}_n\coloneqq\overline{\boldsymbol{v}}_n^{\tau_n} \in \mathbb{P}^0(\mathcal{I}_{\tau_n};\Vo_{h_n,0})$, $n\in \mathbb{N}$, satisfy
		\begin{align*}
			\sup_{n\in \mathbb{N}}{\big[\|\overline{\boldsymbol{v}}_n\|_{\bXpn\cap L^\infty(I;Y)}+\|\mathbfcal{S}_n\overline{\boldsymbol{v}}_n\|_{(\bXpn)^*}\big]}<\infty\,;
		\end{align*}
		\item[(iii)] \textup{Weak convergence.} 
		There exists  a  limit $\boldsymbol{v}\in  \bVp \cap L^\infty(I;H)$ such that  
		for~every~${r,s\in \mathcal{P}^{\infty}(Q_T)}$ with  $r\prec q$ in $Q_T$ and $s\prec p$ in $Q_T$,  it holds that
		\begin{align*}
			\begin{aligned}
				\overline{\boldsymbol{v}}_n&\rightharpoonup\overline{\boldsymbol{v}}&&\quad\textup{ in }\mathbfcal{U}^{r,s}_{\D}&&\quad (n\to \infty)\,,\\
				\overline{\boldsymbol{v}}_n&\overset{\ast}{\rightharpoondown}\overline{\boldsymbol{v}}&&\quad\textup{ in }L^\infty(I;Y)&&\quad(n\to\infty)\,.
			\end{aligned}
		\end{align*}
		Furthermore, it follows that $\overline{\boldsymbol{v}}\in \mathbfcal{W}_{\D}^{q,p}$, with continuous representation $\overline{\boldsymbol{v}}_c\in C^0(\overline{I};H)$, satisfies  $\overline{\boldsymbol{v}}_c(0)=\mathbf{v}_0$ in $H$ and for every $\boldsymbol{\varphi}\in \mathbfcal{D}_T$, it holds that
		\begin{align*}
			-(\overline{\boldsymbol{v}},\partial_t\boldsymbol{\varphi})_{Q_T}+(\mathbf{S} (\cdot,\cdot,\mathbf{D}_x\overline{\boldsymbol{v}}),\mathbf{D}_x\boldsymbol{\varphi})_{Q_T} =(\boldsymbol{f},\boldsymbol{\varphi})_{Q_T}+(\boldsymbol{F},\mathbf{D}_x\boldsymbol{\varphi})_{Q_T}\,.
		\end{align*}
	\end{description}
\end{theorem}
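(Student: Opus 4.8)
The plan is to realize Scheme~\ref{alg:p-S-scheme} as a concrete instance of the abstract Scheme~\ref{scheme} and then invoke, in order, Proposition~\ref{prop:well-posed}, Proposition~\ref{apriori}, and Theorem~\ref{thm:main}. First I would set up the identification: for a.e.\ $t\in I$ define $A_n(t)\colon U^{q_n,p_n}_{\D}(t)\to (U^{q_n,p_n}_{\D}(t))^*$ by
\begin{align*}
	\langle A_n(t)\mathbf{u},\mathbf{z}\rangle_{\smash{U^{q_n,p_n}_{\D}(t)}}\coloneqq (\mathbf{S}_n(t,\cdot,\mathbf{D}_x\mathbf{u}),\mathbf{D}_x\mathbf{z})_{\Omega}\,,
\end{align*}
and recall from Remark~\ref{rem:S_no_t} that $\mathbf{S}_n(t_k^*,\cdot,\cdot)=\mathbf{S}_n(t,\cdot,\cdot)$ for a.e.\ $t\in I_k$, so that the temporal mean $\langle A_n\rangle_k$ appearing in \eqref{eq:scheme} reproduces exactly the term $(\mathbf{S}_n(t_k^*,\cdot,\mathbf{D}_x\mathbf{v}_n^k),\mathbf{D}_x\mathbf{z}_{h_n})_{\Omega}$ of \eqref{eq:p-S-scheme}; likewise $\langle\boldsymbol{f}_n^*\rangle_k$ produces $(\langle\boldsymbol{f}_n\rangle_k,\cdot)_\Omega+(\langle\boldsymbol{F}_n\rangle_k,\mathbf{D}_x\cdot)_\Omega$ by linearity of the $L^2$-projection and the definition of $\mathbfcal{J}_{\D}$. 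Hence Scheme~\ref{alg:p-S-scheme} is literally Scheme~\ref{scheme} for these data.

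Next I would check that Assumption~\ref{assumption} holds. Parts (i)--(iii) are already verified in the three preceding subsections (the convergences \eqref{eq:conv_pn}, the stability/convergence of $\Pi_{h_n}^{0,V}$ and $\Pi_{h_n}^{0,\mathrm{x}}$). For part~(iv): from \eqref{eq:explicit_Sn} the mappings $\mathbf{S}_n$ have a weak $(p_n(\cdot,\cdot),\delta)$-structure with constants uniform in $n$ (since $p_n\to p$ in $L^\infty(Q_T)$ controls the exponents and $\mu_0,\delta$ are fixed), so (SN.1)--(SN.4) hold; the Carath\'eodory, measurability, and growth conditions (A.1)--(A.3) for $A_n(t)$ follow from (SN.1)--(SN.3) with a single non-decreasing $\mathbb{B}$, and the semi-coercivity (A.4) follows from (SN.3) with $c_1=0$. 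Crucially, Proposition~\ref{prop:stress} tells us that the induced operators $\mathbfcal{S}_n=\mathbfcal{A}_n$ satisfy the non-conforming Bochner condition~(M) with respect to $(\Vo_{h_n,0})_{n\in\mathbb{N}}$ and $\mathbfcal{S}$. Thus all hypotheses of the abstract theory are met, and statement~(i) is Proposition~\ref{prop:well-posed} (for $n$ large enough that $\tau_n<\tfrac{1}{4c_1}$; here $c_1=0$, so in fact all $n$), statement~(ii) is the a priori estimates \eqref{eq:ap.1} and \eqref{eq:ap.3} of Proposition~\ref{apriori}, and statement~(iii) is Theorem~\ref{thm:main}, whose conclusion $\frac{\mathbf{d}_\sigma\overline{\boldsymbol{v}}}{\mathbf{dt}}+\mathbfcal{S}\overline{\boldsymbol{v}}=\boldsymbol{f}^*$ in $(\bVp)^*$ with $\overline{\boldsymbol{v}}_c(0)=\mathbf{v}_0$, tested against $\boldsymbol{\varphi}\in\mathbfcal{D}_T$ and written out via Definition~\ref{def:generalized_time_derivative} and the definition of $\mathbfcal{J}_{\D}(\boldsymbol{f},\boldsymbol{F})$, is precisely the weak formulation displayed in (iii).

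The only genuinely non-routine point is the verification of condition~(M) for the discrete stress operators, but this is exactly the content of Proposition~\ref{prop:stress}, which we are entitled to use; so the proof of Theorem~\ref{thm:S} itself reduces to bookkeeping. The one place demanding a little care is the matching in Remark~\ref{rem:S_no_t}: one must observe that $p_n$ is piece-wise constant in time on the grid $\mathcal{I}_{\tau_n}$ by construction \eqref{def:p_n}, so that $A_n(t)=A_n(t_k^*)$ for a.e.\ $t\in I_k$ and therefore the $k$-th temporal mean $\langle A_n\rangle_k$ coincides with the single-point evaluation used in \eqref{eq:p-S-scheme}; without this, \eqref{eq:p-S-scheme} would not be an instance of \eqref{eq:scheme}. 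I would also remark that $q_n=(p_n)_*-\varepsilon\ge p_n$ requires $\varepsilon\le\frac{2}{d}p^-$, which is assumed, so $U^{q_n,p_n}_{\D}(t)$ is well-defined and the embedding $\bXpn\hookrightarrow L^{q_n(\cdot,\cdot)}$ used throughout is available. I expect the write-up to be short: state the identification, cite Assumption~\ref{assumption}(i)--(iii) as already established, verify (iv) via Proposition~\ref{prop:stress}, and conclude by quoting Propositions~\ref{prop:well-posed}, \ref{apriori} and Theorem~\ref{thm:main}.
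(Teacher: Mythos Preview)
Your proposal is correct and follows essentially the same approach as the paper: define the time-slice operators $A_n(t)$ via $\mathbf{S}_n$, verify (A.1)--(A.4) from the structural conditions on $\mathbf{S}_n$ (the paper spells out the computations for (A.3) and (A.4) a bit more explicitly, using that $p^-\le p_n\le p^+$ by construction to obtain constants uniform in $n$), invoke Proposition~\ref{prop:stress} for the non-conforming Bochner condition~(M), and then conclude via Propositions~\ref{prop:well-posed}, \ref{apriori} and Theorem~\ref{thm:main} together with the identification $\langle A_n\rangle_k=A_n(t_k^*)$ from Remark~\ref{rem:S_no_t}. Your observation that $c_1=0$ here (so the step-size restriction is vacuous) matches the paper's coercivity estimate.
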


\begin{proof}
	For every $n\in \mathbb{N}$ and a.e.\ $t\in I$, let $S_n(t)\colon U^{q_n,p_n}_{\D}(t)\to (U^{q_n,p_n}_{\D}(t))^*$ for every $\mathbf{u}_n,\mathbf{z}_n\in U^{q_n,p_n}_{\D}(t)$ be defined by 
	\begin{align}\label{def:S_n_t}
		\langle S_n(t)\mathbf{u}_n,\mathbf{z}_n\rangle_{\smash{U^{q_n,p_n}_{\D}(t)}}\coloneqq (\mathbf{S}_n(t,\cdot,\mathbf{D}_x\mathbf{u}_n),\mathbf{D}_x\mathbf{z}_n)_{\Omega}\,.
	\end{align}
	We  prove that the operator families $S_n(t)\colon  U^{q_n,p_n}_{\D}(t)\to  (U^{q_n,p_n}_{\D}(t))^*$, $t\in  I$, $n\in \mathbb{N}$,~satisfy~\mbox{(\hyperlink{A.1}{A.1})--(\hyperlink{A.4}{A.4})}:
	
	\textit{ad  (\hyperlink{A.1}{A.1}).} Following the proof of \cite[Prop.\ 5.10(C.1)\&(C.4)]{alex-book}, for a.e.\ $t\hspace*{-0.1em}\in\hspace*{-0.1em} I$~and~every~${n\hspace*{-0.1em}\in \hspace*{-0.1em}\mathbb{N}}$,~we~find~that $S_n(t)\colon U^{q_n,p_n}_{\D}(t)\to (U^{q_n,p_n}_{\D}(t))^*$ is well-defined, continuous, and, thus, demi-continuous, so that the operator families $S_n(t)\colon U^{q_n,p_n}_{\D}(t)\to (U^{q_n,p_n}_{\D}(t))^*$, $t\in I$, $n\in \mathbb{N}$, satisfy (\hyperlink{A.1}{A.1}).
	

		\textit{ad  (\hyperlink{A.2}{A.2}).} For every $\boldsymbol{u}_n,\boldsymbol{z}_n\in \mathbfcal{U}_{\D}^{q_n,p_n}$, thanks to the variable Hölder inequality \eqref{eq:gen_hoelder}, we have that 
		$\mathbf{S}_n(\cdot,\cdot,\mathbf{D}_x\boldsymbol{u}_n):\mathbf{D}_x\boldsymbol{z}_n\in L^1(Q_T)$. By the Fubini-Tonelli theorem, we find that
		\begin{align*}
			(t\mapsto \langle S_n(t)(\boldsymbol{u}_n(t)), \boldsymbol{z}_n(t)\rangle_{\smash{U^{q_n,p_n}_{\D}(t)}})= (t\mapsto \mathbf{S}_n(t,\cdot,\mathbf{D}_x\boldsymbol{u}_n(t)),\mathbf{D}_x\boldsymbol{z}_n(t))_{\Omega})\in L^1(I)\,,
		\end{align*}
		  so that the operator families $S_n(t)\colon U^{q_n,p_n}_{\D}(t)\to  (U^{q_n,p_n}_{\D}(t))^*$, $t\in I$, $n\in \mathbb{N}$, satisfy (\hyperlink{A.2}{A.2}).
	
	\textit{ad  (\hyperlink{A.3}{A.3}).} Following the proof of \cite[Prop.\ 5.10(C.6)\&(C.3)]{alex-book}, for a.e.\ $t\hspace*{-0.1em}\in\hspace*{-0.1em} I$ and~every~${\mathbf{u}_n,\mathbf{z}_n\hspace*{-0.1em}\in\hspace*{-0.1em} U^{q_n,p_n}_{\D}(t)}$, $n\in \mathbb{N}$, we find that
		\begin{align}\label{thm:S.3}
			\begin{aligned}
			\vert \langle S_n(t)\mathbf{u}_n,\mathbf{z}_n\rangle_{\smash{U^{q_n,p_n}_{\D}(t)}}\vert &\leq 2^{(p_n^-)'}\mu_0^{(p_n^-)'}2^{p_n^+}\big(\rho_{p_n(t,\cdot),\Omega}(\delta)+\rho_{p_n(t,\cdot),\Omega}(\mathbf{D}_x\mathbf{u}_n)\big)\\&\quad+\smash{((p^+_n)')^{1-p_n^+}(p_n^-)^{-1}}
			\,\rho_{p_n(t,\cdot),\Omega}(\mathbf{D}_x\mathbf{z}_n)\,.
			\end{aligned}
		\end{align}
		Since, by construction \eqref{def:p_n}, we have that $p^-\hspace*{-0.1em}\leq\hspace*{-0.1em} p_n\hspace*{-0.1em}\leq\hspace*{-0.1em} p^+$ a.e.\ in $Q_T$ for all $n\hspace*{-0.1em}\in \hspace*{-0.1em}\mathbb{N}$, from~\eqref{thm:S.3},~\textit{e.g.},~\mbox{using}~that $\rho_{p_n(t,\cdot),\Omega}(\delta)\hspace*{-0.1em}\leq\hspace*{-0.1em} \vert \Omega\vert(\max\{1,\delta\})^{p^+}\hspace*{-0.25em}$ for a.e.\ $t\hspace*{-0.12em}\in\hspace*{-0.12em} I$ and all $n\hspace*{-0.12em}\in\hspace*{-0.12em} \mathbb{N}$, for a.e.\ $t\hspace*{-0.12em}\in\hspace*{-0.12em} I$ and every ${\mathbf{u}_n,\mathbf{z}_n\hspace*{-0.12em}\in\hspace*{-0.12em} U^{q_n,p_n}_{\D}(t)}$,~${n\hspace*{-0.12em}\in\hspace*{-0.12em} \mathbb{N}}$, it follows that
		\begin{align}\label{thm:S.4} \begin{aligned}\vert \langle S_n(t)\mathbf{u}_n,\mathbf{z}_n\rangle_{\smash{U^{q_n,p_n}_{\D}(t)}}\vert &\leq 2^{(p^-)'}(\max\{1,\mu_0\})^{(p^-)'}2^{p^+}\big(\vert \Omega\vert(\max\{1,\delta\})^{p^+}+\rho_{p_n(t,\cdot),\Omega}(\mathbf{D}_x\mathbf{u}_n)\big)\\&\quad+\smash{((p^+)')^{1-p^+}(p^-)^{-1}}
				\,\rho_{p_n(t,\cdot),\Omega}(\mathbf{D}_x\mathbf{z}_n)\,,
			\end{aligned}
		\end{align} 
		so that the operator families $S_n(t)\colon U^{q_n,p_n}_{\D}(t)\to  (U^{q_n,p_n}_{\D}(t))^*$, $t\in I$, $n\in \mathbb{N}$, satisfy (\hyperlink{A.3}{A.3}).
		
		\textit{ad  (\hyperlink{A.4}{A.4}).} Following the proof of \cite[Prop.\ 5.10(C.5)]{alex-book}, for a.e.\ $t\hspace*{-0.1em}\in\hspace*{-0.1em} I$ and every $\mathbf{u}_n\hspace*{-0.1em}\in\hspace*{-0.1em} U^{q_n,p_n}_{\D}(t)$,~$n\hspace*{-0.1em}\in\hspace*{-0.1em} \mathbb{N}$, using, again, that $\rho_{p_n(t,\cdot),\Omega}(\delta)\leq \vert \Omega\vert(\max\{1,\delta\})^{p^+}$ for a.e.\ $t\in I$ and all $n\in \mathbb{N}$, we find that
		\begin{align}\label{thm:S.5} \begin{aligned}
				\langle S_n(t)\mathbf{u}_n,\mathbf{u}_n\rangle_{\smash{U^{q_n,p_n}_{\D}(t)}} &\geq \tfrac{\mu_0}{2}\rho_{p_n(t,\cdot),\Omega}(\mathbf{D}_x\mathbf{u}_n)-\mu_0\,\rho_{p_n(t,\cdot),\Omega}(\delta)
				\\&\ge \tfrac{\mu_0}{2}\rho_{p_n(t,\cdot),\Omega}(\mathbf{D}_x\mathbf{u}_n)- \vert \Omega\vert(\max\{1,\delta\})^{p^+}\,,
			\end{aligned}
		\end{align}
		so that the operator families $S_n(t)\colon U^{q_n,p_n}_{\D}(t)\to  (U^{q_n,p_n}_{\D}(t))^*$, $t\in I$, $n\in \mathbb{N}$, satisfy (\hyperlink{A.4}{A.4}).
		
		In summary,  the operator families $S_n(t)\colon  U^{q_n,p_n}_{\D}(t)\to (U^{q_n,p_n}_{\D}(t))^*$,  $t\in  I$, $n\in  \mathbb{N}$,~\mbox{satisfy}~\mbox{(\hyperlink{A.1}{A.1})--(\hyperlink{A.4}{A.4})}.
		By Proposition \ref{prop:stress},  the induced sequence of
operators $\mathbfcal{S}_n\colon\hspace*{-0.15em}\mathbfcal{U}_{\D}^{q_n,p_n}\hspace*{-0.15em}\cap\hspace*{-0.15em} L^\infty(I;Y)\hspace*{-0.15em}\to\hspace*{-0.15em} (\mathbfcal{U}_{\D}^{q_n,p_n})^*$,~${n\hspace*{-0.15em}\in\hspace*{-0.15em} \mathbb{N}}$,~satisfies the non-conforming Bochner condition (M) with respect to $(\Vo_{h_n,0})_{n\in \mathbb{N}}$ and 
${\mathbfcal{S}\colon\hspace*{-0.15em} \mathbfcal{V}_{\D}^{q,p}\hspace*{-0.15em}\cap \hspace*{-0.15em}L^\infty(I;H)\hspace*{-0.15em}\to\hspace*{-0.15em} (\mathbfcal{V}_{\D}^{q,p})^*}$. As a result, the claims (i)--(iii) follow from Proposition \ref{prop:well-posed}, Proposition~\ref{apriori}, and Theorem \ref{thm:main} since Assumption \ref{assumption} is satisfied and  $\langle S_n\rangle_k=S_n(t_k^*)$ for all $k=1,\ldots,K_n$, $n\in \mathbb{N}$,~(\textit{cf}.~Remark~\ref{rem:S_no_t}).
\end{proof}

\subsection{Unsteady $p(\cdot,\cdot)$-Navier--Stokes equations}

\hspace*{5mm}Next, we apply the general framework of the previous  sections to establish the well-posedness, stability and (weak) convergence of the following fully-discrete Rothe--Galerkin approximation of the unsteady $p(\cdot,\cdot)$-Navier--Stokes equations \eqref{eq:ptxNavierStokes}, \textit{i.e.}, we include the difficulties coming~from~the~convective~term.\medskip

\begin{algorithm}[Fully discrete, implicit scheme]\label{alg:p-NS-scheme}
	For given $n\in \mathbb{N}$, the iterates $(\mathbf{v}_n^k)_{k=1,\ldots ,K_n}\subseteq \Vo_{h_n,0}$, for every $k=1,\ldots ,K_n$ and $\mathbf{z}_{h_n}\in \Vo_{h_n,0}$, are defined via
	\begin{align}
		\begin{aligned}
		(\mathrm{d}_{\tau_n} \mathbf{v}_n^k,\mathbf{z}_n)_\Omega+( \mathbf{S}_n(t_k^*,\cdot,\mathbf{D}_x\mathbf{v}_n^k),\mathbf{D}_x\mathbf{z}_{h_n})_{\Omega}
		&+\tfrac{1}{2}(\mathbf{z}_{h_n}\otimes \mathbf{v}_n^k,\nabla_{\!x}\mathbf{v}_n^k)_{\Omega}-\tfrac{1}{2}(\mathbf{v}_n^k\otimes \mathbf{v}_n^k,\nabla_{\!x}\mathbf{z}_{h_n})_{\Omega}\\
		&= (\langle \boldsymbol{f}_n\rangle_k,\mathbf{z}_{h_n})_{\Omega}+(\langle \boldsymbol{F}_n\rangle_k,\mathbf{D}_x\mathbf{z}_{h_n})_{\Omega}
		\,.
	\end{aligned}\label{eq:p-NS-scheme}
	\end{align}
\end{algorithm}\newpage

Using the preceding sections, we arrive at the second main result of the paper, \textit{i.e.}, the well-posedness, stability, and (weak) convergence of Scheme \ref{alg:p-NS-scheme}.\enlargethispage{5mm}

\begin{theorem}[Well-posedness, stability, weak convergence]\label{prop:p-NS-scheme}
	If $p^->p_c\coloneqq\frac{3d+2}{d+2}$, then for every $\varepsilon\in (0,\min\{(p^-)^*-(p_c)^*,\frac{2}{d}p^-\}]$, the following  statements apply:
	\begin{description}[noitemsep,topsep=2pt,leftmargin=!,labelwidth=\widthof{\itshape(iii)},font=\normalfont\itshape]
		\item[(i)] \textup{Well-posedness.}  For every $n\in \mathbb{N}$,  there exist iterates $(\mathbf{v}_n^k)_{k=0,\ldots ,K_n}\subseteq \Vo_{h_n,0}$ solving~\eqref{eq:p-NS-scheme};
		\item[(ii)] \textup{Stability.}  The piece-wise constant interpolants $\overline{\boldsymbol{v}}_n\coloneqq \overline{\boldsymbol{v}}_n^{\tau_n} \in \mathbb{P}^0(\mathcal{I}_{\tau_n};\Vo_{h_n,0})$, $n\in \mathbb{N}$, satisfy
		\begin{align*}
			\sup_{n\in \mathbb{N}}{\big[\|\overline{\boldsymbol{v}}_n\|_{\smash{\bXpn\cap L^\infty(I;Y)}}+\|\mathbfcal{S}_n\overline{\boldsymbol{v}}_n\|_{\smash{(\bXpn)^*}}+\|\mathbfcal{C}_n\overline{\boldsymbol{v}}_n\|_{\smash{(\bXpn)^*}}\big]}<\infty\,;
		\end{align*}
		\item[(iii)] \textup{Weak \hspace*{-0.1mm}convergence.}  
		There exists a not re-labeled subsequence and a  limit  $\overline{\boldsymbol{v}}\in \bVp\cap  L^\infty(I;H) $ such that for every $r,s\in \mathcal{P}^{\infty}(Q_T)$ with $r\prec q$ in $Q_T$ and $s\prec p$ in $Q_T$,  it holds that
		\begin{align*}
			\begin{aligned}
				\overline{\boldsymbol{v}}_n&\rightharpoonup\overline{\boldsymbol{v}}&&\quad\textup{ in }\mathbfcal{U}^{r,s}_{\D}&&\quad (n\to \infty)\,,\\
				\overline{\boldsymbol{v}}_n&\overset{\ast}{\rightharpoondown}\overline{\boldsymbol{v}}&&\quad\textup{ in }L^\infty(I;Y)&&\quad(n\to\infty)\,.
			\end{aligned}
		\end{align*}
		Furthermore, it follows that $\overline{\boldsymbol{v}}\in \mathbfcal{W}_{\D}^{q,p}$, with continuous representation $\overline{\boldsymbol{v}}_c\in C^0(\overline{I};H)$, satisfies  $\overline{\boldsymbol{v}}_c(0)=\mathbf{v}_0$ in $H$ and for every $\boldsymbol{\varphi}\in \mathbfcal{D}_T$, it holds that
		\begin{align*}
			-(\overline{\boldsymbol{v}},\partial_t\boldsymbol{\varphi})_{Q_T}+(\mathbf{S} (\cdot,\cdot,\mathbf{D}_x\overline{\boldsymbol{v}})-\overline{\boldsymbol{v}}\otimes\overline{\boldsymbol{v}},\mathbf{D}_x\boldsymbol{\varphi})_{Q_T} =(\boldsymbol{f},\boldsymbol{\varphi})_{Q_T}+(\boldsymbol{F},\mathbf{D}_x\boldsymbol{\varphi})_{Q_T}\,.
		\end{align*}
	\end{description}
\end{theorem}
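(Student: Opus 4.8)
The plan is to reduce everything to the abstract Rothe--Galerkin machinery of Section \ref{sec:rothe_galerkin}, exactly as in the proof of Theorem \ref{thm:S}, by feeding in the operator that also carries the convective term. For every $n\in \mathbb{N}$ and a.e.\ $t\in I$, I would define $A_n(t)\colon U^{q_n,p_n}_{\D}(t)\to (U^{q_n,p_n}_{\D}(t))^*$ for every $\mathbf{u}_n,\mathbf{z}_n\in U^{q_n,p_n}_{\D}(t)$ by
\begin{align*}
\langle A_n(t)\mathbf{u}_n,\mathbf{z}_n\rangle_{\smash{U^{q_n,p_n}_{\D}(t)}}\coloneqq (\mathbf{S}_n(t,\cdot,\mathbf{D}_x\mathbf{u}_n),\mathbf{D}_x\mathbf{z}_n)_{\Omega}+\tfrac12(\mathbf{z}_n\otimes \mathbf{u}_n,\nabla_{\!x}\mathbf{u}_n)_{\Omega}-\tfrac12(\mathbf{u}_n\otimes \mathbf{u}_n,\nabla_{\!x}\mathbf{z}_n)_{\Omega}\,,
\end{align*}
i.e.\ $A_n(t)=S_n(t)+C_n(t)$, with $S_n(t)$ as in \eqref{def:S_n_t} and $C_n(t)$ the (time-independent) Temam-type convective operator. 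Since $C_n(t)$ does not depend on $t$ and, by Remark \ref{rem:S_no_t}, $\langle S_n\rangle_k=S_n(t_k^*)$, the temporal means satisfy $\langle A_n\rangle_k=S_n(t_k^*)+C_n$, so that Scheme \ref{alg:p-NS-scheme} is precisely Scheme \ref{scheme} associated with $A_n$ and $\boldsymbol{f}^*_n=\mathbfcal{J}_{\mathbf{D}_x}(\boldsymbol{f}_n,\boldsymbol{F}_n)$.

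Next I would verify that the families $A_n(t)$, $t\in I$, $n\in \mathbb{N}$, satisfy (\hyperlink{A.1}{A.1})--(\hyperlink{A.4}{A.4}) with a single non-decreasing $\mathbb{B}$. For the stress contribution this is done verbatim as in the proof of Theorem \ref{thm:S} (estimates \eqref{thm:S.3}--\eqref{thm:S.5}, with constants depending only on $p^{\pm},\mu_0,\delta,|\Omega|$). For the convective contribution, (\hyperlink{A.1}{A.1}) follows from its continuity on each slice space, (\hyperlink{A.2}{A.2}) is immediate as $C_n(t)$ is $t$-independent, and (\hyperlink{A.3}{A.3}) follows from Korn's inequality together with the variable H\"older inequality \eqref{eq:gen_hoelder}, once one knows $q_n\ge (p_c)_*$ (so that $U^{q_n,p_n}_{\D}(t)\hookrightarrow L^{2p_n'(t,\cdot)}(\Omega;\mathbb{R}^d)$ quantitatively and uniformly in $n$, using $\sup_{n}q_n^{\pm}<\infty$). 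Crucially, (\hyperlink{A.4}{A.4}) is inherited from $S_n(t)$ alone because the Temam modification is skew-symmetric, $\langle C_n(t)\mathbf{u}_n,\mathbf{u}_n\rangle_{\smash{U^{q_n,p_n}_{\D}(t)}}=0$. Here the admissible window $\varepsilon\in (0,\min\{(p^-)^*-(p_c)^*,\tfrac2d p^-\}]$ does the bookkeeping: $\varepsilon\le \tfrac2d p^-$ guarantees $q=p_*-\varepsilon\ge p$ (Assumption \ref{assumption}(i)), while $\varepsilon\le (p^-)^*-(p_c)^*$ together with $p_n\ge p^-$ yields $q_n=(p_n)_*-\varepsilon\ge (p^-)^*-\varepsilon\ge (p_c)^*=(p_c)_*$, which is exactly the hypothesis of Proposition \ref{prop:convective_term} and Corollary \ref{prop:stress_plus_convective_term}; since $p^->p_c$ this window is non-empty. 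Hence, by Corollary \ref{prop:stress_plus_convective_term}, the induced operator sequence $\mathbfcal{A}_n=\mathbfcal{S}_n+\mathbfcal{C}_n\colon \bXpn\cap L^\infty(I;Y)\to (\bXpn)^*$, $n\in \mathbb{N}$, satisfies the non-conforming Bochner condition (M) with respect to $(\Vo_{h_n,0})_{n\in \mathbb{N}}$ and $\mathbfcal{A}=\mathbfcal{S}+\mathbfcal{C}$; together with the constructions of $\mathbf{v}_n^0$ and $\boldsymbol{f}^*_n$ from the preceding subsections this shows that the whole Assumption \ref{assumption} is in force.

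With this preparation the three claims follow from the general results of Section \ref{sec:rothe_galerkin}. For \emph{(i)} I would apply Proposition \ref{prop:well-posed}: the only point to revisit is that the discrete operator $F_n$ on the finite-dimensional space $\Vo_{h_n,0}$ remains pseudo-monotone, bounded and coercive; pseudo-monotonicity holds since $C_n$ restricted to $\Vo_{h_n,0}$ is continuous, hence pseudo-monotone there, and adds to the pseudo-monotone stress part, while coercivity is unaffected because $\langle C_n\mathbf{z}_{h_n},\mathbf{z}_{h_n}\rangle_{\smash{\Vo_{h_n,0}}}=0$, so that $\langle F_n\mathbf{z}_{h_n},\mathbf{z}_{h_n}\rangle_{\smash{\Vo_{h_n,0}}}\ge 3c_1\|\mathbf{z}_{h_n}\|_{2,\Omega}^2-c_2$ for $\tau<\tfrac1{4c_1}$ exactly as before. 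For \emph{(ii)} I would invoke Proposition \ref{apriori} — applicable since (\hyperlink{A.4}{A.4}) holds — and then bound $\|\mathbfcal{C}_n\overline{\boldsymbol{v}}_n\|_{(\bXpn)^*}$ separately by the boundedness of $\mathbfcal{C}_n$ (Proposition \ref{prop:convective_term}), $\|\mathbfcal{C}_n\overline{\boldsymbol{v}}_n\|_{(\bXpn)^*}\le 16\,(1+|Q_T|)^3\,\|\overline{\boldsymbol{v}}_n\|_{\bXpn}^2$, combined with \eqref{eq:ap.1}. Finally, for \emph{(iii)}, Theorem \ref{thm:main} yields the stated weak and weak-$*$ convergences along a subsequence and gives $\overline{\boldsymbol{v}}\in \mathbfcal{W}^{q,p}_{\D}$, with continuous representative $\overline{\boldsymbol{v}}_c\in C^0(\overline{I};H)$, such that $\tfrac{\mathbf{d}_\sigma\overline{\boldsymbol{v}}}{\mathbf{dt}}+(\mathbfcal{S}+\mathbfcal{C})\overline{\boldsymbol{v}}=\boldsymbol{f}^*$ in $(\bVp)^*$ and $\overline{\boldsymbol{v}}_c(0)=\mathbf{v}_0$ in $H$; testing this identity against $\boldsymbol{\varphi}\in \mathbfcal{D}_T$, using Definition \ref{def:generalized_time_derivative} for the time-derivative term, $\boldsymbol{f}^*=\mathbfcal{J}_{\mathbf{D}_x}(\boldsymbol{f},\boldsymbol{F})$, and $(\overline{\boldsymbol{v}}\otimes\overline{\boldsymbol{v}},\nabla_{\!x}\boldsymbol{\varphi})_{Q_T}=(\overline{\boldsymbol{v}}\otimes\overline{\boldsymbol{v}},\mathbf{D}_x\boldsymbol{\varphi})_{Q_T}$ (symmetry of $\overline{\boldsymbol{v}}\otimes\overline{\boldsymbol{v}}$) gives the asserted weak formulation. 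I expect the only genuine obstacle to be the verification of (\hyperlink{A.3}{A.3}) for the convective part with constants uniform in $n$ — i.e.\ making the embedding $U^{q_n,p_n}_{\D}(t)\hookrightarrow L^{2p_n'(t,\cdot)}(\Omega;\mathbb{R}^d)$ quantitative and $n$-independent — and the bookkeeping ensuring $q_n\ge (p_c)_*$ throughout; everything else is a transcription of the Stokes proof together with the perturbation result (Proposition \ref{prop:perturbation}) already exploited in Corollary \ref{prop:stress_plus_convective_term}.
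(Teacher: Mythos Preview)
Your proposal is correct and follows essentially the same route as the paper: define $A_n(t)=S_n(t)+C$, verify (\hyperlink{A.1}{A.1})--(\hyperlink{A.4}{A.4}) (the stress part as in Theorem~\ref{thm:S}, the convective part via the H\"older/Korn estimate with $q_n\ge (p_c)_*=2p_c'$ and the skew-symmetry $\langle C\mathbf{u}_n,\mathbf{u}_n\rangle=0$), invoke Corollary~\ref{prop:stress_plus_convective_term} for the non-conforming Bochner condition~(M), and then read off (i)--(iii) from Proposition~\ref{prop:well-posed}, Proposition~\ref{apriori}, and Theorem~\ref{thm:main}. The only cosmetic difference is that the paper carries out the (\hyperlink{A.3}{A.3}) estimate for $C$ entirely in the constant exponents $p_c$ and $2p_c'$ (which makes the uniformity in $n$ automatic), whereas you sketch it via the variable embedding $U^{q_n,p_n}_{\D}(t)\hookrightarrow L^{2p_n'(t,\cdot)}$; both work, but the constant-exponent version is the cleaner path to the uniform $\mathbb{B}$.
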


\begin{proof} 
	For every $n\hspace*{-0.1em}\in \hspace*{-0.1em}\mathbb{N}$ and a.e.\ $t\hspace*{-0.1em}\in\hspace*{-0.1em} I$, we define the operator ${A_n(t)\hspace*{-0.1em}\coloneqq \hspace*{-0.1em} S_n(t)+C\hspace*{-0.1em}\colon \hspace*{-0.1em}U^{q_n,p_n}_{\D}(t)\hspace*{-0.1em}\to \hspace*{-0.1em}(U^{q_n,p_n}_{\D}(t))^*}$, where $S_n(t)\colon U^{q_n,p_n}_{\D}(t)\to (U^{q_n,p_n}_{\D}(t))^*$ is, again, defined by \eqref{def:S_n_t} and $C\colon U^{q_n,p_n}_{\D}(t)\to (U^{q_n,p_n}_{\D}(t))^*$~for every $\mathbf{u}_n,\mathbf{z}_n\in U^{q_n,p_n}_{\D}(t)$ is defined by 
	\begin{align}\label{def:C_n}
		\langle C\mathbf{u}_n,\mathbf{z}_n\rangle_{\smash{U^{q_n,p_n}_{\D}(t)}}\coloneqq \tfrac{1}{2}(\mathbf{z}_n\otimes\mathbf{u}_n,\nabla_{\!x}\mathbf{u}_n)_{\Omega}-\tfrac{1}{2}(\mathbf{u}_n\otimes\mathbf{u}_n,\nabla_{\!x}\mathbf{z}_n)_{\Omega}\,.
	\end{align}
	We prove  that the operator families $A_n(t)\colon U^{q_n,p_n}_{\D}(t)\in  (U^{q_n,p_n}_{\D}(t))^*$, $t\in  I$, $n\in  \mathbb{N}$,~satisfy~\mbox{(\hyperlink{A.1}{A.1})--(\hyperlink{A.4}{A.4})}:
	
	\textit{ad (\hyperlink{A.1}{A.1}).} Similar to the proof of \cite[Prop.\ 5.11(C.1),(C.2),(C.4)]{alex-book},  from the compact embedding  $W^{1,p_c}(\Omega;\mathbb{R}^d)\hspace*{-0.15em}\hookrightarrow\hookrightarrow \hspace*{-0.15em} L^s(\Omega;\mathbb{R}^d)$ valid for all $s\hspace*{-0.15em}\in\hspace*{-0.15em}\left[1,(p_c)^*\right)$, where $(p_c)^*\hspace*{-0.15em}\coloneqq\hspace*{-0.15em} \frac{dp_c}{d-p_c}$ if $d\hspace*{-0.15em}\ge\hspace*{-0.15em} 3$ and $(p_c)^*\hspace*{-0.15em}\coloneqq\hspace*{-0.15em} \infty$~if~${d\hspace*{-0.15em}=\hspace*{-0.15em}2}$, for a.e.\ $t\in I$ and every $n\in \mathbb{N}$, 
	it follows that $C\colon U^{q_n,p_n}_{\D}(t)\to (U^{q_n,p_n}_{\D}(t))^*$~is~\mbox{well-defined}~and~compact, so that, 
	 appealing to the proof of Proposition \ref{thm:S}, $A_n(t)\colon U^{q_n,p_n}_{\D}(t)\to (U^{q_n,p_n}_{\D}(t))^*$ is continuous.
	
	\textit{ad (\hyperlink{A.2}{A.2}).}   For every $\boldsymbol{u}_n,\boldsymbol{z}_n\in \mathbfcal{U}_{\D}^{q_n,p_n}$, due to Hölder's inequality \eqref{eq:gen_hoelder}, we have that 
	${\boldsymbol{z}_n\otimes\boldsymbol{u}_n:\nabla_{\!x}\boldsymbol{u}_n},$ $\boldsymbol{u}_n\otimes\boldsymbol{u}_n:\nabla_{\!x}\boldsymbol{z}_n\hspace*{-0.1em}\in\hspace*{-0.1em} L^1(Q_T)$. By the Fubini-Tonelli theorem, we find that 
	$(t\hspace*{-0.1em}\mapsto \hspace*{-0.1em}\langle C(\boldsymbol{u}_n(t)), \boldsymbol{z}_n(t)\rangle_{\smash{U^{q_n,p_n}_{\D}(t)}})\hspace*{-0.1em}= (t\mapsto \tfrac{1}{2}(\boldsymbol{z}_n(t)\otimes \boldsymbol{u}_n(t),\nabla_{\!x}\boldsymbol{u}_n(t))_{\Omega}-\tfrac{1}{2}(\boldsymbol{u}_n(t)\otimes\boldsymbol{u}_n(t),\nabla_{\!x}\boldsymbol{z}_n(t))_{\Omega})\in L^1(I)$,
	so that the operator families $A_n(t)\colon U^{q_n,p_n}_{\D}(t)\to  (U^{q_n,p_n}_{\D}(t))^*$, $t\in I$, $n\in \mathbb{N}$, satisfy (\hyperlink{A.2}{A.2}).
	
	
	\textit{ad (\hyperlink{A.3}{A.3}).} Using Hölder's inequality, Young's inequality, and that, owing to $2p_c'= (p_c)_*\leq q_n\leq q^+$ a.e.\ in $Q_T$, since $\varepsilon\hspace*{-0.1em}\in\hspace*{-0.1em} (0,(p^-)_*-(p_c)_*]$,  and $ p_c\hspace*{-0.1em}\leq\hspace*{-0.1em} p_n\hspace*{-0.1em}\leq\hspace*{-0.1em} p^+$ a.e.\ in $Q_T$ for all $n\hspace*{-0.1em}\in\hspace*{-0.1em} \mathbb{N}$, for a.e.\ $t\hspace*{-0.1em}\in\hspace*{-0.1em} I$ and~every~${n\hspace*{-0.1em}\in\hspace*{-0.1em}\mathbb{N}}$, it~holds~that 
	$\rho_{2 p_c',\Omega}(\mathbf{u}_n)\leq 2^{q^+}\,(\vert \Omega\vert +\rho_{q_n(t,\cdot),\Omega}(\mathbf{u}_n))$ as well as $\rho_{p_c,\Omega}(\mathbf{D}_x\mathbf{u}_n)\leq 2^{p^+}\,(\vert \Omega\vert+\rho_{p_n(t,\cdot),\Omega}(\mathbf{D}_x\mathbf{u}_n))$ for all $\mathbf{u}_n\in U^{q_n,p_n}_{\D}(t)$, $n\in \mathbb{N}$, for a.e.\ $t\in I$ and every $\textbf{u}_n,\textbf{z}_n\in U^{q_n,p_n}_{\D}(t)$, $n\in\mathbb{N}$, we find that
	\begin{align*}
		\vert\langle C\mathbf{u}_n,\mathbf{z}_n\rangle_{\smash{U^{q_n,p_n}_{\D}(t)}}\vert&\leq \|\mathbf{u}_n\|_{2 p_c',\Omega}^2\|\mathbf{D}_x\mathbf{z}_n\|_{p_c,\Omega}+\|\mathbf{u}_n\|_{2p_c',\Omega}\|\mathbf{z}_n\|_{2p_c',\Omega}\|\mathbf{D}_x\mathbf{u}_n\|_{p_c,\Omega}\\&
		\leq \rho_{2 p_c',\Omega}(\mathbf{u}_n)+\rho_{p_c,\Omega}(\mathbf{D}_x\mathbf{z}_n)+\rho_{2 p_c',\Omega}(\mathbf{z}_n)+\rho_{p_c,\Omega}(\mathbf{D}_x\mathbf{u}_n)\\
		&\leq 2^{q^+}\big(4\,\vert \Omega\vert +\rho_{q_n(t,\cdot),\Omega}(\mathbf{u}_n)+\rho_{q_n(t,\cdot),\Omega}(\mathbf{z}_n)+\rho_{p_n(t,\cdot),\Omega}(\mathbf{D}_x\mathbf{z}_n)+\rho_{p_n(t,\cdot),\Omega}(\mathbf{D}_x\mathbf{u}_n)\big)\,.
	\end{align*} 
	Due \hspace*{-0.1mm}to \hspace*{-0.1mm}\eqref{thm:S.4}, \hspace*{-0.1mm}it \hspace*{-0.1mm}follows \hspace*{-0.1mm}that \hspace*{-0.1mm}the \hspace*{-0.1mm}operator \hspace*{-0.1mm}families \hspace*{-0.1mm}$A_n(t)\colon \hspace*{-0.15em}U^{q_n,p_n}_{\D}(t)\hspace*{-0.15em}\to\hspace*{-0.15em} (U^{q_n,p_n}_{\D}(t))^*$, \hspace*{-0.1mm}$t\hspace*{-0.15em}\in\hspace*{-0.15em} I$, \hspace*{-0.1mm}$n\hspace*{-0.15em}\in\hspace*{-0.15em} \mathbb{N}$,~\hspace*{-0.1mm}\mbox{satisfy}~\hspace*{-0.1mm}(\hyperlink{A.3}{A.3}).

	\textit{ad (\hyperlink{A.4}{A.4}).} By construction \eqref{def:C_n}, for a.e.\ $t\in I$ and every $\mathbf{u}_n\in U^{q_n,p_n}_{\D}(t)$, $n\in \mathbb{N}$, we have that
	\begin{align*}
		\langle C\mathbf{u}_n,\mathbf{u}_n\rangle_{\smash{U^{q_n,p_n}_{\D}(t)}}=0\,.
	\end{align*}
	Due \hspace*{-0.15mm}to \hspace*{-0.15mm}\eqref{thm:S.5}, \hspace*{-0.15mm}it \hspace*{-0.15mm}follows \hspace*{-0.15mm}that \hspace*{-0.15mm}the \hspace*{-0.15mm}operator \hspace*{-0.15mm}families \hspace*{-0.15mm}$A_n(t)\colon \hspace*{-0.175em}U^{q_n,p_n}_{\D}(t)\hspace*{-0.175em}\to\hspace*{-0.175em} (U^{q_n,p_n}_{\D}(t))^*$, $t\hspace*{-0.175em}\in \hspace*{-0.175em}I$,~${n\hspace*{-0.175em}\in \hspace*{-0.175em}\mathbb{N}}$,~\hspace*{-0.15mm}\mbox{satisfy}~\hspace*{-0.15mm}(\hyperlink{A.4}{A.4}).  
	
In summary, the operator families $A_n(t)\colon U^{q_n,p_n}_{\D}(t)\to  (U^{q_n,p_n}_{\D}(t))^*$, $t\in  I$, $n\in  \mathbb{N}$,~satisfy~\mbox{(\hyperlink{A.1}{A.1})--(\hyperlink{A.4}{A.4})}.
By Proposition \ref{prop:stress},  the induced sequence of
operators ${\mathbfcal{A}_n\colon\hspace*{-0.15em}\mathbfcal{U}_{\D}^{q_n,p_n}\hspace*{-0.15em} \cap\hspace*{-0.15em}  L^\infty(I;Y)\hspace*{-0.15em}\to\hspace*{-0.15em} (\mathbfcal{U}_{\D}^{q_n,p_n})^*}$,~${n\hspace*{-0.15em}\in\hspace*{-0.15em} \mathbb{N}}$,~\mbox{satisfies} the non-conforming Bochner condition (M) with respect to $(\Vo_{h_n,0})_{n\in \mathbb{N}}$ and 
${\mathbfcal{A}\colon \hspace*{-0.15em}\mathbfcal{V}_{\D}^{q,p}\hspace*{-0.15em}\cap\hspace*{-0.15em} L^\infty(I;H)\hspace*{-0.15em}\to\hspace*{-0.15em} (\mathbfcal{V}_{\D}^{q,p})^*}$.
As a result, the claims (i)--(iii) follow from Proposition \ref{prop:well-posed}, Proposition~\ref{apriori}, and Theorem \ref{thm:main} since Assumption \ref{assumption} is satisfied and  $\langle A_n\rangle_k=A_n(t_k^*)$ for all $k=1,\ldots,K_n$, $n\in \mathbb{N}$,~(\textit{cf}.~Remark~\ref{rem:S_no_t}).
\end{proof}

	\section{Numerical experiments}\label{sec:experiments}

\hspace*{5mm}In this section, we complement the theoretical findings  of Section \ref{sec:application} with numerical experiments:~first, we examine a purely academic example that is meant to indicate the 
(weak) convergence~of~Scheme~\ref{alg:p-NS-scheme} predicted by Theorem \ref{prop:p-NS-scheme}, for low regularity data, in the two-dimensional case (\textit{i.e.}, $d=2$); second, we consider a less academic example describing an unsteady incompressible electro-rheological fluid flow~in~the three-dimensional case (\textit{i.e.}, $d=3$).

\subsection{Implementation details} 

\hspace*{5mm}All experiments were carried out employing the finite element software package \texttt{FEniCS} (version 2019.1.0, \textit{cf}.\  \cite{LW10}). All triangulations were generated using  the \texttt{MatLab} (version R2022b, \textit{cf}.\  \cite{matlab}) library \texttt{DistMesh} (version 1.1, \textit{cf}.\  \cite{distmesh}).
All graphics were generated using the \texttt{Matplotlib} library~(version~3.5.1, \textit{cf.}~\cite{Hun07}), the \texttt{Vedo} library (version 2023.4.4, \textit{cf}.\ \cite{vedo}), or the \texttt{ParaView}~\mbox{engine}~(\mbox{version}~\mbox{5.12.0-RC2},~\textit{cf}.~\cite{ParaView}). The communication between \texttt{FEniCS} (\textit{i.e.}, \texttt{Numpy} (version 1.24.3, \textit{cf}.\  \cite{numpy})) and \texttt{MatLab}~relied~on~the~\texttt{mat4py} library (version 3.1.4, \textit{cf}.\ \cite{mat4py}). 

As quadrature points of the one-point~quadrature~rule~used~to discretize $p \in \mathcal{P}^{\log}(Q_T)$ (\textit{cf}.\ \eqref{def:p_n}), 
we employ tuples consisting of right limits of local time intervals and barycenters~of~elements~(\textit{i.e.},~\eqref{eq:explicit_quadpoints}).\enlargethispage{6mm}

For given sequences $(\tau_n)_{n\in \mathbb{N}},(h_n)_{n\in \mathbb{N}}\hspace*{-0.12em}\subseteq\hspace*{-0.12em} (0,+\infty)$ with $\tau_n,h_n\hspace*{-0.12em}\to\hspace*{-0.12em} 0$ $(n\hspace*{-0.12em}\to\hspace*{-0.12em} \infty)$, where~${\tau_n\hspace*{-0.12em} \coloneqq \hspace*{-0.12em}\frac{T}{K_n}}$~for~all~${n\hspace*{-0.12em}\in \hspace*{-0.12em}\mathbb{N}}$ for a sequence $(K_n)_{n\in \mathbb{N}}\subseteq \mathbb{N}$ with $K_n\to \infty$ $(n\to \infty)$, for every $k=1,\dots, K_n$,~\eqref{eq:p-NS-scheme}~can~\mbox{equivalently}~be re-written as a non-linear saddle point problem seeking for iterates
$\smash{(\mathbf{v}_n^k,\pi_n^k)^{\top}\hspace*{-0.05em}\in\hspace*{-0.05em} \Vo_{h_n}\times \Qo_{h_n}}$,~${k\hspace*{-0.05em}=\hspace*{-0.05em}1,\ldots,K_n}$, 
such that for every $(\mathbf{z}_{h_n},z_{h_n})^\top\in\Vo_{h_n}\times Q_{h_n}$,~it~holds~that
\begin{align}\label{eq:primal1}
	\begin{aligned}
		(\mathrm{d}_{\tau_n}\mathbf{v}_n^k, \mathbf{z}_{h_n})_{\Omega} 
		&+(\mathbf{S}_n(t_k,\cdot,\mathbf{D}_x\mathbf{v}_n^k),\mathbf{D}_x\mathbf{z}_{h_n})_{\Omega}\\&
		\quad+\tfrac{1}{2}(\mathbf{z}_{h_n}\otimes \mathbf{v}_n^k,\nabla_{\! x}\mathbf{v}_n^k)_{\Omega}-\tfrac{1}{2}(\mathbf{v}_n^k\otimes \mathbf{v}_n^k,\nabla_{\! x}\mathbf{z}_{h_n})_{\Omega} 
		\\&\quad-(\pi _n^{k},\textup{div}_x\mathbf{z}_{h_n})_{\Omega}
		\\
		&=  (\langle \boldsymbol{f}_n\rangle_k,\mathbf{z}_{h_n})_{\Omega}+(\langle \boldsymbol{F}_n\rangle_k,\mathbf{D}_x\mathbf{z}_{h_n})_{\Omega}\,,\\
		(\textup{div}_x \mathbf{v}_n^k,z_{h_n})_{\Omega}&=0\,.
	\end{aligned}
\end{align}  

We approximate the iterates $(\mathbf{v}_n^k,\pi_n^k)^{\top}\in \Vo_{h_n,0}\times \Qo_{h_n}$, $k=1,\ldots,K_n$, $n\in \mathbb{N}$, solving the non-linear saddle point problem \eqref{eq:primal1} employing the Newton solver from \mbox{\texttt{PETSc}} (version~3.17.3, \textit{cf.}~\cite{LW10}), with an absolute tolerance~of $\tau_{\textup{abs}}= 1.0\times 10^{-8}$ and a relative tolerance of $\tau_{\textup{abs}}=1.0\times 10^{-10}$. The linear system emerging~in~each~Newton iteration is solved using a sparse direct solver from \texttt{MUMPS} (version~5.5.0,~\textit{cf.}~\cite{mumps}). In~the~\mbox{implementation}, the uniqueness of the pressure is enforced by adding a zero mean condition.

\subsection{General choice of the extra-stress tensor}

\hspace*{5mm}In the non-linear saddle point problem \eqref{eq:primal1}, 
the extra-stress tensor $\mathbf{S}\colon Q_T\times \mathbb{R}^{d\times d}_{\textup{sym}}\to\mathbb{R}^{d\times d}_{\mathrm{sym}}$,~${d\in \{2,3\}}$,  is \hspace*{-0.1mm}always \hspace*{-0.1mm}defined \hspace*{-0.1mm}by \hspace*{-0.1mm}\eqref{eq:example-stress}, \hspace*{-0.1mm}where \hspace*{-0.1mm}$\mu_0\hspace*{-0.15em}  \coloneqq \hspace*{-0.15em} \frac{1}{2}$, \hspace*{-0.1mm}$\delta\hspace*{-0.15em} \coloneqq\hspace*{-0.15em} 1.0\hspace*{-0.1em} \times\hspace*{-0.1em}  10^{-5}$, \hspace*{-0.1mm}and \hspace*{-0.1mm}the \hspace*{-0.1mm}power-law \hspace*{-0.1mm}index \hspace*{-0.1mm}is \hspace*{-0.1mm}at \hspace*{-0.1mm}least~\hspace*{-0.1mm}\mbox{$\log$-Hölder}~\hspace*{-0.1mm}con-tinuous (\textit{i.e.}, $p\hspace*{-0.1em}\in \hspace*{-0.1em}\mathcal{P}^{\log}(Q_T)$) with $p^-\hspace*{-0.1em}>\hspace*{-0.1em} \frac{3d+2}{d+2}$ (\textit{i.e.}, $p^-\hspace*{-0.1em}>\hspace*{-0.1em} 2$ in the case $d\hspace*{-0.1em}=\hspace*{-0.1em}2$ and $p^-\hspace*{-0.1em}>\hspace*{-0.1em}2.2$~in~the~case~${d\hspace*{-0.1em}=\hspace*{-0.1em}3}$).

\subsection{(Weak) convergence of the velocity vector field} 

\hspace*{5mm}In this subsection, we carry out numerical experiments that are meant to indicate (weak) convergence of Scheme \ref{alg:p-NS-scheme} predicted by Proposition \ref{prop:p-NS-scheme}.
To this end, we construct manufactured solutions with fractional regularity, gradually reduce the fractional regularity parameters, and measure~convergence~rates that reduce with decreasing (but are stable 
for fixed) value of the fractional regularity parameters.

\textit{$\bullet$  Power-law index}. The power-law index $p\in C^{0,\alpha}(\overline{Q_T})$, where $T=0.1$ and $\alpha\in  (0,1]$,  for every $(t,x)^\top\in \overline{Q_T}$ is defined by 
\begin{align*}
	p(t,x)\coloneqq \Big(1-\frac{\vert x\vert^{\alpha}}{2^{\alpha/2}}\Big)\, p^++\frac{\vert x\vert^{\alpha}}{2^{\alpha/2}}\,(p^-+t)\,,
\end{align*}
where $p^-,p^+>1$.\newpage

\textit{$\bullet$ \hspace*{-0.1mm}Manufactured \hspace*{-0.1mm}solutions}. \hspace*{-0.1mm}As \hspace*{-0.1mm}manufactured solutions \hspace*{-0.1mm}serve \hspace*{-0.1mm}the \hspace*{-0.1mm}velocity \hspace*{-0.1mm}vector \hspace*{-0.1mm}field \hspace*{-0.1mm}${\overline{\boldsymbol{v}}\hspace*{-0.15em}\colon\hspace*{-0.15em}\overline{Q_T}\hspace*{-0.15em}\to\hspace*{-0.15em} \mathbb{R}^2}$~\hspace*{-0.1mm}and~\hspace*{-0.1mm}the kinematic pressure $\overline{\pi} \colon \overline{Q_T}\to \mathbb{R}$, given $\rho_{\overline{\boldsymbol{v}}},\rho_{\overline{\pi}}\hspace*{-0.1em}\in \hspace*{-0.1em}C^{0,\alpha}(\overline{Q_T})$, 
for every $(t,x)^\top\hspace*{-0.1em}\coloneqq\hspace*{-0.1em}(t,x_1,x_2)^\top\hspace*{-0.1em}\in\hspace*{-0.1em} \overline{Q_T}$~defined~by
\begin{align}\label{solutions}
\smash{\overline{\boldsymbol{v}}(t,x)\coloneqq t\,\vert x\vert^{\rho_{\overline{\boldsymbol{v}}}(t,x)} (x_2,-x_1)^\top\,, \qquad \overline{\pi}(t,x)\coloneqq 25\,t\,(\vert x\vert^{\rho_{\overline{\pi}}(t,x)}-\langle\,\vert \cdot\vert^{\rho_{\overline{\pi}}(t,\cdot)}\,\rangle_\Omega)\,,}
\end{align}
\textit{i.e.}, we choose the right-hand side data $\boldsymbol{f}^*\hspace*{-0.1em}\in\hspace*{-0.1em} (\mathbfcal{V}^{q,p}_{\mathbf{D}_x})^*$ (\textit{i.e.}, $\boldsymbol{f}\hspace*{-0.1em}\in\hspace*{-0.1em} L^{(p^-)'}(Q_T;\mathbb{R}^2)$~and~$\boldsymbol{F}\hspace*{-0.1em}\in\hspace*{-0.1em} L^{p'(\cdot,\cdot)}(Q_T;\mathbb{R}^{2\times 2}_{\textup{sym}})$),  the \hspace*{-0.1mm}Dirichlet \hspace*{-0.1mm}boundary \hspace*{-0.1mm}data \hspace*{-0.1mm}$\boldsymbol{v}_{\Gamma_T}\hspace*{-0.15em}\in\hspace*{-0.15em}{L^{p^-}(I;W^{\smash{1-\frac{1}{p^-},p^-}}\hspace*{-0.1em}(\partial\Omega;\mathbb{R}^2))}$, \hspace*{-0.1mm}and \hspace*{-0.1mm}the \hspace*{-0.1mm}initial \hspace*{-0.1mm}velocity~\hspace*{-0.1mm}$\mathbf{v}_0\hspace*{-0.15em}\in\hspace*{-0.15em} Y$~\hspace*{-0.1mm}\mbox{accordingly}.

\textit{$\bullet$  Regularity of velocity vector field.} Concerning the fractional  regularity of the velocity vector field, for a fractional regularity parameter  $\beta\in (0,1]$, we choose\vspace*{-1mm}
\begin{align}\label{rho_v}
	\rho_{\overline{\boldsymbol{v}}}\coloneqq 2\frac{\beta-1}{p}+\delta \in C^{0,\alpha}(\overline{Q_T})\,,
\end{align}
which just yields that\vspace*{-0.5mm}
\begin{align}\label{regurarity_v}
	\begin{aligned}
	\mathbf{F}(\cdot,\cdot,\mathbf{D}_x\overline{\boldsymbol{v}})&\in L^2(I;W^{\beta,2}(\Omega;\mathbb{R}^{2\times 2}_{\textup{sym}}))\cap W^{\beta,2}(I;L^2(\Omega;\mathbb{R}^{2\times 2}_{\textup{sym}}))\,,\\ 
	\overline{\boldsymbol{v}}&\in L^\infty(I;W^{\beta,2}(\Omega;\mathbb{R}^2))\,.
\end{aligned}
\end{align}

\textit{$\bullet$ Regularity of kinematic pressure.} Concerning the fractional   regularity of the kinematic pressure, for a fractional regularity parameter    $\gamma \in (0,1]$, we choose\vspace*{-1mm}
\begin{align}\label{rho_pi}
	\rho_{\overline{\pi}}=\gamma-\frac{2}{p'} +\delta\in   C^{0,\alpha}(\overline{Q_T})\,,
\end{align}
which just yields that\vspace*{-0.5mm}
\begin{align}\label{regurarity_pi} 
\overline{\pi}\in \mathcal{Q}_{\nabla_{\! x}}^p\coloneqq \left\{ \xi\in \smash{L^{p'(\cdot,\cdot)}(Q_T)}\;\bigg|\; \begin{aligned}&\vert \nabla_{\! x} ^{\gamma}\xi\vert \in \smash{L^{p'(\cdot,\cdot)}(Q_T)}\,,\;\\& \xi(t)\in \smash{H^{\gamma,p'(t,\cdot)}(\Omega)\cap L^{p'(t,\cdot)}_0(\Omega)} \text{ for a.e.\ }t\in I\end{aligned}\right\}\,,
\end{align} 
where the space $ \smash{H^{\gamma,p'(t,\cdot)}(\Omega)}$, for a.e.\ $t\in I$, denotes the fractional variable Haj\l asz--Sobolev space and $\vert \nabla_{\! x}^{\gamma}(\cdot)\vert$ the minimal $\gamma$-order Haj\l asz gradient with respect to the space variable (\textit{cf}.\ \cite{ZS22}).

\textit{$\bullet$ Comment on the choice of $p$, $\rho_{\overline{\boldsymbol{v}}}$, $\rho_{\overline{\pi}}$}. The power-law index $p\hspace*{-0.1em}\in\hspace*{-0.1em} C^{0,\alpha}(\overline{Q_T})$ is constructed~in~such~a~way that $p(t,x)\hspace*{-0.1em}\approx\hspace*{-0.1em} p^+$ for all $t\hspace*{-0.1em}\in\hspace*{-0.1em} I$ close to the origin (\textit{i.e.}, when $\vert x\vert\hspace*{-0.1em}\approx\hspace*{-0.1em}0$), where both~the~\mbox{velocity}~\mbox{vector}~field~and the \hspace*{-0.1mm}kinematic \hspace*{-0.1mm}pressure \hspace*{-0.1mm}(\textit{cf}.\ \hspace*{-0.1mm}\eqref{solutions})
\hspace*{-0.1mm}have \hspace*{-0.1mm}singularities \hspace*{-0.1mm}enforcing \hspace*{-0.1mm}the \hspace*{-0.1mm}fractional~\hspace*{-0.1mm}\mbox{regularity}~\hspace*{-0.1mm}(\textit{i.e.},~\hspace*{-0.1mm}\eqref{regurarity_v}~\hspace*{-0.1mm}and~\hspace*{-0.1mm}\eqref{regurarity_pi}). This is needed to create a critical setup that allows us to measure fractional convergence rates to be expec-ted~by~the contributions \cite{breit-mensah,BK24}. It proved important~to~use~time-~and~\mbox{space-dependent}~power~\mbox{functions} $\rho_{\overline{\boldsymbol{v}}},\rho_{\overline{\pi}}\in C^{0,\alpha}(\overline{Q_T})$ in \eqref{solutions}, since, then, the asymptotic behavior of the power-law~index~$p\in C^{0,\alpha}(\overline{Q_T})$ (\textit{i.e.}, $p(t,x)\approx p^+$  for all $t\in I$ when $\vert x\vert \approx 0$) transfers to these power~\mbox{functions},~\textit{i.e.}, due~to~\eqref{rho_v}~and~\eqref{rho_pi}, it holds that $\rho_{\overline{\boldsymbol{v}}}(t,x)\approx 2\frac{\beta-1}{p^+}+\delta$ for all $t\in I$ and $\rho_{\overline{\pi}}(t,x)\approx \gamma-
\smash{\frac{2}{(p^+)'}}+\delta$~for~all~$t\in  I$~when~$\vert x\vert \approx0$. 

\textit{$\bullet$ Discretization of time-space cylinder}.
First, we construct an initial triangulation $\mathcal
T_{h_0}$, where $h_0=1$, by subdividing the domain $\Omega=(0,1)^2$ along its diagonals into four triangles  with different orientations.  Finer triangulations~$\mathcal T_{h_n}$, $n\hspace*{-0.1em}=\hspace*{-0.1em}1,\ldots,6$, where $h_{n+1}\hspace*{-0.1em}=\hspace*{-0.1em}\frac{h_n}{2}$ for all $n\hspace*{-0.1em}=\hspace*{-0.1em}0,\ldots,6$, are, then,
obtained~by~\mbox{regular} subdivision of the previous triangulation, \textit{i.e.}, each triangle is subdivided
into four triangles by connecting the midpoints of the edges. Moreover, we employ the~time~\mbox{step-sizes}~${\tau_n\coloneqq 2^{-n-2}} $,~\textit{i.e.},~${K_n\coloneqq 2^{n+2}}$,~${n\in \mathbb{N}}$.\enlargethispage{9mm}

\textit{$\bullet$ \hspace*{-0.1mm}Alternative \hspace*{-0.1mm}time \hspace*{-0.1mm}discretization \hspace*{-0.1mm}of \hspace*{-0.1mm}right-hand \hspace*{-0.1mm}side}.
\hspace*{-0.1mm}For \hspace*{-0.1mm}a \hspace*{-0.1mm}simple \hspace*{-0.1mm}implementation,~\hspace*{-0.1mm}for~\hspace*{-0.1mm}each~\hspace*{-0.1mm}${k\hspace*{-0.15em}=\hspace*{-0.15em}1,\dots ,K}$, we \hspace*{-0.1mm}replace \hspace*{-0.1mm}the \hspace*{-0.1mm}$k$-th.\ \hspace*{-0.1mm}temporal \hspace*{-0.1mm}means \hspace*{-0.1mm}$\langle\boldsymbol{f}_n\rangle_k\hspace*{-0.175em}\in \hspace*{-0.175em}(\mathbb{P}^0(\mathcal{T}_{h_n}))^2$ \hspace*{-0.1mm}and \hspace*{-0.1mm}$\langle\boldsymbol{F}_n\rangle_k\hspace*{-0.175em}\in\hspace*{-0.175em} (\mathbb{P}^0(\mathcal{T}_{h_n}))^{2\times 2}_{\textup{sym}}$~\hspace*{-0.1mm}by~\hspace*{-0.1mm}${\boldsymbol{f}_n(t_k)\hspace*{-0.175em}\in \hspace*{-0.175em}(\mathbb{P}^0(\mathcal{T}_{h_n}))^2}$ and $\boldsymbol{F}_n(t_k)\in (\mathbb{P}^0(\mathcal{T}_{h_n}))^{2\times 2}_{\textup{sym}}$,~\mbox{respectively}.
Since the manufactured solutions (\textit{cf}.\ \eqref{solutions}) are smooth in  time and since 
\eqref{regurarity_v}, \hspace*{-0.1mm}\eqref{regurarity_pi} \hspace*{-0.1mm}imply \hspace*{-0.1mm}higher \hspace*{-0.1mm}integrability \hspace*{-0.1mm}of \hspace*{-0.1mm}the \hspace*{-0.1mm}right-hand~\hspace*{-0.1mm}side,~\hspace*{-0.1mm}we~\hspace*{-0.1mm}have~\hspace*{-0.1mm}that
 $\partial_t\boldsymbol{f}\hspace*{-0.1em}\in \hspace*{-0.1em} \smash{L^{(p^-)'}(Q_T;\mathbb{R}^2)}$ and $\boldsymbol{F},\partial_t\boldsymbol{F}\hspace*{-0.1em}\in\hspace*{-0.1em} L^{p'(\cdot,\cdot)+\eta}(Q_T;\mathbb{R}^{2\times 2}_{\textup{sym}})$ for some $\eta>0$, which, 
by  the (local)~\mbox{$L^2$-stability} of $\Pi_{\tau_n}^{0,\mathrm{t}}$ and $ \Pi_{h_n}^{0,\mathrm{x}}$, the estimates
 $\vert \langle \boldsymbol{f}\rangle_k-\boldsymbol{f}(t_k)\vert \leq \langle\vert\partial_t \boldsymbol{f}\vert\rangle_k$ and $\vert \langle \boldsymbol{F}\rangle_k-\boldsymbol{F}(t_k)\vert \leq \langle\vert\partial_t \boldsymbol{F}\vert\rangle_k$ for all $k=1,\ldots,K_n$, $n\in\mathbb{N}$,  
 and the variable Hölder inequality \eqref{eq:gen_hoelder},
 for every $k=1,\ldots,K_n$ and $T\in \mathcal{T}_{h_n}$, $n\in \mathbb{N}$, implies that 
\begin{align}
	\rho_{(p^-)',T}(\langle \boldsymbol{f}_n\rangle_k-\boldsymbol{f}_n(t_k)) 
		&	\leq  \smash{\tau_n^{(p^-)'}} \, 	\rho_{(p^-)',T}(\partial_t \boldsymbol{f})\,,\hspace{4cm}\label{eq:local1}\\
			\label{eq:local2}
		\rho_{p'_n(t_k,x_T),T}
		(\langle\boldsymbol{F}_n\rangle_k-\boldsymbol{F}_n(t_k)) 
		&\leq 
	\smash{\tau_n^{(p^+)'}}\,2^{p^++\eta}\big(\vert T\vert +\tau_n^{-1}\rho_{p'(\cdot,\cdot)+\eta,I_k\times T}
		(\partial_t \boldsymbol{F})\big) \,.
\end{align} 
Multiplying \hspace*{-0.1mm}\eqref{eq:local1}, \hspace*{-0.1mm}\eqref{eq:local2} \hspace*{-0.1mm}with \hspace*{-0.1mm}$\tau_n$ \hspace*{-0.1mm}and \hspace*{-0.1mm}summing \hspace*{-0.1mm}with \hspace*{-0.1mm}respect \hspace*{-0.1mm}to \hspace*{-0.1mm}$(k,T)^\top\hspace*{-0.2em}\in\hspace*{-0.15em} \{1,\ldots,K_n\}\times \mathcal{T}_{h_n}$,~\hspace*{-0.1mm}${n\hspace*{-0.15em}\in \hspace*{-0.15em}\mathbb{N}}$,~\hspace*{-0.1mm}shows~\hspace*{-0.1mm}that 
replacing \hspace*{-0.1mm}$\langle\boldsymbol{f}_n\rangle_k\hspace*{-0.175em}\in\hspace*{-0.175em} (\mathbb{P}^0(\mathcal{T}_{h_n}))^2$ \hspace*{-0.1mm}and \hspace*{-0.1mm}$\langle\boldsymbol{F}_n\rangle_k\hspace*{-0.175em}\in\hspace*{-0.15em} (\mathbb{P}^0(\mathcal{T}_{h_n}))^{2\times 2}_{\textup{sym}}$ 
\hspace*{-0.1mm}by \hspace*{-0.1mm}$\boldsymbol{f}_n (t_k)\hspace*{-0.175em}\in\hspace*{-0.175em} (\mathbb{P}^0(\mathcal{T}_{h_n}))^2$~\hspace*{-0.1mm}and~\hspace*{-0.1mm}${\boldsymbol{F}_n(t_k)\hspace*{-0.175em}\in\hspace*{-0.175em} (\mathbb{P}^0(\mathcal{T}_{h_n}))^{2\times 2}_{\textup{sym}}}$, respectively, for all $k\hspace*{-0.1em}=\hspace*{-0.1em} 1,\dots ,K_n$, $n\hspace*{-0.1em}\in\hspace*{-0.1em}\mathbb{N}$, does  not have an  effect on the best possible 
convergence rate $\min\{\alpha,\beta,\gamma(p^+)'/2\}$ to be expected by the contributions  \cite{breit-mensah,BK24} for the natural error quantities~in~\eqref{eq:errors}.\newpage

Then, we compute the iterates $(\mathbf{v}_n^k,\pi_n^k)^\top\in \Vo_{h_n,0}\times \Qo_{h_n}$, $k=0,\ldots,K_n$,~${n=0,\ldots,6}$, solving \eqref{eq:primal1}, the piece-wise constant interpolants $\overline{\boldsymbol{v}}_n\coloneqq \overline{\boldsymbol{v}}_n^{\tau_n}\in \mathbb{P}^0(\mathcal{I}_{\tau_n};\Vo_{h_n,0})$, $n=0,\ldots,6$, 
and,~for~every~$n=0,\ldots,6$, the natural error quantities\vspace{-1mm}
\begin{align}\label{eq:errors}
	\begin{aligned}
		e_{\mathbf{D\overline{\boldsymbol{v}}},n}&\coloneqq \bigg(\sum_{k=1}^{K_n}{\tau_n\,\|\mathbf{F}_n(t_k,\cdot,\mathbf{D}_x\mathbf{v}_n^k)-\mathbf{F}_n(t_k,\cdot,\mathbf{D}_x\overline{\boldsymbol{v}}(t_k))\|_{2,\Omega}^2}\bigg)^{\smash{\frac{1}{2}}}\,,\\[-0.5mm]
			e_{\smash{\overline{\boldsymbol{S}}},n}&\coloneqq \bigg(\sum_{k=1}^{K_n}{\tau_n\,\|\mathbf{F}_n^*(t_k,\cdot,\mathbf{S}_n(t_k,\cdot,\mathbf{D}_x\mathbf{v}_n^k))-\mathbf{F}_n^*(t_k,\cdot,\mathbf{S}(t_k,\cdot,\mathbf{D}_x\overline{\boldsymbol{v}}(t_k)))\|_{2,\Omega}^2}\bigg)^{\smash{\frac{1}{2}}}\,,\\[-0.5mm]
		e_{\overline{\boldsymbol{v}},n}&\coloneqq \max_{k=1,\ldots,K_n}{\|\mathbf{v}_n^k-\overline{\boldsymbol{v}}(t_k)\|_{2,\Omega}} \,,
		\\[-0.5mm]
		e_{\overline{\pi},n}&\coloneqq \bigg(\sum_{k=1}^{K_n}{\tau_n\,
			\Big\|\big((\delta+\vert \mathbf{D}\overline{\boldsymbol{v}}(t_k)\vert)^{p_n(t_k,\cdot)-1}+\vert \pi_n^k-\overline{\pi}(t_k)\vert\big)^{\smash{\frac{(p_n)'(t_k,\cdot)-2}{2}}}\pi_n^k-\overline{\pi}(t_k)\Big\|^2_{2,\Omega}} \bigg)^{\smash{\frac{1}{2}}}\,,
	\end{aligned}
\end{align}
where  
$\mathbf{F}_n,\mathbf{F}_n^*\colon  Q_T\times \mathbb{R}^{2\times 2}_{\textup{sym}}\to  \mathbb{R}^{d\times d}_{\textup{sym}}$,  for a.e.\ $(t,x)^\top\in Q_T$ and every $\mathbf{A}\in \mathbb{R}^{2\times 2}_{\textup{sym}}$, are defined by\enlargethispage{12mm}
\begin{align*}
	\mathbf{F}_n(t,x,\mathbf{A})\coloneqq (\delta+\vert  \mathbf{A}\vert)^{\smash{\frac{p_n(t,x)-2}{2}}}\mathbf{A}\,,\qquad
    \mathbf{F}_n^*(t,x,\mathbf{A})\coloneqq (\delta^{p_n(t,x)-1}+\vert  \mathbf{A}\vert)^{\smash{\frac{p_n'(t,x)-2}{2}}}\mathbf{A}\,.
\end{align*}

In order to measure convergence rates,  we compute experimental order of convergence~(EOC),~\textit{i.e.},\vspace*{-0.5mm}
\begin{align} \label{eq:eoc}
	\texttt{EOC}_n(e_n)\coloneqq\frac{\log\big(\frac{e_n}{e_{n-1}}\big)}{\log\big(\frac{h_n+\tau_n}{h_{n-1}+\tau_{n-1}}\big)}\,, \quad n=1,\ldots,6\,,
\end{align}
where, for every $n= 0,\ldots,6$, we denote by $e_n$
either 
$e_{\mathbf{D\overline{\boldsymbol{v}}},n}$, $e_{\smash{\overline{\boldsymbol{S}}},n}$,
$e_{\overline{\boldsymbol{v}},n}$, and $e_{\overline{\pi},n}$, respectively.

Motivated by the discussion in \cite[Lem.\ 3.4]{BK24}, we restrict to the particular case $\alpha=\beta=\gamma$.~In~addition, we always
set $p^+\hspace*{-0.1em}\coloneqq \hspace*{-0.1em}p^-+1$. Then, with regard to the contributions \cite{breit-mensah,BK24}, 
we~expect~the~convergence~rate $	\texttt{EOC}_n(e_n)=\alpha(p^+)'/2$ for the error quantities $e_n\in \{e_{\mathbf{D\overline{\boldsymbol{v}}},n},e_{\smash{\overline{\boldsymbol{S}}},n},e_{\overline{\boldsymbol{v}},n},e_{\overline{\pi},n}\}$, $n=1,\ldots,6$,~(\textit{cf}.~\eqref{eq:errors}).

For different values of $p^- \in \{2.25, 2.5, 2.75\}$, fractional exponents $\alpha =\beta=\gamma\in \{1.0,0.5,0.25,0.125\}$, and   triangulations $\mathcal{T}_{h_n}$,
$n = 0, \ldots , 6$, obtained by uniform mesh refinement  as described above, the EOCs (\textit{cf}.\ \eqref{eq:eoc}) with respect to the natural error quantities \eqref{eq:errors} are
computed and presented in the~Tables~\mbox{\ref{tab:1}--\ref{tab:8}}:
for both the Mini and the Taylor--Hood element, for the errors $e_n\in \{e_{\mathbf{D\overline{\boldsymbol{v}}},n}, e_{\smash{\overline{\boldsymbol{S}}},n},e_{\overline{\pi},n}\}$, $n=0,\ldots,6$,   we report
the expected convergence rate of about $\texttt{EOC}_n(e_n) \approx \alpha (p^+)'/2$,~$n=1,\ldots,6$,~while for the errors $e_{\overline{\boldsymbol{v}},n}$, \hspace*{-0.1mm}$n\hspace*{-0.1em}=\hspace*{-0.1em}0,\ldots,6$,  \hspace*{-0.1mm}we \hspace*{-0.1mm}report \hspace*{-0.1mm}the \hspace*{-0.1mm}increased \hspace*{-0.1mm}convergence \hspace*{-0.1mm}rate  \hspace*{-0.1mm}${\texttt{EOC}_n(e_{\overline{\boldsymbol{v}},n}) \gtrapprox  1}$,~\hspace*{-0.1mm}${n\hspace*{-0.1em}=\hspace*{-0.1em}1,\ldots,6}$,~which~\mbox{previously} has  been reported in related fully-discrete approximations of the unsteady $p$-Navier--Stokes equations. For small fractional exponents, \textit{i.e.}, 
$\alpha =\beta=\gamma\in \{0.25,0.125\}$,
the reported convergence rates are smaller than \hspace*{-0.1mm}the \hspace*{-0.1mm}expected \hspace*{-0.1mm}convergence \hspace*{-0.1mm}rates, \hspace*{-0.1mm}but \hspace*{-0.1mm}seem \hspace*{-0.1mm}to \hspace*{-0.1mm}approach \hspace*{-0.1mm}the \hspace*{-0.1mm}expected \hspace*{-0.1mm}convergence~\hspace*{-0.1mm}rates~\hspace*{-0.1mm}\mbox{asymptotically}. In summary, the  convergence rates are stable, indicating that Scheme~\ref{alg:p-NS-scheme}~converges~(at~least)~weakly.\vspace*{-1mm}
\enlargethispage{2mm}

\begin{table}[H]
	\setlength\tabcolsep{4.0pt}
	\centering
	\begin{tabular}{c |c|c|c|c|c|c|c|c|c|c|c|c|} \cmidrule(){1-13}
		\multicolumn{1}{|c||}{\cellcolor{lightgray}$\alpha$}	
		& \multicolumn{3}{c||}{\cellcolor{lightgray}$1.0$}   & \multicolumn{3}{c||}{\cellcolor{lightgray}$0.5$} & \multicolumn{3}{c||}{\cellcolor{lightgray}$0.25$} & \multicolumn{3}{c}{\cellcolor{lightgray}$0.125$}\\ 
		\hline 
		
		\multicolumn{1}{|c||}{\cellcolor{lightgray}\diagbox[height=1.1\line,width=0.11\dimexpr\linewidth]{\vspace{-0.6mm}$i$}{\\[-5mm] $p^-$}}
		& \cellcolor{lightgray}2.25 & \cellcolor{lightgray}2.5  & \multicolumn{1}{c||}{\cellcolor{lightgray}2.75}
		& \cellcolor{lightgray}2.25 & \cellcolor{lightgray}2.5  & \multicolumn{1}{c||}{\cellcolor{lightgray}2.75}
		& \cellcolor{lightgray}2.25 & \cellcolor{lightgray}2.5  & \multicolumn{1}{c||}{\cellcolor{lightgray}2.75}
		& \cellcolor{lightgray}2.25 & \cellcolor{lightgray}2.5  & \cellcolor{lightgray}2.75  \\ \hline\hline 
\multicolumn{1}{|c||}{\cellcolor{lightgray}$3$}             & 0.668 & 0.650 & \multicolumn{1}{c||}{0.634} & \multicolumn{1}{c|}{0.329} & 0.328 & \multicolumn{1}{c||}{0.325} & \multicolumn{1}{c|}{0.142} & 0.149 & \multicolumn{1}{c||}{0.153} & \multicolumn{1}{c|}{0.049} & 0.061 & 0.068 \\ \hline
\multicolumn{1}{|c||}{\cellcolor{lightgray}$4$}             & 0.704 & 0.684 & \multicolumn{1}{c||}{0.667} & \multicolumn{1}{c|}{0.344} & 0.339 & \multicolumn{1}{c||}{0.334} & \multicolumn{1}{c|}{0.152} & 0.156 & \multicolumn{1}{c||}{0.157} & \multicolumn{1}{c|}{0.056} & 0.064 & 0.069 \\ \hline
\multicolumn{1}{|c||}{\cellcolor{lightgray}$5$}             & 0.716 & 0.695 & \multicolumn{1}{c||}{0.677} & \multicolumn{1}{c|}{0.351} & 0.343 & \multicolumn{1}{c||}{0.337} & \multicolumn{1}{c|}{0.159} & 0.160 & \multicolumn{1}{c||}{0.160} & \multicolumn{1}{c|}{0.060} & 0.066 & 0.070 \\ \hline 
\multicolumn{1}{|c||}{\cellcolor{lightgray}$6$}             & 0.720 & 0.698 & \multicolumn{1}{c||}{0.680} & \multicolumn{1}{c|}{0.355} & 0.346 & \multicolumn{1}{c||}{0.338} & \multicolumn{1}{c|}{0.164} & 0.163 & \multicolumn{1}{c||}{0.162} & \multicolumn{1}{c|}{0.064} & 0.068 & 0.071 \\ \hline\hline
		\multicolumn{1}{|c||}{\cellcolor{lightgray}\small expected} & 0.722 & 0.700 & \multicolumn{1}{c||}{0.682} & \multicolumn{1}{c|}{0.361} & 0.350 & \multicolumn{1}{c||}{0.341} & \multicolumn{1}{c|}{0.181} & 0.175 & \multicolumn{1}{c||}{1.171} & \multicolumn{1}{c|}{0.091} & 0.088 & 0.086 \\ \hline
	\end{tabular}\vspace{-3mm}
	\caption{Experimental order of convergence (MINI): $\texttt{EOC}_n(e_{\mathbf{D\overline{\boldsymbol{v}}},n})$,~${n=3,\dots,6}$.}
	\label{tab:1}
		\begin{tabular}{c |c|c|c|c|c|c|c|c|c|c|c|c|} \cmidrule(){1-13}
		\multicolumn{1}{|c||}{\cellcolor{lightgray}$\alpha$}	
		& \multicolumn{3}{c||}{\cellcolor{lightgray}$1.0$}   & \multicolumn{3}{c||}{\cellcolor{lightgray}$0.5$} & \multicolumn{3}{c||}{\cellcolor{lightgray}$0.25$} & \multicolumn{3}{c}{\cellcolor{lightgray}$0.125$}\\ 
		\hline 
		
		\multicolumn{1}{|c||}{\cellcolor{lightgray}\diagbox[height=1.1\line,width=0.11\dimexpr\linewidth]{\vspace{-0.6mm}$i$}{\\[-5mm] $p^-$}}
		& \cellcolor{lightgray}2.25 & \cellcolor{lightgray}2.5  & \multicolumn{1}{c||}{\cellcolor{lightgray}2.75}
		& \cellcolor{lightgray}2.25 & \cellcolor{lightgray}2.5  & \multicolumn{1}{c||}{\cellcolor{lightgray}2.75}
		& \cellcolor{lightgray}2.25 & \cellcolor{lightgray}2.5  & \multicolumn{1}{c||}{\cellcolor{lightgray}2.75}
		& \cellcolor{lightgray}2.25 & \cellcolor{lightgray}2.5  & \cellcolor{lightgray}2.75  \\ \hline\hline 
\multicolumn{1}{|c||}{\cellcolor{lightgray}$3$}             & 0.663 & 0.645 & \multicolumn{1}{c||}{0.630} & \multicolumn{1}{c|}{0.320} & 0.320 & \multicolumn{1}{c||}{0.319} & \multicolumn{1}{c|}{0.130} & 0.140 & \multicolumn{1}{c||}{0.146} & \multicolumn{1}{c|}{0.039} & 0.053 & 0.062 \\ \hline
\multicolumn{1}{|c||}{\cellcolor{lightgray}$4$}             & 0.701 & 0.681 & \multicolumn{1}{c||}{0.665} & \multicolumn{1}{c|}{0.337} & 0.333 & \multicolumn{1}{c||}{0.329} & \multicolumn{1}{c|}{0.143} & 0.148 & \multicolumn{1}{c||}{0.152} & \multicolumn{1}{c|}{0.046} & 0.057 & 0.063 \\ \hline
\multicolumn{1}{|c||}{\cellcolor{lightgray}$5$}             & 0.714 & 0.693 & \multicolumn{1}{c||}{0.676} & \multicolumn{1}{c|}{0.346} & 0.340 & \multicolumn{1}{c||}{0.334} & \multicolumn{1}{c|}{0.151} & 0.154 & \multicolumn{1}{c||}{0.155} & \multicolumn{1}{c|}{0.052} & 0.060 & 0.065 \\ \hline
\multicolumn{1}{|c||}{\cellcolor{lightgray}$6$}             & 0.719 & 0.698 & \multicolumn{1}{c||}{0.680} & \multicolumn{1}{c|}{0.352} & 0.343 & \multicolumn{1}{c||}{0.336} & \multicolumn{1}{c|}{0.158} & 0.158 & \multicolumn{1}{c||}{0.158} & \multicolumn{1}{c|}{0.057} & 0.063 & 0.067 \\ \hline\hline
		\multicolumn{1}{|c||}{\cellcolor{lightgray}\small expected} & 0.722 & 0.700 & \multicolumn{1}{c||}{0.682} & \multicolumn{1}{c|}{0.361} & 0.350 & \multicolumn{1}{c||}{0.341} & \multicolumn{1}{c|}{0.181} & 0.175 & \multicolumn{1}{c||}{1.171} & \multicolumn{1}{c|}{0.091} & 0.088 & 0.086 \\ \hline
	\end{tabular}\vspace{-3mm}
	\caption{Experimental order of convergence (MINI): $\texttt{EOC}_n(e_{\smash{\overline{\boldsymbol{S}}},n})$,~${n=3,\dots,6}$.}
	\label{tab:2} 
	\end{table}
	\begin{table}[H]
		\setlength\tabcolsep{4.0pt}
		\centering
	\begin{tabular}{c |c|c|c|c|c|c|c|c|c|c|c|c|} \cmidrule(){1-13}
		\multicolumn{1}{|c||}{\cellcolor{lightgray}$\alpha$}	
		& \multicolumn{3}{c||}{\cellcolor{lightgray}$1.0$}   & \multicolumn{3}{c||}{\cellcolor{lightgray}$0.5$} & \multicolumn{3}{c||}{\cellcolor{lightgray}$0.25$} & \multicolumn{3}{c}{\cellcolor{lightgray}$0.125$}\\ 
		\hline 
		
		\multicolumn{1}{|c||}{\cellcolor{lightgray}\diagbox[height=1.1\line,width=0.11\dimexpr\linewidth]{\vspace{-0.6mm}$i$}{\\[-5mm] $p^-$}}
		& \cellcolor{lightgray}2.25 & \cellcolor{lightgray}2.5  & \multicolumn{1}{c||}{\cellcolor{lightgray}2.75}
		& \cellcolor{lightgray}2.25 & \cellcolor{lightgray}2.5  & \multicolumn{1}{c||}{\cellcolor{lightgray}2.75}
		& \cellcolor{lightgray}2.25 & \cellcolor{lightgray}2.5  & \multicolumn{1}{c||}{\cellcolor{lightgray}2.75}
		& \cellcolor{lightgray}2.25 & \cellcolor{lightgray}2.5  & \cellcolor{lightgray}2.75  \\ \hline\hline 
\multicolumn{1}{|c||}{\cellcolor{lightgray}$3$}             & 0.726 & 0.703 & \multicolumn{1}{c||}{0.685} & \multicolumn{1}{c|}{0.372} & 0.361 & \multicolumn{1}{c||}{0.352} & \multicolumn{1}{c|}{0.186} & 0.181 & \multicolumn{1}{c||}{0.177} & \multicolumn{1}{c|}{0.087} & 0.087 & 0.087 \\ \hline
\multicolumn{1}{|c||}{\cellcolor{lightgray}$4$}             & 0.725 & 0.703 & \multicolumn{1}{c||}{0.684} & \multicolumn{1}{c|}{0.369} & 0.357 & \multicolumn{1}{c||}{0.348} & \multicolumn{1}{c|}{0.183} & 0.177 & \multicolumn{1}{c||}{0.173} & \multicolumn{1}{c|}{0.084} & 0.083 & 0.083 \\ \hline
\multicolumn{1}{|c||}{\cellcolor{lightgray}$5$}             & 0.724 & 0.702 & \multicolumn{1}{c||}{0.683} & \multicolumn{1}{c|}{0.367} & 0.355 & \multicolumn{1}{c||}{0.346} & \multicolumn{1}{c|}{0.182} & 0.176 & \multicolumn{1}{c||}{0.172} & \multicolumn{1}{c|}{0.083} & 0.082 & 0.081 \\ \hline
\multicolumn{1}{|c||}{\cellcolor{lightgray}$6$}             & 0.724 & 0.701 & \multicolumn{1}{c||}{0.683} & \multicolumn{1}{c|}{0.365} & 0.354 & \multicolumn{1}{c||}{0.344} & \multicolumn{1}{c|}{0.182} & 0.176 & \multicolumn{1}{c||}{0.172} & \multicolumn{1}{c|}{0.084} & 0.082 & 0.081 \\ \hline\hline
		\multicolumn{1}{|c||}{\cellcolor{lightgray}\small expected} & 0.722 & 0.700 & \multicolumn{1}{c||}{0.682} & \multicolumn{1}{c|}{0.361} & 0.350 & \multicolumn{1}{c||}{0.341} & \multicolumn{1}{c|}{0.181} & 0.175 & \multicolumn{1}{c||}{1.171} & \multicolumn{1}{c|}{0.091} & 0.088 & 0.086 \\ \hline
	\end{tabular}\vspace{-3mm}
	\caption{Experimental order of convergence (MINI): $\texttt{EOC}_n(e_{\overline{\pi},n})$,~${n=3,\dots,6}$.}
	\label{tab:3}
	\begin{tabular}{c |c|c|c|c|c|c|c|c|c|c|c|c|} \cmidrule(){1-13}
		\multicolumn{1}{|c||}{\cellcolor{lightgray}$\alpha$}	
		& \multicolumn{3}{c||}{\cellcolor{lightgray}$1.0$}   & \multicolumn{3}{c||}{\cellcolor{lightgray}$0.5$} & \multicolumn{3}{c||}{\cellcolor{lightgray}$0.25$} & \multicolumn{3}{c}{\cellcolor{lightgray}$0.125$}\\ 
		\hline 
		
		\multicolumn{1}{|c||}{\cellcolor{lightgray}\diagbox[height=1.1\line,width=0.11\dimexpr\linewidth]{\vspace{-0.6mm}$i$}{\\[-5mm] $p^-$}}
		& \cellcolor{lightgray}2.25 & \cellcolor{lightgray}2.5  & \multicolumn{1}{c||}{\cellcolor{lightgray}2.75}
		& \cellcolor{lightgray}2.25 & \cellcolor{lightgray}2.5  & \multicolumn{1}{c||}{\cellcolor{lightgray}2.75}
		& \cellcolor{lightgray}2.25 & \cellcolor{lightgray}2.5  & \multicolumn{1}{c||}{\cellcolor{lightgray}2.75}
		& \cellcolor{lightgray}2.25 & \cellcolor{lightgray}2.5  & \cellcolor{lightgray}2.75  \\ \hline\hline 
\multicolumn{1}{|c||}{\cellcolor{lightgray}$3$}             & 1.638 & 1.620 & \multicolumn{1}{c||}{1.598} & \multicolumn{1}{c|}{1.501} & 1.517 & \multicolumn{1}{c||}{1.524} & \multicolumn{1}{c|}{1.390} & 1.428 & \multicolumn{1}{c||}{1.453} & \multicolumn{1}{c|}{1.291} & 1.347 & 1.389 \\ \hline
\multicolumn{1}{|c||}{\cellcolor{lightgray}$4$}             & 1.693 & 1.673 & \multicolumn{1}{c||}{1.651} & \multicolumn{1}{c|}{1.536} & 1.550 & \multicolumn{1}{c||}{1.557} & \multicolumn{1}{c|}{1.418} & 1.454 & \multicolumn{1}{c||}{1.479} & \multicolumn{1}{c|}{1.317} & 1.367 & 1.409 \\ \hline
\multicolumn{1}{|c||}{\cellcolor{lightgray}$5$}             & 1.709 & 1.680 & \multicolumn{1}{c||}{1.651} & \multicolumn{1}{c|}{1.549} & 1.550 & \multicolumn{1}{c||}{1.540} & \multicolumn{1}{c|}{1.427} & 1.457 & \multicolumn{1}{c||}{1.474} & \multicolumn{1}{c|}{1.335} & 1.368 & 1.401 \\ \hline
\multicolumn{1}{|c||}{\cellcolor{lightgray}$6$}             & 1.674 & 1.618 & \multicolumn{1}{c||}{1.564} & \multicolumn{1}{c|}{1.529} & 1.487 & \multicolumn{1}{c||}{1.411} & \multicolumn{1}{c|}{1.416} & 1.415 & \multicolumn{1}{c||}{1.401} & \multicolumn{1}{c|}{1.354} & 1.354 & 1.340  \\ \hline\hline
		\multicolumn{1}{|c||}{\cellcolor{lightgray}\small expected} & 0.722 & 0.700 & \multicolumn{1}{c||}{0.682} & \multicolumn{1}{c|}{0.361} & 0.350 & \multicolumn{1}{c||}{0.341} & \multicolumn{1}{c|}{0.181} & 0.175 & \multicolumn{1}{c||}{1.171} & \multicolumn{1}{c|}{0.091} & 0.088 & 0.086 \\ \hline
	\end{tabular}\vspace{-3mm}
	\caption{Experimental order of convergence (MINI): $\texttt{EOC}_n(e_{\overline{\boldsymbol{v}},n})$,~${n=3,\dots,6}$.}
	\label{tab:4}
	\begin{tabular}{c |c|c|c|c|c|c|c|c|c|c|c|c|} \cmidrule(){1-13}
		\multicolumn{1}{|c||}{\cellcolor{lightgray}$\alpha$}	
		& \multicolumn{3}{c||}{\cellcolor{lightgray}$1.0$}   & \multicolumn{3}{c||}{\cellcolor{lightgray}$0.5$} & \multicolumn{3}{c||}{\cellcolor{lightgray}$0.25$} & \multicolumn{3}{c}{\cellcolor{lightgray}$0.125$}\\ 
		\hline 
		
		\multicolumn{1}{|c||}{\cellcolor{lightgray}\diagbox[height=1.1\line,width=0.11\dimexpr\linewidth]{\vspace{-0.6mm}$i$}{\\[-5mm] $p^-$}}
		& \cellcolor{lightgray}2.25 & \cellcolor{lightgray}2.5  & \multicolumn{1}{c||}{\cellcolor{lightgray}2.75}
		& \cellcolor{lightgray}2.25 & \cellcolor{lightgray}2.5  & \multicolumn{1}{c||}{\cellcolor{lightgray}2.75}
		& \cellcolor{lightgray}2.25 & \cellcolor{lightgray}2.5  & \multicolumn{1}{c||}{\cellcolor{lightgray}2.75}
		& \cellcolor{lightgray}2.25 & \cellcolor{lightgray}2.5  & \cellcolor{lightgray}2.75  \\ \hline\hline 
\multicolumn{1}{|c||}{\cellcolor{lightgray}$3$}             & 0.685 & 0.666 & \multicolumn{1}{c||}{0.650} & \multicolumn{1}{c|}{0.348} & 0.343 & \multicolumn{1}{c||}{0.337} & \multicolumn{1}{c|}{0.157} & 0.161 & \multicolumn{1}{c||}{0.163} & \multicolumn{1}{c|}{0.056} & 0.066 & 0.072 \\ \hline
\multicolumn{1}{|c||}{\cellcolor{lightgray}$4$}             & 0.711 & 0.690 & \multicolumn{1}{c||}{0.673} & \multicolumn{1}{c|}{0.356} & 0.348 & \multicolumn{1}{c||}{0.340} & \multicolumn{1}{c|}{0.165} & 0.165 & \multicolumn{1}{c||}{0.164} & \multicolumn{1}{c|}{0.062} & 0.068 & 0.072 \\ \hline
\multicolumn{1}{|c||}{\cellcolor{lightgray}$5$}             & 0.719 & 0.697 & \multicolumn{1}{c||}{0.679} & \multicolumn{1}{c|}{0.359} & 0.349 & \multicolumn{1}{c||}{0.341} & \multicolumn{1}{c|}{0.169} & 0.167 & \multicolumn{1}{c||}{0.165} & \multicolumn{1}{c|}{0.066} & 0.070 & 0.073 \\ \hline
\multicolumn{1}{|c||}{\cellcolor{lightgray}$6$}             & 0.721 & 0.699 & \multicolumn{1}{c||}{0.681} & \multicolumn{1}{c|}{0.360} & 0.350 & \multicolumn{1}{c||}{0.341} & \multicolumn{1}{c|}{0.172} & 0.169 & \multicolumn{1}{c||}{0.167} & \multicolumn{1}{c|}{0.070} & 0.073 & 0.074 \\ \hline\hline
		\multicolumn{1}{|c||}{\cellcolor{lightgray}\small expected} & 0.722 & 0.700 & \multicolumn{1}{c||}{0.682} & \multicolumn{1}{c|}{0.361} & 0.350 & \multicolumn{1}{c||}{0.341} & \multicolumn{1}{c|}{0.181} & 0.175 & \multicolumn{1}{c||}{1.171} & \multicolumn{1}{c|}{0.091} & 0.088 & 0.086 \\ \hline
	\end{tabular}\vspace{-3mm}
	\caption{Experimental order of convergence (Taylor--Hood): $\texttt{EOC}_n(e_{\mathbf{D\overline{\boldsymbol{v}}},n})$,~${n=3,\dots,6}$.}
	\label{tab:5}
	\begin{tabular}{c |c|c|c|c|c|c|c|c|c|c|c|c|} \cmidrule(){1-13}
		\multicolumn{1}{|c||}{\cellcolor{lightgray}$\alpha$}	
		& \multicolumn{3}{c||}{\cellcolor{lightgray}$1.0$}   & \multicolumn{3}{c||}{\cellcolor{lightgray}$0.5$} & \multicolumn{3}{c||}{\cellcolor{lightgray}$0.25$} & \multicolumn{3}{c}{\cellcolor{lightgray}$0.125$}\\ 
		\hline 
		
		\multicolumn{1}{|c||}{\cellcolor{lightgray}\diagbox[height=1.1\line,width=0.11\dimexpr\linewidth]{\vspace{-0.6mm}$i$}{\\[-5mm] $p^-$}}
		& \cellcolor{lightgray}2.25 & \cellcolor{lightgray}2.5  & \multicolumn{1}{c||}{\cellcolor{lightgray}2.75}
		& \cellcolor{lightgray}2.25 & \cellcolor{lightgray}2.5  & \multicolumn{1}{c||}{\cellcolor{lightgray}2.75}
		& \cellcolor{lightgray}2.25 & \cellcolor{lightgray}2.5  & \multicolumn{1}{c||}{\cellcolor{lightgray}2.75}
		& \cellcolor{lightgray}2.25 & \cellcolor{lightgray}2.5  & \cellcolor{lightgray}2.75  \\ \hline\hline 
	\multicolumn{1}{|c||}{\cellcolor{lightgray}$3$}             & 0.680 & 0.663 & \multicolumn{1}{c||}{0.648} & \multicolumn{1}{c|}{0.341} & 0.337 & \multicolumn{1}{c||}{0.333} & \multicolumn{1}{c|}{0.145} & 0.151 & \multicolumn{1}{c||}{0.155} & \multicolumn{1}{c|}{0.045} & 0.058 & 0.065 \\ \hline
	\multicolumn{1}{|c||}{\cellcolor{lightgray}$4$}             & 0.709 & 0.688 & \multicolumn{1}{c||}{0.672} & \multicolumn{1}{c|}{0.352} & 0.345 & \multicolumn{1}{c||}{0.338} & \multicolumn{1}{c|}{0.155} & 0.158 & \multicolumn{1}{c||}{0.159} & \multicolumn{1}{c|}{0.053} & 0.061 & 0.066 \\ \hline
	\multicolumn{1}{|c||}{\cellcolor{lightgray}$5$}             & 0.718 & 0.696 & \multicolumn{1}{c||}{0.679} & \multicolumn{1}{c|}{0.356} & 0.348 & \multicolumn{1}{c||}{0.340} & \multicolumn{1}{c|}{0.162} & 0.162 & \multicolumn{1}{c||}{0.161} & \multicolumn{1}{c|}{0.058} & 0.064 & 0.068 \\ \hline
	\multicolumn{1}{|c||}{\cellcolor{lightgray}$6$}             & 0.721 & 0.699 & \multicolumn{1}{c||}{0.681} & \multicolumn{1}{c|}{0.359} & 0.349 & \multicolumn{1}{c||}{0.341} & \multicolumn{1}{c|}{0.167} & 0.165 & \multicolumn{1}{c||}{0.163} & \multicolumn{1}{c|}{0.063} & 0.067 & 0.070 \\ \hline\hline
		\multicolumn{1}{|c||}{\cellcolor{lightgray}\small expected} & 0.722 & 0.700 & \multicolumn{1}{c||}{0.682} & \multicolumn{1}{c|}{0.361} & 0.350 & \multicolumn{1}{c||}{0.341} & \multicolumn{1}{c|}{0.181} & 0.175 & \multicolumn{1}{c||}{1.171} & \multicolumn{1}{c|}{0.091} & 0.088 & 0.086 \\ \hline
	\end{tabular}\vspace{-3mm}
	\caption{Experimental order of convergence (Taylor--Hood): $\texttt{EOC}_n(e_{\smash{\overline{\boldsymbol{S}}},n})$,~${n=3,\dots,6}$.}
	\label{tab:6} 
	\begin{tabular}{c |c|c|c|c|c|c|c|c|c|c|c|c|} \cmidrule(){1-13}
		\multicolumn{1}{|c||}{\cellcolor{lightgray}$\alpha$}	
		& \multicolumn{3}{c||}{\cellcolor{lightgray}$1.0$}   & \multicolumn{3}{c||}{\cellcolor{lightgray}$0.5$} & \multicolumn{3}{c||}{\cellcolor{lightgray}$0.25$} & \multicolumn{3}{c}{\cellcolor{lightgray}$0.125$}\\ 
		\hline 
		
		\multicolumn{1}{|c||}{\cellcolor{lightgray}\diagbox[height=1.1\line,width=0.11\dimexpr\linewidth]{\vspace{-0.6mm}$i$}{\\[-5mm] $p^-$}}
		& \cellcolor{lightgray}2.25 & \cellcolor{lightgray}2.5  & \multicolumn{1}{c||}{\cellcolor{lightgray}2.75}
		& \cellcolor{lightgray}2.25 & \cellcolor{lightgray}2.5  & \multicolumn{1}{c||}{\cellcolor{lightgray}2.75}
		& \cellcolor{lightgray}2.25 & \cellcolor{lightgray}2.5  & \multicolumn{1}{c||}{\cellcolor{lightgray}2.75}
		& \cellcolor{lightgray}2.25 & \cellcolor{lightgray}2.5  & \cellcolor{lightgray}2.75  \\ \hline\hline 
	\multicolumn{1}{|c||}{\cellcolor{lightgray}$3$}             & 0.728 & 0.706 & \multicolumn{1}{c||}{0.688} & \multicolumn{1}{c|}{0.372} & 0.361 & \multicolumn{1}{c||}{0.352} & \multicolumn{1}{c|}{0.182} & 0.179 & \multicolumn{1}{c||}{0.176} & \multicolumn{1}{c|}{0.082} & 0.084 & 0.085 \\ \hline
	\multicolumn{1}{|c||}{\cellcolor{lightgray}$4$}             & 0.725 & 0.703 & \multicolumn{1}{c||}{0.685} & \multicolumn{1}{c|}{0.368} & 0.357 & \multicolumn{1}{c||}{0.347} & \multicolumn{1}{c|}{0.180} & 0.176 & \multicolumn{1}{c||}{0.172} & \multicolumn{1}{c|}{0.079} & 0.080 & 0.081 \\ \hline
	\multicolumn{1}{|c||}{\cellcolor{lightgray}$5$}             & 0.724 & 0.702 & \multicolumn{1}{c||}{0.684} & \multicolumn{1}{c|}{0.366} & 0.355 & \multicolumn{1}{c||}{0.345} & \multicolumn{1}{c|}{0.179} & 0.175 & \multicolumn{1}{c||}{0.171} & \multicolumn{1}{c|}{0.080} & 0.080 & 0.080 \\ \hline 
	\multicolumn{1}{|c||}{\cellcolor{lightgray}$6$}             & 0.724 & 0.701 & \multicolumn{1}{c||}{0.683} & \multicolumn{1}{c|}{0.365} & 0.353 & \multicolumn{1}{c||}{0.344} & \multicolumn{1}{c|}{0.180} & 0.175 & \multicolumn{1}{c||}{0.171} & \multicolumn{1}{c|}{0.081} & 0.080 & 0.080 \\ \hline\hline
		\multicolumn{1}{|c||}{\cellcolor{lightgray}\small expected} & 0.722 & 0.700 & \multicolumn{1}{c||}{0.682} & \multicolumn{1}{c|}{0.361} & 0.350 & \multicolumn{1}{c||}{0.341} & \multicolumn{1}{c|}{0.181} & 0.175 & \multicolumn{1}{c||}{1.171} & \multicolumn{1}{c|}{0.091} & 0.088 & 0.086 \\ \hline
	\end{tabular}\vspace{-3mm}
	\caption{Experimental order of convergence (Taylor--Hood): $\texttt{EOC}_n(e_{\overline{\pi},n})$,~${n=3,\dots,6}$.}
	\label{tab:7}
	\begin{tabular}{c |c|c|c|c|c|c|c|c|c|c|c|c|} \cmidrule(){1-13}
		\multicolumn{1}{|c||}{\cellcolor{lightgray}$\alpha$}	
		& \multicolumn{3}{c||}{\cellcolor{lightgray}$1.0$}   & \multicolumn{3}{c||}{\cellcolor{lightgray}$0.5$} & \multicolumn{3}{c||}{\cellcolor{lightgray}$0.25$} & \multicolumn{3}{c}{\cellcolor{lightgray}$0.125$}\\ 
		\hline 
		
		\multicolumn{1}{|c||}{\cellcolor{lightgray}\diagbox[height=1.1\line,width=0.11\dimexpr\linewidth]{\vspace{-0.6mm}$i$}{\\[-5mm] $p^-$}}
		& \cellcolor{lightgray}2.25 & \cellcolor{lightgray}2.5  & \multicolumn{1}{c||}{\cellcolor{lightgray}2.75}
		& \cellcolor{lightgray}2.25 & \cellcolor{lightgray}2.5  & \multicolumn{1}{c||}{\cellcolor{lightgray}2.75}
		& \cellcolor{lightgray}2.25 & \cellcolor{lightgray}2.5  & \multicolumn{1}{c||}{\cellcolor{lightgray}2.75}
		& \cellcolor{lightgray}2.25 & \cellcolor{lightgray}2.5  & \cellcolor{lightgray}2.75  \\ \hline\hline 
	\multicolumn{1}{|c||}{\cellcolor{lightgray}$3$}             & 1.652 & 1.628 & \multicolumn{1}{c||}{1.600} & \multicolumn{1}{c|}{1.553} & 1.566 & \multicolumn{1}{c||}{1.571} & \multicolumn{1}{c|}{1.434} & 1.470 & \multicolumn{1}{c||}{1.496} & \multicolumn{1}{c|}{1.320} & 1.375 & 1.417 \\ \hline
	\multicolumn{1}{|c||}{\cellcolor{lightgray}$4$}             & 1.706 & 1.681 & \multicolumn{1}{c||}{1.654} & \multicolumn{1}{c|}{1.576} & 1.588 & \multicolumn{1}{c||}{1.593} & \multicolumn{1}{c|}{1.460} & 1.491 & \multicolumn{1}{c||}{1.516} & \multicolumn{1}{c|}{1.349} & 1.397 & 1.436 \\ \hline
	\multicolumn{1}{|c||}{\cellcolor{lightgray}$5$}             & 1.732 & 1.705 & \multicolumn{1}{c||}{1.675} & \multicolumn{1}{c|}{1.581} & 1.590 & \multicolumn{1}{c||}{1.595} & \multicolumn{1}{c|}{1.469} & 1.494 & \multicolumn{1}{c||}{1.512} & \multicolumn{1}{c|}{1.368} & 1.406 & 1.435 \\ \hline
	\multicolumn{1}{|c||}{\cellcolor{lightgray}$6$}             & 1.736 & 1.718 & \multicolumn{1}{c||}{1.686} & \multicolumn{1}{c|}{1.561} & 1.565 & \multicolumn{1}{c||}{1.566} & \multicolumn{1}{c|}{1.463} & 1.462 & \multicolumn{1}{c||}{1.462} & \multicolumn{1}{c|}{1.384} & 1.403 & 1.399 \\ \hline\hline
		\multicolumn{1}{|c||}{\cellcolor{lightgray}\small expected} & 0.722 & 0.700 & \multicolumn{1}{c||}{0.682} & \multicolumn{1}{c|}{0.361} & 0.350 & \multicolumn{1}{c||}{0.341} & \multicolumn{1}{c|}{0.181} & 0.175 & \multicolumn{1}{c||}{1.171} & \multicolumn{1}{c|}{0.091} & 0.088 & 0.086 \\ \hline
	\end{tabular}\vspace{-3mm}
	\caption{Experimental order of convergence (Taylor--Hood): $\texttt{EOC}_n(e_{\overline{\boldsymbol{v}},n})$,~${n=3,\dots,6}$.}\label{tab:8}
\end{table}

\subsection{An example for an electro-rheological fluid flow}

\hspace*{5mm}In this subsection, we examine a less academic example describing an unsteady \textit{electro-rheological fluid} flow in three dimensions.  Solutions to the unsteady $p(\cdot,\cdot)$-Navier--Stokes equations \eqref{eq:ptxNavierStokes}~also model  electro-rheological fluid flow behavior, if the right-hand side is given~via~${\boldsymbol{f}\hspace*{-0.2em}\coloneqq\hspace*{-0.2em}\widehat{\boldsymbol{f}}\hspace*{-0.175em}+\hspace*{-0.175em}\chi_E\textup{div}_x(\boldsymbol{E}\hspace*{-0.15em}\otimes\hspace*{-0.15em}\boldsymbol{E})\colon \hspace*{-0.175em}Q_T\hspace*{-0.2em}\to \hspace*{-0.2em}\mathbb{R}^3}$, where $\widehat{\boldsymbol{f}}\colon Q_T\to \mathbb{R}^3$ is a given \textit{mechanical body force},
$\chi_E>0$ the \textit{di-eletric susceptibility}, $\boldsymbol{E}\colon \overline{Q_T}\to \mathbb{R}^3$ a given \textit{electric field}, solving the  \textit{quasi-static Maxwell's equations}, \textit{i.e.},
\begin{align}\label{eq:maxwell}
	\begin{aligned}
		\textup{div}_x\boldsymbol{E} &= 0&&\quad\text{ in }Q_T\,,\\
		\textrm{curl}_x\boldsymbol{E} &= \mathbf{0}&&\quad\text{ in }Q_T\,,\\
		\boldsymbol{E}\cdot\mathbf{n}&=\boldsymbol{E}_0\cdot\mathbf{n}&&\quad\text{ on }\Gamma_T\,,
	\end{aligned}
\end{align} 
and the power-law index $p\colon \overline{Q_T}\to (1,+\infty)$ depends on the strength of the electric field $\vert \boldsymbol{E}\vert\colon \overline{Q_T}\to \mathbb{R}_{\ge 0}$, \textit{i.e.}, 
there exists a material function $\hat{p}\colon \mathbb{R}_{\ge 0}\to\mathbb{R}_{\ge 0} $ such that
$p(t,x)\coloneqq \hat{p}(\vert \boldsymbol{E}(t,x)\vert)$ for all $(t,x)^\top\in \overline{Q_T}$.\enlargethispage{10mm}

In \hspace*{-0.1mm}the \hspace*{-0.1mm}numerical \hspace*{-0.1mm}experiments, \hspace*{-0.1mm}we \hspace*{-0.1mm}choose \hspace*{-0.1mm}as \hspace*{-0.1mm}spatial \hspace*{-0.1mm}domain \hspace*{-0.1mm}$\Omega\hspace*{-0.15em}\coloneqq\hspace*{-0.15em} (0,1)^3\setminus(B_{1/16}^3(\frac{1}{4}\mathbf{e}_1)\cup B_{1/16}^3(\frac{3}{4}\mathbf{e}_1))$\footnote{Here, $\mathbf{e}_1\coloneqq (1,0,0)^\top\in\mathbb{S}^2 $ denotes the first three-dimensional unit vector.},~\textit{i.e.}, the \hspace*{-0.1mm}unit \hspace*{-0.1mm}cube \hspace*{-0.1mm}with \hspace*{-0.1mm}two \hspace*{-0.1mm}holes \hspace*{-0.1mm}(\text{cf}.\ \hspace*{-0.1mm}Figure \hspace*{-0.1mm}\ref{fig:E}(LEFT)), \hspace*{-0.1mm}as \hspace*{-0.1mm}final \hspace*{-0.1mm}time \hspace*{-0.1mm}$T\hspace*{-0.15em}=\hspace*{-0.15em}0.1$, \hspace*{-0.1mm}as \hspace*{-0.1mm}material~\hspace*{-0.1mm}function~\hspace*{-0.1mm}${\hat{p}\hspace*{-0.15em}\in\hspace*{-0.15em} C^{0,1}(\mathbb{R}_{\ge 0})}$, defined by $\hat{p}(t)\coloneqq 2+\frac{2}{1+10t}$~for~all~${t\ge 0}$, and as electric field $\boldsymbol{E} \in W^{1,\infty}(Q_T;\mathbb{R}^3)$, for every $(t,x)^\top\in \overline{Q_T}$ defined by
\begin{align}\label{def:E}
	\boldsymbol{E}(t,x)\coloneqq 100.0\times \min\bigg\{ \min\bigg\{ \frac{T}{4}, t \bigg\}, T - t \bigg\}\times\left(\frac{x-\frac{1}{4}\mathbf{e}_1}{\vert x-\frac{1}{4}\mathbf{e}_1\vert^3}-\frac{x-\frac{3}{4}\mathbf{e}_1}{\vert x-\frac{3}{4}\mathbf{e}_1\vert^3}\right)\,.
\end{align}
in order to model \textit{shear-thickening} (note that, by definition, it holds that $p^-\hspace*{-0.15em}> \hspace*{-0.15em}2.2$) between~two~\mbox{\textit{electrodes}},  located at the two holes of the domain $\Omega$ (\textit{cf}.\ Figure \ref{fig:E}(LEFT)).  It is readily checked that $\boldsymbol{E}\colon \overline{Q_T}\to \mathbb{R}^3$ indeed solves the quasi-static Maxwell's equations \eqref{eq:maxwell} if we prescribe the normal boundary~\mbox{condition}, \textit{e.g.}, if we set $\boldsymbol{E}_0\coloneqq \boldsymbol{E}$ on $\Gamma_T$. For sake of simplicity, we set $\chi_E\coloneqq 1$. In order to \mbox{generate}~a~\mbox{vortex}~flow, we choose as mechanical body force 
$\widehat{\boldsymbol{f}}\hspace*{-0.15em}\in \hspace*{-0.15em} C^\infty(\overline{\Omega};\mathbb{R}^3)$,~\mbox{defined}~by~${ \widehat{\boldsymbol{f}}(x)\hspace*{-0.175em}\coloneqq \hspace*{-0.175em}(2x_2\hspace*{-0.15em}-\hspace*{-0.15em}1)\mathbf{e}_1}$~for~all~${x\hspace*{-0.175em}=\hspace*{-0.175em}(x_1,x_2,x_3)^\top\hspace*{-0.25em}\in \hspace*{-0.175em}\overline{\Omega}}$, which becomes the total force $\boldsymbol{f}\hspace*{-0.15em}\in\hspace*{-0.15em} W^{1,\infty}(Q_T;\mathbb{R}^3)$ in absence of the electric field at initial time $t=0$ and at final time $t=T$ (\textit{cf}.\ \mbox{Figure} \ref{fig:f}(LEFT)). 
The fully-charged electric field $\boldsymbol{E}(\frac{T}{2})\in C^\infty(\overline{\Omega};\mathbb{R}^3)$~and~total force
  $\boldsymbol{f}(\frac{T}{2})\in C^\infty(\overline{\Omega};\mathbb{R}^3)$ are depicted in Figure~\ref{fig:E}(RIGHT) and Figure~\ref{fig:f}(RIGHT), respectively.~More~precisely, this setup simulates the states  of linear charging from $t=0$ to $t=\frac{T}{4}$, fully-charged from~$t=\frac{T}{4}$~to~$t=\frac{3T}{4}$, and linear discharging from $t=\frac{3T}{4}$ to $t=T$ of the electric field $\boldsymbol{E} \in W^{1,\infty}(Q_T;\mathbb{R}^3)$ defined by \eqref{def:E}.

\begin{figure}[H]\centering
	\includegraphics[width=6.885cm]{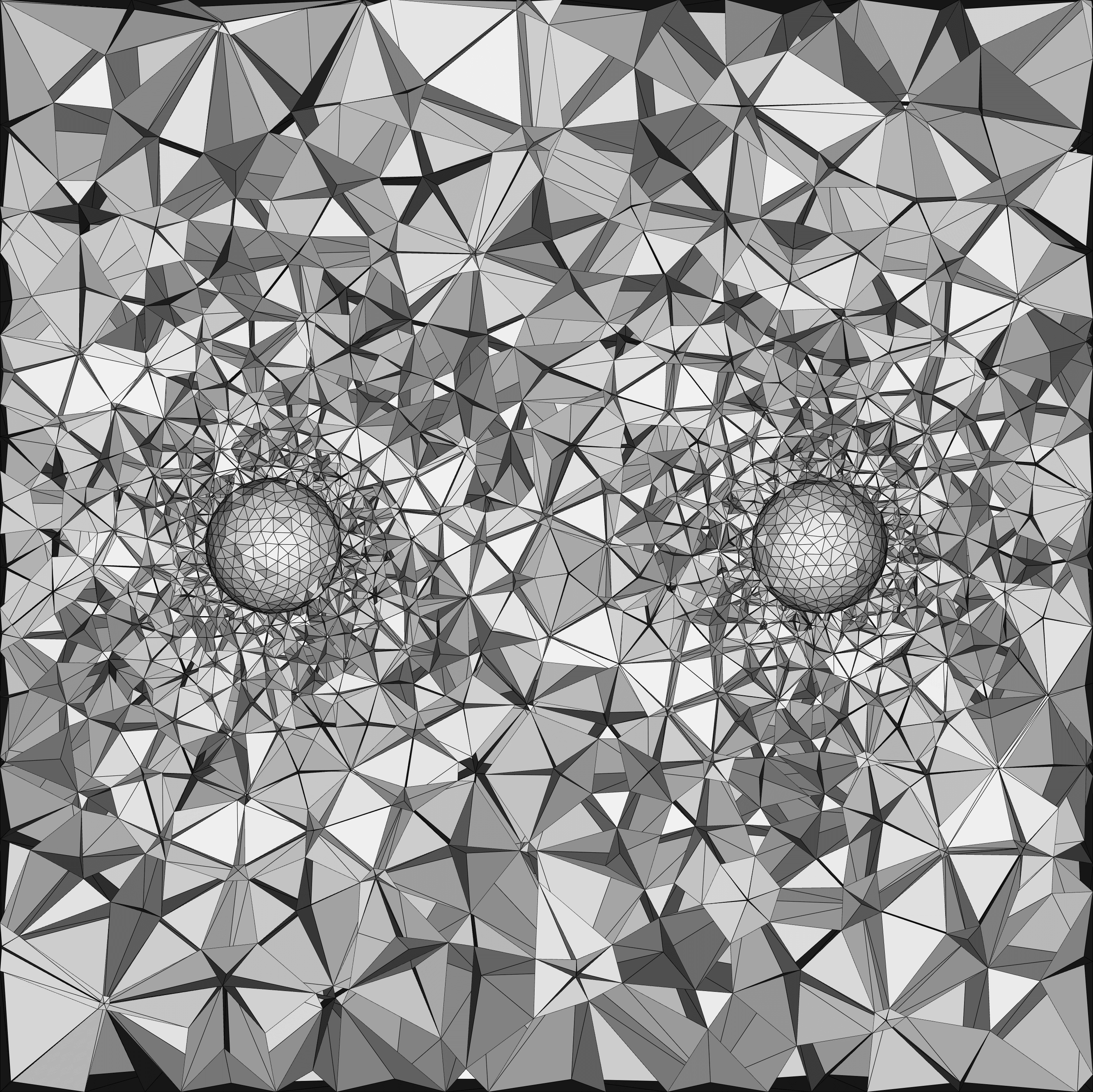} 	\includegraphics[width=8.58cm]{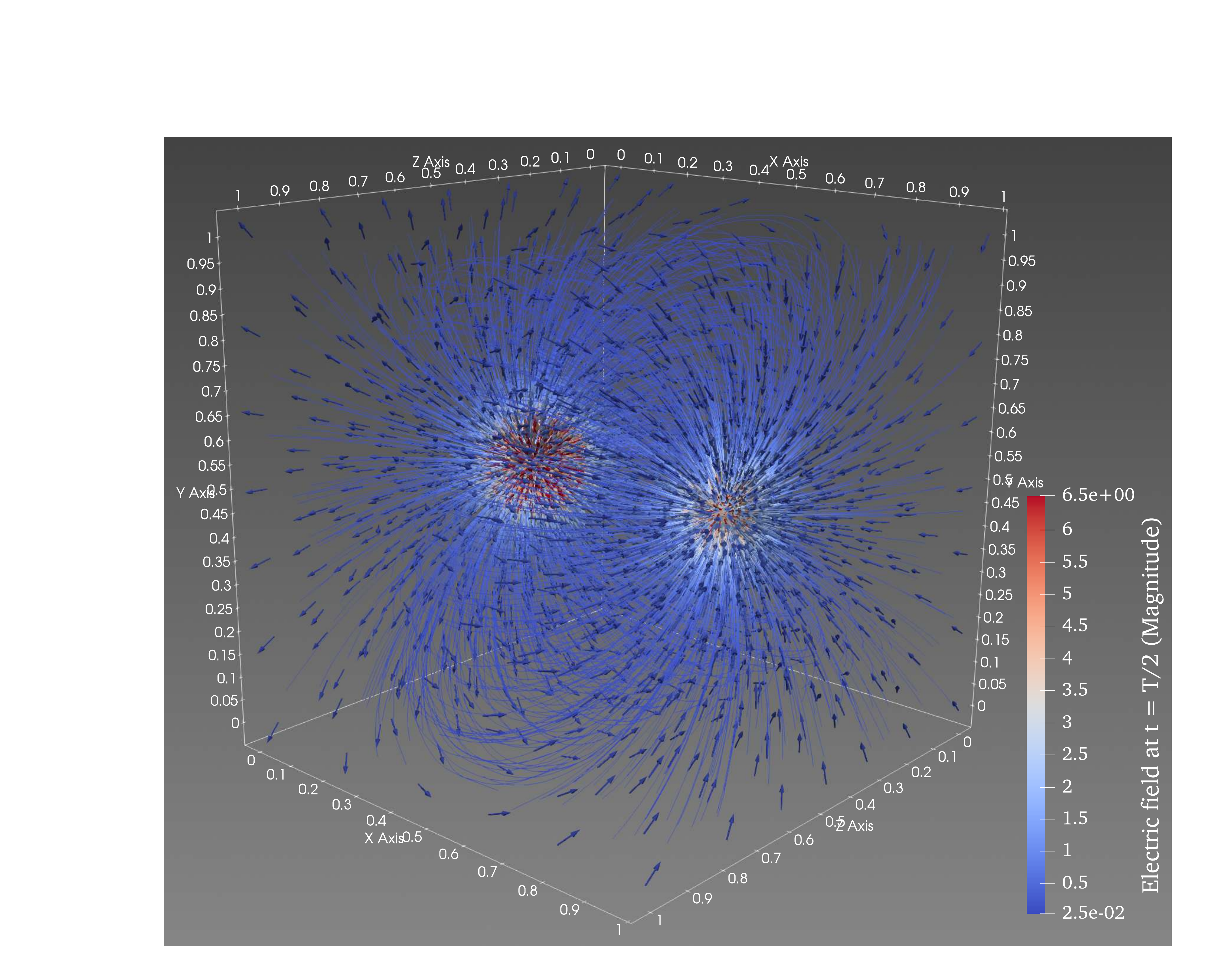} 
	\caption{LEFT: intersection of triangulation $\mathcal{T}_h$ with ($\mathbb{R}\times \{\frac{1}{2}\}\times \mathbb{R}$)-plane; RIGHT:  fully-charged~electric field $\boldsymbol{E}(\frac{T}{2})\in C^\infty(\overline{\Omega};\mathbb{R}^3)$ at time $t=\frac{T}{2}$.}
	\label{fig:E}
\end{figure}\vspace*{-3mm}

As initial condition, we consider a smooth, incompressible vortex flow fulfilling~a~\mbox{no-slip} boundary condition on $\partial\Omega$. More precisely, the initial velocity vector field $\mathbf{v}_0\in H$, for every $x=(x_1,x_2,x_3)^\top \in \Omega$, is defined by
\begin{align}\label{def:v0}
	\mathbf{v}_0(x)\coloneqq (-\sin( 2\pi x_2 ) ( 1 - \cos( 2\pi x_1 ) ),\sin( 2\pi x_1 ) ( 1 - \cos( 2 \pi x_2 ) ),0)^\top\,.
\end{align}
 
\begin{figure}[H]\centering
	\includegraphics[width=7.7cm]{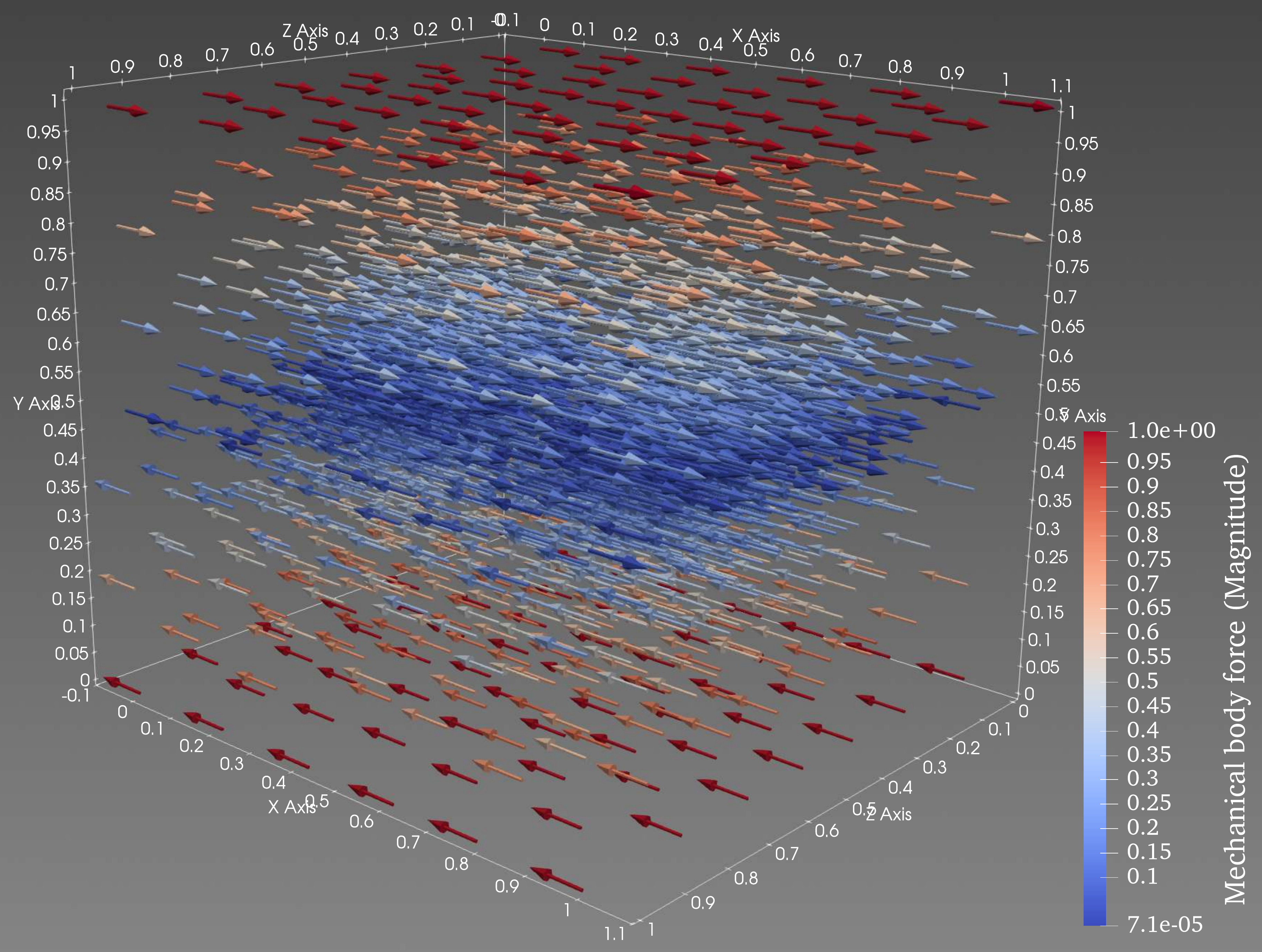}	\includegraphics[width=7.67cm]{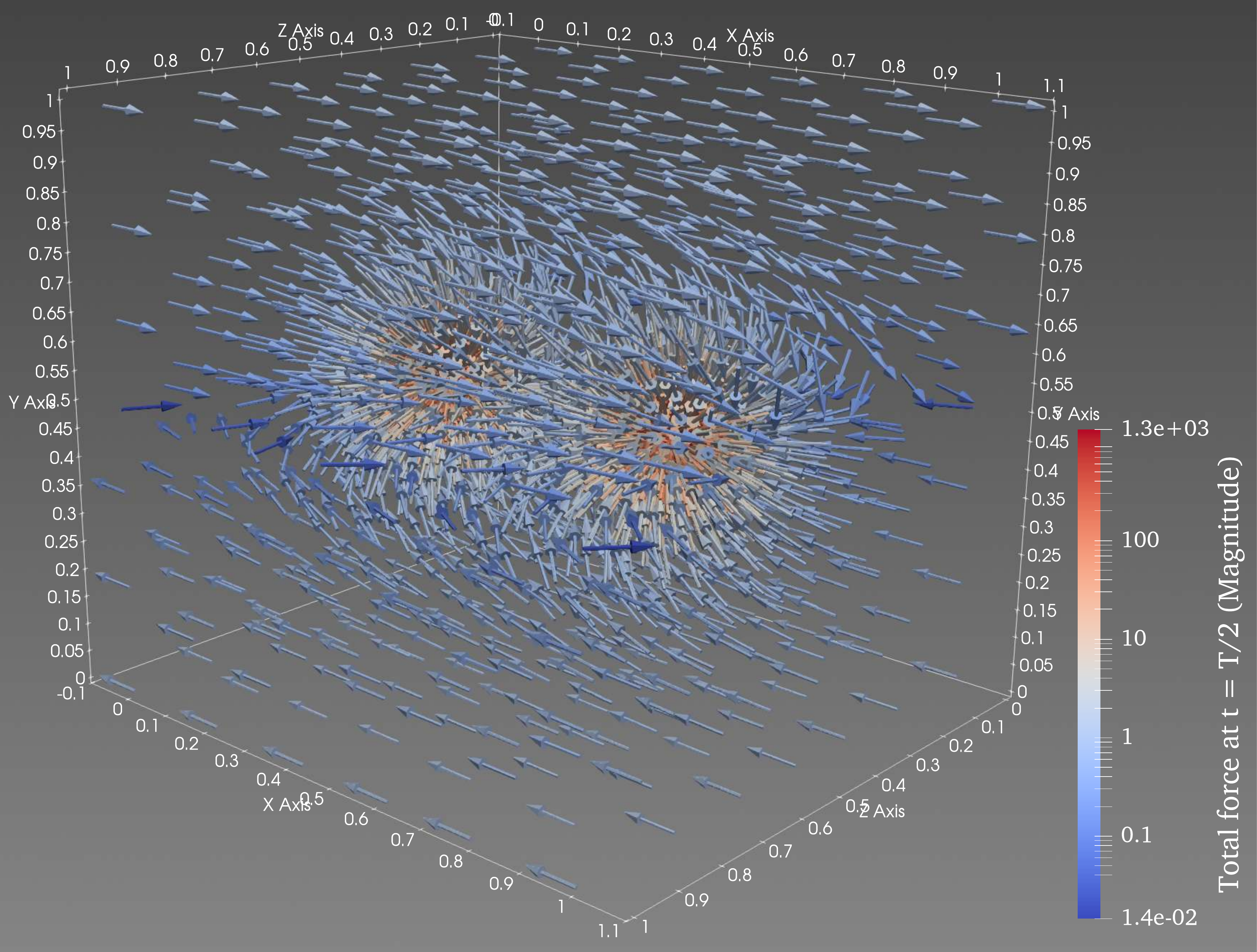} 
	\caption{LEFT: mechanical body force $\widehat{\boldsymbol{f}}\in C^\infty(\overline{\Omega};\mathbb{R}^3)$; RIGHT: total force $\boldsymbol{f}(\frac{T}{2})\in C^\infty(\overline{\Omega};\mathbb{R}^3)$ at time $t=\frac{T}{2}$, \textit{i.e.}, when the electric field is fully-charged (in a $\log$-plot).}
	\label{fig:f}
\end{figure}\vspace*{-1mm}

Since  $\hat{p}\hspace*{-0.1em}\in\hspace*{-0.1em} W^{1,\infty}(\mathbb{R}_{\ge 0})$,
we have that $p\hspace*{-0.1em}\in\hspace*{-0.1em} W^{1,\infty}(Q_T)$. In particular, the power-law index~${p\hspace*{-0.1em}\in\hspace*{-0.1em} W^{1,\infty}(Q_T)}$ is approximated by $p_h\in \mathbb{P}^0(\mathcal{I}_h;\mathbb{P}^0(\mathcal{T}_h))$, $h\in (0,1]$, which is obtained by employing the same one-point quadrature rule from the previous section (\textit{i.e.}, \eqref{def:p_n} with \eqref{eq:explicit_quadpoints}).
We use the Taylor--Hood element  on a triangulation $\mathcal{T}_h$ with 4.947 vertices and 26.903 tetrahedra to compute ${(\mathbf{v}_h^k,\pi_h^k)^\top\hspace*{-0.2em}\in\hspace*{-0.15em} \Vo_{h,0}\hspace*{-0.15em}\times \hspace*{-0.15em}\Qo_h}$,~${k\hspace*{-0.15em}=\hspace*{-0.15em}1,\ldots,K}$, where~$K=128$,~solving~\eqref{eq:primal1}. In doing so, due to $\boldsymbol{f}\in W^{1,\infty}(Q_T;\mathbb{R}^3)$, again, for every $k=1,\ldots,K$, we replace the $k$-th.\ temporal mean $\langle \Pi_h^{0,\mathrm{x}}\boldsymbol{f}\rangle_k\in (\mathbb{P}^0(\mathcal{T}_h))^3$ by the temporal evaluation $\Pi_h^{0,\mathrm{x}}\boldsymbol{f}(t_k)\in (\mathbb{P}^0(\mathcal{T}_h))^3$. To better compare the influence of the charging and discharging of the electric field $\boldsymbol{E} \in W^{1,\infty}(Q_T;\mathbb{R}^3)$, we carry out two different numerical test cases:\enlargethispage{12mm}

$\bullet$ \textit{Numerical test cases:}

\textit{(Test Case 1)}.\ \hypertarget{TC1}{} In \hspace*{-0.1mm}this \hspace*{-0.1mm}case, \hspace*{-0.1mm}the \hspace*{-0.1mm}electric \hspace*{-0.1mm}field \hspace*{-0.1mm}is \hspace*{-0.1mm}never \hspace*{-0.1mm}charged, \hspace*{-0.1mm}\textit{i.e.}, \hspace*{-0.1mm}instead \hspace*{-0.1mm}of \hspace*{-0.1mm}\eqref{def:E},~\hspace*{-0.1mm}we~\hspace*{-0.1mm}set~\hspace*{-0.1mm}${\boldsymbol{E}\hspace*{-0.15em}\coloneqq\hspace*{-0.15em}\mathbf{0}}$~in~$Q_T$, so that the total force just becomes the mechanical body force, \textit{i.e.},  
$\boldsymbol{f}=\widehat{\boldsymbol{f}}$ in $Q_T$ (\textit{cf}.\ Figure \ref{fig:f}(LEFT)).

\textit{(Test Case 2)}.\ \hypertarget{TC2}{} In this case, the electric field is defined by \eqref{def:E} (\textit{i.e.},
it  linearly charges from $t=0$ to $t=\frac{T}{4}$, is fully-charged from $t=\frac{T}{4}$ to $t=\frac{3T}{4}$ (\textit{cf}.\ Figure \ref{fig:E}(RIGHT)), and linearly discharges from $t=\frac{3T}{4}$ to $t=T$), so that the total forces is given via $\boldsymbol{f}=\widehat{\boldsymbol{f}}+\chi_E\textup{div}_x(\boldsymbol{E}\otimes \boldsymbol{E})$~in~$Q_T$~(\textit{cf}.~Figure~\ref{fig:f}(RIGHT)).

$\bullet$ \textit{Observations:}

\textit{(Test Case 1)}.\  In this case, the magnitudes of the velocity vector field (\textit{cf}.\ Figure \ref{fig:v1}) and the kinematic pressure  (\textit{cf}.\ Figure \ref{fig:p1})  slightly reduce as the temporally constant total force $\boldsymbol{f}=\widehat{\boldsymbol{f}}$~(\textit{cf}.~\mbox{Figure}~\ref{fig:f}(LEFT)) is not sufficient to fully maintain the initial vortex flow $\mathbf{v}_0\in H$ (\textit{cf}.\ \eqref{def:v0}).

\textit{(Test Case 2)}.\  In this case, at time $t=\frac{T}{2}$, when the electric field  $\boldsymbol{E}(\frac{T}{2})\in C^\infty(\overline{\Omega};\mathbb{R}^3)$~is~fully-charged (\textit{cf}.\ Figure~\ref{fig:E}(RIGHT)), the total force  $\boldsymbol{f}(\frac{T}{2})\in C^\infty(\overline{\Omega};\mathbb{R}^3)$ generates a high attraction close to the two holes (\textit{cf}.\ Figure~\ref{fig:f}(RIGHT)), where the electrodes are located, so
that 
the magnitudes of both velocity vector field and the kinematic pressure significantly increase close to these two holes.~To~be~more~precise,
the magnitude of velocity vector field  is increased by a factor of about $10$ (\textit{cf}.\ Figure~\ref{fig:v2}(LEFT)) compared to (\hyperlink{TC1}{Test Case 2}) (\textit{cf}.\ Figure~\ref{fig:v1}(LEFT)), while the magnitude of the kinematic pressure is  increased by a factor of about $10^5$ (\textit{cf}.\ Figure~\ref{fig:p2}(LEFT))  compared to (\hyperlink{TC1}{Test Case 2}) (\textit{cf}.\ Figure~\ref{fig:p1}(LEFT)).~At~time~$t=T$, when the electric field is fully-discharged, the total force becomes the mechanical body force enforcing a simple vortex flow, so that
the velocity vector field (\textit{cf}.\ Figure~\ref{fig:v2}(RIGHT))  and kinematic pressure (\textit{cf}.\ Figure~\ref{fig:p2}(RIGHT))  arrive at states similar to (\hyperlink{TC1}{Test Case 1}) at time $t=T$ (\textit{cf}.\ Figure~\ref{fig:v1}(RIGHT) and Figure~\ref{fig:p1}(RIGHT)).
This mimics the rapid change of states possible with electro-rheological fluids.

\begin{figure}[H]\centering
	\includegraphics[width=7.745cm]{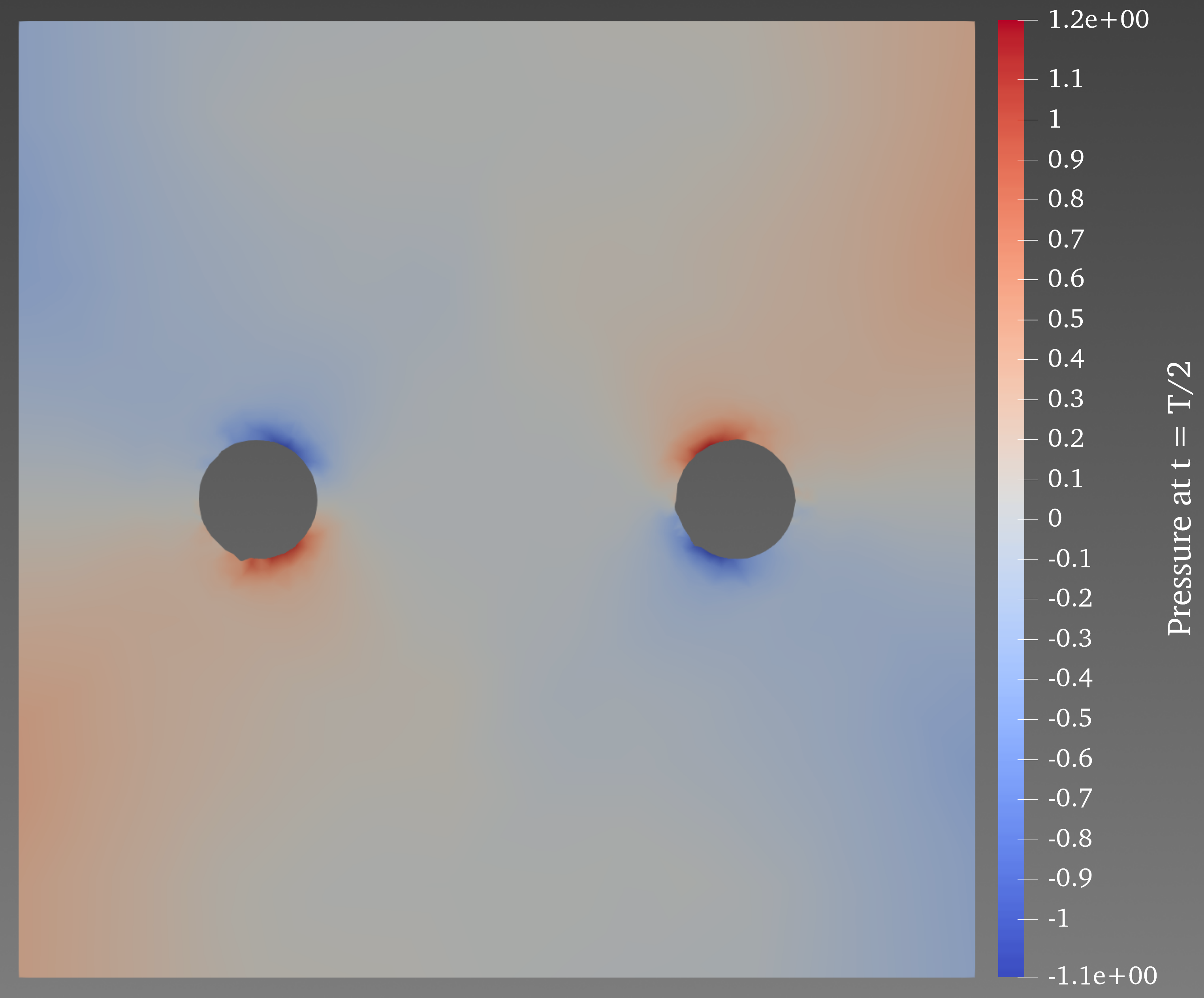}	\includegraphics[width=7.655cm]{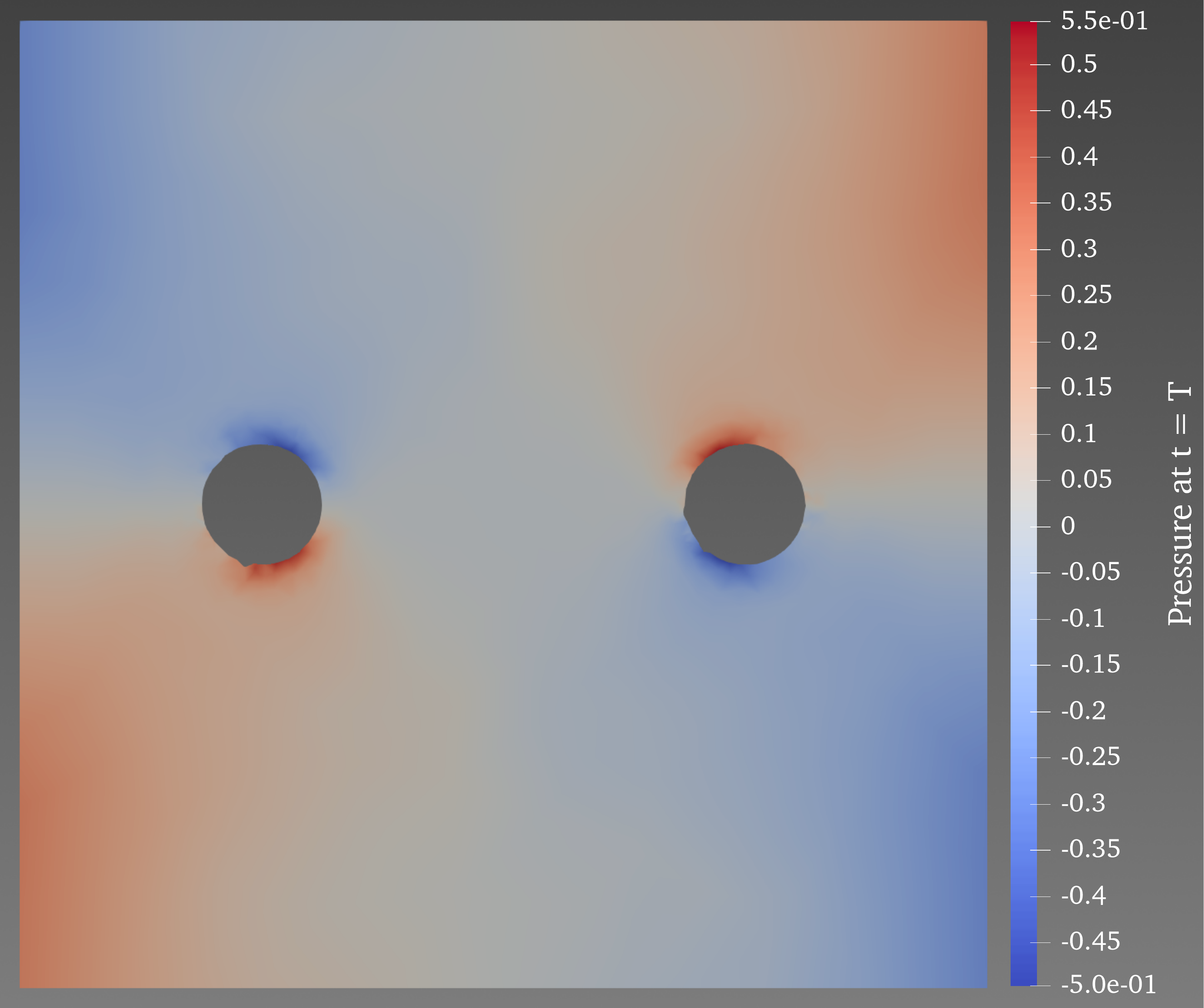}\vspace*{-1.75mm}
	\caption{LEFT: discrete kinematic pressure $\pi_h^{64}\in \mathbb{P}^1_c(\mathcal{T}_h)$ intersected with the ($\mathbb{R}\times \{\frac{1}{2}\}\times \mathbb{R}$)-plane (\textit{i.e.}, at time $t=\frac{T}{2}$) in (\protect\hyperlink{TC1}{Test Case 1});
	RIGHT: discrete kinematic pressure $\pi_h^{128}\in \mathbb{P}^1_c(\mathcal{T}_h)$ intersected with the ($\mathbb{R}\times \{\frac{1}{2}\}\times \mathbb{R}$)-plane (\textit{i.e.}, at time $t=T$) in (\protect\hyperlink{TC1}{Test Case 1}).}
	\label{fig:p1}
\end{figure}\vspace*{-6.5mm}

\begin{figure}[H]\centering
	\includegraphics[width=7.73cm]{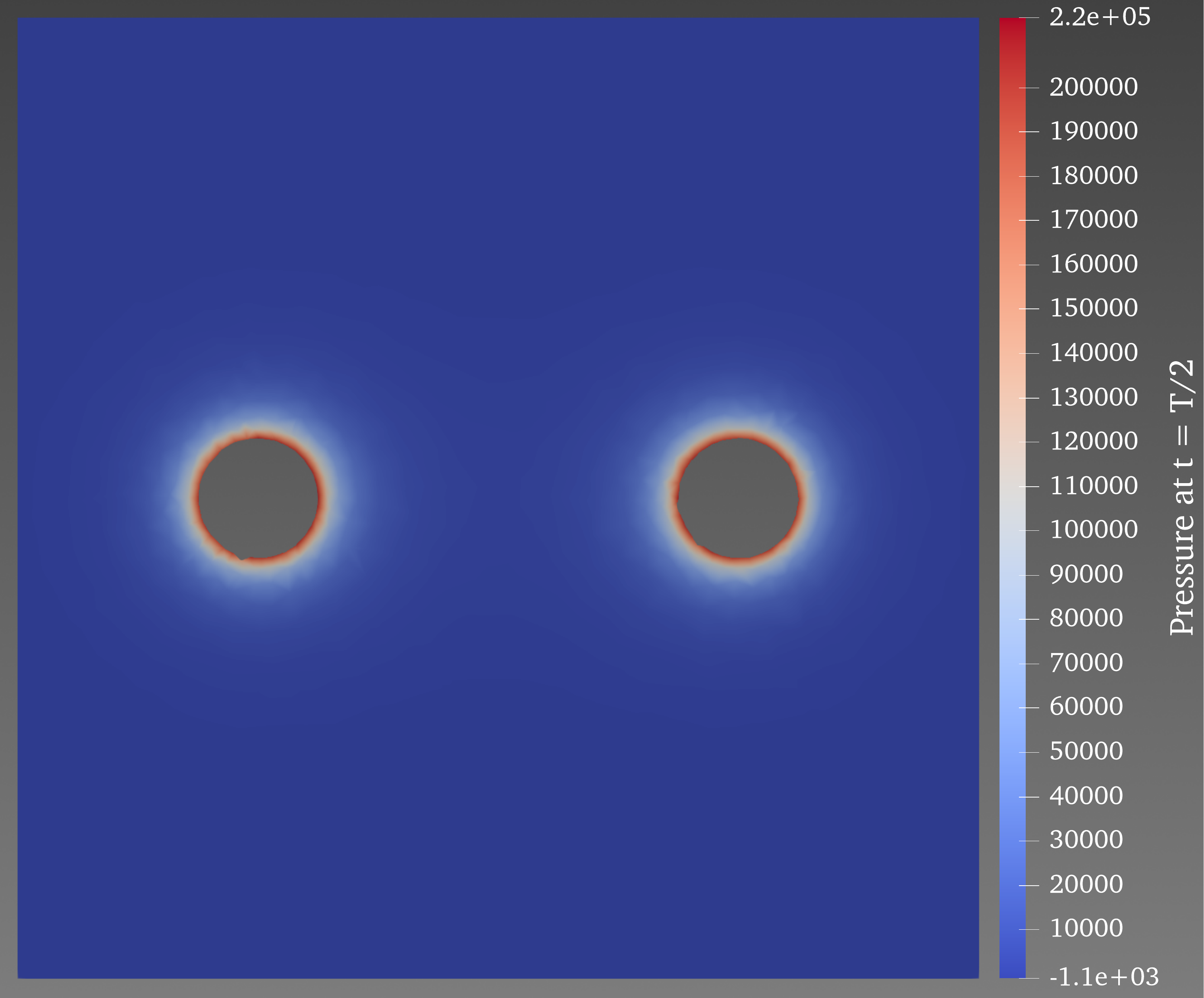}	\includegraphics[width=7.68cm]{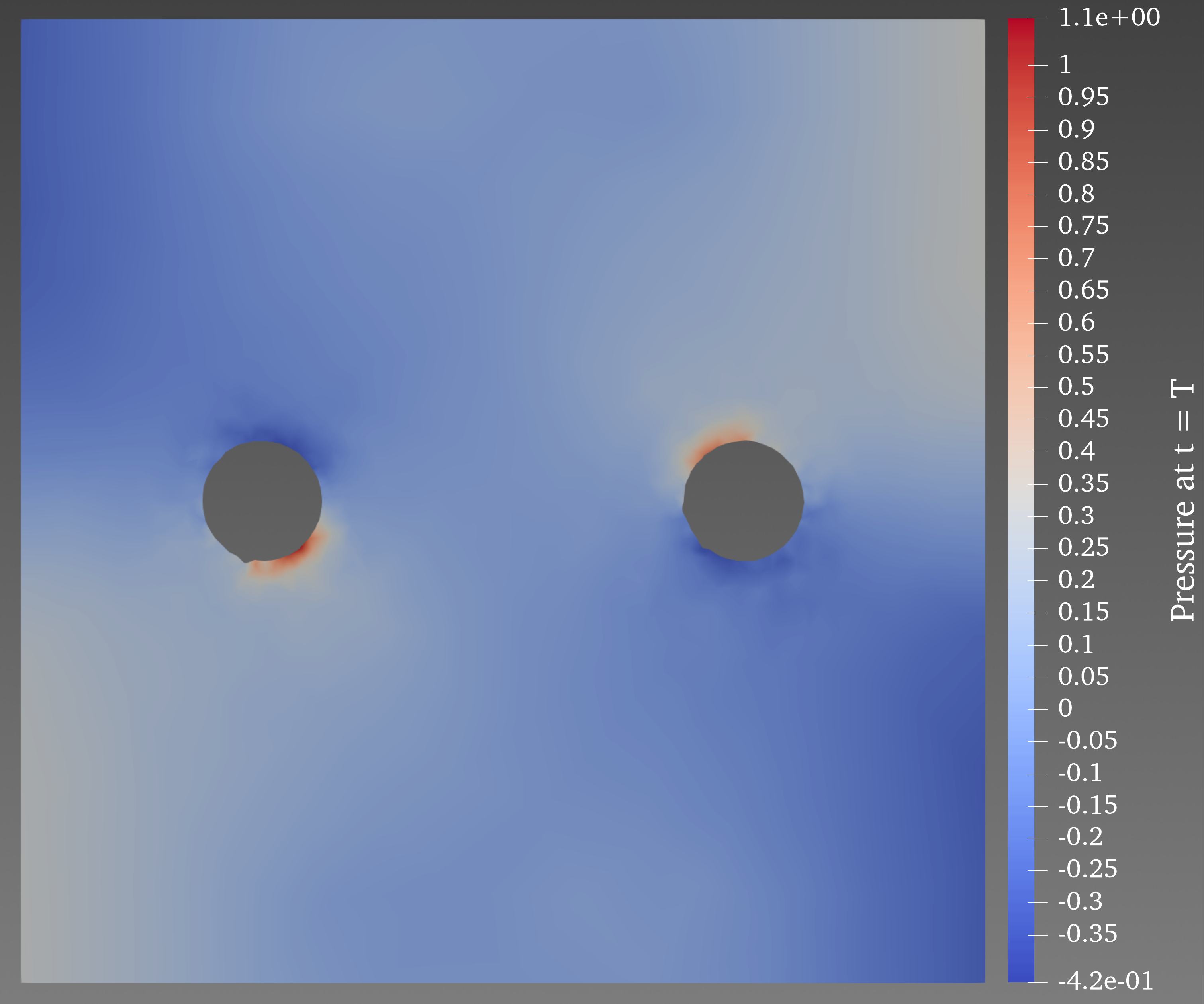}\vspace*{-1.75mm}
\caption{LEFT: discrete kinematic pressure $\pi_h^{64}\in \mathbb{P}^1_c(\mathcal{T}_h)$ intersected with the ($\mathbb{R}\times \{\frac{1}{2}\}\times \mathbb{R}$)-plane (\textit{i.e.}, at time $t=\frac{T}{2}$) in (\protect\hyperlink{TC2}{Test Case 2});
	RIGHT: discrete kinematic pressure $\pi_h^{128}\in \mathbb{P}^1_c(\mathcal{T}_h)$ intersected with the ($\mathbb{R}\times \{\frac{1}{2}\}\times \mathbb{R}$)-plane (\textit{i.e.}, at time $t=T$) in (\protect\hyperlink{TC2}{Test Case 2}).}
	\label{fig:p2}
\end{figure}\vspace*{-6.5mm}\enlargethispage{15mm}

\begin{figure}[H]\centering
	\includegraphics[width=7.7cm]{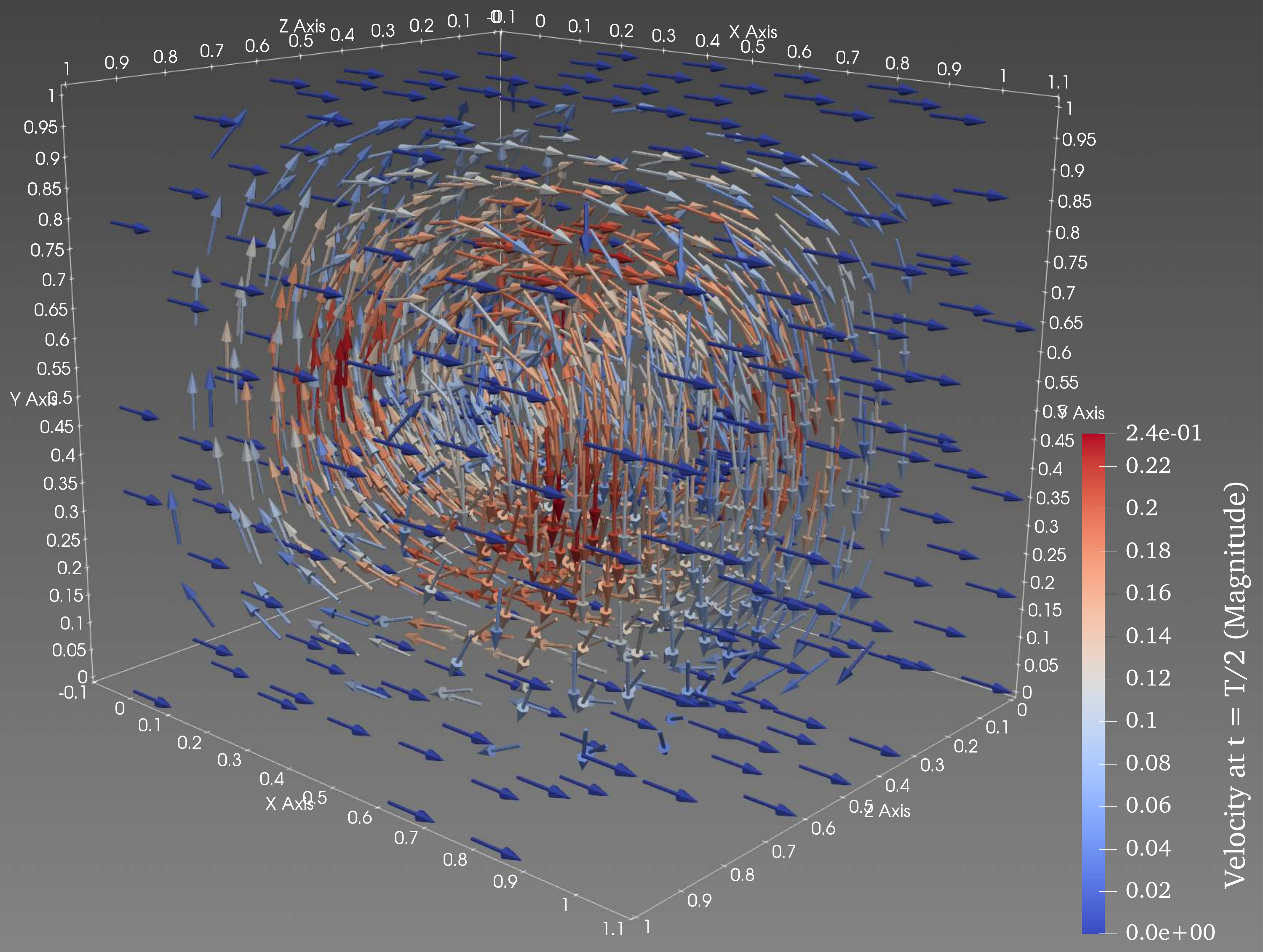}	\includegraphics[width=7.7cm]{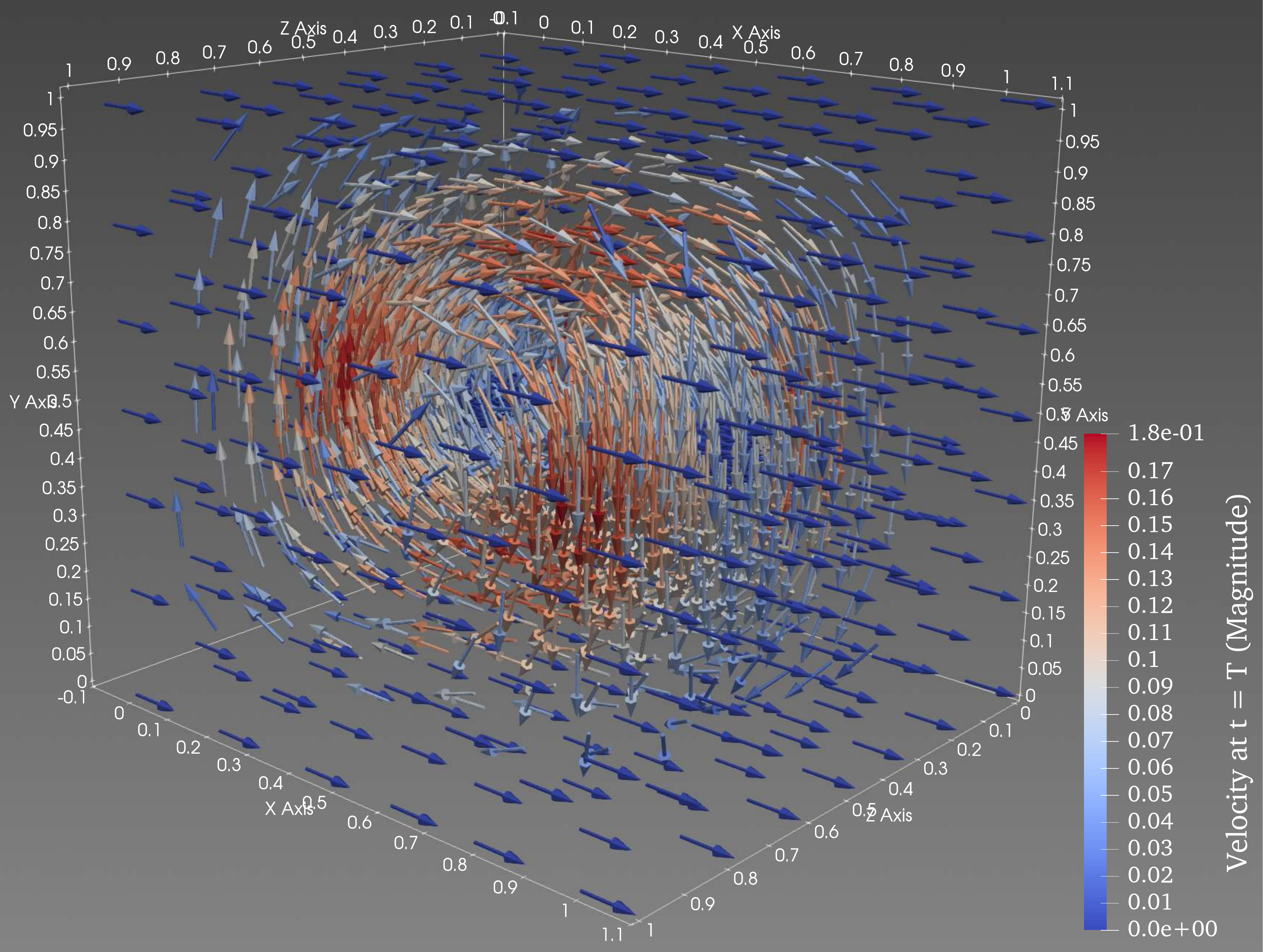}\vspace*{-1.75mm}
	\caption{LEFT: discrete velocity vector field $\mathbf{v}_h^{64}\in (\mathbb{P}^2(\mathcal{T}_h))^3$  (\textit{i.e.}, at time $t=\frac{T}{2}$)  in (\protect\hyperlink{TC1}{Test Case 1});
	RIGHT: discrete velocity vector field $\mathbf{v}_h^{128}\in (\mathbb{P}^2(\mathcal{T}_h))^3$  (\textit{i.e.}, at time $t=T$)  in (\protect\hyperlink{TC1}{Test Case 1}).}
	\label{fig:v1}
\end{figure}
	
\begin{figure}[H]\centering
	\includegraphics[width=7.57cm]{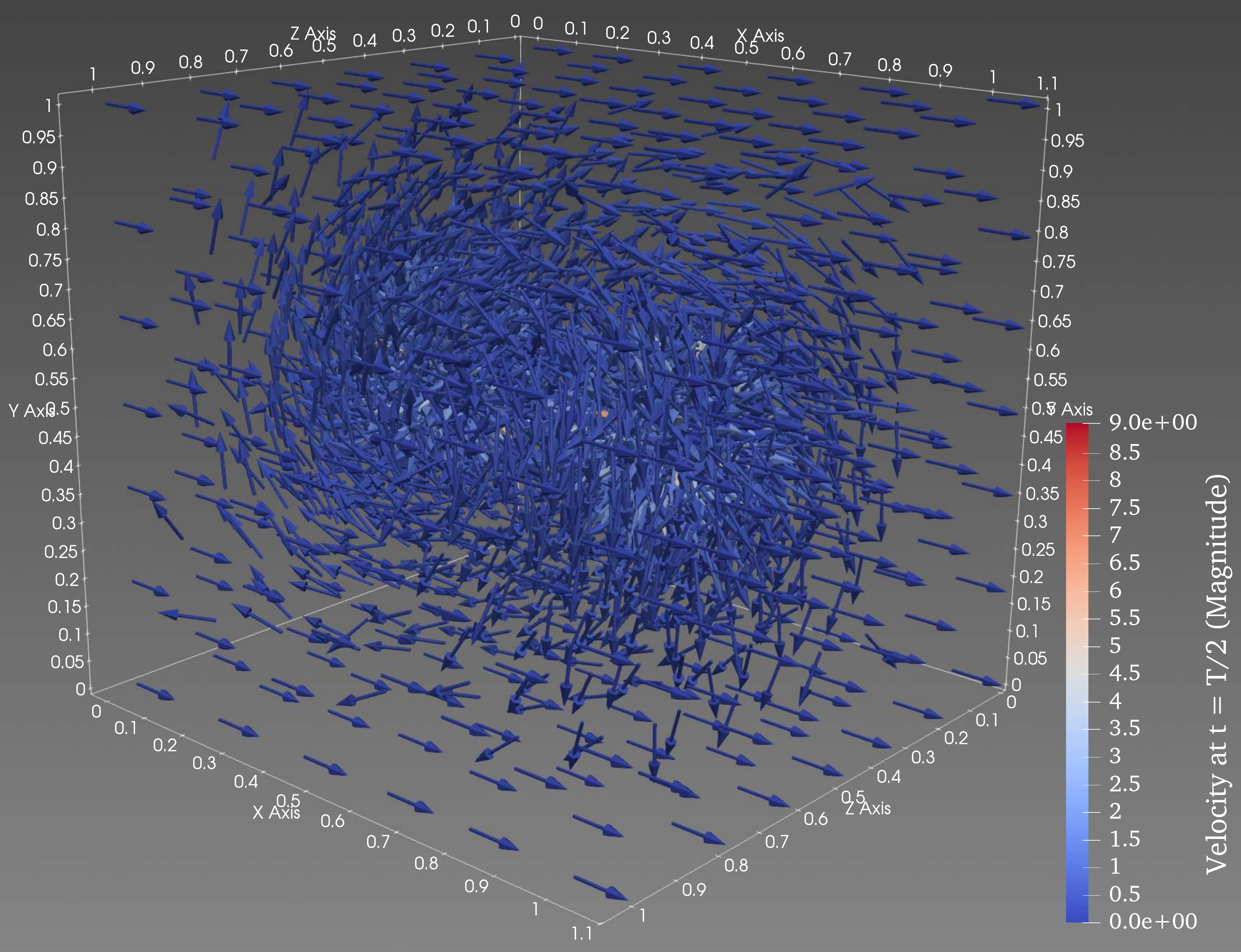}	\includegraphics[width=7.73cm]{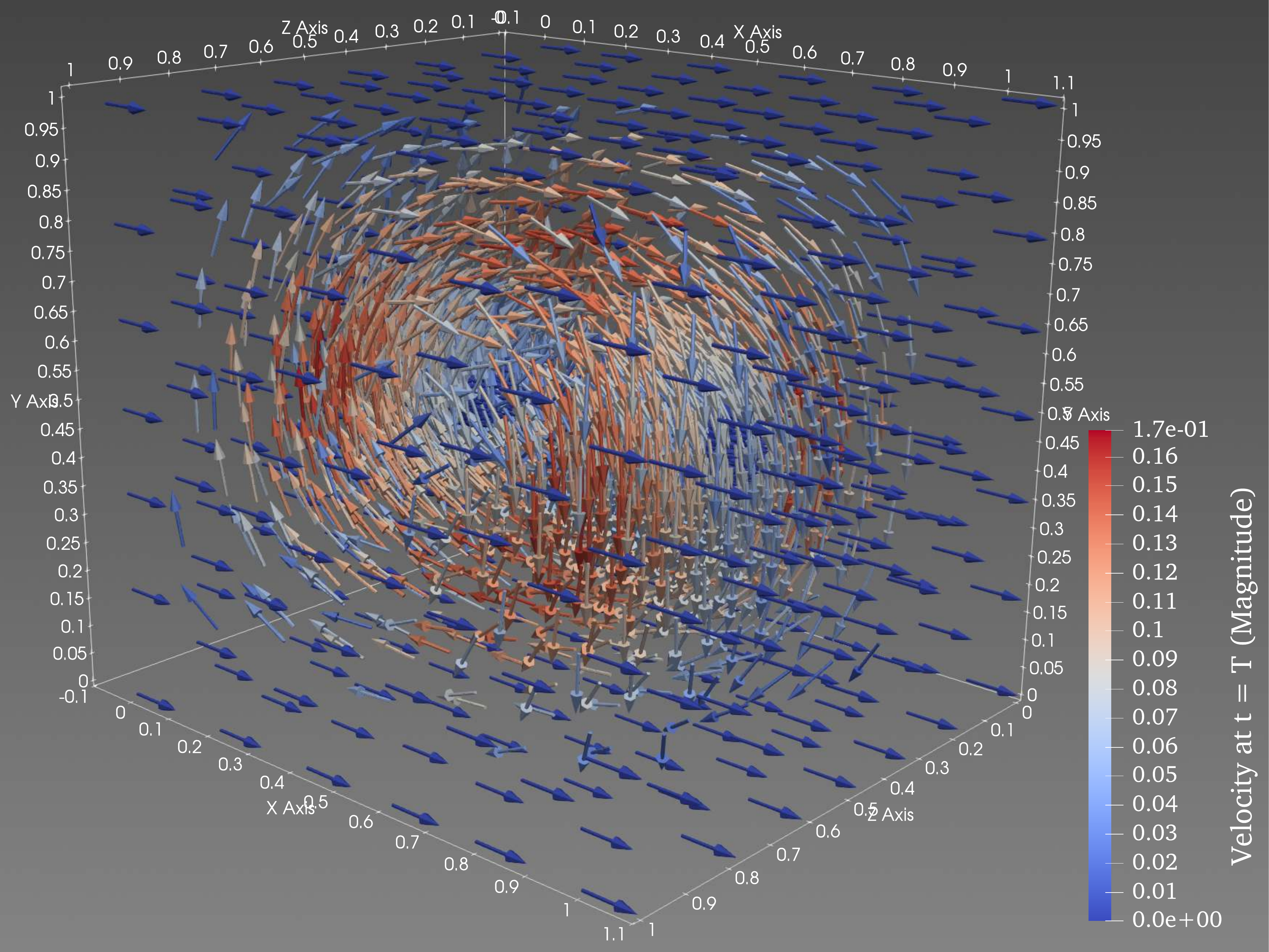}\vspace*{-1.75mm}
	\caption{LEFT: discrete velocity vector field $\mathbf{v}_h^{64}\in (\mathbb{P}^2(\mathcal{T}_h))^3$  (\textit{i.e.}, at time $t=\frac{T}{2}$)  in (\protect\hyperlink{TC2}{Test Case 2});
	RIGHT: discrete velocity vector field $\mathbf{v}_h^{128}\in (\mathbb{P}^2(\mathcal{T}_h))^3$  (\textit{i.e.}, at time $t=T$)  in (\protect\hyperlink{TC2}{Test Case 2}).}
	\label{fig:v2}
\end{figure}

 {\setlength{\bibsep}{0pt plus 0.0ex}\small

%
   
   \providecommand{\MR}[1]{}\def\cprime{$'$} \def\cprime{$'$} \def\cprime{$'$}
   \providecommand{\MR}[1]{}
   \providecommand{\bysame}{\leavevmode\hbox to3em{\hrulefill}\thinspace}
   \providecommand{\noopsort}[1]{}
   \providecommand{\mr}[1]{\href{http://www.ams.org/mathscinet-getitem?mr=#1}{MR~#1}}
   \providecommand{\zbl}[1]{\href{http://www.zentralblatt-math.org/zmath/en/search/?q=an:#1}{Zbl~#1}}
   \providecommand{\jfm}[1]{\href{http://www.emis.de/cgi-bin/JFM-item?#1}{JFM~#1}}
   \providecommand{\arxiv}[1]{\href{http://www.arxiv.org/abs/#1}{arXiv~#1}}
   \providecommand{\doi}[1]{\url{https://doi.org/#1}}
   \providecommand{\MR}{\relax\ifhmode\unskip\space\fi MR }
   \providecommand{\MRhref}[2]{%
   	\href{http://www.ams.org/mathscinet-getitem?mr=#1}{#2}
   }
   \providecommand{\href}[2]{#2}

}
	
\end{document}